\title{Characters of level $1$ standard modules of $C_n^{(1)}$ as generating functions for generalised partitions}
\author{Jehanne Dousse and Isaac Konan}
\address{Université de Genève, Section de Mathématiques, 7-9 rue du Conseil-Général, 1205 Genève, Switzerland}
\address{Univ Lyon, CNRS, Université Claude Bernard Lyon 1, UMR5208, Institut Camille Jordan, F-69622 Villeurbanne, France}
\email{jehanne.dousse@unige.ch}
\email{konan@math.univ-lyon1.fr}
\definecolor{foge}{rgb}{0.1, 0.6, 0.1}
\newcommand{\N}{\mathbb{N}}
\numberwithin{equation}{section}
\numberwithin{figure}{section}
\newcommand{\Thm}[1]{Theorem \ref{#1}}
\newcommand{\Prp}[1]{Proposition \ref{#1}}
\newcommand{\ov}{\overline}
\newcommand{\Ll}{\Lambda}
\newcommand{\Pp}{\mathcal{P}}
\newcommand{\Ppp}{\Pp^{\gg}_{\co}}
\newcommand{\Z}{\mathbb{Z}}
\newcommand{\Od}{\mathcal{O}}
\newcommand{\F}{\mathcal{F}}
\newcommand{\Co}{\mathcal{C}}
\newcommand{\Ee}{\mathcal{E}}
\newcommand{\B}{\mathcal{B}}
\newcommand{\la}{\lambda}
\newcommand{\Rr}{\mathcal{R}}
\newcommand{\ep}{\epsilon}
\newcommand{\wt}{\overline{\mathrm{wt}}}
\newcommand{\Sc}{\mathcal{S}}
\newcommand{\ot}{\otimes}
\newcommand{\p}{\mathfrak p}
\newcommand{\co}{c_{g}}
\newcommand{\eit}{\tilde{e}_i}
\newcommand{\gf}{\mathfrak g}
\newcommand{\h}{\mathfrak h}
\DeclareMathOperator{\ch}{ch}
\DeclareMathOperator{\sgn}{sgn}
\DeclareMathOperator{\mult}{mult}
\newcommand{\I}{\{0,\dots,n\}}
\newcommand{\C}{\mathcal{C}}
\newcommand{\Pppp}{\Pp^{\gtrdot}_{\co}}
\numberwithin{equation}{section}
\newtheorem{theo}{Theorem}[section]
\newtheorem{prop}[theo]{Proposition}
\newtheorem{lem}[theo]{Lemma}
\newtheorem{cor}[theo]{Corollary}
\newtheorem{con}[theo]{Conjecture}
\newtheorem{rem}[theo]{Remark}
\theoremstyle{definition} \newtheorem{deff}[theo]{Definition}
\begin{document}
\begin{abstract}
We give a new simple formula for the energy function of a level $1$ perfect crystal of type $C_n^{(1)}$ introduced by Kang, Kashiwara and Misra. We use this to give several expressions for the characters of level $1$ standard modules as generating functions for different types of partitions. We then relate one of these formulas to the difference conditions in the conjectural partition identity of Capparelli, Meurman, Primc and Primc, and prove that their conjecture is true for all level $1$ standard modules. Finally, we propose a non-specialised generalisation of their conjecture.
\end{abstract}

\maketitle

\section{Introduction and statement of results}
This paper lies at the intersection between representation theory and combinatorics, the goal being to connect Rogers--Ramanujan type partition identities with characters of Lie algebra modules and crystal bases. The connection between these two fields originated with work of Lepowsky, Milne and Wilson~\cite{Le-Mi1,Lepowsky,Lepowsky2} who linked the Rogers--Ramanujan identities to characters of standard level $3$ modules of the affine Lie algebra $A_1^{(1)}$. But before going into detail, let us first recall some definitions and notations.

A partition of a positive integer $n$ is a non-increasing sequence of positive integers whose sum is $n$. For example, the partitions of $4$ are $(4), (3,1), (2,2), (2,1,1)$ and $(1,1,1,1)$. A partition identity is a theorem stating that for all $n$, the number of partitions of $n$ satisfying some conditions (often difference conditions between consecutive parts) equals the number of partitions of $n$ satisfying some other conditions (often congruence conditions on the parts).
Probably the most famous partition identities are those of Rogers--Ramanujan \cite{RogersRamanujan}. They were originally stated as $q$-series identities:
\begin{theo}[The Rogers--Ramanujan identities]
  \label{th:RR}
  Let $i=0$ or $1$. Then
  $$
  \sum_{n \geq 0} \frac{q^{n^2+ (1-i)n}}{(q;q)_n} = \frac{1}{(q^{2-i};q^5)_{\infty}(q^{3+i};q^5)_{\infty}}.
  $$
\end{theo}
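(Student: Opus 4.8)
The plan is to prove the cases $i=0$ and $i=1$ in parallel, deducing both from the limiting form of Bailey's lemma together with the Jacobi triple product identity; the two values of $i$ will correspond to two Bailey pairs, one relative to $a=q$ and one relative to $a=1$. Recall that a pair of sequences $(\alpha_n)_{n\ge0},(\beta_n)_{n\ge0}$ is a \emph{Bailey pair relative to $a$} if $\beta_n=\sum_{r=0}^{n}\alpha_r/\big((q;q)_{n-r}(aq;q)_{n+r}\big)$ for all $n\ge0$, and that the $\rho_1,\rho_2\to\infty$ limit of Bailey's lemma (applied once, then letting $n\to\infty$) yields
$$\sum_{n\ge0}a^{n}q^{n^{2}}\beta_n=\frac{1}{(aq;q)_\infty}\sum_{n\ge0}a^{n}q^{n^{2}}\alpha_n.$$

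First I would specialise $\beta_n=1/(q;q)_n$. Then the left-hand side above equals $\sum_{n\ge0}q^{n^{2}+n}/(q;q)_n$ for $a=q$ and $\sum_{n\ge0}q^{n^{2}}/(q;q)_n$ for $a=1$, which are precisely the left-hand sides of the theorem for $i=0$ and $i=1$. The matching $\alpha_n$ are the classical ones: $\alpha_n=(-1)^{n}q^{n(3n+1)/2}(1-q^{2n+1})/(1-q)$ when $a=q$, and $\alpha_0=1$ with $\alpha_n=(-1)^{n}\big(q^{n(3n-1)/2}+q^{n(3n+1)/2}\big)$ for $n\ge1$ when $a=1$. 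Checking that these really are Bailey pairs is the only genuine computation; it amounts to a terminating very-well-poised ${}_6\phi_5$ evaluation (equivalently a consequence of the $q$-Pfaff--Saalschütz summation), which I would cite rather than reprove.

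It then remains to evaluate the right-hand sides. Substituting these $\alpha_n$ and collecting the resulting two-term cancellations, the sum $\sum_{n\ge0}a^{n}q^{n^{2}}\alpha_n$ collapses to a bilateral theta series: one finds $\sum_{n\in\mathbb{Z}}(-1)^{n}q^{(5n^{2}+3n)/2}$ for $a=q$ (up to the factor $1-q$, which is absorbed via $(q;q)_\infty=(1-q)(q^{2};q)_\infty$) and $\sum_{n\in\mathbb{Z}}(-1)^{n}q^{(5n^{2}+n)/2}$ for $a=1$. Applying the Jacobi triple product identity $\sum_{n\in\mathbb{Z}}(-1)^{n}x^{n}q^{\binom{n}{2}}=(q;q)_\infty(x;q)_\infty(q/x;q)_\infty$ with base $q^{5}$ and $x=q^{4}$ (resp.\ $x=q^{3}$) rewrites each such series as $(q^{5};q^{5})_\infty(q^{b};q^{5})_\infty(q^{5-b};q^{5})_\infty$ with $b\in\{4,3\}$; dividing by $(q;q)_\infty=(q;q^{5})_\infty(q^{2};q^{5})_\infty(q^{3};q^{5})_\infty(q^{4};q^{5})_\infty(q^{5};q^{5})_\infty$ then leaves exactly $1/\big((q^{2-i};q^{5})_\infty(q^{3+i};q^{5})_\infty\big)$, as required. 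The main obstacle is the Bailey-pair verification, together with keeping the telescoping bookkeeping honest; everything else is routine manipulation of $q$-series, valid both as a formal power series identity and, for $|q|<1$, as an identity of analytic functions. As alternative routes one could instead run the classical Rogers--Schur functional-equation argument --- set $G(z)=\sum_{n\ge0}z^{n}q^{n^{2}}/(q;q)_n$, prove $G(z)=G(zq)+zq\,G(zq^{2})$ with $G(0)=1$, identify the unique such series with an explicit theta-type series, and specialise at $z=1$ and $z=q$ --- or invoke a limiting case of Watson's $q$-analogue of Whipple's transformation.
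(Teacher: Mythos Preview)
Your Bailey-pair argument is a standard and correct route to the Rogers--Ramanujan identities: the pairs you write down are the classical ones (Slater's list, or equivalently a limiting ${}_6\phi_5$), the telescoping of $\sum a^nq^{n^2}\alpha_n$ into a bilateral theta series is accurate, and the final Jacobi triple product / Euler factorisation step is exactly as you describe. The alternative functional-equation and Watson--Whipple approaches you mention are also valid.

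That said, there is nothing to compare against: the paper does \emph{not} prove Theorem~\ref{th:RR}. It is quoted in the introduction purely as historical and motivational background, with a citation to Rogers--Ramanujan's original paper, and the remainder of the article concerns characters of level~$1$ standard modules of $C_n^{(1)}$ via perfect crystals and grounded partitions. So your proof is self-contained and correct, but it is answering a question the paper never set itself; the only ``proof'' the paper offers is the reference~\cite{RogersRamanujan}.
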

\noindent Here the $q$-Pochhammer symbol $(a;q)_n$ is defined  for $n \in \mathbb{N} \cup \{\infty\}$ as
\begin{align*}
  (a;q)_n &:= \prod_{k=0}^{n-1} (1-aq^k).
%\\ (a_1, \dots, a_j ; q)_n &:= (a_1;q)_n \cdots (a_j;q)_n.
\end{align*}
These identities were then interpreted as partition identities by MacMahon \cite{MacMahon} and Schur \cite{SchurRR}:
\begin{theo}[Rogers--Ramanujan identities, combinatorial version]
  \label{th:RRcomb}
  Let $i=0$ or $1$. For every natural number $n$, the number of partitions of $n$ such that the difference between two consecutive parts is at least $2$ and the part $1$ appears at most $i$ times is equal to the number of partitions of $n$ into parts congruent to $\pm (2-i) \mod 5.$
\end{theo}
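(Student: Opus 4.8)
The plan is to deduce Theorem~\ref{th:RRcomb} from the analytic identities of Theorem~\ref{th:RR}: I would show that the two sides of the identity in Theorem~\ref{th:RR} are precisely the generating functions (by size) for the two families of partitions described in Theorem~\ref{th:RRcomb}, and then compare coefficients of $q^n$. Both combinatorial interpretations are obtained by expanding the $q$-series side appropriately; the content is a staircase bijection on the "difference $\geq 2$" side and a trivial product expansion on the congruence side.

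For the left-hand side, fix $i \in \{0,1\}$ and let $\lambda_1 > \lambda_2 > \cdots > \lambda_\ell \geq 1$ be a partition of $n$ with $\lambda_j - \lambda_{j+1} \geq 2$ for $1 \leq j < \ell$ and with the part $1$ occurring at most $i$ times. Since consecutive parts already differ by at least $2$, this last condition is vacuous when $i = 1$ and forbids the part $1$ when $i = 0$, so in both cases it amounts to $\lambda_\ell \geq 2 - i$. The gap condition then forces $\lambda_j \geq 2(\ell - j) + (2 - i)$, so that $\mu_j := \lambda_j - 2(\ell - j) - (2 - i)$ defines a (possibly empty) partition $\mu_1 \geq \cdots \geq \mu_\ell \geq 0$ into at most $\ell$ nonnegative parts, and $\lambda \mapsto \mu$ is a bijection whose inverse adds back the staircase. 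Since $\sum_{j=1}^{\ell} \bigl(2(\ell - j) + (2 - i)\bigr) = \ell(\ell - 1) + \ell(2 - i) = \ell^2 + (1 - i)\ell$, this bijection shifts the weight by exactly $\ell^2 + (1 - i)\ell$. As partitions into at most $\ell$ parts have generating function $1/(q;q)_\ell$, the generating function for partitions of the prescribed type with exactly $\ell$ parts is $q^{\ell^2 + (1 - i)\ell}/(q;q)_\ell$, and summing over $\ell \geq 0$ yields the left-hand side of Theorem~\ref{th:RR}.

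For the right-hand side, the parts congruent to $\pm(2 - i) \bmod 5$ are exactly those lying in the residue classes $2 - i$ and $3 + i$ modulo $5$ (note $-(2 - i) \equiv 3 + i \bmod 5$), and the standard product expansion for partitions with prescribed allowed parts gives that the generating function for partitions into such parts is $\prod_{k \geq 0} (1 - q^{5k + 2 - i})^{-1}(1 - q^{5k + 3 + i})^{-1} = 1/\bigl((q^{2-i};q^5)_\infty (q^{3+i};q^5)_\infty\bigr)$, which is the right-hand side of Theorem~\ref{th:RR}. Combining the two computations with Theorem~\ref{th:RR}, the generating functions for the two families coincide, hence their coefficients of $q^n$ agree for every $n$, which is the assertion of Theorem~\ref{th:RRcomb}.

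The only real subtlety is the bookkeeping around the parameter $i$ in the staircase argument — in particular recognising that the restriction on the multiplicity of the part $1$ is automatically implied by the gap condition except when $i = 0$, and keeping the shift $\ell^2 + (1-i)\ell$ straight. There is no genuine obstacle once Theorem~\ref{th:RR} is granted; a self-contained proof avoiding Theorem~\ref{th:RR} would instead have to reprove the Rogers--Ramanujan identities themselves (e.g.\ via Schur's recurrence or an Andrews--Baxter-type involution), and that is the step one would expect to be hard.
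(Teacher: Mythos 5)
Your proof is correct and is exactly the standard MacMahon--Schur interpretation that the paper itself invokes (it cites this result without proof): the staircase bijection with weight shift $\ell^2+(1-i)\ell$ identifies the sum side of Theorem~\ref{th:RR} as the generating function for the gap-$\geq 2$ partitions, and the product side expands as the generating function for partitions into parts $\equiv \pm(2-i) \pmod 5$. Your bookkeeping (the part-$1$ condition reducing to $\lambda_\ell \geq 2-i$, and the shift computation) is accurate, so nothing needs to be changed.
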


Any partition $\lambda$ can be also be described in terms of frequencies, i.e. by a sequence $(f_i)_{i\geq 1}$, where $f_i$ is the number of parts of $\lambda$ which are equal to $i$. For example, the partition $(4,3,3,3,1,1)$ corresponds to the frequency sequence $(2,0,3,1,0,0,0,\dots)$. In the Rogers--Ramanujan identities, partitions with difference at least $2$ between consecutive parts can equivalently be seen as partitions whose frequency sequence satisfies $f_i + f_{i+1} \leq 1$ for all $i$. It is however not easy in general (or even possible) to find a correspondence between frequency conditions and difference conditions. For the Rogers--Ramanujan identities, it is the version with frequency conditions that Lepowsky and Wilson used to make the connection with characters of affine Lie algebras.

\medskip
We do not recall all the definitions related to Kac--Moody Lie algebras and crystals. The interested reader can find them in classical books like~\cite{HK,Kac}, or in our first paper on the subject \cite{DK19-2}. However we fix some notations.

Let $\gf$ be an affine Kac--Moody Lie algebra with Cartan subalgebra $\h$. Let $P^{+}$ denote the set of dominant integral weights and let $L(\lambda)$ be the irreducible highest weight $\gf$-module of highest weight $\lambda \in P^+$ (also called the standard module with highest weight $\lambda$). Let $\alpha_0, \dots , \alpha_n$ and $\Lambda_0, \dots, \Lambda_n$ be the simple roots and fundamental weights, respectively. Let $\delta = d_0 \alpha_0 + d_1 \alpha_1 + \cdots + d_{n-1} \alpha_{n-1}$ be the null root.

Then the character of $L(\lambda)$ is defined as
$$
\ch L(\lambda) = \sum_{\mu\in \h^*} \dim L(\lambda)_{\mu} \cdot e^{\mu},
$$
where $e$ is a formal exponential satisfying $e^{\mu + \mu'} = e^{\mu}e^{\mu'}$, and $L(\lambda)_{\mu}$ is the weight space of weight $\mu$ in the weight-space decomposition of $L(\lambda)$. 
The Weyl--Kac character formula \cite[Proposition 10.10]{Kac} expresses the character as follows:
\begin{align*} \label{eq:WeylKac}
  \ch L(\lambda) = \frac{\sum_{w \in W} \sgn(w) e^{w(\lambda+\rho)-\rho}}{\prod_{\alpha \in \Delta^{+}} (1-e^{-\alpha})^{\mult \alpha}},
\end{align*}
where $\Delta^{+}$ is the set of positive roots, $W$ is the Weyl group and $\rho$ is the Weyl vector.

Replacing $e^{-\alpha_i}$ by $q$ for all $i \in \I$  in a character formula is called performing the \textit{principal specialisation}. We denote the principal specialisation by the operator $\mathds{1}$ (see \eqref{eq:princchar} for an example).
Lepowsky and Milne \cite{Le-Mi1,Le-Mi2} first noted that, once multiplied with the ``fudge factor'' $1/(q;q^2)_{\infty}$,  the product side of the Rogers--Ramanujan identities becomes equal to the principal specialisation of the Weyl--Kac character formula for the level $3$ standard modules $3 \Lambda_0$ and $2 \Lambda_0+\Lambda_1$ of the affine Lie algebra $A_1^{(1)}$.
To establish the equality with the sum side of the Rogers--Ramanujan identities, Lepowsky and Wilson \cite{Lepowsky} constructed a basis of the same level $3$ standard modules using the theory of vertex operator algebras. This basis corresponds in some sense to partitions in which some patterns, equivalent to the frequency conditions $f_i + f_{i+1} \leq 1$, are forbidden.

\medskip

More generally, performing the principal specialisation in the Weyl--Kac character formula for any standard module of any affine Kac-Moody gives a product of $q$-Pochhammer symbols, see for example \cite{LepLectures}.

In particular, in type $C_n^{(1)}$, there is a precise expression for $\mathds{1}\left(e^{-k_0\Lambda_0-\cdots-k_n\Lambda_n}\chi(L(k_0\Lambda_0+\cdots+k_n\Lambda_n))\right)$ as an infinite product. 
Let $s$ be a non-negative integer, and let $x_0,\ldots,x_s$ be positive integers. Define the set
$$D(x_0,\ldots,x_s)=\{x_0+\cdots+ x_j: 0\leq j\leq s\}\sqcup \{x_0+\cdots+ x_{j-1}+2x_{j}+\cdots +2x_s: 1\leq j\leq s\},$$
and the multiset 
$$\Delta(x_0,\ldots,x_s) = D(x_0,\ldots,x_s)\sqcup D(x_1,\ldots,x_s)\sqcup \cdots \sqcup D(x_s).$$
Then
\begin{equation}
\label{eq:princchar}
\begin{aligned}
&\mathds{1}\left(e^{-k_0\Lambda_0-\cdots-k_n\Lambda_n}\ch(L(k_0\Lambda_0+\cdots+k_n\Lambda_n))\right) \\
&\quad= \frac{\prod_{a\in \{2n+2k+2\}^n \sqcup D(k_0+1,\ldots,k_n+1);b\in \Delta(k_1+1,\ldots,k_n+1); j=a,b,2n+2k+2-b}(q^j;q^{2n+2k+2})_\infty}{(q;q^2)_\infty(q;q)_\infty^n},
\end{aligned}
\end{equation}
where $k=k_0+\cdots+k_n$.

Lepowsky and Wilson's approach, applied to other affine Kac--Moody Lie algebras or at other levels, gives rise to other sum/product identities, also called Rogers--Ramanujan type partition identities, see e.g. \cite{Capparelli,Meurman2,Nandi,PrimcSikic,PrimcSikic2,Siladic}. However, while it is relatively easy to obtain an infinite product by using the principal specialisation in the Weyl--Kac character formula, it is much harder to find a basis of the modules considered. Hence, some of the aforementioned partition identities were only conjectured, not proved, using representation theory. But a combinatorial proof of the identity automatically proves that the conjectured basis is indeed a basis, by equality of cardinalities. Capparelli's identity \cite{Capparelli} was first proved combinatorially by Andrews \cite{Andrewscap}, then refined by Alladi, Andrews and Gordon in \cite{AllAndGor} using the method of weighted words, and finally proved by Capparelli \cite{Capparelli2}. Siladi\'c's identity was already proved by Siladi\'c himself \cite{Siladic} using purely representation theoretic techniques, but was then refined combinatorially by the first author \cite{Doussesil2}, and latter generalised by the second author \cite{Konan_Sil} and linked to statistical mechanics \cite{Konan_ww1}. 
Nandi's identity \cite{Nandi} was conjectured using vertex operator algebras and proved combinatorially by Takigiku and Tsuchioka \cite{Takigiku2019APO}.

\medskip
Using the vertex operator algebras approach, Primc and {\v{S}}iki\'c \cite{PrimcSikic} proved an identity for characters of $L(\Lambda_0)$ in type $C_n^{(1)}$ for all $n \geq 1$, and conjectured a generalisation for $L(k\Lambda_0)$ in type $C_n^{(1)}$ for all $n,k \geq 1$ in \cite{PrimcSikic2}. 
This conjecture describes the characters as generating functions for partitions with frequency conditions on so-called ``cascades'', which are downward paths in certain tables of integers, and the principal specialisation gives a sum/product identity.

Using computer algebra experimentations, Capparelli, Meurman, Primc and Primc \cite{CMPP} recently generalised Primc and {\v{S}}iki\'c's conjecture to all standard modules of level $k$ in type $C_n^{(1)}$ for all $n,k \geq 1$. We give some notation to be able to state their conjecture.
Let $\mathcal{N}_{2n+1}$ be the array of integers with $2n+1$ rows represented in \eqref{intarray}:
\begin{equation}\label{intarray}
\begin{matrix}
0&& 1& &3& & 5& & 7&    \\
 & 0&&2& &4 & &6 & &8  \\
 & &&& &\vdots& &  & & \\
 0&& 1& &3& & 5& & 7&   \\
 & 0&&2& &4 & &6 & & 8  \\
0&& 1& &3& & 5& & 7&    \\
\end{matrix}\quad\dots.
\end{equation}
Here an integer $l$ in row $i$ is considered to be different from the integer $l$ in row $j$ for $j \neq i$ (or equivalently, we can consider that each row is coloured with a different colour).
Any partition with parts in $\mathcal{N}_{2n+1}$ can be represented by its frequencies in the array $\mathcal{F}^{k_0,k_1, \dots , k_n}$ represented in \eqref{freqarray}:
\begin{equation}\label{freqarray}
\begin{matrix}
 k_{n}&& f_1& &f_ 3& & f_5& & f_7&    \\
 & 0&&f_ 2& &f_4 & &f_6 & & f_{8}  \\
 & &&& &\vdots& &  & & \\
 k_{1}&& f_1& &f_ 3& & f_5& & f_7&   \\
 & 0&&f_ 2& &f_4 & &f_6 & & f_{8}  \\
 k_{0}&& f_1& &f_ 3& & f_5& & f_7&    \\
\end{matrix}\quad\dots,
\end{equation}
where the zeros in the first column are set to have fictitious frequencies equal to $k_0,k_1, \dots , k_n$.

A \textit{downward path} $\mathcal{Z}$ in an array with $2n+1$ rows is a $(2n+1)$-tuple $(a_1,a_2,a_3,\dots,a_{2n+1})$ such that $a_i$ is in the $i$-th row for all $1 \leq i \leq 2n+1$ and $(a_i, a_{i+1})$ is a pair of two adjacent elements for all $1 \leq i \leq 2n$.
For example,
two downward paths in $\mathcal{F}^{2,0,0}$ are represented in red and blue below:
$$
\begin{matrix}
0&&{\color{red}f_{1}}& & f_{3}& & {\color{blue}f_{5}}& & f_{7}&   \\
& {\color{red}0}&& f_{2}& &f_{4} & &{\color{blue}f_{6}} & & f_{8}  \\
0&&{\color{red}f_{1}}& &f_{3} & &f_{5} & &{\color{blue}f_{7}} &  \\
& {\color{red}0}&&f_{2} & &f_{4} & &f_{6} & &{\color{blue}f_{8}}   \\
{\color{red}2}&&f_{1}& &f_{3} & &f_{5} & &{\color{blue}f_{7}}&
\end{matrix}\quad\dots.
$$
A partition with parts in $\mathcal{N}_{2n+1}$ is said to be $(k_0,k_1, \dots , k_n)$-\textit{admissible} if its frequencies in $\mathcal{F}^{k_0,k_1, \dots , k_n}$ satisfy
$$\sum_{m \in \mathcal{Z}} m \leq k_0+\cdots+k_n,$$ for all admissible paths $\mathcal{Z}$  in $\mathcal{F}^{k_0,k_1, \dots , k_n}$.

We can now state the Capparelli--Meurman--Primc--Primc (CMPP) conjecture.
\begin{con}[CMPP conjecture]
\label{con:CMPP}
Let $k_0,\ldots,k_n$ be non-negative integers. The principally specialized character
$\mathds{1}\left(e^{-k_0\Lambda_0-\cdots-k_n\Lambda_n}\ch(L(k_0\Lambda_0+\cdots+k_n\Lambda_n))\right)$
is the generating function for the number of $(k_0, k_1, \dots, k_n)$-admissible partitions with parts in $\mathcal{N}_{2n+1}$.
\end{con}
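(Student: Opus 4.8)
The plan is to realise $L(\Lambda)$, for $\Lambda=k_0\Lambda_0+\cdots+k_n\Lambda_n$ of level $k=k_0+\cdots+k_n$, through the perfect-crystal path model of Kang, Kashiwara, Misra, Miwa, Nakashima and Nakayashiki, and then to match the energy-graded path generating function with the generating function for $(k_0,\ldots,k_n)$-admissible partitions. The engine is the simple energy formula for the level $1$ perfect crystal $B_1$. I would first use the fact that the level $k$ perfect crystal $B_k$ of Kang, Kashiwara and Misra embeds as a subcrystal of the $k$-fold tensor power $B_1^{\otimes k}$, and that under this embedding the energy on $B_k\otimes B_k$ is read off from the additive energy on $B_1^{\otimes k}\otimes B_1^{\otimes k}$, itself a sum of pairwise level $1$ energies. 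Thus every energy computation at level $k$ reduces to the level $1$ formula evaluated pairwise, and no new energy evaluation is required.

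Next I would fix the ground-state path attached to $\Lambda$ and rewrite $\mathds{1}\!\left(e^{-\Lambda}\ch L(\Lambda)\right)$ as $\sum_{\mathcal Z}q^{\deg\mathcal Z}$, the sum over $\Lambda$-paths $\mathcal Z$ in $B_k$ graded by the normalised energy. Pushing a level $k$ path through $B_k\hookrightarrow B_1^{\otimes k}$ presents it as $k$ interlocked level $1$ paths. Since the $2n+1$ elements of $B_1$ are exactly the $2n+1$ rows of $\mathcal N_{2n+1}$, each such $k$-tuple of layers records, row by row, a partition with parts in $\mathcal N_{2n+1}$; the fictitious frequencies $k_0,\ldots,k_n$ in the first column of $\mathcal F^{k_0,\ldots,k_n}$ encode which ground-state rows the $k$ layers emanate from. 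This is the dictionary sending a path to a partition whose frequencies populate $\mathcal F^{k_0,\ldots,k_n}$.

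The crux is to prove that the image of this dictionary is \emph{exactly} the set of $(k_0,\ldots,k_n)$-admissible partitions, i.e. that the local crystal and energy constraints on the $k$ interlocked layers are equivalent to the single global bound $\sum_{m\in\mathcal Z}m\le k$ taken over all downward paths $\mathcal Z$. I expect this to be the main obstacle, and the reason the level $1$ case is so much easier: for $k=1$ there is a single layer and admissibility collapses to transparent difference conditions, whereas for general $k$ the bound couples the layers and one must show that a frequency vector is realisable by $k$ legal level $1$ layers if and only if every downward path carries frequency-sum at most $k$. This is a feasibility (Hall/max-flow) statement whose unweighted shadow is precisely a Mirsky-type theorem: the downward paths are the chains of a natural poset on the occupied cells, the $k$ layers are antichains, and ``coverable by $k$ antichains'' is equivalent to ``no chain of length exceeding $k$.'' I would attack the weighted version by induction on $k$, peeling one level $1$ tensor factor off the embedding $B_1^{\otimes k}\supset B_k$ and matching a $(k_0,\ldots,k_n)$-admissible partition at level $k$ with the superposition of one level $1$ admissible layer on a level $k-1$ admissible partition, the path bound dropping from $k$ to $k-1$.

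Finally I would verify that the energy degree of a path equals the integer partitioned by its image, so the two generating functions agree term by term. As an independent anchor, the explicit product \eqref{eq:princchar} holds at every level, so the entire statement is equivalent to the purely $q$-theoretic identity that the admissible-partition generating function equals that product; I would use this both to check the bijection numerically on small $(n,k)$ and, should the inductive feasibility argument prove too rigid, as the target of a direct inclusion--exclusion sieve over the downward paths that violate the bound.
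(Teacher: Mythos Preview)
First, be aware that the paper does \emph{not} prove this conjecture in full: it remains open for $k_0+\cdots+k_n\ge 2$, and the paper establishes only the level $1$ case $k_0+\cdots+k_n=1$ (Theorem~\ref{theo:mainsp}). Your proposal therefore aims beyond what the paper achieves, and should be measured against that.

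Your plan contains a concrete error that undermines the proposed dictionary. The level $1$ perfect crystal $\B$ used here does \emph{not} have $2n+1$ elements; it has $2n^2+n+1$ vertices, namely $\emptyset$ together with all pairs $(x,y)$ with $x\le y$ in $\overline{[n]}$ (see Figure~\ref{fig:crystalb}). The $2n+1$ rows of $\mathcal N_{2n+1}$ do not correspond to crystal vertices. In the paper's level $1$ proof they arise only after a chain of nontrivial bijections: first stripping out the colour $c_\emptyset$ (Section~\ref{sec:deletingcolour}), then splitting each remaining secondary-coloured part $k_{c_{x,y}}$ into a pair $(\eta,\zeta)$ of primary-coloured integers (Section~\ref{sec:roadfrob}), and finally translating the relation $\gg_\rho$ into path constraints (Section~\ref{sec:frequenciesonpaths}). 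The row indices $0,\ldots,2n$ emerge from the specialisation map $\mathds{1}$ applied to these primary components, as the difference $\mathds{1}(\eta)-\mathds{1}(\zeta)$, not from any enumeration of $\B$. So the correspondence ``crystal element $\leftrightarrow$ row'' that your argument rests on does not exist, and with it the layer-by-layer picture collapses.

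Even setting that aside, the step you flag as the ``crux''---that a frequency vector is realisable by $k$ legal layers iff every downward path carries total frequency at most $k$---is not merely the main obstacle, it is essentially the conjecture itself restated in antichain language, and you have supplied no argument for it. The inductive peeling you propose would require splitting a $(k_0,\ldots,k_n)$-admissible partition as one admissible level $1$ layer plus a $(k_0',\ldots,k_n')$-admissible partition at level $k-1$; there is no evident reason such a splitting exists, and proving it is equivalent to what you are trying to show. The paper's authors leave the higher-level statement as a conjecture (indeed they propose a non-specialised refinement of it) and suggest attacking it by studying the higher-level perfect crystals directly rather than through $\B^{\otimes k}$.
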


The case $k_0=k$, $k_1 = \cdots = k_n =0$ corresponds to the conjecture of Primc--{\v{S}}iki\'c.

Note that the type $C_n^{(1)}$ is defined for $n\geq 2$. However, by replacing in Conjecture \ref{con:CMPP} the principal specialization of the character by the product \eqref{eq:princchar}, it becomes possible to consider the case $n=1$ from a combinatorial viewpoint.
By doing so, we retrieve an identity of Meurman--Primc \cite{Meurman2} related to the character of the standard module $L(k_0\Lambda_0+k_1\Lambda_1)$ of type $A_1^{(1)}$.
\medskip

In this paper, we give several expressions for the characters of all level $1$ standard modules $\Lambda_0, \dots , \Lambda_n$ of $C_n^{(1)}$ in terms of generating functions for different types of generalised partitions. One of these generating functions, when performing the principal specialisation, becomes exactly the generating function for the admissible partitions of the CMPP conjecture.

Hence, our result implies that \textbf{the CMPP conjecture is true for all level $1$ standard modules $\Lambda_0, \dots , \Lambda_n$ of $C_n^{(1)}$, i.e. for the case $k_0+ \cdots + k_n =1$ and all $n$}.

\medskip We derive these character formulas in terms of partitions via the theory of perfect crystals. Again we refer the reader to \cite{HK} for all the relevant definitions. Intuitively, the crystal of a standard module is an oriented graph which encodes some representation theoretic information on the module. This theory was introduced by Kang, Kashiwara, Misra, Miwa, Nakashima, and Nakayashiki \cite{KMN2a,KMN2b}, who proved the so-called ``(KMN)$^2$ crystal base character formula'' \cite{KMN2a}, which expresses the character as a sum of formal exponentials indexed by so-called $\lambda$-paths, i.e. sequences of vertices in the crystal graph which are ultimately equal to the so-called ``ground state path"
$$
{\p}_\lambda = \bigl(g_k)_{k=0}^\infty =  \cdots \ot g_{k+1} \ot g_k \ot \cdots \ot g_1 \ot g_0.
$$

Primc \cite{Primc} was the first to use the theory of perfect crystals and the (KMN)$^2$ crystal base character formula to conjecture and prove new partition identities corresponding to level $1$ standard modules of $A_1^{(1)}$ and $A_2^{(1)}$. The $A_1^{(1)}$ conjecture was proved and refined combinatorially by the first author and Lovejoy \cite{DoussePrimc} and the refined version was generalised to $A_n^{(1)}$ for all $n$ by the two authors \cite{DK19,DK19-2}.

In \cite{DK19-2}, the authors introduced the theory of grounded partitions, which allows one to rewrite the character as a generating function for certain coloured partitions, when the ground state path is constant, i.e. of the form $ \cdots \ot g_0 \ot g_0 \ot g_0$.
They later generalised their theory to multi-grounded partitions in \cite{DKmulti} to treat all possible ground state paths and applied it to higher levels of $A_1^{(1)}$ together with Hardiman in \cite{DHK}, obtaining a companion to the Andrews--Gordon \cite{AndrewsGordon} and Meurman--Primc  \cite{Meurman2} identities. In this paper we only study perfect crystals with constant ground state paths, 
so the theory of grounded partitions of \cite{DK19-2}, which we now briefly recall, is sufficient for our purposes.

\begin{deff}
\label{def:generalisedcolouredpartition}
Let $\C$ be a set of colours, let $\Z_{\C} = \{k_c : k \in \Z, c \in \C\}$ be the corresponding set of coloured integers, and let $\succ$ be a binary relation defined on $\Z_{\C}$.
A \textit{generalised coloured partition} with relation $\succ$ is a finite sequence $(\pi_0,\dots,\pi_s)$ of coloured integers from $\Z_{\C} $, such that for all $i \in \{0, \dots, s-1\},$ $\pi_i\succ \pi_{i+1}.$
\end{deff}
When it is clear from the context, we sometimes write simply ``partitions" instead of ``generalised coloured partition".
Moreover, if $k_c$ is a coloured integer, then $c(k_c)$ (resp. $|k_c|$) denotes the colour $c$ (resp. size $k$) of $k_c$. Similarly, if $\lambda$ is a generalised coloured partition, $|\lambda|$ denotes the size of $\lambda$, i.e. the sum of the sizes of its parts. Moreover, we let $\ell(\lambda)$ denote the number of parts of $\lambda$ and $C(\lambda)$ be colour sequence of $\lambda$, i.e. the product of the colours of its parts.

Now we fix a particular colour $\co \in \C$ and define grounded partitions. 
\begin{deff}
\label{def:groundedpartitions}
A \textit{grounded partition} with ground $\co$ and relation $\succ$ is a non-empty generalised coloured partition $\pi=(\pi_0,\ldots,\pi_s)$ with relation $\succ$, such that $\pi_s = 0_{\co}$, and when $s>0$, $\pi_{s-1}\neq 0_{\co}.$

\noindent Let $\Pp^{\succ}_{c_g}$ denote the set of such partitions.
\end{deff}

We proved the following character formula as generating function for grounded partitions.

\begin{theo}[Dousse--Konan 2019]
\label{th:formchar}
Let $\gf$ be an affine Kac--Moody Lie algebra, let $\B$ be a perfect crystal of level $\ell$, let $\Ll$  be a dominant integral weight of level $\ell$ with constant ground state path $\cdots \ot g \ot g$. Let $H$ be an energy function on $\B \ot B$ such that that $ H(g \ot g)=0$.
Then, letting $\Co=\{c_b:b\in \B\}$ be the set of colours indexed by the vertices of $\B$, and setting $q=e^{-\frac{\delta}{d_0}}$ and $c_b=e^{\wt b}$ for all $b\in \B$, we have $\co=1$, and
\begin{align*}
\sum_{\pi\in \Pppp} C(\pi)q^{|\pi|} &= e^{-\Ll}\rm{ch}(L(\Ll)),\\
 \sum_{\pi\in \Ppp} C(\pi)q^{|\pi|} &= \frac{e^{-\Ll}\rm{ch}(L(\Ll))}{(q;q)_{\infty}},
\end{align*}
where
$\Pppp$ and $\Ppp$ are respectively the sets of grounded partitions with ground $c_g$ and relations $\gtrdot$ and $\gg$ defined by
\begin{equation*}\label{eq:egal}
 k_{c_b}\gtrdot k'_{c_{b'}} \text{ if and only if } k-k'= H(b'\ot b).
\end{equation*}
and
\begin{equation*}\label{eq:relation}
 k_{c_b}\gg k'_{c_{b'}} \text{ if and only if } k-k'\geq H(b'\ot b).
\end{equation*}
\end{theo}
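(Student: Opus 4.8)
The plan is to obtain both formulas from the (KMN)$^2$ crystal base character formula via an explicit weight‑preserving bijection with grounded partitions, the second formula then needing one short $q$‑series computation. The key simplifying observations are that, for a constant ground state path $\cdots\ot g\ot g$ of level $\ell$, the element $g$ satisfies $\veps(g)=\vphi(g)=\Ll$, whence $\wt g=\vphi(g)-\veps(g)=0$ (so $\co=c_g=e^{\wt g}=1$, as claimed), and that $H(g\ot g)=0$ by hypothesis. The (KMN)$^2$ formula writes $\ch L(\Ll)=\sum_{p}e^{\wt(p)}$, where $p=(p(k))_{k\ge 0}$ runs over the $\Ll$‑paths ($p(k)=g$ for $k$ large) and, after the now‑vanishing ground‑state contributions cancel,
$$\wt(p)-\Ll \;=\; \sum_{k\ge 0}\wt p(k)\;-\;\Bigl(\sum_{k\ge 1}k\,H\bigl(p(k)\ot p(k-1)\bigr)\Bigr)\tfrac{\delta}{d_0},$$
both sums being finite.

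For the first identity I would define $\Phi$ sending an $\Ll$‑path $p$ to $\Phi(p)=\bigl((a_0)_{c_{p(0)}},\dots,(a_N)_{c_{p(N)}}\bigr)$, where $N$ is minimal with $p(k)=g$ for all $k\ge N$, $a_N=0$, and $a_k=a_{k+1}+H(p(k+1)\ot p(k))$ for $0\le k<N$. The relation $\gtrdot$ holds by construction, $\pi_N=0_{c_g}$, and minimality of $N$ forces $p(N-1)\neq g$ when $N>0$, so $\Phi(p)\in\Pppp$; conversely, an element of $\Pppp$ ends in $0_{c_g}$ preceded (if nontrivial) by a part of colour $\neq c_g$, which recovers $N$ and the colour sequence, and the relation $\gtrdot$ recovers the sizes, so $\Phi$ is a bijection onto $\Pppp$. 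The statistics then match by a telescoping identity: $\prod_{k\ge 0}c_{p(k)}=e^{\sum_{k}\wt p(k)}=C(\Phi(p))$, and
$$\sum_{k\ge 1}k\,H\bigl(p(k)\ot p(k-1)\bigr)=\sum_{k=0}^{N-1}(k+1)(a_k-a_{k+1})=\sum_{k=0}^{N}a_k=|\Phi(p)|,$$
so that $e^{\wt(p)-\Ll}=C(\Phi(p))\,q^{|\Phi(p)|}$ with $q=e^{-\delta/d_0}$; summing over all $\Ll$‑paths yields $e^{-\Ll}\ch L(\Ll)=\sum_{\pi\in\Pppp}C(\pi)q^{|\pi|}$.

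For the second identity it then suffices to prove the combinatorial identity $\sum_{\pi\in\Ppp}C(\pi)q^{|\pi|}=\frac{1}{(q;q)_\infty}\sum_{\pi\in\Pppp}C(\pi)q^{|\pi|}$. I would describe a $\gg$‑grounded partition $\pi$ by three pieces of data: its \emph{reduced colour sequence}, obtained by contracting the maximal terminal run of colour‑$c_g$ parts to a single $0_{c_g}$ — this is exactly the colour sequence of an associated $\pi^{\gtrdot}\in\Pppp$, and $C(\pi)=C(\pi^{\gtrdot})$; the tuple $(d_0,\dots,d_{s'-1})\in\N^{s'}$ of non‑negative ``slacks'' in the relations $\pi_i\gg\pi_{i+1}$ at the $s'=\ell(\pi^{\gtrdot})-1$ core positions; and the partition $\nu$ whose parts are the sizes of the contracted ground‑coloured parts. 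A direct computation gives $|\pi|=|\pi^{\gtrdot}|+\sum_{i=0}^{s'-1}(i+1)d_i+s'\nu_1+|\nu|$. Summing over all such data: the slacks contribute $\prod_{j=1}^{s'}(1-q^{j})^{-1}=1/(q;q)_{s'}$, while by Euler's identity $\sum_{k\ge 0}z^k/(q;q)_k=1/(z;q)_\infty$ (with $z=q^{s'+1}$) the tail contributes $1/(q^{s'+1};q)_\infty$; since $(q;q)_{s'}(q^{s'+1};q)_\infty=(q;q)_\infty$ the dependence on $s'$ cancels, giving the factor $1/(q;q)_\infty$ and hence the second formula.

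The step I expect to be the real obstacle is this last one: finding the decomposition of a $\gg$‑grounded partition so that the ``slack at the core'' and the ``ground‑coloured tail'' \emph{together} assemble into an arbitrary partition (equivalently, so that the $(q;q)_{s'}$ cancels against $(q^{s'+1};q)_\infty$) — the naive decomposition, in which one only spreads apart the relations of a fixed‑length core, instead leaves behind an $s'$‑dependent factor $1/(q;q)_{s'}$. By contrast the (KMN)$^2$ input and the bijection $\Phi$ are, once $\wt g=0$ and $H(g\ot g)=0$ are in hand, routine bookkeeping, the only mildly delicate point being to keep the normalisation of the energy grading consistent with the choice $q=e^{-\delta/d_0}$.
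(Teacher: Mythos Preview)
The paper does not prove this theorem here; it is quoted from the authors' earlier work \cite{DK19-2} and used as input. Your reconstruction is correct and is essentially the argument one finds there: the bijection $\Phi$ between $\Lambda$-paths and $\Pppp$ is exactly the right one (note that the penultimate part of a $\gtrdot$-grounded partition indeed has colour $\neq c_g$, since $H(g\ot g)=0$ would force it to equal $0_{c_g}$), and the telescoping/weight matching is clean.

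For the second identity, the ``obstacle'' you anticipate is already resolved by what you wrote. Your decomposition $(\pi^{\gtrdot},(d_i),\nu)$ is a genuine bijection, and the formula $|\pi|=|\pi^{\gtrdot}|+\sum_{i}(i+1)d_i+s'\nu_1+|\nu|$ is correct. The tail sum is most transparently evaluated by conjugation: if $\nu'$ is the conjugate of $\nu$ then $s'\nu_1+|\nu|=\sum_j(\nu'_j+s')$, so $\sum_\nu q^{s'\nu_1+|\nu|}$ is the generating function for partitions into parts $\ge s'+1$, namely $1/(q^{s'+1};q)_\infty$. Combined with the slack factor $1/(q;q)_{s'}$ this gives $1/(q;q)_\infty$ independently of $s'$, and in fact this computation unwinds to a bijection $\Ppp\leftrightarrow\Pppp\times\Pp$: parts $\le s'$ of the classical partition record the slacks, parts $\ge s'+1$ record (the shifted conjugate of) the tail. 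So there is no remaining gap.
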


Define the set
$$\overline{[n]}:= \{1, \dots , n, \overline{n} , \dots , \overline{1}\},$$
with the convention that for all $k \in \ov{[n]}$, $\overline{\overline{k}}=k$, and the order
\begin{equation}
\label{eq:orderbar}
1<2<\cdots <n-1<n < \overline{n}< \overline{n-1}<\cdots<\overline{2}<\overline{1}.
\end{equation}
In this paper, we study the level $1$ perfect crystal $\B$ of $C_n^{(1)}$ represented in Figure \ref{fig:crystalb}.

\begin{figure}[H]
\begin{center}
\begin{tikzpicture}[scale=0.8, every node/.style={scale=0.8}]

%x=1
\foreach \x in {0,2,6,8,12,14}
\draw (\x-0.35,0-0.2)--(\x+0.35,0-0.2)--(\x+0.35,0+0.3)--(\x-0.35,0+0.3)--cycle;

\draw (0,0) node {$1,1$};
\draw (2,0) node {$1,2$};
\draw (6,0) node {$1,n$};
\draw (8,0) node {$1,\ov{n}$};
\draw (12,0) node {$1,\ov{2}$};
\draw (14,0) node {$1,\ov{1}$};

%x=2
\foreach \x in {2,6,8,12,14}
\draw (\x-0.35,1.5-0.2)--(\x+0.35,1.5-0.2)--(\x+0.35,1.5+0.3)--(\x-0.35,1.5+0.3)--cycle;

\draw (2,1.5) node {$2,2$};
\draw (6,1.5) node {$2,n$};
\draw (8,1.5) node {$2,\ov{n}$};
\draw (12,1.5) node {$2,\ov{2}$};
\draw (14,1.5) node {$2,\ov{1}$};

%x=n
\foreach \x in {6,8,12,14}
\draw (\x-0.35,4.5-0.2)--(\x+0.35,4.5-0.2)--(\x+0.35,4.5+0.3)--(\x-0.35,4.5+0.3)--cycle;

\draw (6,4.5) node {$n,n$};
\draw (8,4.5) node {$n,\ov{n}$};
\draw (12,4.5) node {$n,\ov{2}$};
\draw (14,4.5) node {$n,\ov{1}$};

%x=\ov{n}
\foreach \x in {8,12,14}
\draw (\x-0.35,6-0.2)--(\x+0.35,6-0.2)--(\x+0.35,6+0.3)--(\x-0.35,6+0.3)--cycle;

\draw (8,6) node {$\ov{n},\ov{n}$};
\draw (12,6) node {$\ov{n},\ov{2}$};
\draw (14,6) node {$\ov{n},\ov{1}$};

%x=\ov{2}
\foreach \x in {12,14}
\draw (\x-0.35,9-0.2)--(\x+0.35,9-0.2)--(\x+0.35,9+0.3)--(\x-0.35,9+0.3)--cycle;

\draw (12,9) node {$\ov{2},\ov{2}$};
\draw (14,9) node {$\ov{2},\ov{1}$};

%x=\ov{1}
\foreach \x in {0,14}
\draw (\x-0.35,10.5-0.2)--(\x+0.35,10.5-0.2)--(\x+0.35,10.5+0.3)--(\x-0.35,10.5+0.3)--cycle;

\draw (0,10.5) node {$\emptyset$};
\draw (14,10.5) node {$\ov{1},\ov{1}$};

%edge

\foreach \x in {4,10}
\foreach \y in {0,1.5}
\draw (\x,\y+0.05) node {$\dots$};

\foreach \x in {12,14}
\foreach \y in {3,7.5}
\draw (\x,\y+0.05) node {$\vdots$};

\foreach \x in {10}
\foreach \y in {4.5,6}
\draw (\x,\y+0.05) node {$\dots$};

\foreach \x in {6,8}
\foreach \y in {3}
\draw (\x,\y+0.05) node {$\vdots$};

%7 edges plus bas et plus a droite
\foreach \x in {0,...,6}
\draw[->] (2*\x+0.35,0+0.05)--(2*\x+1.65,0+0.05);
\foreach \y in {0,...,6}
\draw[->] (14,1.5*\y+0.3)--(14,+1.5*\y+1.3);

%6 edgdes bas et gauche suivants
\foreach \x in {1,...,5}
\draw[->] (2*\x+0.35,1.5+0.05)--(2*\x+1.65,1.5+0.05);
\foreach \y in {1,...,5}
\draw[->] (12,1.5*\y+0.3)--(12,+1.5*\y+1.3);

%4 edges
\foreach \x in {3,5,6}
\draw[->] (2*\x+0.35,4.5+0.05)--(2*\x+1.65,4.5+0.05);
\foreach \y in {0,1,3}
\draw[->] (8,1.5*\y+0.3)--(8,+1.5*\y+1.3);

%3 edges
\foreach \x in {4,...,6}
\draw[->] (2*\x+0.35,6+0.05)--(2*\x+1.65,6+0.05);
\foreach \y in {0,...,2}
\draw[->] (6,1.5*\y+0.3)--(6,+1.5*\y+1.3);

%1 edge
\foreach \x in {6}
\draw[->] (2*\x+0.35,9+0.05)--(2*\x+1.65,9+0.05);
\foreach \y in {0}
\draw[->] (2,1.5*\y+0.3)--(2,+1.5*\y+1.3);

%edges 0
\draw[dotted,->] (0,10.3)--(0,0.3);
\draw[dotted,->] (13.65,10.55)--(0.35,10.55);

\draw[dotted,<-] (2,-0.2)--(2,-1.7)--(14.85,-1.7)--(14.85,1.55)--(14.35,1.55);
\draw[dotted,<-] (6,-0.2)--(6,-1.325)--(15.35,-1.325)--(15.35,4.55)--(14.35,4.55);
\draw[dotted,<-] (8,-0.2)--(8,-0.95)--(15.85,-0.95)--(15.85,6.05)--(14.35,6.05);
\draw[dotted,<-] (12,-0.2)--(12,-0.575)--(16.35,-0.575)--(16.35,9.05)--(14.35,9.05);

\draw (0,5.22) node[left] {\begin{footnotesize}$0$\end{footnotesize}};
\draw (7,10.7) node {\begin{footnotesize}$0$\end{footnotesize}};
\foreach \x in {0,...,3}
\draw (14.85+0.5*\x,0.72) node[right] {\begin{footnotesize}$0$\end{footnotesize}};

%edges 1
\foreach \x in {2,6,8,14}
\draw (\x,0.72) node[right] {\begin{footnotesize}$1$\end{footnotesize}};
\draw (14,9.72) node[right] {\begin{footnotesize}$1$\end{footnotesize}};

\foreach \y in {0,4.5,6,9}
\draw (13,\y-0.1) node {\begin{footnotesize}$1$\end{footnotesize}};
\draw (1,-0.1) node {\begin{footnotesize}$1$\end{footnotesize}};

%edges 2
\foreach \x in {6,8,12,14}
\draw (\x,2.22) node[right] {\begin{footnotesize}$2$\end{footnotesize}};
\foreach \x in {12,14}
\draw (\x,8.22) node[right] {\begin{footnotesize}$2$\end{footnotesize}};

\foreach \y in {0,1.5,4.5,6}
\draw (11,\y-0.1) node {\begin{footnotesize}$2$\end{footnotesize}};
\foreach \y in {0,1.5}
\draw (3,\y-0.1) node {\begin{footnotesize}$2$\end{footnotesize}};

%edges n-1
\foreach \x in {6,12,14}
\draw (\x,3.72) node[right] {\begin{footnotesize}$n-1$\end{footnotesize}};
\foreach \x in {12,14}
\draw (\x,6.72) node[right] {\begin{footnotesize}$n-1$\end{footnotesize}};

\foreach \y in {0,1.5,6}
\draw (9,\y-0.1) node {\begin{footnotesize}$n-1$\end{footnotesize}};
\foreach \y in {0,1.5}
\draw (5,\y-0.1) node {\begin{footnotesize}$n-1$\end{footnotesize}};

%edges n-1
\foreach \x in {8,12,14}
\draw (\x,5.22) node[right] {\begin{footnotesize}$n$\end{footnotesize}};
\foreach \y in {0,1.5,4.5}
\draw (7,\y-0.1) node {\begin{footnotesize}$n$\end{footnotesize}};

\end{tikzpicture}

\caption{Crystal graph of $\B$}
\label{fig:crystalb}
\end{center}
\end{figure}

Our first main result is a simple formula for the energy function on $\B \ot \B$, in the spirit of the formula which we gave for the energy function of a level $1$ crystal of $A_n^{(1)}$ in \cite{DK19,DK19-2}.

\begin{theo}
\label{th:energy}
The energy function $H$ on $\B \ot \B$ such that $H( \emptyset \ot \emptyset)=0$ is given, for all $x \leq y, x' \leq y' \in \ov{[n]}$, by the formulas
$$
\begin{cases}
H(\emptyset\ot \emptyset)=0,\\
H(\emptyset\ot (x,y))=H((x,y)\ot\emptyset )=1,
\end{cases}
$$
and
$$
H((x,y)\ot (x',y')) = \begin{cases}
\chi(x\geq x')+\chi(y\geq y')-\chi(y\geq y'>x \geq x') \ \ \text{if} \ \ \overline{y'}\neq x, \\
\chi(x>x')+\chi(y> y')-\chi(y> y'>x > x') \ \ \text{if} \ \ \overline{y'}= x,
\end{cases}
$$
where $\chi (prop)=1$ if the proposition $prop$ is true and $0$ otherwise.
\end{theo}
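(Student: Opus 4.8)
The plan is to verify that the claimed formula for $H$ satisfies the defining axioms of an energy function for the perfect crystal $\B$ of type $C_n^{(1)}$: namely, $H$ must be constant (up to the correct additive shift) on each connected component of $\B \ot \B$ under the action of the Kashiwara operators $\tilde{e}_i, \tilde{f}_i$ for $i \in \{0,1,\dots,n\}$, and the normalisation $H(\emptyset \ot \emptyset) = 0$ pins it down uniquely. Concretely, the energy function is characterised by the two conditions
$$
H(\eit(b_1 \ot b_2)) = H(b_1 \ot b_2) + \begin{cases} -1 & \text{if } i=0 \text{ and } \veps_0(b_1) > \vphi_0(b_2),\\ +1 & \text{if } i=0 \text{ and } \veps_0(b_1) \leq \vphi_0(b_2),\\ 0 & \text{if } i \neq 0, \end{cases}
$$
for every $i$ such that $\eit(b_1 \ot b_2) \neq 0$ (and symmetrically for $\fit$). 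So the first step is to recall the explicit tensor product rule for $\tilde{e}_i, \tilde{f}_i$ on $\B \ot \B$ in terms of the $\veps_i, \vphi_i$ statistics, which can be read off from the crystal graph in Figure \ref{fig:crystalb}; this requires writing down $\veps_i(x,y)$ and $\vphi_i(x,y)$ for each vertex $(x,y)$ and each $i$, including the boundary vertex $\emptyset$.

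Second, I would reduce the verification to a manageable finite check by exploiting the structure of $\B$. For $i \neq 0$, the crystal $\B$ restricted to the $\{1,\dots,n\}$-arrows is (a piece of) a $C_n$-crystal, and the statistics $\chi(x \geq x')$, $\chi(y \geq y')$, etc., are built precisely from the order \eqref{eq:orderbar}; the key observation is that applying $\tilde{e}_i$ or $\tilde{f}_i$ for $i \in \{1,\dots,n-1\}$ changes exactly one of the coordinates $x,y,x',y'$ by moving it one step in the order (e.g. $j \leftrightarrow j+1$ or $\overline{j+1} \leftrightarrow \overline{j}$), while $\tilde{f}_n$ toggles $n \leftrightarrow \overline{n}$. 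One then checks that the combination $\chi(x \geq x') + \chi(y \geq y') - \chi(y \geq y' > x \geq x')$ (and its strict variant in the case $\overline{y'} = x$) is invariant under each such elementary move — this is a case analysis on the relative positions of the four coordinates, but it is finite and uniform in $n$ because only local comparisons are involved. The subtle point is the bifurcation between the two cases ($\overline{y'} = x$ versus $\overline{y'} \neq x$): one must check that whenever an $i$-arrow crosses the locus $\overline{y'} = x$, the two formulas agree on the shared configuration, and that for $i \neq 0$ no such crossing actually changes the value.

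Third, I would handle $i = 0$ separately. The $0$-arrows in Figure \ref{fig:crystalb} connect $\emptyset$, the vertices $(1,y)$, and $(x,\overline{1})$ in a specific pattern, so there are only $O(n)$ such arrows in $\B$ and hence $O(n^2)$ relevant pairs in $\B \ot \B$ to examine; for each, I would compute $\veps_0(b_1)$ and $\vphi_0(b_2)$, determine the sign $\pm 1$ predicted by the energy axiom, and confirm the claimed formula changes by exactly that amount. This is where the $+1$ normalisation for $H(\emptyset \ot (x,y))$ and $H((x,y) \ot \emptyset)$ gets used, and where one also sees that the global normalisation $H(\emptyset \ot \emptyset) = 0$ propagates correctly. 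I expect this $i=0$ analysis, together with the crossing-the-locus check, to be the main obstacle, since the $0$-action is the least symmetric part of the crystal and the energy jumps there; everything for $i \neq 0$ should follow from a clean invariance argument. An alternative, possibly cleaner route is to show the proposed $H$ agrees with the known energy function of Kang--Kashiwara--Misra on enough vertices (e.g. on all $(b_1 \ot b_2)$ with $b_2 = \emptyset$ or on a set of component representatives) and then invoke uniqueness of the energy function given the normalisation; I would keep this in reserve in case the direct invariance computation for general $n$ becomes unwieldy.
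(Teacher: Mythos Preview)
Your plan is sound in principle but takes a genuinely different route from the paper. You propose to verify the proposed formula directly against the \emph{defining recursion} of an energy function (invariance under $\tilde e_i,\tilde f_i$ for $i\neq 0$, and the $\pm 1$ shift for $i=0$), appealing to connectedness of $\B\ot\B$ and the normalisation at $\emptyset\ot\emptyset$ for uniqueness. The paper instead never touches the Kashiwara operators on $\B\ot\B$: it takes the already-known closed formula of Kang--Kashiwara--Misra, $H=\max_j\{\theta_j,\theta'_j,\eta_j,\eta'_j\}$, rewrites each of these auxiliary functions in terms of the indicator functions $\chi(\cdot)$, and then shows that the two candidate expressions coincide \emph{value by value} on the range $\{0,1,2\}$. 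Concretely, the paper first pins down exactly when the new expression equals $2$, $0$, or $1$ (this is Lemma~\ref{lem:H012}), and then checks that the KKM maximum takes the same value in each of these regions (Lemmas~\ref{lem:H2}, \ref{lem:H0}, \ref{lem:H1}); the cases involving $\emptyset$ are handled by a one-line computation.

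What each approach buys: the paper's route completely avoids the tensor-product crystal rule and the delicate $i=0$ analysis you flag as the main obstacle; in exchange it requires having the KKM $\max$ formula on hand and doing a case split on the target value rather than on arrows. Your route is more self-contained (it would reprove that the expression is an energy function from scratch), but the invariance check across the locus $\overline{y'}=x$ and the $0$-string bookkeeping are exactly the kind of case analysis that the paper's strategy sidesteps. Your ``reserve'' alternative---comparing to the KKM function on representatives and invoking uniqueness---is closest in spirit to what the paper does, except that the paper does not restrict to representatives: it compares on \emph{all} of $\B\ot\B$, which is feasible precisely because both functions visibly take only the three values $0,1,2$.
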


Combining Theorems \ref{th:formchar} and \ref{th:energy}, we have an expression for the character as generating functions for grounded partitions which satisfy difference conditions given by the energy $H$.

\begin{theo}
\label{th:charwithenergy}
Let $b_0=\emptyset$ and $b_i=(i,\overline{i})$ for all $i\in \{1,\ldots,n\}$.
Setting $q=e^{-\delta}$ and $c_b=e^{\wt b}$ for all $b\in \B$, we have for all $i \in \{0, \dots , n \}$,
\begin{align*}
\sum_{\pi\in \Pp^{\gtrdot}_{c_{b_i}}} C(\pi)q^{|\pi|} &= e^{-\Ll}\rm{ch}(L(\Ll_i)),\\
 \sum_{\pi\in \Pp^{\gg}_{c_{b_i}}} C(\pi)q^{|\pi|} &= \frac{e^{-\Ll}\rm{ch}(L(\Ll_i))}{(q;q)_{\infty}}.
\end{align*}
\end{theo}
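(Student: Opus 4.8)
The plan is to obtain Theorem~\ref{th:charwithenergy} by specialising the general character formula of Theorem~\ref{th:formchar} to $\gf=C_n^{(1)}$, the level $1$ perfect crystal $\B$ of Figure~\ref{fig:crystalb}, the fundamental weight $\Ll=\Ll_i$, and the energy function $H$ of Theorem~\ref{th:energy}. To do this one must verify the three hypotheses of Theorem~\ref{th:formchar} in this setting: that $\B$ is a perfect crystal of level $1$; that the ground state path attached to $\Ll_i$ is the constant path $\cdots\ot b_i\ot b_i$; and that $H(b_i\ot b_i)=0$. Granting these, Theorem~\ref{th:formchar} with $\ell=1$, $\Ll=\Ll_i$, $g=b_i$ and this $H$ yields exactly the two displayed identities: the binary relations $\gtrdot$ and $\gg$ it builds from $H$ are precisely those defining $\Pp^{\gtrdot}_{c_{b_i}}$ and $\Pp^{\gg}_{c_{b_i}}$, and since $C_n^{(1)}$ has $d_0=1$ the substitution $q=e^{-\delta/d_0}$ becomes $q=e^{-\delta}$ while $c_b=e^{\wt b}$ is unchanged; Theorem~\ref{th:formchar} furthermore guarantees $c_{b_i}=1$, consistently with $\wt b_i=0$.

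The perfectness of $\B$ is not ours to prove: the crystal drawn in Figure~\ref{fig:crystalb} is exactly the level $1$ perfect crystal of $C_n^{(1)}$ constructed by Kang, Kashiwara and Misra, so we quote their result. For the ground state paths, I would compute $\veps_j(b)$ and $\vphi_j(b)$ for $0\le j\le n$ and $b\in\{b_0,\dots,b_n\}$ directly from the $j$-arrows of Figure~\ref{fig:crystalb}. The outcome is that $b_0=\emptyset$ and $b_i=(i,\overline{i})$ for $1\le i\le n$ are exactly the minimal elements of $\B$, and that $\veps(b_i)=\vphi(b_i)=\Ll_i$ for each $i$. Since $b_i$ is then the unique element of $\B$ with $\veps(b_i)=\Ll_i$, the recursion defining the ground state path of $L(\Ll_i)$ returns $g_0=b_i$, then $g_1$ is the unique element with $\veps(g_1)=\vphi(g_0)=\Ll_i$, i.e.\ $g_1=b_i$, and so on, so that $\p_{\Ll_i}=\cdots\ot b_i\ot b_i$ is constant.

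Finally, for $i=0$ the identity $H(b_0\ot b_0)=H(\emptyset\ot\emptyset)=0$ is the normalisation imposed in Theorem~\ref{th:energy}. For $1\le i\le n$ we apply the formula of Theorem~\ref{th:energy} to $(x,y)=(x',y')=(i,\overline{i})$: as $\overline{y'}=\overline{\overline{i}}=i=x$, the second case applies and gives $H(b_i\ot b_i)=\chi(i>i)+\chi(\overline{i}>\overline{i})-\chi(\overline{i}>\overline{i}>i>i)=0$. All hypotheses of Theorem~\ref{th:formchar} are thus in place, which completes the argument.

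The only step that is not a mechanical substitution is the identification of the ground state paths: one has to be sure that all $n+1$ of them are genuinely constant, rather than eventually periodic of period $2$ via the order-$2$ automorphism of the affine Dynkin diagram of $C_n^{(1)}$. This is exactly the equality $\vphi(b_i)=\veps(b_i)=\Ll_i$, and it is the one point where the explicit combinatorics of $\B$ is needed; everything else is the mutual substitution of Theorems~\ref{th:formchar} and~\ref{th:energy}.
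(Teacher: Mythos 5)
Your proposal is correct and follows the same route as the paper, which obtains Theorem \ref{th:charwithenergy} precisely by combining Theorem \ref{th:formchar} (applied to the Kang--Kashiwara--Misra level $1$ perfect crystal $\B$ with its constant ground state paths $\cdots\ot b_i\ot b_i$) with the energy formula of Theorem \ref{th:energy}; the verifications you spell out ($\veps(b_i)=\vphi(b_i)=\Ll_i$, $H(b_i\ot b_i)=0$, $d_0=1$ so $q=e^{-\delta}$) are exactly the routine checks the paper leaves implicit.
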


In \cite{DK19}, we had a result of the same shape as Theorem \ref{th:charwithenergy} for $A_n^{(1)}$ which generalised Primc's identities. In addition, through a bijection, we were able to deduce a generalisation of Capparelli's identity as well. In this paper we give a generalisation of this bijection, stated in full generality in Section \ref{sec:deletingcolour}, which allows us in particular to give yet another expression for the character.

Let $\Co=\{c_b:b\in \B\}$ be the set of colours indexed by the vertices of $\B$, let $\Sc=\Co\setminus\{c_\emptyset\}$, and let $c_\infty$ be a colour not in $\Co$.
Then, for all $i\in \{0,\ldots,n\}$,
define the function $\rho_i$ on $\Sc \times (\Sc \sqcup \{c_{\infty}\})$ by
\begin{itemize}
\item $\rho_i(c_{b_i},c_\infty)=1$,
\item $\rho_i(c_b,c_\infty)=H(b_i \ot b)$ if $c_b \in \Sc \setminus \{c_{b_i}\}$,
\item $\rho_i(c_{x',y'},c_{x,y})=\chi(x\geq x')+\chi(y\geq y')-\chi(y\geq y'>x \geq x')$.
\end{itemize}
Let $\tilde{\Pp}_{\rho_i}^{c_\infty}$ be the set of generalised coloured partitions
$\pi=(\pi_0,\ldots,\pi_{s-1},\pi_s=0_{c_\infty})$ with colours in $\Sc \sqcup \{c_{\infty}\}$ such that for all $i\in \{0,\ldots,s-1\}$, 
$$c(\pi_i)\neq c_\infty \text{ and }|\pi_i|-|\pi_{i+1}|\geq \rho_i(c(\pi_i),c(\pi_{i+1})).$$
Let $\Pp_{i,\rho}$ be the set obtained from $\tilde{\Pp}_{\rho_i}^{c_\infty}$ by transforming the final part $0_{c_\infty}$ into $0_{c_{b_i}}$. We prove the following.

\begin{theo}
\label{th:bijCn}
For all $i\in \{0,\ldots,n\}$, there exists a bijection $\Phi$ between $\Pp^{\gg}_{c_{b_i}}$ 
and the product set 
 $\Pp_{i,\rho} \times \Pp$, where $\Pp$ is the set of partitions.
Furthermore, for $\Phi(\lambda)=(\mu,\nu)$, we have $|\lambda|=|\mu|+|\nu|$,  $\ell(\lambda)=\ell(\mu)+\ell(\nu)$, and the colour sequence of $\lambda$ restricted to the colours in $\Co \setminus \{c_{b_k}: k \in \{0, \dots n \}\} $ is the same as the colour sequence of $\mu$ restricted to the colours in $\Co \setminus \{c_{b_k}: k \in \{0, \dots n \}\} $.
\end{theo}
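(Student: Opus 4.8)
The plan is to construct the bijection $\Phi$ explicitly, by \emph{deleting the colour $c_\emptyset$}, generalising the Primc--Capparelli bijection of \cite{DK19}. The starting observation is the colour-set discrepancy: parts of a partition in $\Pp^{\gg}_{c_{b_i}}$ are coloured by all of $\Co$, whereas those in $\Pp_{i,\rho}$ use only $\Sc\cup\{c_{b_i}\}=\Co\setminus\{c_\emptyset\}$ together with the formal ground colour $c_\infty$. So $\Phi$ must remove the parts of colour $c_\emptyset=c_{b_0}$ from $\lambda$, recording the freed size in the ordinary partition $\nu$ while possibly altering the multiplicities of the other ground-like colours $c_{b_k}$, $1\le k\le n$; this is exactly why those $n+1$ colours are the ones allowed to change. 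I would first isolate this mechanism as an abstract statement about grounded partitions attached to an energy function — this is the content of \Sct{sec:deletingcolour} — and then specialise it to $(\B,H,c_\emptyset)$.

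Two facts read off from \Thm{th:energy} drive the specialisation. First, on the set $G=\{b_0=\emptyset,b_1,\dots,b_n\}$ of ground-like vertices one has the uniform value $H(g\ot g')=\chi(g\neq g')$ for all $g,g'\in G$; hence the ground-like parts of any $\lambda\in\Pp^{\gg}_{c_{b_i}}$ interact with each other exactly as in a level $1$ crystal of $A_n^{(1)}$, which is what makes them freely redistributable among themselves and into an ordinary partition. Second, for two adjacent parts of colours $c_{a,b}$ (the larger) then $c_{a',b'}$, the $\gg$-gap $H((a',b')\ot(a,b))$ is given by the first line of the formula in \Thm{th:energy}, i.e. it equals $\rho_i(c_{a,b},c_{a',b'})$, \emph{except} when $\overline{b}=a'$, in which case it is smaller by exactly $1$; and (this is a short case check I would carry out) that drop occurs precisely when one of the two parts is a diagonal vertex $b_k$ and the other is compatible with it. In particular $\rho_i$ is a uniform strengthening of the $\gg$-conditions by at most $1$, each surplus $1$ sitting at an adjacency involving a diagonal colour; consequently $\Pp_{i,\rho}\subseteq\Pp^{\gg}_{c_{b_i}}$, and the job of $\Phi$ is exactly to record, for a general $\lambda$, the extra room coming both from its $c_\emptyset$-parts and from gaps that exceed their $\rho_i$-values. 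The lone ad hoc value $\rho_i(c_{b_i},c_\infty)=1$, versus $H(b_i\ot b_i)=0$, is nothing but the grounded-partition clause $\pi_{s-1}\neq 0_{\co}$.

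Concretely I would define $\Phi$ by sweeping $\lambda=(\lambda_0,\dots,\lambda_s=0_{c_{b_i}})$ upwards from its smallest part: first peel off the excesses $|\lambda_j|-|\lambda_{j+1}|-H(\cdot)\ge 0$ as in the proof of \Thm{th:formchar} to split off an ordinary partition; then use the uniformity of $H$ on $G$ to push every $c_\emptyset$-contribution down and out of the partition — either absorbing it into $\nu$ or relabelling it among the $c_{b_k}$ — until the residual gaps around diagonal colours are exactly the $\rho_i$-gaps, leaving an element $\mu$ of $\Pp_{i,\rho}$. The inverse reconstructs $\lambda$ from $(\mu,\nu)$ by reinserting the recorded room and the $c_\emptyset$-parts. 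Since every move only relabels or relocates a ground-like part and transfers its size to $\nu$, the identities $|\lambda|=|\mu|+|\nu|$, $\ell(\lambda)=\ell(\mu)+\ell(\nu)$ and the agreement of the colour sequences away from $\{c_{b_k}:0\le k\le n\}$ will follow at once.

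The main obstacle is the bookkeeping needed to make this well-defined and bijective: showing the normalising sweep terminates, that the output satisfies the $\rho_i$-conditions at \emph{every} adjacency — including the junction with the ground, where the clauses $\rho_i(c_b,c_\infty)=H(b_i\ot b)$ and $\rho_i(c_{b_i},c_\infty)=1$ come in — that the extracted sequence is genuinely a partition, and above all that the procedure is reversible, i.e. that from $(\mu,\nu)$ one can recover \emph{uniquely} the position of each former $c_\emptyset$-part and the identity of each modified $c_{b_k}$-part. This is exactly what the general colour-deletion bijection of \Sct{sec:deletingcolour} is designed to handle; what is genuinely special to type $C_n^{(1)}$ then reduces to the two energy computations above and the verification that the relation they induce on $\Sc$ is $\rho_i$.
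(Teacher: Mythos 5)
Your overall strategy is the paper's: apply the general colour-deletion bijection of \Sct{sec:deletingcolour} (\Thm{theo:dualitycapaprimc}) with deleted colour $c_0=c_\emptyset$, and reduce what is specific to $C_n^{(1)}$ to properties of the energy function. The two facts you isolate --- that $H(g\ot g')=\chi(g\neq g')$ on the diagonal vertices $b_0,\dots,b_n$, and that $H$ and $\rho_i$ disagree only at certain adjacencies involving a diagonal colour --- are indeed the backbone of the paper's \Lem{lem:epwelldef} and \Lem{lem:eprho}, and your reading of the clause $\rho_i(c_{b_i},c_\infty)=1$ as the grounded-partition condition is right.

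There are, however, genuine problems with the way you describe the map. First, the concrete mechanism (peel off the excesses $|\lambda_j|-|\lambda_{j+1}|-H\geq 0$ as in \Thm{th:formchar}, let $\nu$ record ``gaps that exceed their $\rho_i$-values'', and normalise $\mu$ until the gaps at diagonal colours are \emph{exactly} the $\rho_i$-gaps) would fail: $\Pp_{i,\rho}$ is defined by the inequalities $|\pi_j|-|\pi_{j+1}|\geq\rho_i$, so slack above $\rho_i$ must remain inside $\mu$ and your normalised output would only reach a proper subset of $\Pp_{i,\rho}$; moreover extracting gap excesses is incompatible with $\ell(\lambda)=\ell(\mu)+\ell(\nu)$, which forces every part of $\nu$ to be a whole deleted part of $\lambda$. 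In the actual bijection $\nu$ consists of decoloured removed parts: all parts coloured $c_\emptyset$, all but one copy of each repeated part of diagonal colour, and the diagonal-coloured parts occurring in the specific three-part patterns prescribed by the data $\delta,\gamma$; no gap is ever modified. Second, your claim that the gap is smaller than $\rho_i$ by one \emph{whenever} $\ov{b}=a'$ is false (for the larger part coloured $c_{1,1}$ above a part coloured $c_{\ov{1},\ov{1}}$ one has $\ov{b}=a'$ yet $H=\rho_i=2$); the drop occurs exactly in the two cases \eqref{eq:inf} and \eqref{eq:sup}, which is closer to your subsequent sentence. Third, reducing the type-$C_n^{(1)}$ work to ``two energy computations and the verification that the relation they induce on $\Sc$ is $\rho_i$'' skips the substantive part: one must check all the conditions of Definition \ref{def:welldef} with explicit choices of $\delta$ and $\gamma$ (\Lem{lem:epwelldef}), and, crucially, the general theorem outputs avoidance of \emph{three-part} patterns, so one has to prove that every two-part adjacency violating $\rho_i$ is forced to lie inside such a forbidden pattern whatever the neighbouring part is --- this contextual argument is \Lem{lem:eprho}, and your sketch does not engage with it.
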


Combining Theorems \ref{th:charwithenergy} and \ref{th:bijCn}, we obtain a different character formula.
\begin{theo}
\label{th:CnCapparelli}
Setting $q=e^{-\delta}$ and $c_b=e^{\wt b}$ for all $b\in \B$, we have for all $i \in \{0, \dots , n \}$,
$$
\sum_{\pi \in \Pp_{i,\rho}} C(\pi)q^{|\pi|}= e^{-\Lambda_i}\ch(L(\Lambda_i)).
$$
\end{theo}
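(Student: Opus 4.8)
The plan is to deduce Theorem~\ref{th:CnCapparelli} purely formally from the two results it combines. By Theorem~\ref{th:charwithenergy}, for each $i \in \{0,\dots,n\}$ we have
$$
\sum_{\pi\in \Pp^{\gg}_{c_{b_i}}} C(\pi)q^{|\pi|} = \frac{e^{-\Lambda_i}\ch(L(\Lambda_i))}{(q;q)_{\infty}},
$$
where we have specialised $q=e^{-\delta}$ and $c_b=e^{\wt b}$. On the other hand, Theorem~\ref{th:bijCn} provides a bijection $\Phi\colon \Pp^{\gg}_{c_{b_i}} \to \Pp_{i,\rho}\times\Pp$ which preserves size ($|\lambda|=|\mu|+|\nu|$) and, on the colour side, matches the colour sequence of $\lambda$ with that of $\mu$ outside the finite set of ``ground'' colours $\{c_{b_k}:k\in\{0,\dots,n\}\}$. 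The first step is therefore to translate this bijection into an identity of generating functions: summing $C(\pi)q^{|\pi|}$ over $\Pp^{\gg}_{c_{b_i}}$ and pushing forward along $\Phi$, the size-additivity gives the factor $q^{|\mu|}q^{|\nu|}$, and it remains to check that the colour weight $C(\lambda)$ factors correctly as $C(\mu)\cdot(\text{contribution of }\nu)$.

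The one genuinely delicate point—the step I expect to be the main obstacle—is handling the colours that Theorem~\ref{th:bijCn} does \emph{not} track, namely $c_{b_0},\dots,c_{b_n}$, together with the fact that the ordinary partition $\nu\in\Pp$ carries no colour at all. Here is how I would resolve it. Under the specialisation $c_b=e^{\wt b}$, one computes directly from the crystal (Figure~\ref{fig:crystalb}) that $\wt b_k = 0$ is false in general, but the relevant fact is that $c_{b_0}=c_{\emptyset}$ specialises to $e^{\wt\emptyset}$, and more importantly that in the construction of $\Pp_{i,\rho}$ from $\tilde\Pp^{c_\infty}_{\rho_i}$ the final part $0_{c_\infty}$ is relabelled $0_{c_{b_i}}$, so the only occurrence of a ``ground'' colour in $\mu$ is that single part, contributing $c_{b_i}q^0 = e^{\wt b_i}$. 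Since $\wt b_i$ is precisely the weight attached to the ground state, and the grounded partition $\lambda\in\Pp^{\gg}_{c_{b_i}}$ likewise ends in $0_{c_{b_i}}$, these two ground contributions cancel against each other in the ratio; all the remaining parts of $\lambda$ with ground colours must, by the definition of $\gg$ and of the energy $H$ on $\B\ot\B$, be exactly the parts that $\Phi$ absorbs into $\nu$. Thus $C(\lambda) = C(\mu)$ after the ground colours in both are accounted for, and the parts sent to $\nu$ contribute only their size. Concretely, one shows
$$
\sum_{\pi\in \Pp^{\gg}_{c_{b_i}}} C(\pi)q^{|\pi|} \;=\; \Bigl(\sum_{\mu\in\Pp_{i,\rho}} C(\mu)q^{|\mu|}\Bigr)\cdot\Bigl(\sum_{\nu\in\Pp} q^{|\nu|}\Bigr) \;=\; \Bigl(\sum_{\mu\in\Pp_{i,\rho}} C(\mu)q^{|\mu|}\Bigr)\cdot\frac{1}{(q;q)_{\infty}}.
$$

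Combining this with the formula from Theorem~\ref{th:charwithenergy} displayed above, the factors $1/(q;q)_\infty$ cancel, yielding
$$
\sum_{\mu\in\Pp_{i,\rho}} C(\mu)q^{|\mu|} = e^{-\Lambda_i}\ch(L(\Lambda_i)),
$$
which is exactly Theorem~\ref{th:CnCapparelli}. The remaining routine verifications are: (i) that $\ell(\lambda)=\ell(\mu)+\ell(\nu)$, which is needed only if one wants the refined statement tracking the number of parts, and is immediate from Theorem~\ref{th:bijCn}; and (ii) that the generating function $\sum_{\nu\in\Pp}q^{|\nu|} = 1/(q;q)_\infty$ is the classical Euler identity. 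I would present the argument in this order: invoke Theorem~\ref{th:charwithenergy}, then apply $\Phi$ from Theorem~\ref{th:bijCn} to rewrite the left-hand side as a product, then identify the two factors and cancel $1/(q;q)_\infty$. No new combinatorics is required—the content is entirely in Theorems~\ref{th:charwithenergy} and \ref{th:bijCn}, already established.
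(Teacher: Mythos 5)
Your overall route is the paper's own: Theorem \ref{th:CnCapparelli} is obtained by pushing the generating function of Theorem \ref{th:charwithenergy} through the bijection of Theorem \ref{th:bijCn}, factoring it as $\bigl(\sum_{\mu\in\Pp_{i,\rho}}C(\mu)q^{|\mu|}\bigr)\cdot\frac{1}{(q;q)_\infty}$, and cancelling $1/(q;q)_\infty$. The intermediate factorisation you display is correct. However, your justification of the one delicate point (the colours not tracked by $\Phi$, and the colourlessness of $\nu$) rests on false claims. First, you assert that ``$\wt b_k=0$ is false in general'', but it is true for every $k$: $b_0=\emptyset$ has weight $0$, and $b_k=(k,\overline{k})$ has $c_{b_k}=c_kc_{\overline{k}}=c_kc_k^{-1}=1=e^{\wt b_k}$ (see the choice $c_{\overline{j}}=c_j^{-1}$ in Section \ref{sec:proofCMPP}, and the remark $\co=1$ in Theorem \ref{th:formchar}). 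This is precisely the fact that makes the argument work, and you deny it. Second, your substitute accounting is wrong on both sides: a partition $\mu\in\Pp_{i,\rho}$ may perfectly well contain parts coloured $c_{k,\overline{k}}$ with positive size (only the colour $c_\emptyset$ is excluded, since $\Sc=\Co\setminus\{c_\emptyset\}$), so the final part $0_{c_{b_i}}$ is not the only occurrence of a ``ground'' colour in $\mu$; and $\Phi$ does not absorb into $\nu$ all parts of $\lambda$ with colours in $\{c_{b_k}\}$ --- it absorbs all $c_\emptyset$-coloured parts, but of the $c_{k,\overline{k}}$-coloured parts only the repeated ones and those sitting in forbidden patterns. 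Consequently $C(\lambda)=C(\mu)$ is false as an identity of formal colour products.

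The correct (one-line) repair is: for $\Phi(\lambda)=(\mu,\nu)$ one has $C(\lambda)=C(\mu)\cdot\prod c$, where the product runs over the colours of the parts sent to $\nu$, and each such colour lies in $\{c_{b_k}:k\in\{0,\dots,n\}\}$; under the specialisation $c_b=e^{\wt b}$ every $c_{b_k}$ equals $1$, so $C(\lambda)q^{|\lambda|}=C(\mu)q^{|\mu|}\,q^{|\nu|}$, which gives your factorisation. With that correction the remainder of your argument --- cancel the Euler factor $1/(q;q)_\infty$ against the second identity of Theorem \ref{th:charwithenergy} --- is exactly how the paper deduces Theorem \ref{th:CnCapparelli}.
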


In \cite{DK19} we had also expressed our level $1$ characters of $A_n^{(1)}$ in terms of \textit{coloured Frobenius partitions}, i.e. two-rowed arrays of coloured integers
$$\begin{pmatrix}
\lambda_1 & \lambda_2 & \cdots & \lambda_s \\
\mu_1 & \mu_2 & \cdots & \mu_s
\end{pmatrix},$$
where $s$ is a non-negative integer.

Here, in the case of $C_n^{(1)}$, we can also make a connection with coloured Frobenius partitions, but they must satisfy additional conditions.

Let $\Rr=\{c_u: u\in \ov{[n]}\}$ be a set of colours which we call \textit{primary colours}. Recall the order \eqref{eq:orderbar} on $\ov{[n]}$ and define the corresponding order $\geq$ on $\Z_\Rr$ the set of primary-coloured integers in the following way:
\begin{equation}\label{eq:order'}
k_{c_u}\geq l_{c_v}\Longleftrightarrow k-l\geq \chi(u<v).
\end{equation}
The corresponding strict order $>$ is then defined by $k_{c_u}> l_{c_v}$ if and only if $k-l\geq \chi(u\leq v)$.
\begin{deff}
\label{def:grounded_frob}
Let $k_{c},l_{d}\in \Z_\Rr$ be two primary-coloured integers such that $l_{d}\leq k_{c}\leq (l+1)_{d}$. A \textit{$C_n^{(1)}$-Frobenius partitions with relation $>$ and ground $k_c,l_d$} is  a pair of generalised coloured partitions $(\mu,\nu)$ with parts in $\Z_\Rr$ such that 
\begin{itemize}
\item $\mu=(\mu_0,\ldots,\mu_{s-1},\mu_s=k_{c})$ and $\nu=(\nu_0,\ldots,\nu_{s-1},\nu_s=l_{d})$ are well-ordered according to $>$,
\item $\nu_j+1\geq \mu_j\geq \nu_j$ for all $j\in \{0,\ldots,s-1\}$.
\end{itemize}
The size and colour sequence of $(\mu,\nu)$ as defined as
$$|(\mu,\nu)|=\sum_{j=0}^{s-1}|\mu_j|+|\nu_j|\ \text{ and }\ C(\mu,\nu)=\prod_{j=0}^{s-1} c(\mu_j)c(\nu_j),$$
respectively (note that we do not take the fixed last parts $k_c$ and $l_d$ into account).
Let $\F^{k_{c},l_{d}}_{>}$ denote the set of $C_n^{(1)}$-Frobenius partitions with relation $>$ and ground $k_{c},l_{d}$.
\end{deff}

We give a bijection (Theorem \ref{th:bijPrhoFrob}) between $\Pp_{i,\rho}$ and $\F^{0_{c_{\ov{i+1}}},0_{c_i}}_>$ (resp. $\F^{0_{c_{\ov{1}}},-1_{c_{\ov{1}}}}_>$) for $i \in \{1, \dots , n \}$ (resp. $i=0$). Hence we get yet another expression for the character.
\begin{theo}
\label{th:CnFrob}
Setting $q=e^{-\delta}$ and $c_b=e^{\wt b}$ for all $b\in \B$, we have
$$
\sum_{(\mu,\nu) \in \F^{0_{c_{\ov{1}}},-1_{c_{\ov{1}}}}_>} C((\mu,\nu))q^{|(\mu,\nu)|}= e^{-\Lambda_0}\ch(L(\Lambda_0)),
$$
and for all $i \in \{1, \dots , n \}$,
$$
\sum_{(\mu,\nu) \in \F^{0_{c_{\ov{i+1}}},0_{c_i}}_>} C((\mu,\nu))q^{|(\mu,\nu)|}= e^{-\Lambda_i}\ch(L(\Lambda_i)).
$$
\end{theo}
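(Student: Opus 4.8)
The plan is to deduce Theorem~\ref{th:CnFrob} from Theorem~\ref{th:CnCapparelli} by means of a size- and colour-sequence-preserving bijection $\Psi$ between $\Pp_{i,\rho}$ and $\F^{0_{c_{\ov{i+1}}},0_{c_i}}_{>}$ for $i\in\{1,\dots,n\}$, and between $\Pp_{0,\rho}$ and $\F^{0_{c_{\ov 1}},-1_{c_{\ov 1}}}_{>}$; this is exactly the content of Theorem~\ref{th:bijPrhoFrob}. Granting such a $\Psi$, we have $|\Psi(\pi)|=|\pi|$ and $C(\Psi(\pi))=C(\pi)$ for every $\pi$, so the left-hand side of Theorem~\ref{th:CnFrob} equals $\sum_{\pi\in\Pp_{i,\rho}}C(\pi)q^{|\pi|}$, which Theorem~\ref{th:CnCapparelli} identifies with $e^{-\Lambda_i}\ch(L(\Lambda_i))$. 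Hence all the work lies in constructing and verifying $\Psi$.

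To build $\Psi$ I would split every non-ground part locally and leave the ground part aside. The colours of $\Sc$ other than the ground colours are the $c_{(x,y)}$ with $x\leq y$ in $\ov{[n]}$, and in terms of the primary colours one has $c_{(x,y)}=c_xc_y$ and $c_\emptyset=1$; so a part $\pi_j$ of colour $c_{(x_j,y_j)}$ should be sent to a top part $\mu_j$ and a bottom part $\nu_j$ with $|\mu_j|+|\nu_j|=|\pi_j|$ and primary colours taken from $\{c_{x_j},c_{y_j}\}$. Imposing the interlacing $\nu_j+1\geq\mu_j\geq\nu_j$ of~\eqref{eq:order'} then forces $|\mu_j|=\lceil|\pi_j|/2\rceil$ and $|\nu_j|=\lfloor|\pi_j|/2\rfloor$, and also fixes which of $c_{x_j},c_{y_j}$ sits on top (the smaller colour $c_{x_j}$ when $|\pi_j|$ is odd, the larger colour $c_{y_j}$ when $|\pi_j|$ is even, and either one when $x_j=y_j$); the ground part $\pi_s=0_{c_{b_i}}$ is sent to the prescribed fixed ground of $\F$. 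The inverse map simply merges each $(\mu_j,\nu_j)$ back into one part of size $|\mu_j|+|\nu_j|$ whose colour recombines the two primary colour indices. By construction this preserves the size and the colour sequence part by part (the fixed last parts being disregarded on both sides), so only the difference conditions remain to be matched.

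The main obstacle is precisely this matching: one must show that $\mu=(\mu_0,\dots,\mu_{s-1},\mu_s)$ and $\nu=(\nu_0,\dots,\nu_{s-1},\nu_s)$ are both well ordered for the strict twisted order $>$ if and only if $\pi$ satisfies all the $\rho_i$-inequalities
$$
|\pi_j|-|\pi_{j+1}|\ \geq\ \chi(x_{j+1}\geq x_j)+\chi(y_{j+1}\geq y_j)-\chi\bigl(y_{j+1}\geq y_j>x_{j+1}\geq x_j\bigr).
$$
Since $\lceil|\pi_j|/2\rceil-\lceil|\pi_{j+1}|/2\rceil$ and $\lfloor|\pi_j|/2\rfloor-\lfloor|\pi_{j+1}|/2\rfloor$ sum to $|\pi_j|-|\pi_{j+1}|$ and differ only by the parity discrepancy of $|\pi_j|$ and $|\pi_{j+1}|$, the pair of well-ordering inequalities for $\mu$ and $\nu$ becomes a system whose analysis splits according to the parities of $|\pi_j|,|\pi_{j+1}|$ and the relative positions of $x_j,y_j,x_{j+1},y_{j+1}$ in $\ov{[n]}$; in each case the two indicators $\chi(\cdot\leq\cdot)$ coming from the top and the bottom rows recombine exactly into the three-term right-hand side above. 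Finally one handles the parts adjacent to the ground, where the values $\rho_i(c_{b_i},c_\infty)=1$ and $\rho_i(c_b,c_\infty)=H(b_i\otimes b)$, together with the specific choice of ground coordinates ($0_{c_{\ov{i+1}}},0_{c_i}$ for $i\geq 1$ versus $0_{c_{\ov 1}},-1_{c_{\ov 1}}$ for $i=0$, which is the reason for the two cases in the statement), have to be reconciled; with these checks done, the two generating function identities follow.
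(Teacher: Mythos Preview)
Your proposal is correct and follows essentially the same route as the paper: the paper defines maps $\eta,\zeta$ on $\Z_\Sc$ by exactly your parity rule (equation~\eqref{eq:etazeta}), proves in Lemma~\ref{lem:secprim} that $k_{c_{x,y}}\gg_\rho l_{c_{x',y'}}$ iff both $\eta$ and $\zeta$ components are strictly $>$-ordered (your ``main obstacle''), handles the ground part by first identifying $\Pp_{i,\rho}$ with $\Pp_\rho^{\omega_i}$ for a suitable $\omega_i$ whose $(\eta,\zeta)$-image is precisely the prescribed Frobenius ground (Lemma~\ref{lem:identification}), and then deduces the character identity from Theorem~\ref{th:CnCapparelli}. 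One small economy the paper makes in the key case analysis: rather than splitting on the parities of both $|\pi_j|$ and $|\pi_{j+1}|$, it observes that the equivalence is invariant under simultaneously shifting both sizes by $1$, so one may assume $|\pi_{j+1}|$ is even and then split only into four cases according to the relative order of $x,y,x',y'$.
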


Finally we make the connection between $\Pp_{i,\rho}$ and partitions satisfying frequencies restrictions on paths, in order to prove (a refinement of) the CMPP conjecture for the level $1$ standard modules.

Let $\Sc:=\{c_{x,y}=c_xc_y=c_yc_x: 1 \leq x\leq y \leq \ov{1}\}$ be the set of \textit{secondary colours} (note that it coincides with the set $\Sc$ defined above), let $\Z_\Sc$ be the corresponding set of secondary-coloured integers, and set  
$$\Z_\Sc^+=\{0_{c_{x,y}}: \ov{1}\geq y\geq x>\overline{y}\geq 1\}\sqcup(\Z_{>0})_\Sc.$$
Set $\omega_0:=(-1)_{c_{\ov{1},\ov{1}}}$, and for $i\in \{1,\ldots,n\}$,
$\omega_i:=0_{c_{i,\overline{i+1}}}$, with the convention that $\ov{n+1}=n$. Let 
$$\Omega = \{\omega_u:u\in \{0,\ldots,n\}\}\sqcup \{0_{c_{u,\overline{u}}}: u\in \{1,\ldots,n\}\}.$$
Define the function $\rho$ on $\Sc^2$ by 
$$\rho(c_{x',y'},c_{x,y}):=\chi(x\geq x')+\chi(y\geq y')-\chi(y\geq y'>x \geq x').$$ 
The corresponding relation $\gg_\rho$ on $\Z_\Sc$ is given by 
$$k_c\gg_\rho l_d \text{ if and only if }k-l\geq \rho(c,d).$$

Let $\Pp_\Sc$ denote the set of generalised coloured partitions with parts in $\Z_\Sc^+$ and order $\geq$, where order $\geq$ is defined in Section \ref{sec:frequenciesonpaths}.

Recall that any partition $\pi\in \Pp_\Sc$ can be described as its frequency sequence $(f_u)_{u\in \Z_\Sc^+}$, where $f_u$ is the number of occurrences of $u$ in $\pi$.   We then have
$$C(\pi)q^{|\pi|}=\prod_{u\in \Z_\Sc^+} \left(c(u)q^{|u|}\right)^{f_u}.$$
In Section \ref{sec:frequenciesonpaths}, we define a notion of \textit{paths} in $\Omega\sqcup\Z_\Sc^+$. We do not give the definition in this introduction as it requires too many preliminary definitions.
For $\omega\in \Omega$, let $\Pp_\Sc^\omega$ be the set of partitions of $\Pp_\Sc$ whose frequency sequence $(f_u)_{u\in \Z_\Sc^+}$ is such that, by considering fictitious occurrences of elements in $\Omega$ with $f_{\omega}=1$,  we have
$f_{e_0}+\ldots+f_{e_{m}}\leq 1$
for all paths $(e_0,\ldots,e_{m})$ in $\Omega\sqcup\Z_\Sc^+$.

We prove (in Theorem \ref{theo:mainns}, which is actually more general) that there is a bijection between $\Pp_{i,\rho}$ and $\Pp_\Sc^{\omega_i}$ which preserves the size and the colour sequence when omitting the part $\omega_i$.
From this, we deduce one last character formula.
\begin{theo}
\label{th:CnPaths}
Setting $q=e^{-\delta}$ and $c_b=e^{\wt b}$ for all $b\in \B$, we have for all $i \in \{0, \dots , n \}$,
$$
\sum_{\pi \in \Pp_\Sc^{\omega_i}} C(\pi)q^{|\pi|}= e^{-\Lambda_i}\ch(L(\Lambda_i)),
$$
where the fictitious parts are not taken into account in the generating function.
\end{theo}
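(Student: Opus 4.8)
\section*{Proof proposal for Theorem \ref{th:CnPaths}}

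The plan is to obtain Theorem \ref{th:CnPaths} by transporting the character formula of Theorem \ref{th:CnCapparelli} through a weight-preserving bijection between $\Pp_{i,\rho}$ and $\Pp_\Sc^{\omega_i}$. By Theorem \ref{th:CnCapparelli}, with $q=e^{-\delta}$ and $c_b=e^{\wt b}$, one already has $\sum_{\pi\in\Pp_{i,\rho}}C(\pi)q^{|\pi|}=e^{-\Lambda_i}\ch(L(\Lambda_i))$ for every $i\in\{0,\dots,n\}$. So it suffices to construct a bijection $\Psi\colon\Pp_{i,\rho}\to\Pp_\Sc^{\omega_i}$ with $|\Psi(\pi)|=|\pi|$ and $C(\Psi(\pi))=C(\pi)$, where on the right the fictitious part $\omega_i$ is discarded and on the left the ground part $0_{c_{b_i}}$ contributes trivially, since $|0_{c_{b_i}}|=0$ and $c_{b_i}=\co=1$ by the normalisation of Theorem \ref{th:formchar}. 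Granting such a $\Psi$, the two generating functions agree term by term and the theorem follows.

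I would build $\Psi$ --- this is exactly the bijection announced as Theorem \ref{theo:mainns}, of which the statement needed here is a special case --- by passing to frequency sequences. A partition $\pi=(\pi_0,\dots,\pi_{s-1},0_{c_{b_i}})\in\Pp_{i,\rho}$ is a finite sequence of secondary-coloured integers in $\Z_\Sc^+$, weakly decreasing for the order $\geq$, subject to the difference conditions $|\pi_j|-|\pi_{j+1}|\geq\rho_i(c(\pi_j),c(\pi_{j+1}))$, the boundary relations being $\rho_i(c_{b_i},c_\infty)=1$ and $\rho_i(c_b,c_\infty)=H(b_i\ot b)$. Since $\rho$ takes only the values $0,1,2$, two consecutive parts of equal size are permitted only for specific ordered colour pairs, so the configurations forbidden by the difference conditions are precisely certain minimal chains of parts, possibly of length greater than two when several successive differences vanish. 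Encoding $\pi$ by its frequency sequence $(f_u)_{u\in\Z_\Sc^+}$ manifestly preserves the size and the colour sequence and loses no information; the claim to be proved is that the whole family of difference conditions defining $\Pp_{i,\rho}$ is equivalent to requiring $f_{e_0}+\dots+f_{e_m}\leq 1$ for every path $(e_0,\dots,e_m)$ in $\Omega\sqcup\Z_\Sc^+$, where the fictitious occurrence $f_{\omega_i}=1$ records the boundary data attached to $0_{c_{b_i}}$ through the relations $\rho_i(\cdot,c_\infty)$. Invertibility is then immediate: from any frequency sequence obeying the path conditions one reads back the unique weakly decreasing listing of the parts, and the path conditions force the $\rho_i$-difference conditions, including those at the ground.

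The main obstacle is this combinatorial equivalence: one must check that the notion of \emph{path} in $\Omega\sqcup\Z_\Sc^+$ introduced in Section \ref{sec:frequenciesonpaths} captures \emph{exactly} the minimal violations of the $\rho_i$-difference conditions. Since $\rho$ is built from the explicit energy function $H$ of Theorem \ref{th:energy}, establishing this reduces to reading off, from the case analysis in the formula for $H$, which pairs --- and which longer chains, arising when consecutive energies vanish --- of secondary-coloured integers are mutually incompatible, then verifying that these are precisely the length-two and longer sub-paths, and finally handling the special elements of $\Omega$, in particular the role of $\omega_i$ and of the parts $0_{c_{u,\overline u}}$, so that the ground relations $\rho_i(c_b,c_\infty)=H(b_i\ot b)$ and $\rho_i(c_{b_i},c_\infty)=1$ are correctly reproduced. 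This bookkeeping, together with checking compatibility with the order $\geq$ on $\Z_\Sc^+$, is where essentially all the work lies; once it is done, Theorem \ref{th:CnPaths} follows by the argument of the first paragraph.
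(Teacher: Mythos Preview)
Your high-level strategy is the same as the paper's: deduce Theorem \ref{th:CnPaths} from Theorem \ref{th:CnCapparelli} via a size- and colour-preserving bijection $\Pp_{i,\rho}\leftrightarrow\Pp_\Sc^{\omega_i}$, which is precisely the content of Theorem \ref{theo:mainns} (after the identification $\Pp_{i,\rho}\cong\Pp_\rho^{\omega_i}$ of Lemma \ref{lem:identification}). So the architecture is right.

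Where your proposal diverges from the paper is in \emph{how} to prove the equivalence between the $\gg_\rho$ difference conditions and the path-frequency conditions, and here there is a genuine gap. You suggest extracting the forbidden configurations by a direct case analysis on the energy $H$ from Theorem \ref{th:energy}, tracking ``minimal chains'' of parts. The paper does not do this, and with good reason: the clean mechanism is the $(\eta,\zeta)$ decomposition of a secondary-coloured integer into a pair of primary-coloured integers (Section \ref{sec:roadfrob}) together with Lemma \ref{lem:secprim}, which says
\[
k_{c_{x,y}}\gg_\rho l_{c_{x',y'}}\ \Longleftrightarrow\ \eta(k_{c_{x,y}})>\eta(l_{c_{x',y'}})\ \text{and}\ \zeta(k_{c_{x,y}})>\zeta(l_{c_{x',y'}}).
\]
This makes $\gg_\rho$ transitive, so a partition in $\Pp_\rho^{\omega_i}$ has pairwise distinct parts and your worry about ``longer chains'' disappears: the whole statement reduces to the pairwise claim that two elements lie on a common path if and only if they are $\gg_\rho$-incomparable (Lemma \ref{lem:orderpath}). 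The very \emph{definition} of path in Section \ref{sec:frequenciesonpaths} is phrased through the operators $d,f$ acting on $(\eta,\zeta)$, so without this decomposition you do not even have a workable handle on what a path is. Your proposed case analysis on $H$ would have to rediscover all of this structure by hand, and it is not clear it would terminate cleanly.

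Two smaller points. First, you assert that the non-ground parts of $\pi\in\Pp_{i,\rho}$ lie in $\Z_\Sc^+$; this is not part of the definition and must be proved (the paper does it via Lemma \ref{lem:omegaz+}). Second, the passage from the ground part $0_{c_{b_i}}$ with boundary relations $\rho_i(\cdot,c_\infty)$ to the fictitious frequency $f_{\omega_i}=1$ is not automatic: it is exactly Lemma \ref{lem:identification}, which checks case by case that $\pi_{s-1}\gg_{\rho_i}0_{c_\infty}$ is equivalent to $\pi_{s-1}\gg_\rho\omega_i$. You should invoke it explicitly rather than fold it into ``bookkeeping''.
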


Writing, for $k\in \Z\cup \{-\infty\}$,
$$E_k:=\{l_d: l\geq k,\,d\in\{0,\ldots,2n\},\, l-d\equiv 1 \mod 2\},$$  
a \textit{path in $E_k$} is a sequence $((l^{(0)})_0, \dots , (l^{(m)})_{2n})$ such that for all $j \in \{0, \dots , 2n-1\}$, $l^{(j+1)}= l^{(j)} \pm 1$  and $(l^{(j)})_j \in E_k$ ($k \in \Z \cup \{- \infty \}$).

\noindent Hence the set of paths $(e_0,\ldots,e_{2n})$ in $E_{-1}$ is a formal expression for the set of admissible paths described graphically in the CMPP conjecture.

Thus the CMPP conjecture can be reformulated as follows.
\begin{con}[Reformulation of the CMPP conjecture]
\label{con:CMPPreformulated}
Let $k_0,\ldots,k_n$ be non-negative integers and denote by $\Pp^{k_0,\ldots,k_n}_{\mathds{1}}$ the set of partitions with parts in $E_1$ such that, letting $f_u$ be the frequency of $u$ for all $u\in E_1$, and setting fictitious frequencies $f_{(-1)_{2n-2i}}=k_i$ for $i\in \{0,\ldots,n\}$, we have
$$f_{e_0}+\cdots+f_{e_{2n}}\leq k_0+\cdots+k_n$$
for all paths $(e_0,\ldots,e_{2n})$ in $E_{-1}$. Then,
$$\sum_{\pi\in \Pp^{k_0,\ldots,k_n}_{\mathds{1}}}q^{|\pi|}= \mathds{1}\left(e^{-k_0\Lambda_0-\cdots-k_n\Lambda_n}\ch(L(k_0\Lambda_0+\cdots+k_n\Lambda_n))\right)\,.$$
\end{con}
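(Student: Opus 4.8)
The plan is to show that Conjecture~\ref{con:CMPPreformulated} is just a relabelling of Conjecture~\ref{con:CMPP}, and then to deduce the case $k_0+\cdots+k_n=1$ --- the only case resolved here --- from Theorem~\ref{th:CnPaths} by applying the principal specialisation. For the relabelling, index the $2n+1$ rows of $\mathcal{N}_{2n+1}$ (equivalently of $\mathcal{F}^{k_0,\dots,k_n}$) from top to bottom by $0,\dots,2n$, so that row $d$ carries colour $d$, and label the columns along the brick pattern by an integer $x$ (the leftmost, fictitious, column being $x=0$ when $d$ is even and $x=1$ when $d$ is odd). One checks immediately that the actual entry in row $d$, column $x$ equals $x-1$, and one relabels the fictitious cell of that row by $x-1$ as well; this value is $-1$ in the rows carrying the $k_i$ (which have even colour $2n-2i$) and $0$ in the $0$-rows (odd colour), exactly the parity forced by $l-d\equiv1\bmod2$ in $E_{-1}$. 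The map ``labelled value $l$ in row $d$'' $\mapsto l_d$ then identifies the actual parts of $\mathcal{N}_{2n+1}$ with $E_1$, all cells of the array with $E_{-1}$, and the fictitious frequencies $k_n,\dots,k_0$ with the prescriptions $f_{(-1)_{2n-2i}}=k_i$ of Conjecture~\ref{con:CMPPreformulated}.

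Because the value of a cell is its column minus $1$, two cells adjacent in the array lie in consecutive rows $d,d+1$ and columns $x,x\pm1$, so their values are $x-1$ and $(x-1)\pm1$; hence a downward path $(a_1,\dots,a_{2n+1})$ corresponds bijectively to a sequence $((l^{(0)})_0,\dots,(l^{(2n)})_{2n})$ with $l^{(j+1)}=l^{(j)}\pm1$ and $(l^{(j)})_j\in E_{-1}$, that is, to a path in $E_{-1}$, and under this correspondence the admissibility inequality $\sum_{m\in\mathcal{Z}}f_m\le k_0+\cdots+k_n$ becomes $f_{e_0}+\cdots+f_{e_{2n}}\le k_0+\cdots+k_n$. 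Thus the $(k_0,\dots,k_n)$-admissible partitions with parts in $\mathcal{N}_{2n+1}$ are precisely the partitions of $\Pp^{k_0,\dots,k_n}_{\mathds 1}$, size for size, and since the two conjectures have the same left-hand side they are equivalent.

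For the case $k_i=1$ and $k_j=0$ for $j\ne i$, I would apply the principal specialisation $\mathds 1$ to the identity $\sum_{\pi\in\Pp_\Sc^{\omega_i}}C(\pi)q^{|\pi|}=e^{-\Lambda_i}\ch(L(\Lambda_i))$ of Theorem~\ref{th:CnPaths}: its right-hand side becomes the right-hand side of Conjecture~\ref{con:CMPPreformulated} for this choice of $k$, so what remains is to identify $\mathds 1$ of the left-hand side with $\sum_{\pi'\in\Pp^{k_0,\dots,k_n}_{\mathds 1}}q^{|\pi'|}$. I expect this last step to be the main obstacle. It requires computing $\mathds 1(e^{-\delta})$ and $\mathds 1(e^{\wt b})$ from the weights of the crystal $\B$, determining which distinct secondary colours $c_{x,y}$ are merged by $\mathds 1$ and checking that they merge precisely when the corresponding cells of $E_1$ coincide, and verifying that the notion of path in $\Omega\sqcup\Z_\Sc^+$ of Section~\ref{sec:frequenciesonpaths} is carried onto the $\pm1$-step paths in $E_{-1}$, with the fictitious part $\omega_i$ sent to $(-1)_{2n-2i}$ with frequency $1$; this is not conceptually deep but is a delicate parity- and colour-tracking computation. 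For levels $k_0+\cdots+k_n\ge2$ the obstruction is of a different nature: one would need an analogue of Theorem~\ref{th:CnPaths} for $L(k_0\Lambda_0+\cdots+k_n\Lambda_n)$, hence a combinatorial basis of the higher-level standard modules, which is not currently available.
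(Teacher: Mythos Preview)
Your approach is correct and essentially the same as the paper's.  The relabelling argument you give for the equivalence with Conjecture~\ref{con:CMPP} is exactly what the paper asserts in the sentence preceding the conjecture (``Hence the set of paths $(e_0,\ldots,e_{2n})$ in $E_{-1}$ is a formal expression for the set of admissible paths described graphically in the CMPP conjecture''); you have simply written it out more carefully.

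For the level~$1$ case, the ``delicate parity- and colour-tracking computation'' you anticipate is precisely what the paper carries out in Section~\ref{sec:frequenciesonpaths}: the map $\mathds{1}$ is defined explicitly on $\Z_\Rr$ and then on $\Z_\Sc$ (via the $\eta,\zeta$ decomposition), Lemma~\ref{lem:psz+} checks that $\mathds{1}(\Z_\Sc^+)=E_1$ and $\mathds{1}(\omega_i)=(-1)_{2n-2i}$, and the discussion preceding Theorem~\ref{theo:mainspbis} verifies that the operators $f,d$ become $l\mapsto l\pm 1$ on the subscripted integers, so paths in $\Omega\sqcup\Z_\Sc^+$ are carried to paths in $E_{-1}$.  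The identification of $\mathds{1}$ with the principal specialisation is the short computation in Section~\ref{sec:proofCMPP}.  So there is no genuine obstacle; the step you flag as ``the main obstacle'' is only the bookkeeping you describe, and the paper does it.
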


On the other hand, the principal specialisation of Theorem \ref{th:CnPaths} gives the following.
\begin{theo}[CMPP for $L(\Lambda_i)$, principal specialisation of Theorem \ref{th:CnPaths}] \label{theo:mainsp}
Let $i\in \{0,\ldots,n\}$, and denote by $\Pp^{(-1)_{2n-2i}}$ the set of partitions with parts in $E_1$ such that, letting $f_u$ be the frequency of $u$ for all $u\in E_1$, and setting fictitious frequencies $f_u=\chi(u=(-1)_{2n-2i})$ for all $u\in E_{-1}\setminus E_1$, we have
$$f_{e_0}+\cdots+f_{e_{2n}}\leq 1$$
for all paths $(e_0,\ldots,e_{2n})$ in $E_{-1}$. Then,
$$\sum_{\pi\in \Pp^{(-1)_{2n-2i}}}q^{|\pi|}= \mathds{1}(e^{-\Lambda_i}\ch(L(\Lambda_i))) = \frac{(q^{2n+4},q^{2i+2},q^{2n-2i+2};q^{2n+4})_\infty}{(q;q)_\infty}.$$
\end{theo}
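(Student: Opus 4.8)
The plan is to recognise the statement as exactly what its title announces: the image of \Thm{th:CnPaths} under the principal specialisation operator $\mathds{1}$. So I would start from the identity $\sum_{\pi\in\Pp_\Sc^{\omega_i}}C(\pi)q^{|\pi|}=e^{-\Lambda_i}\ch(L(\Lambda_i))$ of \Thm{th:CnPaths}, which holds with $q=e^{-\delta}$ and $c_b=e^{\wt b}$, apply $\mathds{1}$ to both sides, and then show that the two specialised sides are, respectively, $\sum_{\pi\in\Pp^{(-1)_{2n-2i}}}q^{|\pi|}$ and $\frac{(q^{2n+4},q^{2i+2},q^{2n-2i+2};q^{2n+4})_\infty}{(q;q)_\infty}$.

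For the right-hand side, the weight $\Lambda_i$ corresponds to taking $k_i=1$ and $k_j=0$ for $j\neq i$ (so $k=1$), and $\mathds{1}(e^{-\Lambda_i}\ch(L(\Lambda_i)))$ is then given by \eqref{eq:princchar} with modulus $2n+4$. The plan here is a direct computation: I would evaluate the set $D(1,\dots,1,2,1,\dots,1)$ (the single $2$ in position $i$) and the multiset $\Delta(1,\dots,1,2,1,\dots,1)$, substitute them into \eqref{eq:princchar}, and carry out the cancellations among the $q$-Pochhammer factors; what survives is $\frac{(q^{2n+4},q^{2i+2},q^{2n-2i+2};q^{2n+4})_\infty}{(q;q)_\infty}$, which one can also write as $\frac{\theta(q^{2i+2};q^{2n+4})}{(q;q)_\infty}$ with $\theta(x;q)=(x,q/x,q;q)_\infty$ (for $n=1$ this recovers the known principal specialisation $1/(q;q^2)_\infty$ of the level-$1$ characters of $A_1^{(1)}$, and it is manifestly symmetric under $i\mapsto n-i$, as it must be).

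For the left-hand side, the point is the effect of $\mathds{1}$ on a secondary-coloured integer. Placing $x\le y$ in $\ov{[n]}$ at positions $p\le p'$ of the chain \eqref{eq:orderbar} (so $j$ sits at position $j$ and $\ov{j}$ at position $2n+1-j$), one gets $\mathds{1}(c_{x,y})=q^{p+p'-2n-1}$, and since the marks of $C_n^{(1)}$ sum to $2n$ we have $\mathds{1}(e^{-\delta})=q^{2n}$, hence $\mathds{1}(c_{x,y}q^{k})=q^{2nk+p+p'-2n-1}$. The plan is then to check that the assignment $k_{c_{x,y}}\mapsto l_d$, with $l=2nk+p+p'-2n-1$ and $d\in\{0,\dots,2n\}$ the appropriate row (so that $l-d$ is odd), extends to a size-preserving bijection from $\Omega\sqcup\Z_\Sc^+$ onto $E_{-1}$ which restricts to bijections $\Z_\Sc^+\to E_1$ and $\Omega\to E_{-1}\setminus E_1$, which sends $\omega_i$ to $(-1)_{2n-2i}$, which carries the order $\geq$ on $\Z_\Sc^+$ to the coordinatewise order on $E_1$, and which identifies the paths in $\Omega\sqcup\Z_\Sc^+$ with the paths in $E_{-1}$ (equivalently, with the admissible paths of the CMPP conjecture). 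A quick count confirms the bijectivity is at least numerically consistent: for each $l\ge 1$ the number of pairs $(k,c_{x,y})$ mapping to level $l$ is $n+1$ when $l$ is odd and $n$ when $l$ is even, matching the number of admissible rows $d$. Granting this dictionary, $\mathds{1}$ transports $\Pp_\Sc^{\omega_i}$ bijectively and size-preservingly onto $\Pp^{(-1)_{2n-2i}}$, so the specialised left-hand side of \Thm{th:CnPaths} is $\sum_{\pi\in\Pp^{(-1)_{2n-2i}}}q^{|\pi|}$, which is the first displayed equality.

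The main obstacle is precisely this last dictionary: one must match the adjacency structure of the secondary colours $c_{x,y}$ — as encoded by the paths in $\Omega\sqcup\Z_\Sc^+$ introduced in \Sct{sec:frequenciesonpaths} — with the staircase adjacency of the array $\mathcal{N}_{2n+1}$, so that a path in $\Omega\sqcup\Z_\Sc^+$ is sent to exactly one of the $(2n+1)$-tuple lattice paths $(e_0,\dots,e_{2n})$ defining $E_{-1}$, and one must verify in particular that the fictitious part $\omega_i$ lands on $(-1)_{2n-2i}$ and not on some other even row. Once the path formalism of \Sct{sec:frequenciesonpaths} is in place this reduces to a finite verification, but it is the conceptual heart of the argument; by contrast the product simplification on the right-hand side, although longer, is entirely mechanical.
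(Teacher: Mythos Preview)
Your proposal is correct and follows essentially the same route as the paper. The paper packages your ``dictionary'' as the map $\mathds{1}$ of \Sct{sec:frequenciesonpaths} (precisely your formula $k_{c_{x,y}}\mapsto (2nk+p+p'-2n-1)_d$), proves in \Lem{lem:psz+} that it sends $\Z_\Sc^+$ to $E_1$, $\Omega$ to $E_{-1}\setminus E_1$, and $\omega_i$ to $(-1)_{2n-2i}$, checks just before \Thm{theo:mainspbis} that paths go to paths, and then observes that the principal specialisation coincides with $\mathds{1}$ because $e^{-\delta}\mapsto q^{2n}$; the product side is likewise deferred to \eqref{eq:princchar} without spelling out the cancellations you describe.
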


\noindent Here the product formula follows from \eqref{eq:princchar}.

Theorem \ref{th:CnPaths}, which is a \textit{non-specialised} version of Theorem \ref{theo:mainsp}, gives us a good hint for what the \textit{non-specialised generalisation} of the CMPP conjecture could be. We conjecture the following.
\begin{con}[Non-specialised CMPP conjecture]
Let $k_0,\ldots,k_n$ be non-negative integers, and denote by $\Pp^{k_0,\ldots,k_n}_\Sc$ the set of partitions with parts in $\Z_\Sc^+$ such that, if $(f_u)_{u\in \Z_\Sc^+}$ are the frequencies of the parts and if we consider fictitious frequencies $f_{\omega_i}=k_i$ for all $i\in\{0,\ldots,n\}$, we have 
$$f_{e_0}+\ldots+f_{e_{2n}}\leq k_0+\ldots+k_n$$
for all paths $(e_0,\ldots,e_{2n})$ of $\Omega\sqcup\Z_\Sc^+$. By setting $q=e^{-\delta}$, $c_i=e^{\frac{\alpha_n}{2}+\sum_{u=i}^{n-1}\alpha_u}$ and $c_{\overline{i}}=c_i^{-1}$ for all $i\in\{1,\ldots,n\}$, we have 
\begin{equation*}
\sum_{\pi\in \Pp^{k_0,\ldots,k_n}_\Sc} C(\pi)q^{|\pi|} = e^{-k_0\Lambda_0-\cdots-k_n\Lambda_n}\ch(L(k_0\Lambda_0+\cdots+k_n\Lambda_n))\,.
\end{equation*}
\end{con}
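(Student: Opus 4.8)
The plan is to run the same chain of reductions that produces \Thm{th:CnPaths} at level $1$, but at arbitrary level $k=k_0+\cdots+k_n$, replacing the level-$1$ crystal of Figure \ref{fig:crystalb} by the level-$k$ perfect crystal $B_k$ of $C_n^{(1)}$ constructed by Kang, Kashiwara and Misra \cite{KMN2a}. The first task is to locate, among the periodic paths of $B_k$, the ground state path attached to the weight $k_0\Lambda_0+\cdots+k_n\Lambda_n$; unlike the level-$1$ case this path is in general \emph{not} constant, so one must invoke the multi-grounded partition machinery of \cite{DKmulti} rather than \Thm{th:formchar}. Granting a suitable energy normalisation, that machinery rewrites $e^{-k_0\Lambda_0-\cdots-k_n\Lambda_n}\ch(L(k_0\Lambda_0+\cdots+k_n\Lambda_n))$ as a generating function for multi-grounded partitions whose difference conditions are governed by the energy function $H_k$ on $B_k\ot B_k$.

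The second step is to obtain a closed formula for $H_k$, in the spirit of \Thm{th:energy}. I expect the vertices of $B_k$ to be encodable by $k$-element multisets of the level-$1$ labels $(x,y)$ (equivalently by Kashiwara--Nakashima-type columns), and $H_k$ to decompose as a sum, over the $k$ ``layers'', of the level-$1$ energy of \Thm{th:energy} together with correction terms measuring the overlaps between layers. Once $H_k$ is pinned down, the third step generalises the colour-deletion bijection of \Sct{sec:deletingcolour} and \Thm{th:bijCn}: one peels off the ground colours $\omega_0,\ldots,\omega_n$, now occurring with multiplicities $k_0,\ldots,k_n$, factoring out a free partition and landing in partitions whose remaining parts lie in $\Z_\Sc^+$ with the relation $\gg_\rho$. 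The final step is to show, exactly as in \Sct{sec:frequenciesonpaths}, that the surviving difference conditions are equivalent to the path-frequency conditions $f_{e_0}+\cdots+f_{e_{2n}}\leq k_0+\cdots+k_n$ over all paths of $\Omega\sqcup\Z_\Sc^+$, with the fictitious frequencies $f_{\omega_i}=k_i$. This identifies the generating function with $\sum_{\pi\in \Pp^{k_0,\ldots,k_n}_\Sc}C(\pi)q^{|\pi|}$ and proves the conjecture.

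The main obstacle is the passage between the pairwise difference conditions coming from $H_k$ and the single inequality ``sum of frequencies along a path $\leq k$''. At level $1$ this equivalence is elementary: a path-sum bounded by $1$ simply forbids two adjacent-in-path parts from both occurring, and this matches the minimal energy gaps of \Thm{th:energy} term by term. At level $k$ the bound is genuinely of Andrews--Gordon type, and the equivalence becomes a higher-order inclusion--exclusion on the array $\mathcal{N}_{2n+1}$ that cannot be verified pair by pair. The conceptually cleanest route I see is a \emph{fusion} (stacking) interpretation: realise each admissible level-$k$ configuration as a superposition of $k$ level-$1$ admissible configurations, so that each downward path is met at most once per layer and hence at most $k$ times in total, which is precisely the bound $k_0+\cdots+k_n$. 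Turning this superposition into an explicit, weight- and colour-preserving bijection, and checking its compatibility with the colour-deletion map of the third step, is where the real work lies and is the step I would expect to require the most care.
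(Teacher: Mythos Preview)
The statement you are attempting to prove is labelled a \emph{conjecture} in the paper, and the paper does \emph{not} prove it. The only remark the authors make is that ``studying perfect crystals of $C_n^{(1)}$ of higher levels may be a good way to approach this conjecture.'' So there is no proof in the paper to compare your proposal against; your outline is in fact precisely the direction the authors themselves suggest but do not carry out.

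That said, your proposal is a research plan, not a proof, and you are candid about this. The genuine gaps are exactly the ones you flag. First, no closed formula for the level-$k$ energy $H_k$ analogous to \Thm{th:energy} is established; your expectation that $B_k$ decomposes into $k$ level-$1$ layers with additive energy plus corrections is plausible but unverified, and the ``correction terms'' are where the difficulty hides. Second, the colour-deletion bijection of \Sct{sec:deletingcolour} is built for a function $\ep$ with values in $\{0,1,2\}$ and a single free colour $c_0$; at level $k$ the energy takes values up to $2k$ and the ground state path involves several distinct vertices, so \Thm{theo:dualitycapaprimc} does not apply as stated and would need a substantial generalisation. Third, and most seriously, the passage from pairwise difference conditions to the Andrews--Gordon-type bound $\sum f_{e_j}\le k$ is, as you say, not a pair-by-pair check; your proposed fusion/stacking bijection is the natural idea, but constructing it so that it respects weights, colours, and the fictitious multiplicities $f_{\omega_i}=k_i$ simultaneously is an open problem, not a routine verification.

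In short: your strategy matches the paper's own hint, but the paper offers no proof, and the steps you identify as ``where the real work lies'' are genuinely open.
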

Studying perfect crystals of $C_n^{(1)}$ of higher levels may be a good way to approach this conjecture.

\medskip
The paper is organised as follows. In Section \ref{sec:energy}, we describe the level $1$ perfect crystal of $C_n^{(1)}$ more rigorously and we prove the formula for the energy function given in Theorem \ref{th:energy}. 
In Section \ref{sec:deletingcolour}, we give the general version of the bijection between $\Pp^{\gg}_{c_{b_i}}$ and $\Pp_{i,\rho} \times \Pp$, and prove Theorem \ref{th:bijCn} and \ref{th:CnCapparelli}.
In Section \ref{sec:roadfrob}, we describe the bijection between $\Pp_{i,\rho}$ and  $C_n^{(1)}$-Frobenius partitions in a general setting.
In Section \ref{sec:frequenciesonpaths}, we describe the bijection with partitions described by frequencies on paths in a general setting and consider some particular cases of our general bijections to establish the CMPP conjecture for all level $1$ standard modules of $C_n^{(1)}$.

\section{A level $1$ perfect crystal of $C_n^{(1)}$}
\label{sec:energy}
\subsection{Description of the crystal}
The following perfect crystal of type $C_n^{(1)}$ was introduced by Kang, Kashiwara and Misra in \cite{KKM} together with an energy function. Here we use the particular case of level $1$ as stated in \cite{HKL}.

\begin{theo}[Kang--Kashiwara--Misra 1994]
A perfect crystal of level $1$ of type $C_n^{(1)}$ can be defined as 
\begin{equation}
\label{eq:crystalKKM}
\B=
 \Big\{ (x_1,\dots,x_n, \ov{x}_n,\dots, \ov{x}_1) \,\Big\vert\,
x_i, \ov{x}_i \in \Z_{\geq0},\
\textstyle\sum_{i=1}^{n} (x_i + \ov{x}_i) = \textnormal{$0$ or $2$} \Big\}.
\end{equation}
The action of the Kashiwara operator $\tilde{f}_i$ on $b = (x_1,\dots,x_n, \ov{x}_n,\dots, \ov{x}_1) \in \B$ is given by
$$\tilde{f}_0 b =
\begin{cases}
(x_1 + 2, x_2, \dots, \ov{x}_2, \ov{x}_1) &
 \text{ if } x_1 \geq \ov{x}_1,\\
(x_1 + 1, x_2, \dots, \ov{x}_2, \ov{x}_1 - 1) &
 \text{ if } x_1 = \ov{x}_1 - 1,\\
(x_1, x_2, \dots, \ov{x}_2, \ov{x}_1 - 2) &
 \text{ if } x_1 \leq \ov{x}_1 - 2,
\end{cases}$$
for $i=1,\dots,n-1$,
$$\tilde{f}_i b =
\begin{cases}
(x_1, \dots, x_i - 1, x_{i+1} + 1, \dots, \ov{x}_1) &
\text{ if } x_{i+1} \geq \ov{x}_{i+1},\\
(x_1, \dots, \ov{x}_{i+1}-1, \ov{x}_{i} + 1, \dots, \ov{x}_1) &
\text{ if } x_{i+1} < \ov{x}_{i+1},
\end{cases}$$
and
$$\tilde{f}_n b = (x_1, \dots, x_n - 1, \ov{x}_n + 1, \dots, \ov{x}_1).$$
Conversely, the action of the operator $\eit$ on $\B$ is given by
$$\tilde{e}_0 b =
\begin{cases}
(x_1 - 2, x_2, \dots, \ov{x}_2, \ov{x}_1) &
 \text{ if } x_1 \geq \ov{x}_1 + 2,\\
(x_1 - 1, x_2, \dots, \ov{x}_2, \ov{x}_1 + 1) &
 \text{ if } x_1 = \ov{x}_1 + 1,\\
(x_1, x_2, \dots, \ov{x}_2, \ov{x}_1 + 2) &
 \text{ if } x_1 \leq \ov{x}_1,
\end{cases}$$
for $i=1,\dots,n-1$,
$$\eit b =
\begin{cases}
(x_1, \dots, x_i + 1, x_{i+1} - 1, \dots, \ov{x}_1) &
\text{ if } x_{i+1} > \ov{x}_{i+1},\\
(x_1, \dots, \ov{x}_{i+1} + 1, \ov{x}_{i} - 1, \dots, \ov{x}_1) &
\text{ if } x_{i+1} \leq \ov{x}_{i+1},
\end{cases}$$
and
\begin{equation*}
\tilde{e}_n b = (x_1, \dots, x_n + 1, \ov{x}_n - 1, \dots, \ov{x}_1).
\end{equation*}

Moreover, an energy function for $\B$ is given, for all $b = (x_1,\dots,x_n, \ov{x}_n,\dots, \ov{x}_1)$ and

\noindent  $b' = (x'_1,\dots,x'_n, \ov{x}'_n,\dots, \ov{x}'_1) \in \B$, by

\begin{equation}
\label{eq:KKMenergy}
H(b \ot b') = \max \big\{ \theta_j(b \ot b') ,  \theta'_j(b \ot b'),  \eta_j(b \ot b'),  \eta'_j(b \ot b') : j \in \{1, \dots , n \} \big\},
\end{equation}
where
\begin{align*}
\theta_j(b \ot b') &= \sum_{k=1}^{j-1} (\ov{x}_k - \ov{x}_k') + \frac{s(b')-s(b)}{2},\\
\theta'_j(b \ot b') &= \sum_{k=1}^{j-1} (x'_k - x_k) + \frac{s(b)-s(b')}{2},\\
\eta_j(b \ot b') &= \sum_{k=1}^{j-1} (\ov{x}_k - \ov{x}_k') + (\ov{x}_j - x_j) + \frac{s(b')-s(b)}{2},\\
\eta'_j(b \ot b') &= \sum_{k=1}^{j-1} (x_k' - x_k) + (x'_j - \ov{x}'_j) + \frac{s(b)-s(b')}{2},
\end{align*}
and
$$s(b) = \sum_{k=1}^n x_k + \sum_{k=1}^n \ov{x}_k.$$
\end{theo}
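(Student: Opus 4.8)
This is the theorem of Kang, Kashiwara and Misra \cite{KKM} (its level $1$ form being recorded in \cite{HKL}), so strictly there is nothing new to prove; let me nonetheless sketch how one establishes it. The first step is to pin down the \emph{classical} crystal structure. Forgetting the node $0$, one equips $\B$ with functions $\varepsilon_i,\varphi_i$ ($1\le i\le n$) read off from the coordinates by the usual bracketing rule (for instance $\varphi_n=x_n$, $\varepsilon_n=\ov x_n$, and $\varphi_i,\varepsilon_i$ from the signature of $(x_i,\ov x_i,x_{i+1},\ov x_{i+1})$ when $i<n$), checks the abstract-crystal axioms at each $i\in\{1,\dots,n\}$, and identifies the connected components: the singleton $s(b)=0$ is the trivial crystal $B(0)$, while the $s(b)=2$ part is connected and has $n(2n+1)=\dim V(2\Lambda_1)$ elements, hence is $B(2\Lambda_1)$. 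Thus, as a $U_q(\mathfrak{sp}_{2n})$-crystal, $\B\cong B(0)\sqcup B(2\Lambda_1)$, with $(x,y)$ (the element carrying boxes in positions $x\le y$ of $\ov{[n]}$) corresponding to the length-two semistandard row tableau with entries $x\le y$; that the displayed formulas for $\tilde e_i,\tilde f_i$ reproduce the known crystal structure of $B(2\Lambda_1)$ is a routine type-$C$ tableau computation.

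For the node $0$ one defines $\varepsilon_0,\varphi_0$ from the pair $(x_1,\ov x_1)$ and checks the crystal axioms at $i=0$. Since $a_{01}a_{10}=2$ in $C_n^{(1)}$, the operator $\tilde f_0$ either pushes two boxes into slot $1$, or exchanges a box between slots $\ov 1$ and $1$, or removes two boxes from slot $\ov 1$ — exactly the rank-one ($A_1^{(1)}$-type) rule on the pair of slots $\{1,\ov 1\}$ — so this is a known computation. Together with the classical structure this makes $\B$ an affine $C_n^{(1)}$-crystal, and one then verifies \emph{perfectness of level $1$} in the sense of \cite{KMN2a}: $\B\otimes\B$ is connected, $\langle c,\varepsilon(b)\rangle\ge 1$ for every $b$, and both $\varepsilon$ and $\varphi$ restrict to bijections from $\B_{\min}=\{\emptyset\}\cup\{(i,\ov i):1\le i\le n\}$ onto $\{\Lambda_0,\dots,\Lambda_n\}$ (with $\emptyset\mapsto\Lambda_0$). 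This is a finite check.

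Finally, that \eqref{eq:KKMenergy} is an energy function can be seen from the standard characterisation: $H\colon\B\otimes\B\to\Z$ is an energy function if and only if it is constant on the connected components of $\B\otimes\B$ as a $U_q(\mathfrak{sp}_{2n})$-crystal, and $H(\tilde e_0(b\otimes b'))-H(b\otimes b')$ equals $+1$ or $-1$ according to which tensor factor $\tilde e_0$ acts on (the sign being fixed by the tensor convention), up to an overall additive normalisation. Here the normalisation is $H(\emptyset\otimes\emptyset)=0$, which holds since $\theta_j,\theta'_j,\eta_j,\eta'_j$ all vanish when $s(b)=s(b')=0$. So one checks: (i) each of $\theta_j,\theta'_j,\eta_j,\eta'_j$ — hence also their maximum — is unchanged when some $\tilde e_i$ ($i\ne 0$) moves a single box between two slots adjacent in the chain $1,\dots,n,\bar n,\dots,\ov 1$, whether it acts on $b$ or on $b'$; and (ii) when $\tilde e_0$ acts on the left factor the partial sums over the first coordinates shift so that the maximum increases by $1$, and when it acts on the right factor the maximum decreases by $1$.

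The step I expect to be the real work is (i)--(ii): since $H$ is a \emph{maximum} of the $4n$ affine quantities $\theta_j,\theta'_j,\eta_j,\eta'_j$ rather than a single linear form, one has to track which index $j$ attains the maximum and how the four families move, case by case — the cases $x_{i+1}\ge\ov x_{i+1}$ versus $x_{i+1}<\ov x_{i+1}$ for $1\le i\le n-1$, the action of $\tilde f_n$ at slot $n$, and the trichotomy $x_1\ge\ov x_1+2$ / $x_1=\ov x_1+1$ / $x_1\le\ov x_1$ governing $\tilde e_0$. The computation is elementary but long; by comparison the identification of the classical components is standard type-$C$ crystal combinatorics and the perfectness verification is finite. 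Below we will in any case replace \eqref{eq:KKMenergy} by the far more economical closed form of Theorem~\ref{th:energy}, whose correctness can itself be checked either directly against \eqref{eq:KKMenergy} or, once more, by the energy-function characterisation used above.
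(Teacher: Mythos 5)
You are right that this theorem is not proved in the paper at all: it is imported verbatim from Kang--Kashiwara--Misra \cite{KKM} (in the level-$1$ form recorded in \cite{HKL}), and the paper's own work only begins afterwards, with the reformulation of the energy function in Theorem \ref{th:energy}. Your sketch of how the cited result is established --- decomposing $\B$ classically as $B(0)\sqcup B(2\Lambda_1)$ (the counting $n(2n+1)$ is correct), noting that node $0$ acts by the rank-one rule on the pair $(x_1,\ov{x}_1)$, verifying the finitely many perfectness conditions with $\B_{\min}=\{\emptyset\}\cup\{(i,\ov{i})\}$, and characterising the energy function by constancy under the classical Kashiwara operators together with the $\pm1$ shift under $\tilde e_0$ --- is the standard route and is consistent with the literature the paper relies on. Since you explicitly defer the long case-by-case verification in your steps (i)--(ii) and the type-$C$ tableau identification, what you have is an accurate outline rather than a complete proof, which is entirely appropriate here: the paper itself simply cites the result, and nothing downstream depends on reproving it.
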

Using the set
$$\overline{[n]}= \{1, \dots , n, \overline{n} , \dots , \overline{1}\}$$
and order
$$1<2<\cdots <n-1<n < \overline{n}< \overline{n-1}<\cdots<\overline{2}<\overline{1}$$
defined in the introduction,
the crystal $\B$ can be rewritten as
$$\B = \{(0, \dots, 0) \} \sqcup \Big\{ e_x + e_y \,\Big\vert\, x \leq y,\ x, y \in \overline{[n]} \Big\},$$
where $e_j$ is the vector with entries indexed by $\overline{[n]}$ having all entries $0$ except a $1$ in position $j$.

Thus we can identify $\B$ with the crystal
$$\{\emptyset \} \sqcup \{(x,y) | x\leq y \in \overline{[n]}\},$$
via the correspondence
\begin{align*}
(0, \dots, 0) &\leftrightarrow \emptyset,\\
e_x + e_y  &\leftrightarrow (x,y) \text{ for all }  1 \leq x \leq y \leq \ov{1}.
\end{align*}
From now on, for simplicity of notation, we will always consider that
$$\B = \{\emptyset \} \sqcup \{(x,y) | x\leq y \in \overline{[n]}\},$$
as shown on Figure \ref{fig:crystalb}.

Moreover, with the same correspondence, the functions $\theta_j, \theta_j', \eta_j , \eta_j'$ can be rewritten as follows.

\begin{prop}
For all  $x\leq y \in \overline{[n]}$ and $j \in \{1, \dots, n\}$, we have
\begin{align*}
\theta_j((x,y) \ot (x',y')) &= \chi(x > \ov{j}) + \chi(y > \ov{j}) - \chi(x'> \ov{j}) - \chi(y' >\ov{j}),\\
\theta'_j((x,y) \ot (x',y')) &= \chi(x' < j) + \chi(y' < j) - \chi(x < j) - \chi(y< j),\\
\eta_j((x,y) \ot (x',y')) &= \chi(x \geq \ov{j}) + \chi(y \geq \ov{j}) - \chi(x'> \ov{j}) - \chi(y' >\ov{j}) - \chi(x=j) - \chi(y=j),\\
\eta'_j((x,y) \ot (x',y')) &= \chi(x' \leq j) + \chi(y' \leq j) - \chi(x < j) - \chi(y< j) - \chi(x'= \ov{j}) - \chi(y'= \ov{j}).
\end{align*}
\end{prop}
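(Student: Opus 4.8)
The plan is to push everything through the explicit coordinate description of the crystal, so that the $\chi$-formulas simply fall out of a routine substitution. First I would record the dictionary between the two models: if $(x,y)$ with $x\leq y\in\ov{[n]}$ corresponds to $e_x+e_y$, then its coordinates $(x_1,\dots,x_n,\ov x_n,\dots,\ov x_1)$ are $x_k=\chi(x=k)+\chi(y=k)$ and $\ov x_k=\chi(x=\ov k)+\chi(y=\ov k)$ for $k\in\{1,\dots,n\}$. In particular $s(b)=\sum_{k}x_k+\sum_{k}\ov x_k=2$ for every $b\in\B\setminus\{\emptyset\}$. Since the Proposition only concerns non-empty crystal elements, all the terms $\tfrac{s(b')-s(b)}{2}$ and $\tfrac{s(b)-s(b')}{2}$ occurring in $\theta_j,\theta'_j,\eta_j,\eta'_j$ vanish identically, and we may drop them from the start.

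The second step is to evaluate the two relevant partial sums using the total order $1<\cdots<n<\ov n<\cdots<\ov1$ on $\ov{[n]}$. The indices $k\in\{1,\dots,j-1\}$ pick out exactly the letters $<j$, so $\sum_{k=1}^{j-1}x_k=\chi(x<j)+\chi(y<j)$; and they pick out, among barred letters, exactly those $>\ov j$, so $\sum_{k=1}^{j-1}\ov x_k=\chi(x>\ov j)+\chi(y>\ov j)$. Substituting these into the definitions of $\theta_j(b\ot b')=\sum_{k=1}^{j-1}(\ov x_k-\ov x'_k)$ and $\theta'_j(b\ot b')=\sum_{k=1}^{j-1}(x'_k-x_k)$ gives the stated formulas directly.

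For $\eta_j$ and $\eta'_j$ I would use the relations $\eta_j(b\ot b')=\theta_j(b\ot b')+\ov x_j-x_j$ and $\eta'_j(b\ot b')=\theta'_j(b\ot b')+x'_j-\ov x'_j$ (valid once the $s$-terms are cancelled), then insert $\ov x_j=\chi(x=\ov j)+\chi(y=\ov j)$, $x_j=\chi(x=j)+\chi(y=j)$, $x'_j=\chi(x'=j)+\chi(y'=j)$, $\ov x'_j=\chi(x'=\ov j)+\chi(y'=\ov j)$, and simplify using the elementary identities $\chi(x>\ov j)+\chi(x=\ov j)=\chi(x\geq\ov j)$ and $\chi(x<j)+\chi(x=j)=\chi(x\leq j)$ (and the same for $y,x',y'$). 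This is a purely mechanical rewriting and yields the last two formulas.

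No genuine obstacle is expected. The only points requiring a little care are the bookkeeping with the order on $\ov{[n]}$ — so that ``$x$ is among the first $j-1$ barred letters'' is correctly identified with ``$x>\ov j$'' — and the observation that the formulas are asserted only for non-empty elements, where $s(b)=s(b')=2$ and the constant terms disappear. Everything else is a short computation that could be presented in a couple of lines per function.
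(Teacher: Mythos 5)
Your proposal is correct and follows the same route as the paper, which disposes of the proposition in one line by invoking the correspondence $(x,y)\leftrightarrow e_x+e_y$ and the identity $x_k=\chi(x=k)+\chi(y=k)$ (with the analogous fact for $\ov{x}_k$); your version simply spells out the substitution, the cancellation of the $\tfrac{s(b)-s(b')}{2}$ terms, and the merging of $\chi(x>\ov{j})+\chi(x=\ov{j})=\chi(x\geq\ov{j})$ explicitly. Nothing further is needed.
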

The proof follows immediately from the correspondence above and the fact that $x_k=\chi(x = k) + \chi(y = k) $.

\subsection{Rewriting the energy function}
Now that the perfect crystal and energy function have been defined, we prove our alternative formulation of the energy function from Theorem \ref{th:energy}. 
To do so, we show that the energy function $H$ defined in \eqref{eq:KKMenergy} is actually the same as the function $H'$ defined by
$$
\begin{cases}
H'(\emptyset\ot \emptyset)=0,\\
H'(\emptyset\ot (x,y))=H((x,y)\ot\emptyset )=1,\\
\end{cases}
$$
and
\begin{equation}
\label{eq:Hdef}
H'((x,y)\ot (x',y')) = \begin{cases}
\chi(x\geq x')+\chi(y\geq y')-\chi(y\geq y'>x \geq x') \ \ \text{if} \ \ \overline{y'}\neq x, \\
\chi(x>x')+\chi(y> y')-\chi(y> y'>x > x') \ \ \text{if} \ \ \overline{y'}= x.
\end{cases}
\end{equation}
We prove this in several steps. First, we prove the following lemma.

\begin{lem}
For all $x \leq y \in \ov{[n]}$,
$$
\begin{cases}
H(\emptyset\ot \emptyset)=0,\\
H(\emptyset\ot (x,y))=H((x,y)\ot\emptyset )=1.
\end{cases}
$$
\end{lem}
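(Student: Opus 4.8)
The plan is to evaluate the Kang--Kashiwara--Misra energy formula \eqref{eq:KKMenergy} directly on the three relevant tensor products, using the fact that the empty element $\emptyset$ corresponds to the zero vector $(0,\dots,0)$, so $s(\emptyset)=0$ and $x_k=\ov x_k=0$ for all $k$, while any nonzero element $(x,y)$ has $s((x,y))=2$. First I would dispose of $H(\emptyset\ot\emptyset)=0$: here $b=b'=\emptyset$, so every summand $\sum_{k=1}^{j-1}(\cdots)$ vanishes, and the correction terms $\tfrac{s(b')-s(b)}{2}$ and $\tfrac{s(b)-s(b')}{2}$ are both $0$; moreover $\ov x_j-x_j=0$ and $x'_j-\ov x'_j=0$, so $\theta_j=\theta'_j=\eta_j=\eta'_j=0$ for every $j$, whence the maximum is $0$.

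Next I would compute $H(\emptyset\ot(x,y))$, i.e. $b=\emptyset$, $b'=(x,y)$, so $s(b)=0$, $s(b')=2$, and all coordinates of $b$ are zero. Then $\theta_j=-\sum_{k<j}\ov x'_k+1$, $\theta'_j=-\sum_{k<j}x'_k-1$, $\eta_j=-\sum_{k<j}\ov x'_k+1$ (since $\ov x_j-x_j=0$), and $\eta'_j=\sum_{k<j}x'_k+x'_j-\ov x'_j-1$. Since all the partial sums $\sum_{k<j}x'_k$ and $\sum_{k<j}\ov x'_k$ are nonnegative, $\theta_j\le 1$ and $\eta_j\le 1$, with equality when $j=1$ (empty sum). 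The terms $\theta'_j$ are always $\le -1<1$. For $\eta'_j$: writing $m_j:=\sum_{k\le j}x'_k$ and noting $\sum_{k<j}x'_k+x'_j=m_j$, we get $\eta'_j=m_j-\ov x'_j-1\le m_n-1=(\text{number of unbarred coordinates of }(x,y))-1\le 1$; so $\eta'_j\le 1$ as well. Hence $\max$ over all $j$ and all four families is exactly $1$, attained by $\theta_1$ (or $\eta_1$). The same argument with $b\leftrightarrow b'$, $s$-values swapped, shows $H((x,y)\ot\emptyset)=1$ symmetrically (the roles of $\theta$ and $\theta'$, and of $\eta$ and $\eta'$, interchange, and the sign of each correction term flips).

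I do not anticipate a genuine obstacle here: the computation is entirely mechanical once one substitutes the zero vector and uses $x'_k=\chi(x=k)+\chi(y=k)$ together with nonnegativity of partial sums. The only point requiring a little care is checking that none of the $2n$ quantities $\eta'_j$ (resp. $\eta_j$ in the symmetric case) can exceed $1$ — this is where one uses that $(x,y)$ has at most two unbarred entries, so $\sum_k x'_k\le 2$, and in fact $m_j-\ov x'_j\le 2$ with the value $2$ forcing $x'_j$ large and $\ov x'_j=0$, still giving $\eta'_j\le1$. Once all four families are bounded by $1$ and one value equals $1$, the lemma follows from the definition of $H$ as a maximum.
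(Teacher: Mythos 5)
Your approach is essentially the paper's: evaluate the four families $\theta_j,\theta'_j,\eta_j,\eta'_j$ of the Kang--Kashiwara--Misra formula \eqref{eq:KKMenergy} on the relevant tensors, check that each is bounded by $1$ (resp.\ equals $0$ in the $\emptyset\ot\emptyset$ case), and exhibit one value equal to $1$; the paper performs the identical computation in the rewritten $\chi$-notation and treats $H((x,y)\ot\emptyset)$ explicitly while quoting symmetry for $H(\emptyset\ot(x,y))$, exactly mirroring your choice of which case to do in detail. One computational slip to fix: with $b=\emptyset$ and $b'=(x,y)$ you have
$$\theta'_j=\sum_{k=1}^{j-1}x'_k+\tfrac{s(b)-s(b')}{2}=\sum_{k<j}x'_k-1,$$
not $-\sum_{k<j}x'_k-1$, so your claim that $\theta'_j\le-1$ is false (for instance $(x,y)=(1,2)$ gives $\theta'_3=1$). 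This does not damage the argument: the same observation $\sum_k x'_k\le 2$ that you invoke for $\eta'_j$ yields $\theta'_j\le 1$, and the maximum is still $1$, attained at $\theta_1$ (or $\eta_1$), so the lemma follows as you intend.
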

\begin{proof}
By definition, for all $1 \leq j \leq n$,
$$\theta_j(\emptyset \ot \emptyset) = \theta'_j(\emptyset \ot \emptyset) = \eta_j(\emptyset \ot \emptyset) = \eta'_j(\emptyset \ot \emptyset) = 0,$$
and hence $H(\emptyset\ot \emptyset)=0$.

Now let $x \leq y \in \ov{[n]}$. We have for all $1 \leq j \leq n$,
\begin{align*}
\theta_j((x,y) \ot \emptyset) &= \chi(x > \ov{j}) + \chi(y > \ov{j}) - 1,\\
\theta'_j((x,y) \ot \emptyset) &= - \chi(x < j) - \chi(y< j) +1,\\
\eta_j((x,y) \ot \emptyset) &= \chi(x \geq \ov{j}) + \chi(y \geq \ov{j}) - \chi(x=j) - \chi(y=j)-1,\\
\eta'_j((x,y) \ot \emptyset) &=  - \chi(x < j) - \chi(y< j) +1.
\end{align*}
Thus for all $1 \leq j \leq n$, the above quantities are at most $1$. Moreover,  for all $x \leq y \in \ov{[n]}$, we have $\theta'_1((x,y) \ot \emptyset)=1$, and thus $H((x,y) \ot \emptyset)=1$.

The proof of $H(\emptyset \ot (x,y))=1$ works in the same way.
\end{proof}

Now let us show that $H((x,y)\ot (x',y'))$ and $H'((x,y)\ot (x',y'))$ coincide for all $x \leq y, x' \leq y' \in \ov{[n]}$.
First we need a lemma about $H'$.

\begin{lem}
\label{lem:H012}
For all $x \leq y, x' \leq y' \in \ov{[n]}$, $0 \leq H'((x,y)\ot (x',y')) \leq 2$.
Moreover,
\begin{equation}\label{eq:h=2}
H'((x,y)\ot (x',y')) = 2\text{ if and only if }x\geq y',
\end{equation}
and 
\begin{align}
H'((x,y)\ot (x',y')) = 0 \text{ if and only if either }&x< x'\text{ and }y< y'\label{eq:asrho}\\
\text{or}\quad&\overline{y}\geq x = \overline{y'}=x'\label{eq:inf}\\
\text{or}\quad&\overline{y}= x = \overline{y'}< x'.\label{eq:sup}
\end{align}
In the other cases, $H'((x,y)\ot (x',y'))=1$.
\end{lem}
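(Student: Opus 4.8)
The plan is to verify the statement by a direct case-check on the two-branch formula \eqref{eq:Hdef} for $H'$: I would establish in turn the bound $0 \le H' \le 2$, the criterion for $H'=2$, and the criterion for $H'=0$, after which the value $1$ is forced in all remaining cases. Throughout I would use only that the order on $\overline{[n]}$ is total, that $u \mapsto \overline{u}$ is an order-reversing involution, and the elementary fact that $\overline{u} > u$ for $u \in \{1,\dots,n\}$ while $\overline{u} < u$ otherwise (so in particular $\overline{u} \ne u$ for every $u$).

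First, the bound. In each branch of \eqref{eq:Hdef} the subtracted indicator, namely $\chi(y \ge y' > x \ge x')$ or $\chi(y > y' > x > x')$, equals $1$ only when both added indicators equal $1$, since the chain $y \ge y' > x \ge x'$ forces $y \ge y'$ and $x \ge x'$, and likewise for the strict chain. Hence $H'$ equals $1$ whenever the subtracted term is $1$, and otherwise equals $\chi(x \ge x') + \chi(y \ge y')$ (resp. its strict analogue $\chi(x > x') + \chi(y > y')$), which lies in $\{0,1,2\}$. This gives $0 \le H' \le 2$.

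Next, $H'=2$. In the branch $\overline{y'} \ne x$ this forces both added indicators to be $1$ and the subtracted one to be $0$, i.e.\ $x \ge x'$, $y \ge y'$, and failure of the chain $y \ge y' > x \ge x'$; given the first two inequalities the chain fails exactly when $y' \le x$. Conversely, since $x \le y$ and $x' \le y'$, the single inequality $x \ge y'$ already implies $x \ge x'$ and $y \ge y'$, so in this branch $H'=2 \iff x \ge y'$. In the branch $\overline{y'}=x$ the same reasoning with strict inequalities gives $H'=2 \iff x>x',\ y>y'$, and $y' \le x$; here $x = \overline{y'} \ne y'$, so $y' \le x$ is the same as $y' < x$, which in turn forces $x' \le y' < x$ and $y \ge x > y'$, hence $x > x'$ and $y > y'$. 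Thus $H'=2 \iff x \ge y'$ in both branches, which is \eqref{eq:h=2}. Finally, $H'=0$: in the branch $\overline{y'} \ne x$, vanishing forces the subtracted term and both added indicators to be $0$, i.e.\ $x < x'$ and $y < y'$ — exactly \eqref{eq:asrho} — and conversely \eqref{eq:asrho} makes all three indicators vanish in either branch. In the branch $\overline{y'}=x$, the same argument gives $H'=0 \iff x \le x'$ and $y \le y'$, which I would split according to strictness: $x<x',\,y<y'$ is \eqref{eq:asrho}; $x<x',\,y=y'$ yields $\overline{y} = \overline{y'} = x < x'$, i.e.\ \eqref{eq:sup}; and $x=x'$ (with $y \le y'$, hence $\overline{y} \ge \overline{y'} = x$) yields $\overline{y} \ge x = \overline{y'} = x'$, i.e.\ \eqref{eq:inf} (this sub-case also covers $x=x',\,y=y'$). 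For the converse, \eqref{eq:inf} and \eqref{eq:sup} each force $x = \overline{y'}$ together with $x \le x'$ and $y \le y'$, hence $H'=0$. Since $H' \in \{0,1,2\}$, all configurations not covered by \eqref{eq:h=2} or by \eqref{eq:asrho}, \eqref{eq:inf}, \eqref{eq:sup} give $H'=1$.

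The whole argument is elementary sign-chasing; the one place I would be most careful — the only real source of bookkeeping error — is the $H'=0$ analysis in the branch $\overline{y'}=x$, where one must correctly match the three strict/equal patterns of $(x \le x',\ y \le y')$ to the three stated alternatives \eqref{eq:asrho}, \eqref{eq:inf}, \eqref{eq:sup} and check that no configuration is left out (in particular that $x=x',\ y=y'$ falls under \eqref{eq:inf}).
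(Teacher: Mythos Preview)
Your proof is correct and follows essentially the same approach as the paper's own proof: a direct case analysis on the two branches of \eqref{eq:Hdef}, using that the subtracted indicator can only fire when both added indicators do, then characterising $H'=2$ and $H'=0$ branch by branch. The only cosmetic differences are in the bookkeeping of the $\overline{y'}=x$ branch (you argue $y'\le x\Leftrightarrow y'<x$ directly from $\overline{y'}\ne y'$, while the paper rules out $x=x'$ or $y=y'$ by contradiction), and your split of $x\le x',\ y\le y'$ into three sub-cases is a trivially equivalent reorganisation of the paper's.
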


\begin{rem}
\label{rem:rk1}
When $x,x',y,y'$ are not in Cases \eqref{eq:inf} and \eqref{eq:sup}, $H'((x,y)\ot (x',y'))$ is always equal to the first line of \eqref{eq:Hdef}.
\end{rem}

\begin{proof}
The function $\chi$ has values in $\{0,1\}$, so from \eqref{eq:Hdef}, $H'$ has values in $\{-1,0,1,2\}$. But in both lines of \eqref{eq:Hdef}, the argument of the last $\chi$ contains the arguments of the first two $\chi$'s, so the last $\chi$ can only be $1$ whenever the first two are $1$ as well. Therefore $H'$ never evaluates to $-1$. The first statement is proved.

\medskip
Now let us prove \eqref{eq:h=2}. By the previous argument, if $\overline{y'}\neq x$, we have $H'((x,y)\ot (x',y')) = 2$ iff $\chi(x\geq x')=\chi(y\geq y')=1$ and $\chi(y\geq y'>x \geq x')=0$, i.e. iff $x\geq x'$, $y\geq y'$ and $x \geq y'$. But we always have $x \leq y$ and $x' \leq y'$, so these three conditions are equivalent to $x \geq y'$ alone.

Similarly, if $\overline{y'}= x$, we have $H'((x,y)\ot (x',y')) = 2$ iff $\chi(x> x')=\chi(y> y')=1$ and $\chi(y> y'>x > x')=0$, i.e. iff $x > x'$, $y > y'$ and $x \geq y'$. As before, the conditions $x \geq x'$ and $y \geq y'$ are already included in $x \geq y'$. Moreover, we never have ($x=x'$ and $x \geq y'$) or ($y=y'$ and $x \geq y'$) at the same time, because it would mean that $y'=x'=x=\overline{y'}$, which is impossible. Thus if $\overline{y'}= x$, $x > x'$, $y > y'$ and $x \geq y'$ iff $x \geq y'$.

Summarising, we have $H'((x,y)\ot (x',y')) = 2$ iff ($\overline{y'}\neq x$ and $x \geq y'$) or ($\overline{y'}= x$ and $x \geq y'$), i.e. iff $x \geq y'$.

\medskip
Finally we treat the cases where $H'((x,y)\ot (x',y')) =0$. Observing that in the definition of $H'$, the last $\chi$ can only be $1$ whenever the two first are $1$ too, we deduce that $H'$ is equal to $0$ if and only if the three $\chi$'s in its definition are all equal to $0$, i.e. iff ($\overline{y'}\neq x$ and $x < x'$ and $y < y'$) or ($\overline{y'}= x$ and $x \leq x'$ and $y \leq y'$). The statement ($x \leq x'$ and $y \leq y'$) can be decomposed as (($x < x'$ and $y < y'$) or ($x=x'$ and $y \leq y'$) or ($x<x'$ and $y=y'$)). Thus $H'((x,y)\ot (x',y'))=0$ if and only if we are in one of the three cases \eqref{eq:asrho}--\eqref{eq:sup}.
\end{proof}

From the definition of $H$ and the fact that for all $x \leq y, x' \leq y' \in \ov{[n]}$, $\theta_1((x,y) \ot (x',y'))=0$, we know that $H$ takes values in $\{0,1,2\}$.
So we only have to prove that the values of $(x,y)\ot (x',y')$ for which $H$ and $H'$ are equal to $2$ (resp. $1$, $0$) coincide.

\begin{lem}
\label{lem:H2}
For all $x \leq y, x' \leq y' \in \ov{[n]}$,
$$H((x,y)\ot (x',y')) = 2\text{ if and only if }x\geq y'.$$
\end{lem}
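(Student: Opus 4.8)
The goal is to show that for $x \le y,\ x' \le y' \in \ov{[n]}$, the Kang--Kashiwara--Misra energy $H$ defined in \eqref{eq:KKMenergy} satisfies $H((x,y)\ot(x',y')) = 2$ exactly when $x \ge y'$. Since we already know (from $\theta_1 \equiv 0$) that $H$ takes values in $\{0,1,2\}$, and since $H$ is the maximum of the $4n$ quantities $\theta_j,\theta'_j,\eta_j,\eta'_j$, it suffices to prove two things: (i) if $x \ge y'$, then at least one of these quantities equals $2$; and (ii) if $x < y'$, then all of them are $\le 1$.

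For (i), assume $x \ge y'$. The plan is to exhibit an explicit index $j$ witnessing the value $2$. The natural candidate is $\theta'_j$ or $\theta_j$. Using the rewritten formulas from the Proposition, $\theta'_j((x,y)\ot(x',y')) = \chi(x'<j)+\chi(y'<j)-\chi(x<j)-\chi(y<j)$. Since $x \ge y' \ge x'$ and $x \le y$, if we pick $j$ with $y' < j \le x$ (which exists in $\{1,\dots,n\}$ precisely when $y' \in \{1,\dots,n-1\}$ and $x$ is an unbarred letter $\ge j$, i.e. when both $x$ and $y'$ lie in the unbarred range), then $\chi(x'<j)=\chi(y'<j)=1$ while $\chi(x<j)=\chi(y<j)=0$, giving $\theta'_j = 2$. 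The remaining positions of $x$ and $y'$ on the order \eqref{eq:orderbar} — e.g. when $x$ is barred, or $y'$ is barred, or $x=n$, $y'=\ov n$, etc. — must be handled by instead using $\theta_j$ (with $\theta_j = \chi(x>\ov j)+\chi(y>\ov j)-\chi(x'>\ov j)-\chi(y'>\ov j)$, choosing $j$ so that $\ov j$ separates $x'\le y'$ from $x \le y$), or the $\eta$'s when $x$ or $y'$ equals $\pm j$ and neither $\theta$ works directly. I would organise this as a short case analysis on where $x$ and $y'$ sit relative to the ``middle'' of the order; the case $x = y'$ (possible only when $x=y'=n$ or $x=y'=\ov n$ is excluded since then $x'\le x \le y'$ forces strict work with $\eta$) deserves its own line.

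For (ii), assume $x < y'$; I must show $\theta_j, \theta'_j, \eta_j, \eta'_j \le 1$ for every $j$. From the rewritten formulas, each of $\theta_j, \theta'_j$ is visibly a difference of a sum of two $\chi$'s and a sum of two $\chi$'s, so it is automatically $\le 2$, and it equals $2$ only if both of its positive $\chi$'s are $1$ and both negative ones are $0$; for $\theta_j$ that forces $x > \ov j$ and $y' \le \ov j$, hence $x > \ov j \ge y'$, contradicting $x < y'$; for $\theta'_j$ it forces $y' < j$ and $x \ge j$, hence $x \ge j > y'$, again a contradiction. For $\eta_j = \chi(x\ge\ov j)+\chi(y\ge\ov j)-\chi(x'>\ov j)-\chi(y'>\ov j)-\chi(x=j)-\chi(y=j)$ and similarly $\eta'_j$, the two extra subtracted terms only make things smaller, but I still need to rule out the value $2$: $\eta_j = 2$ would need $\chi(x\ge\ov j)=\chi(y\ge\ov j)=1$, $\chi(x'>\ov j)=\chi(y'>\ov j)=\chi(x=j)=\chi(y=j)=0$; from $x \ge \ov j$ and $x \ne j$ we get $x > \ov j$ (note $x \ge \ov j$ with $x$ not equal to $j$, and on the order \eqref{eq:orderbar} the only letter strictly between $j$ and $\ov j$ issues must be checked — actually $x \ge \ov j$ already means $x \ge \ov j$), combined with $y' \le \ov j$ (from $\chi(y'>\ov j)=0$) gives $x \ge \ov j \ge y'$; if $x > \ov j$ this contradicts $x<y'$, and if $x = \ov j$ then $\chi(x\ge\ov j)=1$ is fine but then $x = \ov j \ge y' \ge x$... one must be slightly careful, but the upshot is $x \ge y'$, contradiction. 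The symmetric argument disposes of $\eta'_j$.

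The main obstacle is purely bookkeeping: the order \eqref{eq:orderbar} on $\ov{[n]}$ is not the naive integer order, so translating inequalities like ``$x \ge y'$'' into statements about $\chi(x > \ov j)$, $\chi(x < j)$ etc. for a well-chosen $j$ requires care with the barred/unbarred dichotomy and the ``seam'' between $n$ and $\ov n$. I expect the cleanest writeup to fix, given $x \ge y'$, a threshold index $j$ depending on whether $y'$ (equivalently the pair $x' \le y'$) is unbarred, barred, or equals $n$/$\ov n$, and to verify in each case that one of $\theta_j, \theta'_j, \eta_j, \eta'_j$ hits $2$; part (ii) is then a uniform three-line contradiction argument as sketched above. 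No deep idea is needed beyond matching the two descriptions of the crystal order.
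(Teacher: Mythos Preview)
Your proposal is correct and follows essentially the same route as the paper: a case split on the position of $x$ and $y'$ relative to the barred/unbarred seam to exhibit a witness attaining $2$ when $x\ge y'$ (the paper uses $\theta'_{y'+1}$ when $x\le n$, $\theta_{j}$ with $\ov j=x-1$ when $x>\ov n$, and $\eta_n,\eta_j,\eta'_j$ for the boundary and $x=y'$ cases), and a uniform bound argument when $x<y'$. For part~(ii) the paper's phrasing is a touch cleaner than your contradiction argument and avoids the tangle you hit with $\eta_j$: rather than assuming $\eta_j=2$, it simply observes that $\chi(x\ge\ov j)=1$ forces $\chi(y'>\ov j)=1$ (since $y'>x\ge\ov j$), so one positive term is always cancelled by a negative one and $\eta_j\le 1$ immediately---no need to worry about whether $x=\ov j$ or $x>\ov j$.
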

\begin{proof}
We first prove that if $x\geq y'$, then $H((x,y)\ot (x',y')) = 2$. We have the following:
\begin{itemize}
\item If $x> y'$ and $x > \ov{n}$, then for $\ov{j}=x-1$, we have $\theta_j((x,y)\ot (x',y'))=2$.
\item If $x> y'$ and $x \leq n$, then for $j=y'+1$, we have $\theta'_j((x,y)\ot (x',y'))=2$.
\item If $x> y'$ and $x = \ov{n}$, then $\eta_n((x,y)\ot (x',y'))=2$.
\item If $x=y'$ and $x \geq \ov{n}$, then for $\ov{j}=x$, we have $\eta_j((x,y)\ot (x',y'))=2$.
\item If $x=y'$ and $x \leq n$, then for $j=x$, we have $\eta'_j((x,y)\ot (x',y'))=2$.
\end{itemize}
In all the cases above, i.e. for $x\geq y'$, we have indeed that $H((x,y)\ot (x',y')) = 2$.

\medskip
Now we prove that if $x<y'$, then $H((x,y)\ot (x',y')) \leq 1$. If $x<y'$, we have:
\begin{itemize}
\item For all $j$, $\theta_j((x,y)\ot (x',y'))\leq 1$, because if $\chi(x>\ov{j})=1$, then also $\chi(y'>\ov{j})=1$.
\item For all $j$, $\theta'_j((x,y)\ot (x',y'))\leq 1$, because if $\chi(y'<j)=1$, then also $\chi(x<j)=1$.
\item For all $j$, $\eta_j((x,y)\ot (x',y'))\leq 1$, because if $\chi(x\geq\ov{j})=1$, then also $\chi(y'>\ov{j})=1$.
\item For all $j$, $\eta'_j((x,y)\ot (x',y'))\leq 1$, because if $\chi(y' \leq j)=1$, then also $\chi(x<j)=1$.
\end{itemize}
Thus as soon as $x<y'$, we have $H((x,y)\ot (x',y')) \leq 1$.
\end{proof}

Now let us turn to the values for which $H=0$.
\begin{lem}
\label{lem:H0}
Let $x \leq y, x' \leq y' \in \ov{[n]}$. If $x,y,x',y'$ are in Cases \eqref{eq:asrho}, \eqref{eq:inf} or \eqref{eq:sup}, then $H((x,y)\ot (x',y')) =0$.
\end{lem}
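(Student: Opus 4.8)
The plan is to use the fact, already recorded above, that $H((x,y)\ot(x',y'))$ is the maximum over $j\in\{1,\dots,n\}$ of the four quantities $\theta_j,\theta'_j,\eta_j,\eta'_j$, rewritten as indicator sums in the Proposition, and that $\theta_1((x,y)\ot(x',y'))=0$, so that $H\ge 0$ automatically. Thus it suffices to show that in each of the three cases \eqref{eq:asrho}, \eqref{eq:inf}, \eqref{eq:sup} one has $\theta_j,\theta'_j,\eta_j,\eta'_j\le 0$ for every $j$; this forces $H=0$.

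For Case \eqref{eq:asrho}, where $x<x'$ and $y<y'$, I would simply invoke monotonicity of the indicator functions: from $x<x'$ one gets $\chi(x>\ov j)\le\chi(x'>\ov j)$, $\chi(x\ge\ov j)\le\chi(x'>\ov j)$, $\chi(x'<j)\le\chi(x<j)$ and $\chi(x'\le j)\le\chi(x<j)$, and likewise with $y,y'$ in place of $x,x'$. Substituting these into the four formulas, every positive indicator is dominated by a corresponding negative one, and the extra $-\chi(x=j),-\chi(y=j),-\chi(x'=\ov j),-\chi(y'=\ov j)$ terms in $\eta_j$ and $\eta'_j$ only make the expressions smaller; hence all four are $\le 0$.

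For Cases \eqref{eq:inf} and \eqref{eq:sup}, the key preliminary remark is that both hypotheses contain the equality $x=\ov{y'}$, while $(x',y')$ is a crystal element, so $x'\le y'$; since $\ov{y'}\le y'$ forces $y'$ to be barred, $x=\ov{y'}$ is \emph{unbarred}, say $x=i$ with $i\in\{1,\dots,n\}$. In Case \eqref{eq:inf} the pair then has the form $(i,y)\ot(i,\ov i)$ with $i\le y\le\ov i$, and in Case \eqref{eq:sup} it has the form $(i,\ov i)\ot(x',\ov i)$ with $i<x'\le\ov i$. Because $i$ is unbarred, $\chi(i>\ov j)=\chi(i\ge\ov j)=0$ and $\chi(\ov i<j)=\chi(\ov i\le j)=0$ for all $j$, while $\chi(\ov i\ge\ov j)=\chi(i\le j)$ and $\chi(\ov i>\ov j)=\chi(i<j)$; plugging these into the Proposition collapses each of $\theta_j,\theta'_j,\eta_j,\eta'_j$ to a short combination of indicators, which I would check is $\le 0$ by splitting into the subcases $j<i$, $j=i$, $j>i$ (and, in Case \eqref{eq:sup}, into $x'$ barred versus unbarred). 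I would carry out one of the four verifications explicitly in each case and note that the remaining ones are entirely analogous.

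The only step requiring a little care — the main obstacle — is the bookkeeping of barred versus unbarred entries inside the indicators of $\eta_j$ and $\eta'_j$; once one observes that $x$ is forced to be unbarred in Cases \eqref{eq:inf} and \eqref{eq:sup}, everything reduces to the elementary sign checks above. Combined with Lemma \ref{lem:H2} and the (similarly short) converse implication, this completes the identification of $H$ with $H'$ and hence the proof of Theorem \ref{th:energy}.
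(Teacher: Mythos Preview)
Your approach is correct and essentially the same as the paper's: both show that each of $\theta_j,\theta'_j,\eta_j,\eta'_j$ is $\le 0$ for all $j$, handling Case~\eqref{eq:asrho} by monotonicity of the indicators and Cases~\eqref{eq:inf}, \eqref{eq:sup} by a short case analysis. Your preliminary observation that $x=\ov{y'}$ together with $x'\le y'$ forces $x$ to be unbarred is in fact a slightly cleaner entry point than the paper's, which instead considers the barred and unbarred possibilities for $x$ separately inside the $\eta_j,\eta'_j$ analysis and rules out the barred case by contradiction; once you have written out one of the verifications in full (as you indicate), the argument is complete.
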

\begin{proof}
Note that, by definition of $\theta_j$ and $\theta'_j$, if $x \leq x'$ and $y \leq y'$, then $\theta_j \leq 0$ and $\theta'_j \leq 0$ for all $j$. Similarly, if $x < x'$ and $y < y'$, then $\eta_j \leq 0$ and $\eta'_j \leq 0$ for all $j$.
So we already know that when we are in Case \eqref{eq:asrho}, i.e. $x < x'$ and $y < y'$, we have $H((x,y)\ot (x',y')) =0$.

\medskip
Now let us study Case \eqref{eq:inf}, i.e. $\overline{y}\geq x = \overline{y'}=x'$.
When $x=x'$, the only $j$ for which $\eta_j$ could be equal to $1$ is $j=\ov{x}$, which can only happen if $x \in \{\ov{n}, \dots , \ov{1}\}$. Indeed it is the only case where $\chi(x \geq \ov{j})- \chi(x'> \ov{j})=1$. But then we do not have $x \leq \ov{y}$ (because  $x \leq y$ and $x$ is an overlined number), so we never have $\eta_j ((x,y)\ot (x',y'))= 1$ in Case \eqref{eq:inf}. 

Similarly, the only $j$ for which $\eta'_j$ could be equal to $1$ is $j=x$, which can only happen if $x \in \{1, \dots , n\}$. In that case we have
$$\eta'_j ((x,y)\ot (x',y'))= 1 - \chi(y'=\ov{j}),$$
which equals $0$ iff $y'=\ov{j}=\ov{x}$.

Thus in Case \eqref{eq:inf}, for all $j$, the functions $\theta_j, \theta'_j , \eta_j $ and $\eta'_j$ are at most $0$ and thus $H((x,y)\ot (x',y')) =0$.

\medskip
Finally we treat Case \eqref{eq:sup}, i.e. $\overline{y}= x = \overline{y'}< x'$.
When $y=y'$, the only $j$ for which $\eta_j$ could be equal to $1$ is $j=\ov{y}$, which can only happen if $y \in \{\ov{n}, \dots , \ov{1}\}$.
In that case we have
$$\eta_j ((x,y)\ot (x',y'))= 1 - \chi(x=j),$$
which equals $0$ iff $x=j=\ov{y}$.

Similarly, the only $j$ for which $\eta'_j$ could be equal to $1$ is $j=y$, which can only happen if $y \in \{1, \dots , n\}$.
But then we do not have $x=\ov{y}$ (because  $x \leq y$ and $y$ is an non-overlined number), so we never have $\eta_j ((x,y)\ot (x',y'))= 1$ in Case \eqref{eq:sup}. 

Thus in Case \eqref{eq:sup}, for all $j$, the functions $\theta_j, \theta'_j , \eta_j $ and $\eta'_j$ are at most $0$ and $H((x,y)\ot (x',y')) =0$.
\end{proof}

The only thing left to do is prove that in all the cases not treated in Lemmas \ref{lem:H2} and \ref{lem:H0}, $H$ has value $1$.

\begin{lem}
\label{lem:H1}
In all the other cases, i.e. if
\begin{align}
&x < y' \text{ and } (x >x' \text{ or } y >y'), \label{eq:case1}\\
\text{or } &x < y' \text{ and } x =x' \neq \ov{y'} \text{ and } {y \leq y'} , \label{eq:case2a}\\
\text{or } &x < y' \text{ and } y =y' \neq \ov{x} \text{ and } {x \leq x'}, \label{eq:case3a}
\end{align}
we have
 $$H((x,y)\ot (x',y')) =1.$$
\end{lem}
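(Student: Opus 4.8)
The plan is to observe first that all three cases \eqref{eq:case1}--\eqref{eq:case3a} share the hypothesis $x<y'$, so the second half of the proof of Lemma~\ref{lem:H2} shows that each of the quantities $\theta_j((x,y)\ot(x',y'))$, $\theta'_j((x,y)\ot(x',y'))$, $\eta_j((x,y)\ot(x',y'))$, $\eta'_j((x,y)\ot(x',y'))$ is at most $1$; combined with $\theta_1((x,y)\ot(x',y'))=0$ this gives $0\le H((x,y)\ot(x',y'))\le 1$. Since all these quantities are integers, it suffices, in each of the three cases, to produce a single index $j\in\{1,\dots,n\}$ and a single one of these four functions whose value at $(x,y)\ot(x',y')$ equals $1$. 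I would compute throughout with the rewritten expressions for $\theta_j,\theta'_j,\eta_j,\eta'_j$ established in the Proposition above. (One should also record that \eqref{eq:case1}--\eqref{eq:case3a} are exactly the pairs $(x,y)\ot(x',y')$ not covered by Lemmas~\ref{lem:H2} and~\ref{lem:H0}, so that the three lemmas together pin down $H$ and reproduce the trichotomy for $H'$ of Lemma~\ref{lem:H012}; this is a short verification using the order \eqref{eq:orderbar}.)

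I would begin with the generic case \eqref{eq:case1}, splitting according to whether $x>x'$ or $y>y'$ (at least one holds) and, within each, according to whether the relevant element is overlined. In the sub-case $x>x'$: if $x$ is overlined, take $j$ with $\ov j=x$ and use $\eta_j$ — here $\chi(x\ge\ov j)=1=\chi(y\ge\ov j)$ (the latter because $x\le y$ forces $y$ overlined), $\chi(x=j)=\chi(y=j)=0$, $\chi(x'>\ov j)=0$ because $x'<x=\ov j$, and $\chi(y'>\ov j)=1$ because $x<y'$, so $\eta_j=1$; if $x$ is not overlined, take $j=x$ and use $\theta'_j$, which equals $1$ since $\chi(x'<j)=1$, $\chi(x<j)=\chi(y<j)=0$, and $\chi(y'<j)=0$ (as $y'>x=j$). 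The sub-case $y>y'$ is handled in the same spirit, but cannot use the index attached to $y$ in $\eta_j$, because that would bring in an uncontrolled term $\chi(x=j)$; instead, when $y$ is overlined and $y\ne\ov n$ one takes $j$ so that $\ov j$ is the immediate \emph{predecessor} of $y$ in \eqref{eq:orderbar} and uses $\theta_j$ (the inequality $x<y'<y$ forces $\chi(x>\ov j)=0$, so $\theta_j=1$), the boundary case $y=\ov n$ is treated separately with $\eta_n$, and when $y$ is not overlined one takes $j=y$ and uses $\theta'_j$.

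For the equality cases \eqref{eq:case2a} and \eqref{eq:case3a} the $\theta$'s and the ``naive part'' of the $\eta$'s are all non-positive, so one must exploit the correction terms $-\chi(x=j)-\chi(y=j)$ of $\eta_j$ and $-\chi(x'=\ov j)-\chi(y'=\ov j)$ of $\eta'_j$. In case \eqref{eq:case2a} (so $x=x'\ne\ov{y'}$ and $y\le y'$): if $x$ is overlined, take $j$ with $\ov j=x$ and use $\eta_j$, which equals $1$ exactly as in the previous paragraph (no use of $x\ne\ov{y'}$ is needed here, since when $x$ is overlined $x=\ov{y'}$ is impossible anyway, as then $y'=\ov x$ would be non-overlined while $y'\ge x$); if $x$ is not overlined, take $j=x$ and use $\eta'_j$: every term but $\chi(x'\le j)=1$ vanishes, the vanishing of $\chi(y'=\ov j)$ being precisely the statement $\ov{y'}\ne x=j$, which is where the hypothesis enters. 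Case \eqref{eq:case3a} (so $y=y'\ne\ov x$ and $x\le x'$, whence automatically $x'\le y'=y$) is the mirror image: if $y$ is overlined, take $j$ with $\ov j=y$ and use $\eta_j$, the hypothesis $\ov x\ne y$ being exactly what forces $\chi(x=j)=0$; if $y$ is not overlined, take $j=y$ and use $\eta'_j$, which equals $1$ because $x'\le y'=y=j$ and $x<y'=j$.

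The argument is entirely elementary, and the only real difficulty is organisational: one must handle carefully the order \eqref{eq:orderbar} (immediate predecessors, and the fact that ``$z<a$ with $a\in\{1,\dots,n\}$'' forces $z$ non-overlined), the boundary cases $x=\ov n$ and $y=\ov n$, and the asymmetry — created by the correction terms of $\eta_j$ and $\eta'_j$ — that dictates when $\theta_j$ rather than $\eta_j$, or $\theta'_j$ rather than $\eta'_j$, must be used. Checking that the case split \eqref{eq:case1}--\eqref{eq:case3a} is exhaustive, when combined with Lemmas~\ref{lem:H2} and~\ref{lem:H0}, is also part of the work, but it reduces to a direct comparison with Lemma~\ref{lem:H012}; none of these verifications is deep.
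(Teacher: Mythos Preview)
Your proof is correct and follows essentially the same approach as the paper's: bound $H$ above by $1$ via Lemma~\ref{lem:H2}, then in each sub-case exhibit an index $j$ for which one of $\theta_j,\theta'_j,\eta_j,\eta'_j$ equals $1$, with the same case-split according to whether the relevant coordinate is overlined. The only notable variation is that for $x>x'$ with $x$ overlined you use $\eta_j$ (with $\ov j=x$) uniformly, whereas the paper splits into $\theta_{j+1}$ for $x\in\{\ov{n-1},\dots,\ov 1\}$ and $\eta_n$ for $x=\ov n$; your choice avoids a boundary case but is otherwise equivalent.
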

\begin{proof}
Thanks to Lemmas \ref{lem:H012} and \ref{lem:H2}, we know that $H$ takes values in $\{0,1\}$ when $x < y'$. Thus to prove this lemma, we only need, for all the possible values of $x,y,x',y'$ under consideration, to exhibit some $\theta_j, \theta'_j, \eta_j$ or $\eta'_j$ which are equal to $1$.

\medskip
Let us start with Case \eqref{eq:case1}.
\begin{itemize}
\item If $x>x'$, $x < y'$ and $x=\ov{j}$ for some $j \in \{1, \dots , n-1\}$, then $\theta_{j +1}((x,y)\ot (x',y'))= 1$.
\item If $x>x'$, $x < y'$ and $x=\ov{n}$, then $\eta_{n}((x,y)\ot (x',y'))= 1$.
\item If $x>x'$, $x < y'$ and $x=j$ for some $j \in \{1, \dots , n\}$, then $\theta'_{j}((x,y)\ot (x',y'))= 1$.
\item If $y>y'$, $x < y'$ and $y=\ov{j}$ for some $j \in \{1, \dots , n-1\}$, then $\theta_{j +1}((x,y)\ot (x',y'))= 1$.
\item If $y>y'$, $x < y'$ and $y=\ov{n}$, then $\eta_{n}((x,y)\ot (x',y'))= 1$.
\item If $y>y'$, $x < y'$ and $y=j$ for some $j \in \{1, \dots , n\}$, then $\theta'_{j}((x,y)\ot (x',y'))= 1$.
\end{itemize}
So we always have $H((x,y)\ot (x',y')) =1$ in Case \eqref{eq:case1}.

\medskip
Now we consider Case \eqref{eq:case2a}.
\begin{itemize}
\item If $ x =x' \neq \ov{y'}$, $x < y'$, $y \leq y'$ and $x=\ov{j}$ for some $j \in \{1, \dots , n\}$, then $\eta_{j}((x,y)\ot (x',y'))= 1$.
\item If $ x =x' \neq \ov{y'}$, $x < y'$, $y \leq y'$ and $x=j$ for some $j \in \{1, \dots , n\}$, then $\eta'_{j}((x,y)\ot (x',y'))= 1$.
\end{itemize}
Thus we always have $H((x,y)\ot (x',y')) =1$ in Case \eqref{eq:case2a}.

\medskip
We proceed similarly for Case \eqref{eq:case3a}.
\begin{itemize}
\item If $ y =y' \neq \ov{x}$, $x < y'$, $x \leq x'$ and $y=\ov{j}$ for some $j \in \{1, \dots , n\}$, then $\eta_{j}((x,y)\ot (x',y'))= 1$.
\item If $ y =y' \neq \ov{x}$, $x < y'$, $x \leq x'$ and $y=j$ for some $j \in \{1, \dots , n\}$, then $\eta'_{j}((x,y)\ot (x',y'))= 1$.
\end{itemize}
Thus we always have $H((x,y)\ot (x',y')) =1$ in Case \eqref{eq:case3a}, and the lemma is proved.
\end{proof}

We have shown that for all $x \leq y, x' \leq y' \in \ov{[n]}$,
$H((x,y)\ot (x',y'))= H'((x,y)\ot (x',y'))$, and thus Theorem \ref{th:energy} is proved.

\section{Deleting a colour}
\label{sec:deletingcolour}
In this section we present a method given in \cite{KonanPhD} and use it to simplify the difference conditions given by the energy function $H$ introduced in Theorem \ref{th:energy}. This method is a generalisation of the connection between generalisations of Primc's and Capparelli's identities established in \cite{DK19}, which itself generalised a bijection of the first author \cite{DousseCapaPrimc} between Primc's and Capparelli's identities.

\subsection{The setup}
\begin{deff}
\label{def:welldef}
Let $\Co$ be a set of colours, and let $\Co_{\mathrm{sup}}\sqcup \Co_{\mathrm{free}} \sqcup \Co_{\mathrm{inf}}$ be a set-partition of $\Co$. Let $c_{\infty}$ be an additional colour. A function $\ep$ from $\Co \times (\Co \sqcup \{c_{\infty}\})$ to $\{0,1,2\}$ is said to be \textit{well-defined according to the decomposition $\Co_{\mathrm{sup}}\sqcup \Co_{\mathrm{free}} \sqcup \Co_{\mathrm{inf}}$ and $c_{\infty}$} if it satisfies the following conditions:
\begin{enumerate}
 \item For $c,c'\in \Co_{\mathrm{free}}$, $\ep(c,c')=\chi(c\neq c')$.
 \item For $(c,c')\in \Co_{\mathrm{sup}}\times \Co_{\mathrm{free}}$,
 $\ep(c,c')\in \{0,1\}$ and $\ep(c',c)\in \{1,2\}$.
Moreover, for $c\in \Co_{\mathrm{sup}}$, there exists $\delta(c)\in \Co_{\mathrm{free}}$ such that $\ep(c,\delta(c))=0$.
   \item For $(c,c')\in \Co_{\mathrm{free}}\times \Co_{\mathrm{inf}}$, 
$\ep(c,c')\in \{0,1\}$ and $ \ep(c',c)\in \{1,2\}$.
Moreover, for $c\in \Co_{\mathrm{inf}}$, there exists $\delta(c)\in \Co_{\mathrm{free}}$ such that $\ep(\delta(c),c)=0$.
 \item For $(c,c')\in \Co_{\mathrm{sup}}\times \Co_{\mathrm{inf}}$, 
 $\ep(c,c')\in \{0,1\}$ and $\ep(c',c)\in \{1,2\}$.
Moreover, if $\ep(c,c')=0$, there then exists $\gamma(c,c')\in \Co_{\mathrm{free}}$ such that 
$\ep(c,\gamma(c,c'))= \ep(\gamma(c,c'),c')=0$.
   \item For $c,c'\in \Co_{\mathrm{sup}}$, if $ \ep(c,c')\in \{0,1\}$, there then exists $\gamma(c,c')\in \Co_{\mathrm{free}}$ such that
$\ep(c,\gamma(c,c'))= 0$ and $\ep(\gamma(c,c'),c')=1$.
    \item For $c,c'\in \Co_{\mathrm{inf}}$, if $ \ep(c,c')\in \{0,1\}$, there then exists $\gamma(c,c')\in \Co_{\mathrm{free}}$ such that
 $\ep(c,\gamma(c,c'))= 1$ and $\ep(\gamma(c,c'),c')=0$.
 	\item Border conditions: $
 \ep(\Co_{\mathrm{free}},c_\infty) = \{1\}\,,\,
 \ep(\Co_{\mathrm{inf}},c_\infty) \subset \{1,2\}\text{ and }
  \ep(\Co_{\mathrm{sup}},c_\infty) \subset  \{0,1\}.
$
\end{enumerate}

\end{deff}

For the entirety of Sections 2.1 to 2.3, let $\Co=\Co_{\mathrm{sup}}\sqcup \Co_{\mathrm{free}} \sqcup \Co_{\mathrm{inf}}$ be a set of colours, let $c_0\in \Co_{\mathrm{free}}$, let $c_{\infty}$ be an additional colour not in $\mathcal{C}$, and let $\ep$ be a well-defined function according to the decomposition $\Co_{\mathrm{sup}}\sqcup \Co_{\mathrm{free}} \sqcup \Co_{\mathrm{inf}}$ and $c_{\infty}$. Additionally, for all $c\in \Co_{\mathrm{sup}}\sqcup \Co_{\mathrm{inf}} $, let $\delta(c)$ be a particular colour satisfying Conditions (2) and (3) of Definition \ref{def:welldef},
and for all $(c,c')$ in $\{(c,c')\in \Co_{\mathrm{sup}}\times \Co_{\mathrm{inf}}: \ep(c,c')=0\}\sqcup \{(c,c')\in \Co_{\mathrm{sup}}^2: \ep(c,c')\in \{0,1\}\}\sqcup \{(c,c')\in \Co_{\mathrm{inf}}^2: \ep(c,c')\in \{0,1\}\}$, let $\gamma(c,c')$ be a particular colour satisfying Conditions (4) to (6) of Definition \ref{def:welldef}.

Throughout the paper, if $k_c$ and $k'_{c'}$ are two coloured integers, then we use the notation 
$$k_c \gg_\ep k'_{c'} \text{ if and only if } k - k' \geq \ep(c,c').$$ 
Finally, adding an integer $l$ to a coloured integer $k_c$ consists in adding this integer to its size while keeping its colour, i.e. $k_c+l=(k+l)_c$.

\begin{deff} 
\label{def:Psimple}
Define $\Pp_{\ep}^{c_\infty}$ to be the set of generalised coloured partitions $\pi=(\pi_0,\ldots,\pi_{s-1},\pi_s=0_{c_\infty})$ such that for all $i\in \{0,\ldots,s-1\}$, $c(\pi_i)\in \mathcal{C}$ and $\pi_i \gg_\ep \pi_{i+1}$.
\end{deff}

In this paper, a \textit{pattern} denotes a finite sequence of coloured integers well-ordered according to $\gg_\ep$. 
We say that a generalised coloured partition $\pi=(\pi_0,\dots,\pi_s)$ \textit{contains the pattern} $\lambda_0, \dots, \lambda_k$ if there exists an index $i$ such that
$$\pi_i = \lambda_0,\ \pi_{i+1}=\lambda_1,\ \dots ,\ \pi_{i+k}=\lambda_k.$$ If a partition does not contain a pattern, we say that it \textit{avoids} it.

\begin{deff}
\label{def:Pcomplique}
Denote by $_{\delta,\gamma}^{c_0}\Pp_{\ep}^{c_\infty}$ the set of generalised coloured
 partitions of $\Pp_{\ep}^{c_\infty}$ which are $c_0$-regular (i.e. have no part of colour $c_0$) and avoid the following patterns for all $p \in \N_{\geq 0}$:
 \begin{enumerate}
  \item for  $c\in \Co_{\mathrm{free}}\setminus \{c_0\}$, the pattern
  $p_c,p_c$,
  \item for $(c,c')\in \Co_{\mathrm{sup}}\times \Co_{\mathrm{inf}}$ such that $\ep(c,c')=0$, the pattern $p_c,p_{\gamma(c,c')},p_{c'}$,
  \item for $(c,c')\in \Co_{\mathrm{sup}}^2$ such that $\ep(c,c')\in \{0,1\}$, the pattern $p_c,p_{\gamma(c,c')},(p-1)_{c'}$,
    \item for $(c,c')\in \Co_{\mathrm{inf}}^2$ such that $\ep(c,c')\in \{0,1\}$, the pattern $(p+1)_c,p_{\gamma(c,c')},p_{c'}$,
  \item for all $c \in \Co_{\mathrm{sup}}$,
  \begin{enumerate}
  \item for all $c' \in (\Co_{\mathrm{free}}\setminus \{c_0\} )\sqcup \Co_{\mathrm{inf}}\sqcup\{c_\infty\}$, the pattern $p_c,p_{\delta(c)},(p-1)_{c'}$,
  \item for all $c' \in (\Co\setminus\{c_0\})\sqcup\{c_\infty\}$, and for all positive integers $u\geq 2$, the pattern $p_c,p_{\delta(c)},(p-u)_{c'}$,
  \end{enumerate}
  \item for all $c' \in \Co_{\mathrm{inf}}$,
  \begin{enumerate}
   \item at the beginning of the partition, the pattern $p_{\delta(c')},p_{c'}$,
  \item for all $c \in (\Co_{\mathrm{free}}\setminus \{c_0\} )\sqcup \Co_{\mathrm{sup}}$, the pattern $(p+1)_c,p_{\delta(c')},p_{c'}$,
  \item for all $c \in \Co\setminus\{c_0\}$, and for all positive integers $u\geq 2$, the pattern $(p+u)_c,p_{\delta(c')},p_{c'}$.
  \end{enumerate}
\end{enumerate}
\end{deff}

We are now ready to state the main result of this section.
\begin{theo}\label{theo:dualitycapaprimc}
 Assume that $c_0 \in \Co_{\mathrm{free}}$ is such that for all $c\in \Co \setminus \{c_0\}$, $\ep(c_0,c)=\ep(c,c_0)=1$.
 Then, there exists a bijection $\Phi$ between $\Pp_{\ep}^{c_\infty}$ and the product set 
 $_{\delta,\gamma}^{c_0}\Pp_{\ep}^{c_\infty} \times \Pp$, where $\Pp$ is the set of classical integer partitions.
Furthermore, for $\Phi(\lambda)=(\mu,\nu)$, we have $|\lambda|=|\mu|+|\nu|$,  $\ell(\lambda)=\ell(\mu)+\ell(\nu)$, and the colour sequence of $\lambda$ restricted to the colours in $\Co_{\mathrm{sup}}\sqcup \Co_{\mathrm{inf}}$ is the same as the colour sequence of $\mu$ restricted to the colours in $\Co_{\mathrm{sup}}\sqcup \Co_{\mathrm{inf}}$.
\end{theo}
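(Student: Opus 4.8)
The plan is to realise $\Phi$ and its inverse as explicit algorithms built from local moves on the sequence of parts, generalising the bijection of \cite{DK19} (itself a refinement of the bijection in \cite{DousseCapaPrimc}) in the form set up in \cite{KonanPhD}. The guiding principle is that, under the hypothesis $\ep(c_0,c)=\ep(c,c_0)=1$ for all $c\neq c_0$ together with $\ep(c_0,c_0)=\chi(c_0\neq c_0)=0$ coming from Condition~(1) of Definition~\ref{def:welldef}, the colour $c_0$ is \emph{neutral}: a maximal block of consecutive $c_0$-parts is simply a weakly decreasing sequence of integers, i.e. a classical partition, and a $c_0$-part interacts with its two neighbours only through the value $1$. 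So, informally, $\Phi$ should strip the ``$c_0$-content'' out of $\lambda\in\Pp_{\ep}^{c_\infty}$ into the classical partition $\nu\in\Pp$, leaving a $c_0$-regular partition $\mu$; the forbidden patterns of Definition~\ref{def:Pcomplique} are exactly the configurations that this stripping must either destroy or refuse to leave behind, and they are what make the process reversible. Bijectivity then reduces to four things: $\Phi$ lands in the target, $\Phi^{-1}$ lands in the source, the two are mutually inverse, and the stated statistics are preserved — the last being automatic once every individual move is size-, length- and ($\Co_{\mathrm{sup}}\sqcup\Co_{\mathrm{inf}}$)-colour-preserving.

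\textbf{The forward map.} I would define a \emph{reduction step} on $\lambda\in\Pp_{\ep}^{c_\infty}$: scan from the bottom (the $0_{c_\infty}$ end) upward, and at the first place where $\lambda$ has a part of colour $c_0$ or contains one of the forbidden patterns of Definition~\ref{def:Pcomplique}, apply the corresponding local move on a bounded window of parts. Each move deletes (net) one part from $\lambda$, conserves the total size, and appends one new part to $\nu$ whose size equals the size removed; moreover it only ever moves or recolours parts of colour $c_0$ or of the ``free'' colours $\delta(c),\gamma(c,c')\in\Co_{\mathrm{free}}$, so the colour subsequence carried by $\Co_{\mathrm{sup}}\sqcup\Co_{\mathrm{inf}}$ is untouched, $|\lambda|$ is preserved, and $\ell$ is conserved since one part leaves $\lambda$ and one enters $\nu$. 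Iterating, one must show the process terminates; this is the first real point, handled by exhibiting a nonnegative integer monovariant (a suitably weighted count of the ``defects'' — $c_0$-parts and forbidden-pattern occurrences — so that the $u\geq 2$ families (5b),(6c) and the ``beginning of the partition'' pattern (6a) are covered) that strictly decreases at every case of the move. The terminal partition is then $\mu$, which by construction is $c_0$-regular and free of every pattern in Definition~\ref{def:Pcomplique}, i.e. $\mu\in{}_{\delta,\gamma}^{c_0}\Pp_{\ep}^{c_\infty}$; here each of the seven conditions of Definition~\ref{def:welldef} is used to guarantee that the canonical colours $\delta(\cdot)$ and $\gamma(\cdot,\cdot)$ appear with the $\ep$-gaps the moves require.

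\textbf{The inverse map and bijectivity.} Dually, given $(\mu,\nu)\in{}_{\delta,\gamma}^{c_0}\Pp_{\ep}^{c_\infty}\times\Pp$, I would build $\Phi^{-1}$ by inserting the parts of $\nu$ back into $\mu$ one at a time (say largest first) as $c_0$-coloured parts, each insertion followed by a cascade of ``un-reduction'' moves that propagate the new part upward. The hypothesis that $\mu$ avoids all the forbidden patterns is precisely what makes each insertion well-defined and deterministic — it dictates how a freshly inserted $c_0$-part must bump its neighbours — and the relations $\ep(c_0,c)=\ep(c,c_0)=1$ ensure the output stays $\gg_\ep$-ordered, hence in $\Pp_{\ep}^{c_\infty}$. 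One then checks, case by case, that a reduction step and the matching insertion-with-cascade step are inverse local operations; this is exactly where the uniqueness clauses of Definition~\ref{def:welldef} (``there exists $\delta(c)$ such that \dots'', ``there exists $\gamma(c,c')$ such that \dots'') and the bounded window size are used, and it gives $\Phi^{-1}\circ\Phi=\mathrm{id}$ and $\Phi\circ\Phi^{-1}=\mathrm{id}$. The equalities $|\lambda|=|\mu|+|\nu|$, $\ell(\lambda)=\ell(\mu)+\ell(\nu)$ and the coincidence of the colour sequences restricted to $\Co_{\mathrm{sup}}\sqcup\Co_{\mathrm{inf}}$ then follow immediately, being invariant under every move.

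\textbf{Main obstacle.} The difficulty is not any single move but the bookkeeping tying them together: choosing a monovariant that genuinely decreases in \emph{every} case, and proving that ``scan-and-reduce'' and ``insert-and-cascade'' are inverse regardless of incidental choices, so that the algorithms are genuinely mutually inverse and not merely size-preserving. This forces an exhaustive but essentially routine case analysis over the nine pattern families of Definition~\ref{def:Pcomplique} and the decomposition $\Co=\Co_{\mathrm{sup}}\sqcup\Co_{\mathrm{free}}\sqcup\Co_{\mathrm{inf}}$, with each case controlled by exactly one clause of Definition~\ref{def:welldef}; organising this so that it remains readable, rather than any conceptual hurdle, is the real work.
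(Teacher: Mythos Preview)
Your overall strategy---strip ``free'' content out of $\lambda$ into a classical partition $\nu$, leaving a $c_0$-regular $\mu$ avoiding the listed patterns, then invert by reinsertion---is exactly the paper's. But the paper's implementation is simpler than your scan-and-reduce with a monovariant. The forward map is done in three clean \emph{batches}: (i) remove all $c_0$-parts at once (the hypothesis $\ep(c_0,c)=\ep(c,c_0)=1$ forces parts on either side of a removed $c_0$-block to differ by at least $2$, so what remains is still in $\Pp_\ep^{c_\infty}$); (ii) for each repeated free part $p_f$ with $f\in\Co_{\mathrm{free}}\setminus\{c_0\}$, keep one copy and move the rest to $\nu$; (iii) for each surviving $p_f$ that sits as the middle term of a forbidden pattern $p^{(1)}_{c_1},p_f,p^{(2)}_{c_2}$, remove it to $\nu$, noting that the remaining pair $p^{(1)}_{c_1},p^{(2)}_{c_2}$ is automatically admissible. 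Termination is then trivial and no monovariant is needed.

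The substantive gap is in your inverse. You propose to reinsert each $p\in\nu$ as a $c_0$-coloured part and then ``cascade''. But if $\mu$ already contains a part of size $p$ with colour $c\neq c_0$, you cannot place $p_{c_0}$ next to it at all: the size gap is $0$ while $\ep(c_0,c)=\ep(c,c_0)=1$, so the insertion is illegal before any cascade can run. The paper's $\Phi^{-1}$ instead chooses the colour of the reinserted part \emph{from context in $\mu$}: if $\mu$ has no part of size $p$, colour it $c_0$; if $\mu$ already has a free-coloured part $p_f$, colour the new part $f$ too (recreating a repetition); if the only parts of size $p$ in $\mu$ lie in $\Co_{\mathrm{sup}}\sqcup\Co_{\mathrm{inf}}$, colour the new part $\gamma(c_1,c_2)$ or $\delta(c)$ so as to \emph{deliberately} recreate exactly one of the forbidden patterns of Definition~\ref{def:Pcomplique}. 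The invariant that makes this reversible---which your sketch does not isolate---is that after $\Phi$, a part $p\in\nu$ has no companion of size $p$ in $\mu$ \emph{iff} it originated from a $c_0$-part of $\lambda$; that dichotomy, not a cascade, is what tells $\Phi^{-1}$ which colour to assign.
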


We prove \Thm{theo:dualitycapaprimc} bijectively, by inserting the parts of $\nu$ into the generalised coloured partition $\mu$. In Section \ref{sec:insertion}, we explain the insertion process and the properties it satisfies. In Section \ref{sec:bijectivemaps}, we explicitly give the bijection $\Phi$. In Section \ref{sec:applicationtocn1}, we apply \Thm{theo:dualitycapaprimc} to the grounded partitions related to our $1$ perfect crystal $\B$ of type $C_n^{(1)}$ and prove Theorem \ref{th:bijCn}.

\subsection{Insertion of parts}
\label{sec:insertion}

We start by defining the notion of insertion. Let $f\in \Co_{\mathrm{free}}$, $p\in \Z$ and $\pi=(\pi_0,\ldots,\pi_{s-1})$ a generalised coloured partition with relation $\gg_\ep$. Inserting $p_f$ into $\pi$ consists in transforming $\pi$ into $\tilde{\pi}=(\pi_0,\ldots,\pi_{j-1},p_f,\pi_j,\ldots,\pi_{s-1} )$ for some $j\in\{0,\ldots,s\}$ in such a way that $\tilde{\pi}$ is a generalised coloured partition with relation $\gg_\ep$. 
Inserting a part into a pattern is the same as inserting the part into the generalised coloured partition consisting exactly of the parts of the pattern. 
In particular, when $s=1$ and $j=0$, we say that say that we insert $p_f$ to the left of $\pi_0$, and when $s=1$ and $j=1$, we say that say that we insert $p_f$ to the right of $\pi_0$.

First we study the patterns where all the parts have the same size, using $(1)$--$(4)$ of Definition \ref{def:welldef}. The following proposition, which follows directly from the definition of $\ep$, gives an exhaustive list of such patterns.

\begin{prop}
\label{prop:seq0000}
A sequence $p_{c_1}, \ldots , p_{c_s}$ with $p \geq 0$ is is a pattern if and only if
$c_1, \dots, c_s$ is a sequence of colours such that for all $i \in \{1 , \dots, s-1\},$ $\ep(c_i,c_{i+1})=0$. In this case, there exist unique integers $1\leq u \leq v \leq s+1$ such that
\begin{align}
 \{c_1,\ldots,c_{u-1}\}&\subset \Co_{\mathrm{sup}}\,,\nonumber\\
 c_u=\cdots =c_{v-1}&\in \Co_{\mathrm{free}}\,,\\
 \{c_v,\ldots,c_{s}\}&\subset \Co_{\mathrm{inf}}\nonumber\,,
\end{align}
with the convention that $\{c_a,\ldots,c_b\}=\emptyset$ if $a>b$.
  
\end{prop}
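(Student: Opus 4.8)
The plan is to prove Proposition~\ref{prop:seq0000} directly from Definition~\ref{def:welldef}, since the statement is essentially a bookkeeping consequence of which pairs $(c,c')$ can satisfy $\ep(c,c')=0$. First I would observe that $p_{c_1},\ldots,p_{c_s}$ is a pattern (with $p\geq 0$, so no border colour $c_\infty$ is involved) if and only if $p-p\geq \ep(c_i,c_{i+1})$ for all $i$, i.e. $\ep(c_i,c_{i+1})=0$ for all $i\in\{1,\ldots,s-1\}$; this is immediate from the definition of $\gg_\ep$ and the fact that $\ep$ takes values in $\{0,1,2\}$. This gives the ``if and only if'' part of the first sentence.

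Next I would determine, for each of the nine ordered type-pairs $(\Co_{\mathrm{sup}},\Co_{\mathrm{free}},\Co_{\mathrm{inf}})^2$, whether $\ep(c,c')=0$ is possible. Reading off Definition~\ref{def:welldef}: by (1), $\ep(c,c')=0$ for $c,c'\in\Co_{\mathrm{free}}$ forces $c=c'$; by (2), $\ep(c',c)\in\{1,2\}$ for $(c,c')\in\Co_{\mathrm{sup}}\times\Co_{\mathrm{free}}$, so a $\Co_{\mathrm{free}}$ colour can never be immediately followed by a $\Co_{\mathrm{sup}}$ colour in a zero-difference chain; by (3), $\ep(c',c)\in\{1,2\}$ for $(c,c')\in\Co_{\mathrm{free}}\times\Co_{\mathrm{inf}}$, so a $\Co_{\mathrm{inf}}$ colour can never be immediately followed by a $\Co_{\mathrm{free}}$ colour; and by (3) again, $\ep(c',c)\in\{1,2\}$ for $(c,c')\in\Co_{\mathrm{sup}}\times\Co_{\mathrm{inf}}$ read with the roles as stated, so a $\Co_{\mathrm{inf}}$ colour is never immediately followed by a $\Co_{\mathrm{sup}}$ colour. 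Summarising the transitions allowed in a chain with all $\ep(c_i,c_{i+1})=0$: from a $\Co_{\mathrm{sup}}$ colour we may go to $\Co_{\mathrm{sup}}$, $\Co_{\mathrm{free}}$, or $\Co_{\mathrm{inf}}$; from a $\Co_{\mathrm{free}}$ colour we may only go to the same $\Co_{\mathrm{free}}$ colour or to $\Co_{\mathrm{inf}}$; from a $\Co_{\mathrm{inf}}$ colour we may only go to $\Co_{\mathrm{inf}}$.

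From this transition analysis the block structure is forced: once the chain leaves $\Co_{\mathrm{sup}}$ it never returns, and once it enters $\Co_{\mathrm{inf}}$ it stays there; in between, any maximal run of $\Co_{\mathrm{free}}$ colours must be constant (the only allowed $\Co_{\mathrm{free}}\to\Co_{\mathrm{free}}$ step is $c\mapsto c$), and there can be at most one such run because a $\Co_{\mathrm{free}}$ colour cannot be followed by a $\Co_{\mathrm{sup}}$ colour (so we cannot re-enter $\Co_{\mathrm{free}}$ after any $\Co_{\mathrm{sup}}$ interruption, and there is no $\Co_{\mathrm{sup}}$ after the first $\Co_{\mathrm{free}}$ anyway) and an $\Co_{\mathrm{inf}}$ colour cannot be followed by a $\Co_{\mathrm{free}}$ colour. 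Hence there exist unique indices $1\leq u\leq v\leq s+1$ with $c_1,\ldots,c_{u-1}\in\Co_{\mathrm{sup}}$, $c_u=\cdots=c_{v-1}\in\Co_{\mathrm{free}}$, and $c_v,\ldots,c_s\in\Co_{\mathrm{inf}}$, using the convention that an empty range of indices contributes an empty set of colours. Uniqueness of $u,v$ is clear since $u-1$ is the length of the initial $\Co_{\mathrm{sup}}$ block and $v-1$ the last index of the $\Co_{\mathrm{free}}$ block. I do not anticipate a genuine obstacle here; the only thing to be careful about is making the ``at most one $\Co_{\mathrm{free}}$ run'' argument airtight, i.e. explicitly ruling out a pattern that alternates $\Co_{\mathrm{sup}},\Co_{\mathrm{free}},\Co_{\mathrm{sup}},\dots$, which is handled precisely by the observation that $\ep(c,c')\in\{1,2\}$ whenever $c\in\Co_{\mathrm{free}}$ and $c'\in\Co_{\mathrm{sup}}$.
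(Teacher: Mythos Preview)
Your argument is correct and is exactly the unpacking of Definition~\ref{def:welldef} that the paper intends; the paper itself does not spell out a proof and simply states that the proposition ``follows directly from the definition of $\ep$''. The only slip is a citation: the fact that $\ep(c',c)\in\{1,2\}$ for $(c,c')\in\Co_{\mathrm{sup}}\times\Co_{\mathrm{inf}}$ (ruling out an $\Co_{\mathrm{inf}}\to\Co_{\mathrm{sup}}$ step) is condition~(4) of Definition~\ref{def:welldef}, not condition~(3).
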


\subsubsection{Insertion between two parts}
\label{sec:insertbetweenpair}

In this section, we study all the possible insertions of a part $p_f$ with colour $f\in \Co_{\mathrm{free}}$ between the two parts of a pattern $p^{(1)}_{c_1},p^{(2)}_{c_2}$ such that $p\in \{p^{(1)},p^{(2)}\}$.

Observe that a part $p_f$ can be inserted between the two parts of a pattern $p^{(1)}_{c_1},p^{(2)}_{c_2}$ with
$p=p^{(1)}$ and $c_1\in \Co_{\mathrm{free}}$ (resp. $p=p^{(2)}$ and $c_2\in \Co_{\mathrm{free}}$) iff $f=c_1$ (resp. $f=c_2$). 
The only remaining cases to study are insertions such that $p=p^{(1)}$ and $c_1\in \Co_{\mathrm{sup}}$ or $p=p^{(2)}$ and $c_2\in \Co_{\mathrm{inf}}$.

We start with the case $p=p^{(1)}=p^{(2)}$, which corresponds to a pattern of the form $p_{c_1},p_{c_2}$. By Proposition \ref{prop:seq0000}, inserting $p_f$ between $p^{(1)}_{c_1}$ and $p^{(2)}_{c_2}$ is not possible when $c_1$ and $c_2$ are either both in
$\Co_{\mathrm{sup}}$ or both in $\Co_{\mathrm{inf}}$. The following propositions deals with the remaining case, i.e. $(c_1,c_2)\in \Co_{\mathrm{sup}}\times\Co_{\mathrm{inf}}$.

\begin{prop}
\label{lem:seq00}
 For any pair $(c_1,c_2)\in \Co_{\mathrm{sup}}\times\Co_{\mathrm{inf}}$ such that $\ep(c_1,c_2)=0$, a part $p_f$ with $f\in \Co_{\mathrm{free}}$ can be inserted between the parts $p_{c_1},p_{c_2}$ to obtain
 $$p_{c_1},p_f,p_{c_2}$$
 if and only if $\ep(c_1,f)=\ep(f,c_2)=0$.
\end{prop}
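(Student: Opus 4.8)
The statement to prove is Proposition~\ref{lem:seq00}, which characterizes when a free-coloured part $p_f$ can be inserted between two equal-sized parts $p_{c_1},p_{c_2}$ with $(c_1,c_2)\in\Co_{\mathrm{sup}}\times\Co_{\mathrm{inf}}$ and $\ep(c_1,c_2)=0$.

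My plan is as follows. First, I would unwind the definition of insertion: placing $p_f$ between $p_{c_1}$ and $p_{c_2}$ to obtain the sequence $p_{c_1},p_f,p_{c_2}$ requires precisely that this three-term sequence be a pattern, i.e.\ well-ordered according to $\gg_\ep$, which (since all three parts have the same size $p$) amounts to $p-p\geq\ep(c_1,f)$ and $p-p\geq\ep(f,c_2)$, that is $\ep(c_1,f)\leq 0$ and $\ep(f,c_2)\leq 0$. Since $\ep$ takes values in $\{0,1,2\}$, this is equivalent to $\ep(c_1,f)=\ep(f,c_2)=0$. This already gives both directions of the equivalence almost immediately, \emph{provided} we know such an $f$ can exist — but the proposition only claims the equivalence, not existence, so strictly speaking this observation suffices for the ``if and only if''.

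However, I expect the real content the authors want here is the \emph{existence} of such a free colour $f$ in the forward direction being non-vacuous — more precisely, one should check consistency with Condition~(4) of Definition~\ref{def:welldef}. The hypothesis $(c_1,c_2)\in\Co_{\mathrm{sup}}\times\Co_{\mathrm{inf}}$ with $\ep(c_1,c_2)=0$ is exactly the trigger for Condition~(4), which guarantees the existence of $\gamma(c_1,c_2)\in\Co_{\mathrm{free}}$ with $\ep(c_1,\gamma(c_1,c_2))=\ep(\gamma(c_1,c_2),c_2)=0$. So the set of valid $f$ is non-empty and contains $\gamma(c_1,c_2)$. Thus the key step is to invoke Condition~(4) to see that the characterization is not vacuous, and to note (via Proposition~\ref{prop:seq0000} applied to the two-term sequence $p_{c_1},p_{c_2}$, or directly) that $\ep(c_1,c_2)=0$ is what makes $p_{c_1},p_{c_2}$ a pattern in the first place, so the setup is coherent.

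The proof is therefore short: state that by definition of $\gg_\ep$ and since the three parts share size $p$, the sequence $p_{c_1},p_f,p_{c_2}$ is a pattern iff $\ep(c_1,f)\leq 0$ and $\ep(f,c_2)\leq 0$; since $\ep\geq 0$ always, this is iff $\ep(c_1,f)=\ep(f,c_2)=0$; and remark that by Condition~(4) of Definition~\ref{def:welldef}, $\gamma(c_1,c_2)$ is one such colour, so the condition is satisfiable. I do not anticipate a genuine obstacle — the only subtlety is making sure to phrase it so that it is clear we are characterizing \emph{which} $f$ work rather than merely asserting existence, and to be careful that ``between the parts $p_{c_1},p_{c_2}$'' presupposes $p_{c_1},p_{c_2}$ is already a pattern, which holds since $\ep(c_1,c_2)=0$.
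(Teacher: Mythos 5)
Your proof is correct and follows essentially the same route as the paper, which states this proposition without further argument precisely because it is immediate from the definition of insertion (the three equal-sized parts must satisfy $0\geq\ep(c_1,f)$ and $0\geq\ep(f,c_2)$) together with $\ep$ taking values in $\{0,1,2\}$. Your added remark that Condition~(4) of Definition~\ref{def:welldef} supplies $\gamma(c_1,c_2)$ as a witness is consistent with how the paper later exploits this proposition in the bijection, so nothing is missing.
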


We now study the case where $p^{(1)}\neq p^{(2)}$. This necessarily means that $p^{(1)}>p^{(2)}$. We start with the case where $p=p^{(1)}$ and $c_1\in \Co_{\mathrm{sup}}$.

\begin{prop}
\label{lem:seq10up}
For $(c_1,f)\in \Co_{\mathrm{sup}}\times\Co_{\mathrm{free}}$, the exhaustive list of possible insertions of $p_f$ between parts $p_{c_1},p^{(2)}_{c_2}$ is as follows: 
\begin{enumerate}
\item for any colour $c_2$ in $\Co\sqcup \{c_\infty\}$, and any integer $u\geq 2$, we can insert a part $p_f$ between the parts of the pattern $p_{c_1},(p-u)_{c_2}$ to obtain
$$p_{c_1},p_f,(p-u)_{c_2}$$
if and only if $\ep(c_1,f)=0$,
\item for any colour $c_2 \in \Co_{\mathrm{free}}\sqcup \Co_{\mathrm{inf}}\sqcup \{c_\infty\}$, we can insert a part $p_f$ between the parts of the pattern $p_{c_1},(p-1)_{c_2}$ to obtain
$$p_{c_1},p_f,(p-1)_{c_2}$$
if and only if $\ep(c_1,f)=0$,
\item for any colour $c_2 \in \Co_{\mathrm{sup}}$ such that $\ep(c_1,c_2)\in \{0,1\}$, 
we can insert a part $p_f$ between the parts of the pattern $p_{c_1},(p-1)_{c_2}$ to obtain
$$p_{c_1},p_f,(p-1)_{c_2}$$
if and only if $\ep(c_1,f)=0$ and $\ep(f,c_2)=1$.
\end{enumerate}
\end{prop}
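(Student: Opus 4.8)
The plan is to work directly from the definition of insertion together with the defining properties (1)--(7) of a well-defined function $\ep$ (Definition \ref{def:welldef}) and the structure of constant-size patterns (Proposition \ref{prop:seq0000}). Since inserting $p_f$ between $p_{c_1}$ and $p^{(2)}_{c_2}$ means producing either the pattern $p_{c_1},p_f,p^{(2)}_{c_2}$ (placing $p_f$ strictly between, as a size-$p$ part) or, when $p^{(2)} = p$, possibly reordering; but here we always have $p^{(2)} = p - u < p$, so the new part $p_f$ must sit immediately after $p_{c_1}$ and before $(p-u)_{c_2}$. Hence the insertion is valid if and only if both $p_{c_1}\gg_\ep p_f$ and $p_f \gg_\ep (p-u)_{c_2}$, i.e. $\ep(c_1,f)\le 0$ (so $\ep(c_1,f)=0$, since $\ep\ge 0$) and $u\geq \ep(f,c_2)$. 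The whole proof is then the bookkeeping of when the second inequality $u \geq \ep(f,c_2)$ is automatic and when it forces an extra constraint, split according to the three cases on the gap $u$ and the class of $c_2$.

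First I would dispose of the necessary condition common to all three cases: if $p_f$ can be inserted, then in particular $p_{c_1}\gg_\ep p_f$, i.e. $p - p = 0 \geq \ep(c_1,f)$, so $\ep(c_1,f)=0$ as $\ep$ is nonnegative. This already shows $\ep(c_1,f)=0$ is necessary throughout. Conversely it is part of what we must check is sufficient in each case. Next, for case (1), $u\geq 2$: since $\ep$ takes values in $\{0,1,2\}$, the condition $p_f \gg_\ep (p-u)_{c_2}$ reads $u \geq \ep(f,c_2)$, and if $c_2$ is in $\Co_{\mathrm{free}}$ or $\Co_{\mathrm{inf}}$ or equals $c_\infty$ the border/cross conditions (1),(3),(7) of Definition \ref{def:welldef} give $\ep(f,c_2)\in\{0,1\}\le 2\le u$; if $c_2\in\Co_{\mathrm{sup}}$, condition (2) gives $\ep(f,c_2)\in\{1,2\}\le 2\le u$. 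So for $u\geq 2$ the only binding condition is indeed $\ep(c_1,f)=0$, proving (1).

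For case (2), $u=1$ and $c_2 \in \Co_{\mathrm{free}}\sqcup\Co_{\mathrm{inf}}\sqcup\{c_\infty\}$: now $p_f\gg_\ep (p-1)_{c_2}$ requires $1\geq \ep(f,c_2)$, and by conditions (1), (3) and (7) of Definition \ref{def:welldef} we have $\ep(f,c_2)\in\{0,1\}$, so this is automatic, and again only $\ep(c_1,f)=0$ remains. For case (3), $u=1$ and $c_2\in\Co_{\mathrm{sup}}$ with $\ep(c_1,c_2)\in\{0,1\}$: here $p_f \gg_\ep (p-1)_{c_2}$ requires $1\geq \ep(f,c_2)$, but by condition (2) $\ep(f,c_2)\in\{1,2\}$, so this forces $\ep(f,c_2)=1$; together with $\ep(c_1,f)=0$ (necessary as above) these two conditions are also sufficient, since $p_{c_1}\gg_\ep p_f$ and $p_f\gg_\ep(p-1)_{c_2}$ then both hold, and the resulting three-term sequence is well-ordered by $\gg_\ep$. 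I would also remark that the hypothesis $\ep(c_1,c_2)\in\{0,1\}$ guarantees the pattern $p_{c_1},(p-1)_{c_2}$ itself is legitimate and, via condition (5) of Definition \ref{def:welldef}, that a colour $f=\gamma(c_1,c_2)\in\Co_{\mathrm{free}}$ witnessing $\ep(c_1,f)=0$ and $\ep(f,c_2)=1$ actually exists, so the list of insertions is nonempty. The main (and only mild) obstacle is simply making sure the case split on $c_2$ and on $u$ is genuinely exhaustive and that each use of Definition \ref{def:welldef} cites the matching clause; there is no real difficulty beyond this bookkeeping.
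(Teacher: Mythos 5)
Your proposal is correct and follows the same route the paper intends: the paper states this proposition without a separate argument, treating it as immediate bookkeeping from the definition of insertion (both inequalities $p_{c_1}\gg_\ep p_f$ and $p_f\gg_\ep (p-u)_{c_2}$) together with the range constraints on $\ep$ in Definition \ref{def:welldef}, which is exactly the case analysis you carry out. Your added remarks on necessity of $\ep(c_1,f)=0$ and on clause (5) guaranteeing a witness $\gamma(c_1,c_2)$ in case (3) match the paper's subsequent use of these insertions.
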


We conclude with the case where $p=p^{(2)}$ and $c_2\in \Co_{\mathrm{sup}}$.

\begin{prop}
\label{lem:seq10low}
For $(c_2,f)\in \Co_{\mathrm{inf}}\times\Co_{\mathrm{free}}$, the exhaustive list of possible insertions of $p_f$ between parts $p^{(1)}_{c_1},p_{c_2}$ is as follows: 
\begin{enumerate}
\item for any colour $c_1$ in $\Co$, and any integer $u\geq 2$, we can insert a part $p_f$ between the parts of the pattern $(p+u)_{c_1},p_{c_2}$ to obtain
$$(p+u)_{c_1},p_f,p_{c_2}$$
if and only if $\ep(c_1,f)=0$,
\item for any colour $c_1 \in \Co_{\mathrm{sup}}\sqcup \Co_{\mathrm{free}}$, we can insert a part $p_f$ between the parts of the pattern $(p+1)_{c_1},p_{c_2}$ to obtain
$$(p+1)_{c_1},p_f,p_{c_2}$$
if and only if $\ep(f,c_2)=0$,
\item for any colour $c_1 \in \Co_{\mathrm{inf}}$ such that $\ep(c_1,c_2)\in \{0,1\}$, 
we can insert a part $p_f$ between the parts of the pattern $(p+1)_{c_1},p_{c_2}$ to obtain
$$(p+1)_{c_1},p_f,p_{c_2}$$
if and only if $\ep(c_1,f)=1$ and $\ep(f,c_2)=0$.
\end{enumerate}
\end{prop}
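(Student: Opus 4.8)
The plan is to prove \Prp{lem:seq10low} by a direct case analysis, parallel to the proof of \Prp{lem:seq10up} — of which this statement is the mirror image, since \Def{def:welldef} is invariant under the involution exchanging $\Co_{\mathrm{sup}}$ with $\Co_{\mathrm{inf}}$ and replacing $\ep(c,c')$ by $\ep(c',c)$, so one could also simply transport \Prp{lem:seq10up} through that involution. The key preliminary observation is that, in the remaining situation $p=p^{(2)}$, $c_2\in\Co_{\mathrm{inf}}$ and $p^{(1)}>p$ (all other configurations having been dispatched before \Prp{lem:seq10up}), inserting $p_f$ with $f\in\Co_{\mathrm{free}}$ between the two parts of $p^{(1)}_{c_1},p_{c_2}$ yields the three-term sequence $p^{(1)}_{c_1},p_f,p_{c_2}$, which is a pattern if and only if the two adjacency inequalities $p^{(1)}-p\geq\ep(c_1,f)$ and $0\geq\ep(f,c_2)$ both hold. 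Since $\ep$ is non-negative, the right-hand one reads $\ep(f,c_2)=0$ and is forced in every case; so the whole argument reduces to deciding, for each admissible shape of $p^{(1)}_{c_1},p_{c_2}$, when the left-hand inequality $p^{(1)}-p\geq\ep(c_1,f)$ is automatic and when it becomes an extra constraint.

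The next step is to split on $u:=p^{(1)}-p\geq 1$. If $u\geq 2$, then $u\geq 2\geq\ep(c_1,f)$ and $u\geq 2\geq\ep(c_1,c_2)$ for every $c_1\in\Co$ because $\ep$ takes values in $\{0,1,2\}$; hence the pattern $(p+u)_{c_1},p_{c_2}$ always exists and the insertion is possible exactly when the right-hand adjacency holds, i.e. when $\ep(f,c_2)=0$, which is case~(1). If $u=1$, I would read off $\ep(c_1,f)$ from \Def{def:welldef}: for $c_1\in\Co_{\mathrm{sup}}$ clause~(2) gives $\ep(c_1,f)\in\{0,1\}$, and for $c_1\in\Co_{\mathrm{free}}$ clause~(1) gives $\ep(c_1,f)=\chi(c_1\neq f)\in\{0,1\}$, so in both subcases $1\geq\ep(c_1,f)$ is automatic, and $(p+1)_{c_1},p_{c_2}$ is automatically a pattern because clauses~(4) and~(3) force $\ep(c_1,c_2)\in\{0,1\}$ there — this is case~(2). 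Finally, for $c_1\in\Co_{\mathrm{inf}}$ clause~(3) forces $\ep(c_1,f)\in\{1,2\}$, so $1\geq\ep(c_1,f)$ becomes $\ep(c_1,f)=1$, while requiring $(p+1)_{c_1},p_{c_2}$ to be a pattern imposes the extra hypothesis $\ep(c_1,c_2)\in\{0,1\}$ appearing in case~(3); together with $\ep(f,c_2)=0$ this is exactly the stated condition.

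Since $u$ runs over all positive integers and $c_1$ over $\Co_{\mathrm{sup}}\sqcup\Co_{\mathrm{free}}\sqcup\Co_{\mathrm{inf}}$, the split is exhaustive, and for each shape the listed condition has just been shown to be necessary and sufficient; combined with the elementary observations and \Prp{lem:seq00} that handle the excluded case $p^{(1)}=p$, the three items form the complete list of insertions, as claimed. I do not expect a genuine obstacle here: every assertion comes down to the two adjacency inequalities and the clauses of \Def{def:welldef}. The one point that needs real care is the bookkeeping — checking that each $\ep$-value invoked is precisely the one forced by the relevant clause of \Def{def:welldef}, that no admissible shape $p^{(1)}_{c_1},p_{c_2}$ with $p^{(1)}>p$ has been overlooked, and that the ubiquitous right-hand condition $\ep(f,c_2)=0$ is carried along in all three cases.
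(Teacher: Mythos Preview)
Your argument is correct and follows the only natural route: the paper does not give an explicit proof of this proposition (nor of its mirror \Prp{lem:seq10up}), treating it as a direct consequence of the two adjacency inequalities $p^{(1)}-p\geq\ep(c_1,f)$ and $0\geq\ep(f,c_2)$ together with the clauses of \Def{def:welldef}, exactly as you unwind it.

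One point worth flagging: in case~(1) you correctly derive that the insertion condition is $\ep(f,c_2)=0$, whereas the paper's printed statement of case~(1) reads $\ep(c_1,f)=0$. This is a typo in the paper --- your condition is the right one, and this is confirmed by how the proposition is actually used in the construction of $\Phi^{-1}$ (Section~\ref{sec:bijectivemaps}), where for $u\geq 2$ the inserted part is given the colour $f=\delta(c_2)$, which satisfies $\ep(\delta(c_2),c_2)=0$ by clause~(3) of \Def{def:welldef}. So your proof does not match the statement verbatim in case~(1), but it matches the intended (and correct) statement.
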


The set of admissible colours $f$ for the part $p_f$ possibly inserted between $p^{(1)}_{c_1}$ and $p^{(2)}_{c_2}$ depends on both colours $c_1$ and $c_2$ only if $(c_1,c_2) \in\{(c,c')\in \Co_{\mathrm{sup}}\times \Co_{\mathrm{inf}}: \ep(c,c')=0\}\sqcup \{(c,c')\in \Co_{\mathrm{sup}}^2: \ep(c,c')\in \{0,1\}\}\sqcup \{(c,c')\in \Co_{\mathrm{inf}}^2: \ep(c,c')\in \{0,1\}\}.$
In that case, it is in particular possible to insert a part with colour $f=\gamma(c_1,c_2)$.
Otherwise, when the colour $f$ only depends on the colour $c$ of the part $p_c$ with the same size as $p_f$, it is possible to insert $p_f$ with $f=\delta(c)$. These particular choices for $f$ allow us to forbid, for all the pairs $p^{(1)}_{c_1},p^{(2)}_{c_2}$ of consecutive parts, a unique insertion, giving rise to the forbidden patterns in $_{\delta,\gamma}^{c_0}\Pp_{\ep}^{c_\infty}$ (see Definition \ref{def:Pcomplique}).

\begin{rem}
For any  $(c_1,c_2)\in \Co_{\mathrm{sup}}\times \Co_{\mathrm{inf}}$, we can insert between the parts $(p+1)_{c_1},p_{c_2}$ two parts  $(p+1)_{f_1}$ and $p_{f_2}$ with $f_1,f_2$ in $\Co_{\mathrm{free}}$ if only if $\ep(c_1,f_1)=\ep(f_2,c_2)=0$. The admissible colours $f_1$ depend only on $c_1$, and the allowed colours $f_2$ depend only on $c_2$.  
\end{rem}

\subsubsection{Insertion at the extremities}

Recall that 
$$
 \ep(\Co_{\mathrm{free}},c_\infty) = \{1\}\,,\,
 \ep(\Co_{\mathrm{inf}},c_\infty) \subset \{1,2\}\text{ and }
  \ep(\Co_{\mathrm{sup}},c_\infty) \subset  \{0,1\}\,.
$$
Then, similarly to \Prp{prop:seq0000}, if a partition in $\Pp_{\ep}^{c_\infty}$ has several parts of size $0$, then these parts are of the form 
$$0_{c_1},\ldots,0_{c_s},0_{c_\infty}$$
with $c_1,\ldots,c_s \in \Co_{\mathrm{sup}}$. Thus it is not possible to insert a part $0_f$ with $f\in \Co_{\mathrm{free}}$.
We now study the insertion of $1_f$ for some $f\in \Co_{\mathrm{free}}$ at the end of the partition.

\begin{itemize}

 \item When the partition ends with $1_{c},0_{c_\infty}$ with $c\in \Co_{\mathrm{sup}}$, the insertion of $1_f$ to the right of $1_c$ is allowed if and only if $\ep(c,f)=0$.
 
 \item When the partition ends with $1_{c},0_{c_\infty}$ with $c\in \Co_{\mathrm{inf}}$, if the insertion of $1_f$ is allowed, it will always be to the left of $1_c$. Thus $1_c$ remains the last part before $0_{c_\infty}$.
  
\end{itemize}

We finally study the case of an insertion at the beginning of the partition.

\begin{itemize}
\item When the first part is $p_{c}$ with $c\in \Co_{\mathrm{sup}}$, for any free colour $f$,
if the insertion of $p_f$ is allowed, it will always be to the right of $p_c$. Thus $p_c$ remains the first part of the partition.
 
 \item When the first part is $p_{c}$ with $c\in \Co_{\mathrm{inf}}$, 
the insertion of $p_f$ to the leftt of $p_c$ is allowed if and only if $\ep(f,c)=0$.
 \end{itemize}

Thus we can extend Propositions \ref{lem:seq00}, \ref{lem:seq10up} and \ref{lem:seq10low} as follows to include the insertions at the extremities:

\begin{itemize}
 \item the insertion of $p_f$ at the end of the partition corresponds to inserting it between $p_{c_1}$ and $0_{c_\infty}$ with $c_1\in \Co_{\mathrm{sup}}$ and $p\geq 1$,
 
 \item the insertion of $p_f$ at the beginning of the partition corresponds to inserting it between $\infty$ and $p_{c_2}$ with $c_2\in \Co_{\mathrm{inf}}$ and $p\geq 1$ (here $\infty$ is a virtual part, not considered to belong to the partition).
 \end{itemize}

\subsection{Bijective proof of \Thm{theo:dualitycapaprimc}}
\label{sec:bijectivemaps}

\subsubsection{The map $\Phi$}

Let us consider a partition $\lambda\in \mathcal \Pp_{\ep}^{c_\infty}$. We want to build $\Phi(\lambda)=(\mu,\nu)\in \,_{\delta,\gamma}^{c_0}\Pp_{\ep}^{c_\infty} \times \Pp$. 
First, recall that $\ep(c,c_0) = \ep(c_0,c) = \chi(c\neq c_0)$ for any colour $c\in \Co$.
 This is equivalent to saying that, in $\lambda$,  the parts coloured by $c_0$ have sizes different from all the parts whose colour is not $c_0$.
We first consider $\nu$ to be the empty partition. Then we transform some parts $p_f$ with $f\in \Co_{\mathrm{free}}$ into colourless parts $p$ and insert them into $\nu$ as follows.

\begin{enumerate}

\item Take all the parts of $\lambda$ with colour $c_0$ and move them to $\nu$ (after removing their colour $c_0$).
Since the parts which were to the left and to the right of the $p_{c_0}$'s in $\lambda$
have respectively sizes greater and smaller than $p$, this means that their sizes differ by at least $2$. 
Given that $\ep(\Co , \Co \sqcup \{c_\infty\})\subset \{0,1,2\}$, by removing the parts with colour $c_0$ from $\la$,
the remaining partition $\lambda'$ is still in $\Pp_{\ep}^{c_\infty}$. Furthermore, the parts of $\lambda'$ have sizes different from 
the sizes of the parts of $\nu$.

\item For all the parts $p_f$ in $\lambda'$ with $f\in \Co_{\mathrm{free}}\setminus \{c_0\}$ which appear at least twice, transform all the occurrences of $p_f$ but one into $p$ and move them to $\nu$. 
Since one occurrence remains for all such parts, we obtain a partition $\lambda''$ that is still in $\Pp_{\ep}^{c_\infty}$, and has no repeated parts $p_f$ with free colours and no part coloured $c_0$.
Also, the only parts of $\lambda''$ having the same size as some part of $\nu$
are coloured with $f\in \Co_{\mathrm{free}}\setminus \{c_0\}$.

\item For all the parts $p_f$ that appear in patterns $p^{(1)}_{c_1},p_f,p^{(2)}_{c_2}$ of $\lambda''$ which are forbidden in $_{\delta,\gamma}^{c_0}\Pp_{\ep}^{c_\infty}$, transform the parts $p_f$ into $p$ and move them to $\nu$. 
Note that such parts $p_f$ may have been among those which were repeated in Step (2), and can only appear in forbidden patterns with $p=p^{(1)}$ and $c_1 \in \Co_{\mathrm{sup}}$, or 
$p=p^{(2)}$ and $c_2 \in \Co_{\mathrm{inf}}$. One can also observe that, when removing $p_f$ from such patterns, the remaining patterns $p^{(1)}_{c_1},p^{(2)}_{c_2}$ are always allowed in $_{\delta,\gamma}^{c_0}\Pp_{\ep}^{c_\infty}$.
At the end of this step, the partition obtained from $\lambda''$ does not contain any forbidden pattern nor any part coloured $c_0$, and the parts $p_f$ with $f\in \Co_{\mathrm{free}}$ cannot be repeated. We set this partition to be $\mu$.
\end{enumerate}
At the end, we obtain a pair of partitions $(\mu,\nu)\in\, _{\delta,\gamma}^{c_0}\Pp_{\ep}^{c_\infty}\times \Pp$ as desired.

\begin{rem}
The only parts in $\nu$ which do not have the same size as some part in $\mu$ are those coming from the parts of $\lambda$ with colour $c_0$ in Step (1).
\end{rem}

\subsubsection{The map $\Phi^{-1}$}

We now describe the inverse map $\Phi^{-1}$. Let us start with $(\mu,\nu)\in\, _{\delta,\gamma}^{c_0}\Pp_{\ep}^{c_\infty}\times \Pp$, and insert the parts $p$ of $\nu$ into the partition $\mu$ as follows (note that at Step (1), we only insert one copy of each part):

\begin{enumerate}

\item If $\mu$ does not contain any part $p_f$ with $f\in \Co_{\mathrm{free}}\setminus \{c_0\}$, but contains a part $p_c$ with $c \in \Co_{\mathrm{sup}}\sqcup \Co_{\mathrm{inf}}$, we proceed as follows.

\begin{itemize}

\item If there is a pair of colours $(c_1,c_2)\in \Co_{\mathrm{sup}}\times\Co_{\mathrm{inf}}$ such that the pattern $p_{c_1},p_{c_2}$ is in $\mu$, then necessarily $\ep(c_1,c_2)=0$.
By \Prp{prop:seq0000}, there is only one such pair among the parts of size $p$.
Set $f=\gamma(c_1,c_2)$, transform  the part $p$ into $p_f$, and insert $p_f$ between $p_{c_1}$ and $p_{c_2}$. By Proposition \ref{lem:seq00}, we obtain the pattern 
$$p_{c_1},p_{\gamma(c_1,c_2)},p_{c_2}$$
which is forbidden in $ _{\delta,\gamma}^{c_0}\Pp_{\ep}^{c_\infty}$. 

Note that this is the only suitable insertion of a part of size $p$ to ensure that $\Phi^{-1}$ is well-defined and indeed the inverse bijection of $\Phi$, as we now show.
\begin{enumerate}
\item It is not possible to insert a part $p_f$ with $f\in \Co_{\mathrm{free}}$ in the sequence to the left of $p_{c_1}$ by Proposition \ref{prop:seq0000}.
  
\item Similarly, we cannot insert a part $p_f$ with $f\in \Co_{\mathrm{free}}$ in the sequence to the right of $p_{c_2}$.

\item Finally, inserting a part $p_f$ with $f\neq \gamma(c_1,c_2)$ and $\ep(c_1,f)=\ep(f,c_2)=0$ into the pair $p_{c_1},p_{c_2}$ would be allowed according to the definition of $\epsilon$, but the pattern
$p_{c_1},p_f,p_{c_2}$
is allowed in $_{\delta,\gamma}^{c_0}\Pp_{\ep}^{c_\infty}$, and this insertion would make the map $\Phi^{-1}$ non-injective.

\end{enumerate}

\item If all the parts of size $p$ in $\mu$ have colours in $\Co_{\mathrm{sup}}$, denote by $c_1$ the colour of the last of these parts. With the same reasoning as above, we cannot insert a part $p_f$ with $f\in \Co_{\mathrm{free}}$ in the 
sequence to the left of $p_{c_1}$. 
Note that the part to the right of $p_{c_1}$ has necessarily a size less than $p$. Using Proposition \ref{lem:seq10up}, insert $p$ in the following way:

\begin{enumerate}

\item If the part to the right of $p_{c_1}$ has size less than $p-1$, transform $p$ into $p_{\delta(c_1)}$ and  insert it to the right of $p_{c_1}$. We obtain the pattern
$$p_{c_1},p_{\delta(c_1)},(p-u)_{c_2}$$ for some integer $u \geq 2 $, which is forbidden in $ _{\delta,\gamma}^{c_0}\Pp_{\ep}^{c_\infty}$.

\item If the part to the right of $p_{c_1}$ has size $p-1$ and colour $c_2\in (\Co_{\mathrm{free}}\setminus \{c_0\} )\sqcup \Co_{\mathrm{inf}}\sqcup\{c_\infty\}$,  transform $p$ into $p_{\delta(c_1)}$ and insert it to the right of $p_{c_1}$. We obtain the pattern
$$p_{c_1},p_{\delta(c_1)},(p-1)_{c_2}$$ which is forbidden in $ _{\delta,\gamma}^{c_0}\Pp_{\ep}^{c_\infty}$.

\item If the part to the right of $p_{c_1}$ has size $p-1$ and colour $c_2\in \Co_{\mathrm{sup}}$, then necessarily $\ep(c_1,c_2)\in \{0,1\}$.
In that case, transform $p$ into $p_{\gamma(c_1,c_2)}$ and insert it to the right of $p_{c_1}$. We obtain the pattern
$$p_{c_1},p_{\gamma(c_1,c_2)},(p-1)_{c_2}$$ which is forbidden in $_{\delta,\gamma}^{c_0}\Pp_{\ep}^{c_\infty}$.

\end{enumerate}

\item We finish with the remaining case, where all the parts $p_c$ in $\mu$ are such that $c\in \Co_{\mathrm{inf}}$. Let $c_2$ be the colour of the first part of size $p$.
It is not possible to insert $p_f$ with $f \in \Co_{\mathrm{free}}$ in the sequence to the right of $p_{c_2}$ by Proposition \ref{prop:seq0000}. 
Also, the part to the left of $p_{c_2}$, if it exists, has necessarily a size greater than $p$. Using Proposition \ref{lem:seq10low}, insert $p$ in the following way:

\begin{enumerate}

\item If there is no part to the left of $p_{c_2}$, transform the part $p$ into $p_{\delta(c_2)}$ and insert it to the left of $p_{c_2}$. We obtain the pattern
$$p_{\delta(c_2)},p_{c_2}$$
which is forbidden in $_{\delta,\gamma}^{c_0}\Pp_{\ep}^{c_\infty}$.

\item If the part to the left of $p_{c_2}$ has a size greater than $p+1$, transform the part $p$ into $p_{\delta(c_2)}$ and insert it to the left of $p_{c_2}$. We obtain the pattern
$$(p+u)_{c_1},p_{\delta(c_2)},p_{c_2}$$ 
for some integer $u \geq 2 $,
which is forbidden in $_{\delta,\gamma}^{c_0}\Pp_{\ep}^{c_\infty}$.

\item If the part to the left of $p_{c_2}$ has size $p+1$ and colour $c_1 \in (\Co_{\mathrm{free}}\setminus \{c_0\})\sqcup \Co_{\mathrm{sup}}$, transform the part $p$ into $p_{\delta(c_2)}$ and insert it to the left of $p_{c_2}$. We obtain the pattern
$$(p+1)_{c_1},p_{\delta(c_2)},p_{c_2}$$ which is forbidden in $_{\delta,\gamma}^{c_0}\Pp_{\ep}^{c_\infty}$.

\item If the part to the left of $p_{c_2}$ has size $p+1$ and colour $c_1 \in \Co_{\mathrm{inf}}$, then necessarily $\ep(c_1,c_2)\in \{0,1\}$. Transform the part $p$ into $p_{\gamma(c_1,c_2)}$ and insert it to the left of $p_{c_2}$. We obtain the pattern
$$(p+1)_{c_1},p_{\gamma(c_1,c_2)},p_{c_2}$$ which is forbidden in $_{\delta,\gamma}^{c_0}\Pp_{\ep}^{c_\infty}$.

\end{enumerate}

\end{itemize}
Let $\mu'$ denote the resulting partition.

The order in which the parts $p_f$ were inserted does not matter. Indeed, when the insertion between a part of colour $c_1$ and a part of colour $c_2$ depends both on $c_1$ and $c_2$, then there is only one possibility for the inserted part $p_f$. And in the other cases, the insertion of a part of size $p$ does never interfere with the insertion of a part of another size.

Moreover, for the above cases, which form an exhaustive list of insertions $p_f$ into a pair $p^{(1)}_{c_1},p^{(2)}_{c_2}$ with $p=p^{(1)}$ and $c_1 \in \Co_{\mathrm{sup}}$, or 
$p=p^{(2)}$ and $c_2 \in \Co_{\mathrm{inf}}$, the choice of colour $f$ so that the obtained pattern $p^{(1)}_{c_1},p_f,p^{(2)}_{c_2}$ is forbidden in $_{\delta,\gamma}^{c_0}\Pp_{\ep}^{c_\infty}$ is unique.
The partition $\mu'$ has some forbidden patterns of $_{\delta,\gamma}^{c_0}\Pp_{\ep}^{c_\infty}$, no repeated part $p_f$ with $f \in \Co_{\mathrm{free}}$ and no part coloured by $c_0$. Thus Step (1) is the exact inverse of Step (3) of $\Phi$.

\item If there is a part $p_f$ in $\mu'$ with $f\in \Co_{\mathrm{free}}\setminus \{c_0\}$, transform all the parts $p$ into $p_f$ and insert them into $\mu'$. We obtain a partition $\mu''$ with some forbidden patterns of $_{\delta,\gamma}^{c_0}\Pp_{\ep}^{c_\infty}$ and repeated parts $p_f$ with $f\in \Co_{\mathrm{free}}\setminus \{c_0\}$, but no part coloured $c_0$. This is the inverse of Step (2) of $\Phi$.

\item After Steps (1) and (2), the only remaining parts in $\nu$ are such that there is no part in $\mu''$ with the same size. We transform these parts $p$ into $p_{c_0}$ and insert them into $\mu''$. 
The resulting partition has some forbidden patterns of $_{\delta,\gamma}^{c_0}\Pp_{\ep}^{c_\infty}$, repeated parts $p_f$ with $f\in \Co_{\mathrm{free}}$, and parts coloured $c_0$. Set this partition to be $\lambda$. This is the exact inverse of the Step (1) of $\Phi$.

\end{enumerate}

The partition $\lambda$ is a partition of $\Pp_{\ep}^{c_\infty}$, and we set $\Phi^{-1}(\mu,\nu)=\la$.

\subsection{Application to the level one perfect crystal of $C_n^{(1)}$} \label{sec:applicationtocn1}
%Consider the order
%$$1<2<\cdots <n-1<n < \overline{n}< \overline{n-1}<\cdots<\overline{2}<\overline{1},$$
%and use the notation
%$$\overline{[n]}:= \{1, \dots , n, \overline{n} , \dots , \overline{1}\}.$$
%Study the crystal $\B =\{\emptyset, (x,y):x\leq y \in \overline{[n]}\}$ and the energy function $H$ on $\B\ot\B$ defined by
%$$
%\begin{cases}
%H(\emptyset\ot \emptyset)=0,\\
%H(\emptyset\ot (x,y))=H((x,y)\ot\emptyset )=1,\\
%\end{cases}
%$$
%and
%$$
%H((x,y)\ot (x',y')) = \begin{cases}
%\chi(x\geq x')+\chi(y\geq y')-\chi(y\geq y'>x \geq x') \ \ \text{if} \ \ \overline{y'}\neq x, \\
%\chi(x>x')+\chi(y> y')-\chi(y> y'>x > x') \ \ \text{if} \ \ \overline{y'}= x. \\
%\end{cases}
%$$
%Hence, 
%\begin{equation}\label{eq:h=2}
%H((x,y)\ot (x',y')) = 2\text{ if and only if }x\geq y',
%\end{equation}
%and 
%\begin{align}
%H((x,y)\ot (x',y')) = 0 \text{ if and only if either }&x< x'\text{ and }y< y'\label{eq:asrho}\\
%\text{or}\quad&\overline{y}\geq x = \overline{y'}=x'\label{eq:inf}\\
%\text{or}\quad&\overline{y}= x = \overline{y'}< x'.\label{eq:sup}
%\end{align}
Recall that we consider the crystal of Figure \ref{fig:crystalb}, with vertex set $$\B = \{\emptyset \} \sqcup \{(x,y) : x\leq y \in \overline{[n]}\},$$ and energy function $H$ given in Theorem \ref{th:energy}.

Let $\Co=\{c_b:b\in \B\}$ be the set of colours with indices in $\B$, where we write $c_{x,y}$ instead of $c_{(x,y)}$. Define
$\Co=\Co_{\mathrm{sup}}\sqcup \Co_{\mathrm{free}} \sqcup \Co_{\mathrm{inf}}$ with 
\begin{align*}
\Co_{\mathrm{free}} &= \{c_\emptyset,c_{x,\overline{x}}: x\in \{1,\ldots,n\}\},\\
\Co_{\mathrm{sup}} &= \{c_{x,y}: \overline{y}<x\leq y\},\\
\Co_{\mathrm{inf}} &= \{c_{x,y}: x\leq y<\overline{x}\}.
\end{align*}
%Here and in this section, we use the convention that for all $x \in \{1, \dots, n\}$, $\overline{\overline{x}}=x.$
Let $c_{\infty}$ be an additional colour and $\ep$ be a function on $\Co \times (\Co \sqcup \{c_{\infty}\} )$ such that $\ep(c_{b'},c_{b}):=H(b\ot b')$ for all $b,b'\in \Co$ and $\epsilon$ satisfies Condition (7) of Definition \ref{def:welldef}. Thus in particular $\ep$ takes values in $\{0,1,2\}$.

Define $\delta$ and $\gamma$ in the following way:
$$\delta(c_{x,y}):= \begin{cases} c_{\overline{y},y} &\text{ for all } c_{x,y} \in \Co_{\mathrm{sup}},
\\c_{x,\overline{x}}  &\text{ for all } c_{x,y} \in \Co_{\mathrm{inf}},
\end{cases}$$

$$
\gamma(c_{x,y},c_{x',y'}) := \begin{cases} c_{z,\overline{z}} &\text{ for all } (c_{x,y},c_{x',y'}) \in \Co_{\mathrm{sup}}\times \Co_{\mathrm{inf}} \text{ such that } \ep(c_{x,y},c_{x',y'})=0,
\\c_{\overline{y},y} &\text{ for all } (c_{x,y},c_{x',y'}) \in \Co_{\mathrm{sup}}^2 \text{ such that } \ep(c_{x,y},c_{x',y'})\in \{0,1\},
\\c_{x',\overline{x'}} &\text{ for all } (c_{x,y},c_{x',y'}) \in \Co_{\mathrm{inf}}^2 \text{ such that } \ep(c_{x,y},c_{x',y'})\in \{0,1\},
\end{cases}$$
where $z=\max\{x',\overline{y}\}$. 

\begin{lem}\label{lem:epwelldef}
The function $\ep$ is well-defined according to the decomposition $\Co_{\mathrm{sup}}\sqcup \Co_{\mathrm{free}} \sqcup \Co_{\mathrm{inf}}$ and $c_{\infty}$. Moreover the functions $\delta$ and $\gamma$ defined above satisfy Conditions (2)-(6) from Definition \ref{def:welldef}.
\end{lem}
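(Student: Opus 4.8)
The plan is to verify Conditions (1)--(7) of Definition \ref{def:welldef} for the given data; Condition (7) holds by hypothesis, so the content is Conditions (1)--(6), which refer to $\delta$ and $\gamma$, so it amounts to checking them for the explicit functions of the statement. The first step is to translate everything into the combinatorics of $\ov{[n]}$. From $\ep(c_{x,y},c_{x',y'})=H((x',y')\ot(x,y))$, Theorem \ref{th:energy} and Lemma \ref{lem:H012} one records: $\ep(c_\emptyset,c_\emptyset)=0$ and $\ep(c,c_\emptyset)=\ep(c_\emptyset,c)=1$ for every non-empty colour $c$; and for non-empty colours, $\ep(c_{x,y},c_{x',y'})=2$ iff $x'\geq y$, $\ep(c_{x,y},c_{x',y'})=0$ iff one of the three cases [$x'<x$ and $y'<y$], [$\ov{y'}\geq x'=\ov y=x$], [$\ov{y'}=x'=\ov y<x$] holds, and $\ep(c_{x,y},c_{x',y'})=1$ otherwise. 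In parallel one records the membership dictionary: for non-empty $c_{x,y}$ (which satisfies $x\leq y$), it lies in $\Co_{\mathrm{sup}}$, $\Co_{\mathrm{free}}$ or $\Co_{\mathrm{inf}}$ according to whether $\ov y<x$, $\ov y=x$ or $\ov y>x$; moreover $c_{x,y}\in\Co_{\mathrm{sup}}$ forces $y\in\{\ov n,\dots,\ov 1\}$, $c_{x,y}\in\Co_{\mathrm{inf}}$ forces $x\in\{1,\dots,n\}$, and the non-empty free colours $c_{x,\ov x}$ have $x\in\{1,\dots,n\}$. These facts make the ``boundedness'' halves of Conditions (2)--(4) routine: when $c_{x,y}\in\Co_{\mathrm{sup}}$ and $c_{x',y'}$ is free or in $\Co_{\mathrm{inf}}$ (or is $c_\emptyset$), the element $x'$ is non-overlined while $y$ is overlined, so $x'<y$ and hence $\ep(c_{x,y},c_{x',y'})\neq 2$; and $\ep(c_{x',y'},c_{x,y})\neq 0$ because each of the three ``$=0$'' cases is incompatible with the relevant membership constraints.

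To roughly halve the remaining work, I would use the involution $\sigma$ on $\Co$ fixing $c_\emptyset$ and sending $c_{x,y}$ to $c_{\ov y,\ov x}$. Checking each line of the formula in Theorem \ref{th:energy} --- the bar reversing every inequality --- gives $H(b\ot b')=H(\sigma b'\ot\sigma b)$, hence $\ep(\sigma c,\sigma c')=\ep(c',c)$. Since $\sigma$ fixes $\Co_{\mathrm{free}}$ and exchanges $\Co_{\mathrm{sup}}$ with $\Co_{\mathrm{inf}}$, and since a direct inspection of the defining cases shows $\delta\circ\sigma=\sigma\circ\delta$ and $\gamma(\sigma c',\sigma c)=\sigma\gamma(c,c')$, Condition (3) follows from Condition (2) and Condition (6) from Condition (5), while Condition (4) is invariant under $(c,c')\mapsto(\sigma c',\sigma c)$. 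So it suffices to establish Conditions (1), (2), (4) and (5).

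Conditions (1), (2) and (5) reduce to short evaluations of the energy formula. For (1): the pairs involving $c_\emptyset$ give $\ep=1$ or $0$ as needed, and $\ep(c_{x,\ov x},c_{x',\ov{x'}})=H((x',\ov{x'})\ot(x,\ov x))$ is evaluated by the first line if $x\neq x'$ and the second line if $x=x'$, giving $\chi(x\neq x')=\chi(c_{x,\ov x}\neq c_{x',\ov{x'}})$. For the ``moreover'' of (2): $\delta(c_{x,y})=c_{\ov y,y}$ is a non-empty free colour because $y$ is overlined, and $\ep(c_{x,y},c_{\ov y,y})=H((\ov y,y)\ot(x,y))$ falls on the second line and equals $\chi(\ov y>x)+\chi(y>y)-(\cdots)=0$ since $\ov y<x$. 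Condition (5) then reuses this: for $c=c_{x,y},c'=c_{x',y'}\in\Co_{\mathrm{sup}}$ with $\ep(c,c')\in\{0,1\}$, equivalently $x'<y$, one has $\gamma(c,c')=c_{\ov y,y}=\delta(c)$, so $\ep(c,\gamma(c,c'))=0$ as above, and $\ep(\gamma(c,c'),c')=H((x',y')\ot(\ov y,y))$ is checked to equal $1$ by splitting on whether $x'=\ov y$ and using $\ov y<x\leq y$, $\ov{y'}<x'\leq y'$ and $x'<y$.

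The main obstacle is Condition (4). One first shows that for $c=c_{x,y}\in\Co_{\mathrm{sup}}$ and $c'=c_{x',y'}\in\Co_{\mathrm{inf}}$, the equality $\ep(c,c')=0$ forces the first of the three ``$=0$'' cases, i.e.\ $x'<x$ and $y'<y$: the case $\ov{y'}\geq x'=\ov y=x$ would require $\ov y=x$, contradicting $\ov y<x$, while the case $\ov{y'}=x'=\ov y<x$ would give $y'=\ov{x'}$, contradicting $y'<\ov{x'}$. Then $z=\max\{x',\ov y\}$ is non-overlined ($x'$ is non-overlined and $y$ is overlined), so $\gamma(c,c')=c_{z,\ov z}\in\Co_{\mathrm{free}}$, and it remains to verify $\ep(c_{x,y},c_{z,\ov z})=0$ and $\ep(c_{z,\ov z},c_{x',y'})=0$. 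Each is a short evaluation of $H$, but because $z$ is a maximum one must separate the cases $x'\leq\ov y$ and $x'>\ov y$; in one the relevant pair triggers the second line of the formula of Theorem \ref{th:energy} (the case $\ov{y'}=x$) and in the other the first line, and in each subcase the inequalities $\ov y<x\leq y$, $x'\leq y'<\ov{x'}$, $x'<x$, $y'<y$ make every indicator vanish. Combining these verifications with Condition (7), which holds by hypothesis, shows that $\ep$ is well-defined according to the decomposition $\Co_{\mathrm{sup}}\sqcup\Co_{\mathrm{free}}\sqcup\Co_{\mathrm{inf}}$ and $c_\infty$, and that the stated $\delta$ and $\gamma$ satisfy Conditions (2)--(6).
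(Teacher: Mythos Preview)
Your proof is correct and covers the same content as the paper's, but you organise it differently in one notable way. The paper verifies Conditions (1)--(6) one by one, treating (2) and (3) (and likewise (5) and (6)) as separate parallel checks. You instead introduce the bar-involution $\sigma:c_{x,y}\mapsto c_{\ov y,\ov x}$, verify the symmetry $\ep(\sigma c,\sigma c')=\ep(c',c)$ from the explicit energy formula, and observe that $\sigma$ swaps $\Co_{\mathrm{sup}}$ with $\Co_{\mathrm{inf}}$, fixes $\Co_{\mathrm{free}}$, and intertwines $\delta$ and $\gamma$. This legitimately reduces (3) to (2) and (6) to (5), halving the case analysis; the paper does not exploit this symmetry. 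The remaining verifications you do --- that $\ep(c,c')\ne 2$ when $c\in\Co_{\mathrm{sup}}$ and $c'\in\Co_{\mathrm{free}}\sqcup\Co_{\mathrm{inf}}$, that $\ep(c',c)\ne 0$ by ruling out the three cases of Lemma~\ref{lem:H012}, the evaluation $\ep(c_{x,y},c_{\ov y,y})=0$ via the second line of Theorem~\ref{th:energy}, and the Condition~(4) analysis with $z=\max\{x',\ov y\}$ split into $z=\ov y$ versus $z=x'$ --- match the paper's proof in substance, only with the indicator-function bookkeeping made slightly more explicit. Two small points worth tightening: your sentence ``in one the relevant pair triggers the second line \dots\ and in the other the first line'' applies separately to each of the two values $\ep(c_{x,y},c_{z,\ov z})$ and $\ep(c_{z,\ov z},c_{x',y'})$, and the symmetry $H(b\ot b')=H(\sigma b'\ot\sigma b)$ should also be noted to hold trivially when one of $b,b'$ is $\emptyset$.
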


\begin{proof}
The function $\ep$ satisfies the following.

\begin{enumerate}

 \item For all $x, y\in\{1,\ldots,n\}$ with $x\neq y$,
 $$\begin{cases}
 \ep(c_\emptyset,c_\emptyset)=0,\\
 \ep(c_\emptyset,c_{x,\overline{x}})=\ep(c_{x,\overline{x}},c_\emptyset)=1,\\
 \ep(c_{x,\overline{x}},c_{x,\overline{x}})=0,\\
 \ep(c_{x,\overline{x}},c_{y,\overline{y}})=1.
 \end{cases}$$
 
 \item For all $\overline{y}<x\leq y$, $\ep(c_{x,y},c_\emptyset)=\ep(c_\emptyset,c_{x,y})=1$. Let $z\in \{1,\ldots,n\}$. By \eqref{eq:h=2}, we have $\ep(c_{x,y},c_{z,\overline{z}})\neq 2$ because
 $y\geq \overline{n}>n\geq z$. Moreover, $\ep(c_{z,\overline{z}},c_{x,y})\neq 0$ since
 
 \begin{enumerate}
 \item we do not have \eqref{eq:asrho} which is equivalent to $x<z<\overline{y}$,
 \item we do not have \eqref{eq:inf} which is equivalent to $\overline{y}\geq x = z$,
  \item we do not have \eqref{eq:sup} which is equivalent to $\overline{y}=x = z$.
 \end{enumerate}
 
Finally, by \eqref{eq:sup}, $\ep(c_{x,y},\delta(c_{x,y}))=\ep(c_{x,y},c_{\overline{y},y})=0$.

   \item For all $x\leq y<\overline{x}$, $\ep(c_{x,y},c_\emptyset)=\ep(c_\emptyset,c_{x,y})=1$. Let $z\in \{1,\ldots,n\}$. Hence, by \eqref{eq:h=2},  $\ep(c_{z,\overline{z}},c_{x,y})\neq 2$ because
 $\overline{z}\geq \overline{n}>n\geq x$. Moreover, $\ep(c_{x,y},c_{z,\overline{z}})\neq 0$ since
 
 \begin{enumerate}
 \item we do not have \eqref{eq:asrho} which is equivalent to $\overline{y}<z<x$,
 \item we do not have \eqref{eq:inf} which is equivalent to $z=\overline{y}=x$,
  \item we do not have \eqref{eq:sup} which is equivalent to $z=\overline{y}\leq x$.
 \end{enumerate}
 
Finally, by \eqref{eq:inf}, $\ep(\delta(c_{x,y}),c_{x,y})=\ep(c_{x,\overline{x}},c_{x,y})=0$.

 \item For all $\overline{y}<x\leq y$ and $x'\leq y'<\overline{x'}$, by \eqref{eq:h=2}, $\ep(c_{x,y},c_{x',y'})\neq 2$ because
 $y\geq \overline{n}>n\geq x'$. Moreover, $\ep(c_{x',y'},c_{x,y})\neq 0$ since 
 
  \begin{enumerate}
 \item we do not have \eqref{eq:asrho} which is equivalent to $\overline{y'}<\overline{y}<x<x'$,
 \item we do not have \eqref{eq:inf} as $\overline{y}'\neq x'$,
  \item we do not have \eqref{eq:sup} as $\overline{y}\neq  x $.
 \end{enumerate}
 
Finally, by \eqref{eq:asrho},\eqref{eq:inf} and \eqref{eq:sup}, $\ep(c_{x,y},c_{x',y'})=0$ if and only if $x>x'$ and $y>y'$. In that case, setting $z=\max\{x',\overline{y}\}$,
 
\begin{itemize}
\item we first show that $\ep(c_{x,y},\gamma(c_{x,y},c_{x',y'}))=\ep(c_{x,y},c_{z,\overline{z}})=0$. If $\overline{y}<z=x'<x$, then $\ep(c_{x,y},c_{z,\overline{z}})=0$ by \eqref{eq:asrho}. Otherwise, $z=\overline{y}<x$ and $\ep(c_{x,y},c_{z,\overline{z}})=0$ by \eqref{eq:sup};

\item we then show that $\ep(\gamma(c_{x,y},c_{x',y'}),c_{x',y'})=\ep(c_{z,\overline{z}},c_{x',y'})=0$. If $x'<z=\overline{y}<\overline{y'}$, then  
$\ep(c_{z,\overline{z}},c_{x',y'})=0$ by \eqref{eq:asrho}. Otherwise, $z=x'<\overline{y'}$ and $\ep(c_{z,\overline{z}},c_{x',y'})=0$ by \eqref{eq:inf}.
\end{itemize}

\item For all $\overline{y}<x\leq y$, $\ep(c_{x,y},c_{\overline{y},y})=0$ by \eqref{eq:sup}.

\noindent For all  $\overline{y'}<x'\leq y'$, by \eqref{eq:h=2}, $\ep(c_{x,y},c_{x',y'})\neq 2$ if and only if $y>x'$. In that case, by \eqref{eq:h=2}, $\ep(c_{\overline{y},y},c_{x',y'})\neq 2$, and since  $\ep(c_{\overline{y},y},c_{x',y'})\neq 0$, we necessarily have $\ep(c_{\overline{y},y},c_{x',y'})=1$. Thus
$$\ep(c_{x,y},\gamma(c_{x,y},c_{x',y'}))+1=\ep(\gamma(c_{x,y},c_{x',y'}),c_{x',y'})=1\,.$$
   
\item For all $x'\leq y'<\overline{x'}$, by \eqref{eq:inf}, $\ep(c_{x',\overline{x'}},c_{x',y'})=0$.

\noindent For all $x\leq y<\overline{x}$, by
\eqref{eq:h=2}, $\ep(c_{x,y},c_{x',y'})\neq 2$ if and only if $y>x'$. In that case, by \eqref{eq:h=2}, $\ep(c_{x,y},c_{x',\overline{x'}})\neq 2$, and since  $\ep(c_{x,y},c_{x',\overline{x'}})\neq 0$, we necessarily have $\ep(c_{x,y},c_{x',\overline{x'}})=1$. Thus
$$\ep(c_{x,y},\gamma(c_{x,y},c_{x',y'}))=\ep(\gamma(c_{x,y},c_{x',y'}),c_{x',y'})+1=1\,.$$
\end{enumerate}
\end{proof}

In the following, set $\Sc=\Co\setminus\{c_\emptyset\}$.

\begin{lem}\label{lem:eprho}
Let $\rho$ be the function defined on $\Sc \times (\Sc \sqcup \{c_{\infty}\})$ by $\rho(\Sc,c_\infty)=\ep(\Sc,c_\infty)$ and for all $ x\leq y \in \overline{[n]}, x' \leq y' \in \overline{[n]}$, by
$$\rho(c_{x',y'},c_{x,y})=\chi(x\geq x')+\chi(y\geq y')-\chi(y\geq y'>x \geq x').$$ 

Let $\tilde{\Pp}_\rho^{c_\infty}$ be the set of partitions
$\pi=(\pi_0,\ldots,\pi_{s-1},\pi_s=0_{c_\infty})$ with colours in $\Sc \sqcup \{c_{\infty}\}$ such that for all $i\in \{0,\ldots,s-1\}$, 
$$c(\pi_i)\neq c_\infty \text{ and }|\pi_i|-|\pi_{i+1}|\geq \rho(c(\pi_i),c(\pi_{i+1})).$$
Then, setting $c_0=c_\emptyset$, we have
$$_{\delta,\gamma}^{c_0}\Pp_{\ep}^{c_\infty}=\tilde{\Pp}_\rho^{c_\infty}.$$ 

\end{lem}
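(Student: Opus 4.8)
The goal is to show the set equality $_{\delta,\gamma}^{c_0}\Pp_{\ep}^{c_\infty}=\tilde{\Pp}_\rho^{c_\infty}$ when $c_0 = c_\emptyset$. My first step is to unwind what membership in each side means. On the right, a partition lies in $\tilde{\Pp}_\rho^{c_\infty}$ precisely when it has no part coloured $c_\emptyset$ (since $\rho$ is only defined on $\Sc \times (\Sc \sqcup \{c_\infty\})$, and by definition the parts have colours in $\Sc \sqcup \{c_\infty\}$) and consecutive differences satisfy $|\pi_i| - |\pi_{i+1}| \geq \rho(c(\pi_i), c(\pi_{i+1}))$. On the left, membership in $\Pp_{\ep}^{c_\infty}$ means consecutive differences satisfy $\gg_\ep$, i.e. $|\pi_i| - |\pi_{i+1}| \geq \ep(c(\pi_i), c(\pi_{i+1}))$, together with the $c_0$-regularity and the avoidance of the forbidden patterns (1)--(6) of Definition \ref{def:Pcomplique}. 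So the heart of the lemma is to prove that, for a $c_\emptyset$-regular sequence, satisfying "$\gg_\ep$ everywhere plus avoiding all forbidden patterns" is equivalent to "$\gg_\rho$ everywhere".

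The key technical observation is the precise relationship between $\ep$ and $\rho$ on secondary colours. Recall from \Lem{lem:H012} and \Rem{rem:rk1} that $\ep(c_{x',y'},c_{x,y}) = H((x,y)\ot(x',y')) = \chi(x\geq x')+\chi(y\geq y')-\chi(y\geq y'>x\geq x')$ in all cases \emph{except} the "inf" case \eqref{eq:inf} and the "sup" case \eqref{eq:sup}, where $\ep = 0$ but the first-line formula would give $1$. In other words, for all $c_{x',y'}, c_{x,y} \in \Sc$ we have $\rho(c_{x',y'},c_{x,y}) = \ep(c_{x',y'},c_{x,y})$ \emph{unless} we are in case \eqref{eq:inf} or \eqref{eq:sup}, in which case $\rho = 1$ and $\ep = 0$. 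So $\rho \geq \ep$ always, with equality everywhere except at these two exceptional pairs; and one checks the border values agree by the defining stipulation $\rho(\Sc, c_\infty) = \ep(\Sc, c_\infty)$. Thus $\gg_\rho$ is \emph{strictly stronger} than $\gg_\ep$ exactly at the pairs $(c_{x,\bar x}, c_{x,y})$ with $\bar y \geq x$ (case \eqref{eq:inf}, where $\delta(c_{x,y}) = c_{x,\bar x}$ when $c_{x,y}\in\Co_{\mathrm{inf}}$... more precisely $c_{x',y'}=c_{x,y}$, $c_{x,y}$ playing the role $\overline{y}\geq x=\overline{y'}=x'$) and at the pairs in case \eqref{eq:sup}. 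The plan is then:

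\textbf{Inclusion $\tilde{\Pp}_\rho^{c_\infty} \subseteq {_{\delta,\gamma}^{c_0}\Pp_{\ep}^{c_\infty}}$.} Given $\pi \in \tilde{\Pp}_\rho^{c_\infty}$, since $\rho \geq \ep$ pointwise we immediately get $\pi \in \Pp_{\ep}^{c_\infty}$, and it is $c_0$-regular by definition. It remains to check $\pi$ avoids each forbidden pattern of Definition \ref{def:Pcomplique}. For each pattern I will show that its \emph{middle step} (or, for the length-2 patterns, its single step) violates a $\gg_\rho$ inequality: the forbidden patterns are precisely built so that $p_c, p_{\delta(c)}, \dots$ or $\dots, p_{\delta(c')}, p_{c'}$ forces two consecutive parts whose $\rho$-difference requirement is not met — this uses exactly that the relevant pair falls in case \eqref{eq:inf}/\eqref{eq:sup} where $\rho = 1 > 0 = \ep$, or uses the explicit $\gamma$-values and the computations in \Lem{lem:epwelldef}. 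This is a finite case check over patterns (1)--(6).

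\textbf{Inclusion ${_{\delta,\gamma}^{c_0}\Pp_{\ep}^{c_\infty}} \subseteq \tilde{\Pp}_\rho^{c_\infty}$.} Given $\pi$ on the left, it is $c_\emptyset$-regular so its colours are in $\Sc \sqcup \{c_\infty\}$. I must show every consecutive pair satisfies $\gg_\rho$. Whenever the pair is \emph{not} in case \eqref{eq:inf}/\eqref{eq:sup}, we have $\rho = \ep$ there and $\gg_\ep$ already gives it. So suppose $\pi_i, \pi_{i+1}$ is a pair with $\ep(c(\pi_i),c(\pi_{i+1})) = 0$ falling in \eqref{eq:inf} or \eqref{eq:sup}, and suppose for contradiction $|\pi_i| = |\pi_{i+1}|$ (the only way $\gg_\ep$ holds but $\gg_\rho$ fails). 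I will derive a forbidden pattern: using \Prp{prop:seq0000} the two equal-size parts sit inside a maximal block of equal-size parts, and the structure of that block (sup-colours, then a free colour, then inf-colours — but there are no free colours here since $\pi$ is $c_\emptyset$-regular and $c_0 = c_\emptyset$ means the only "free" colours left are the $c_{x,\bar x}$, which can and do occur) together with the avoidance of patterns (2),(5),(6) forces a contradiction. Concretely, in case \eqref{eq:inf} one has $c(\pi_i) = c_{x,\bar x} = \delta$ of the inf-colour $c(\pi_{i+1})$, and pattern (6)(a) at the beginning or (6)(b),(6)(c) with the preceding part, or — if $\pi_i$ itself has no admissible predecessor — directly pattern (6)(a); symmetrically case \eqref{eq:sup} uses pattern (5). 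The careful bookkeeping of which forbidden pattern applies, depending on the part just before/after the equal-size block and its size gap, is the \textbf{main obstacle}: one must verify that in \emph{every} configuration producing a $\rho$-violation, some pattern among (2)--(6) is genuinely present, which requires re-examining the block structure of \Prp{prop:seq0000} and the exact $\delta$, $\gamma$ definitions. Once this case analysis is complete, both inclusions are established and the lemma follows.
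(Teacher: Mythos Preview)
Your plan is correct and matches the paper's proof: both arguments hinge on the observation that $\rho=\ep$ on $\Sc^2$ except precisely in cases \eqref{eq:inf} and \eqref{eq:sup} (where $\rho=1$, $\ep=0$), so that $\tilde{\Pp}_\rho^{c_\infty}$ is exactly the set of $c_\emptyset$-regular partitions in $\Pp_\ep^{c_\infty}$ avoiding the three length-two patterns $p_{c_{x,\bar x}},p_{c_{x,\bar x}}$, $p_{c_{x,\bar x}},p_{c_{x,y}}$ ($y<\bar x$), and $p_{c_{x,y}},p_{c_{\bar y,y}}$ ($\bar y<x$). The paper then checks (as you propose) that every forbidden pattern of Definition~\ref{def:Pcomplique} contains one of these three as a subpattern (giving $\tilde{\Pp}_\rho^{c_\infty}\subset{_{\delta,\gamma}^{c_0}\Pp_{\ep}^{c_\infty}}$), and conversely that each of the three length-two patterns, together with whatever part precedes or follows it, always produces a pattern forbidden by Definition~\ref{def:Pcomplique} --- the case analysis on the size and type of the neighbouring part that you flag as the main obstacle is exactly what the paper carries out.
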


\begin{proof}
We first observe that, by Remark \ref{rem:rk1}, $\rho(c_{x',y'},c_{x,y})=\ep(c_{x',y'},c_{x,y})$ except for the cases $\overline{y}\geq x = \overline{y'}=x'$ and $\overline{y}=x = \overline{y'}< x'$, where $\rho(c_{x',y'},c_{x,y})=\ep(c_{x',y'},c_{x,y})+1=1$. These exceptions can be divided in three disjoint cases:
$$
\begin{cases}
\overline{y}=x = \overline{y'}= x',\\
\overline{y}> x = \overline{y'}=x',\\
\overline{y}=x = \overline{y'}< x'.
\end{cases}
$$
Thus $\tilde{\Pp}_\rho^{c_\infty}$ is exactly exactly the set of $c_\emptyset$-regular partitions of $\Pp_{\ep}^{c_\infty}$ which avoid the patterns:
\begin{enumerate}
\item $p_{c_{x,\overline{x}}},p_{c_{x,\overline{x}}}$ for all $x\in \{1,\ldots,n\}$,
\item $p_{c_{x,\overline{x}}},p_{c_{x,y}}$ for all $x\leq y <\overline{x}$,
\item $p_{c_{x,y}},p_{c_{\overline{y},y}}$ for all $\overline{y}<x\leq y$.
\end{enumerate}
Since each forbidden pattern for the set $_{\delta,\gamma}^{c_0}\Pp_{\ep}^{c_\infty}$ contains at least one of the patterns (1)-(3), this implies that $\tilde{\Pp}_\rho^{c_\infty} \subset \,_{\delta,\gamma}^{c_0}\Pp_{\ep}^{c_\infty}$. 

We now prove that $_{\delta,\gamma}^{c_0}\Pp_{\ep}^{c_\infty}\subset \tilde{\Pp}_\rho^{c_\infty}$ by showing that the forbidden patterns of $\tilde{\Pp}_\rho^{c_\infty}$ are also forbidden in $_{\delta,\gamma}^{c_0}\Pp_{\ep}^{c_\infty}$.

\begin{enumerate}

\item For all $x\in \{1,\ldots,n\}$, the pattern $p_{c_{x,\overline{x}}},p_{c_{x,\overline{x}}}$ is forbidden in in $_{\delta,\gamma}^{c_0}\Pp_{\ep}^{c_\infty}$.

\item For all $x\leq y <\overline{x}$, we show that $p_{c_{x,\overline{x}}},p_{c_{x,y}}$ is forbidden in $_{\delta,\gamma}^{c_0}\Pp_{\ep}^{c_\infty}$. To do so, we first prove that all the patterns of the form $q_{c_{x',y'}},p_{c_{x,\overline{x}}},p_{c_{x,y}}$ allowed in $\Pp_{\ep}^{c_\infty}$ are forbidden in $_{\delta,\gamma}^{c_0}\Pp_{\ep}^{c_\infty}$.

\begin{enumerate}
\item If $q=p$, then $\ep(c_{x',y'},c_{x,\overline{x}})=0$, and either $\overline{y'}\leq  x < x'$ by the union of \eqref{eq:asrho} and \eqref{eq:sup}, or $\overline{y}= x=\overline{y'}=x'$ by \eqref{eq:inf}. In the first case, we have $x < x'$ and $y'\geq \overline{x}>y$, so that $\ep(c_{x',y'},c_{x,y})=0$ by \eqref{eq:asrho}. Since $\max\{\overline{y'},x\}=x$, we obtain the pattern $p_{c_{x',y'}},p_{\gamma(c_{x',y'},c_{x,y})},p_{c_{x,y}}$, which is forbidden in $_{\delta,\gamma}^{c_0}\Pp_{\ep}^{c_\infty}$. In the second case, the pattern $p_{c_{x,\overline{x}}},p_{c_{x,\overline{x}}},p_{c_{x,y}}$ is forbidden in $_{\delta,\gamma}^{c_0}\Pp_{\ep}^{c_\infty}$. 
\item If $q=p+1$, then $\ep(c_{x',y'},c_{x,\overline{x}})\leq 1$. If $\overline{y'}\leq  x'\leq y'$, the pattern $(p+1)_{c_{x',y'}},p_{c_{x,\overline{x}}},p_{c_{x,y}}$ is forbidden in $_{\delta,\gamma}^{c_0}\Pp_{\ep}^{c_\infty}$. Otherwise, $ x'\leq y'<\overline{x'}$. In that case, as $\ep(c_{x',y'},c_{x,\overline{x}})\neq 2$, by \eqref{eq:h=2}, $y'>x$ so that $\ep(c_{x',y'},c_{x,y})\neq 2$. Once again, the pattern $(p+1)_{c_{x',y'}},p_{c_{x,\overline{x}}},p_{c_{x,y}}$ is forbidden in $_{\delta,\gamma}^{c_0}\Pp_{\ep}^{c_\infty}$.
\item If $q\geq p+2$, the pattern $q_{c_{x',y'}},p_{c_{x,\overline{x}}},p_{c_{x,y}}$ is forbidden in $_{\delta,\gamma}^{c_0}\Pp_{\ep}^{c_\infty}$.
\end{enumerate}
The pattern $p_{c_{x,\overline{x}}},p_{c_{x,y}}$ is forbidden in $_{\delta,\gamma}^{c_0}\Pp_{\ep}^{c_\infty}$ at the beginning of the partition. Therefore, together with the cases above, we conclude that the pattern $p_{c_{x,\overline{x}}},p_{c_{x,y}}$ is simply forbidden in $_{\delta,\gamma}^{c_0}\Pp_{\ep}^{c_\infty}$. 
\item Similarly to what we did in $(2)$, we show that the pattern $p_{c_{x,y}},p_{c_{\overline{y},y}}$ is forbidden in $_{\delta,\gamma}^{c_0}\Pp_{\ep}^{c_\infty}$ for all $\overline{y}<x\leq y$.
\end{enumerate}
\end{proof}

Now we modify the minimal differences with the last part coloured $c_{\infty}$ in the functions $\epsilon$ and $\rho$ to make the connection with grounded partitions.

Let $b_0=\emptyset$, and for $i\in \{1,\ldots,n\}$, let  $b_i=(i,\overline{i})$. Then, for $i\in \{0,\ldots,n\}$ 
define the function $\epsilon_i$ on $\Co \times (\Co \sqcup \{c_{\infty}\})$ as follows:
$$\epsilon_i (c',c):= \begin{cases}
H(b \otimes b') &\text{ if } (c',c)=(c_{b'},c_b) \in \Co^2,\\
H(b_i \otimes b') &\text{ if } (c',c)=(c_{b'},c_{\infty}) \text{ with } c_{b'} \in \Co \setminus \{c_{b_i}\} ,\\
1 &\text{ if } c'=c_{b_i} \text{ and } c=c_{\infty}.
\end{cases}$$
The function $\rho_i$ is defined  on $\Sc \times (\Sc \sqcup \{c_{\infty}\})$ by $\rho_i(\Sc,c_\infty)=\ep_i(\Sc,c_\infty)$ and for all $ x\leq y \in \overline{[n]}, x' \leq y' \in \overline{[n]}$,
$$\rho_i(c_{x',y'},c_{x,y})=\chi(x\geq x')+\chi(y\geq y')-\chi(y\geq y'>x \geq x').$$ 

Applying Theorem \ref{theo:dualitycapaprimc} with $c_0 =c_{\emptyset}$ and combining it with Lemma \ref{lem:eprho}, we obtain the following.

\begin{cor}\label{cor:dualitycapaprimcCn}
For all $i\in \{0,\ldots,n\}$, there exists a bijection $\Phi$ between $\Pp_{\ep_i}^{c_\infty}$ and the product set 
 $\tilde{\Pp}_{\rho_i}^{c_\infty} \times \Pp$.
Furthermore, for $\Phi(\lambda)=(\mu,\nu)$, we have $|\lambda|=|\mu|+|\nu|$,  $\ell(\lambda)=\ell(\mu)+\ell(\nu)$, and the colour sequence of $\lambda$ restricted to the colours in $\Co_{\mathrm{sup}}\sqcup \Co_{\mathrm{inf}}$ is the same as the colour sequence of $\mu$ restricted to the colours in $\Co_{\mathrm{sup}}\sqcup \Co_{\mathrm{inf}}$.
\end{cor}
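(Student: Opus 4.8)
The plan is to deduce Corollary \ref{cor:dualitycapaprimcCn} directly from Theorem \ref{theo:dualitycapaprimc}, simply by checking that its hypotheses are met for each of the functions $\ep_i$, and then identifying the resulting target set $_{\delta,\gamma}^{c_0}\Pp_{\ep_i}^{c_\infty}$ with $\tilde{\Pp}_{\rho_i}^{c_\infty}$ via Lemma \ref{lem:eprho}. So the work splits into two essentially independent verifications.

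First I would observe that $\ep_i$ differs from the function $\ep$ of Lemma \ref{lem:epwelldef} only in the border values, i.e. the values $\ep_i(c_{b'},c_\infty)$, which are set to $H(b_i\ot b')$ for $c_{b'}\neq c_{b_i}$ and to $1$ for $c_{b'}=c_{b_i}$. Since each $b_i$ equals $\emptyset$ or lies in $\Co_{\mathrm{free}}$ (as $b_i=(i,\ov i)$ with $\ov i = \ov{b_i}$, so $c_{b_i}\in\Co_{\mathrm{free}}$), the energies $H(b_i\ot b')$ are exactly the values $\ep(c_{b'},c_{b_i})$, which by Lemma \ref{lem:epwelldef} satisfy $\ep(\Co_{\mathrm{free}},c_{b_i})=\{1\}$, $\ep(\Co_{\mathrm{inf}},c_{b_i})\subset\{1,2\}$ and $\ep(\Co_{\mathrm{sup}},c_{b_i})\subset\{0,1\}$ (using \eqref{eq:h=2} and Lemma \ref{lem:H012} together with the facts $y\geq\ov n>n\geq i$ when $c_{b'}\in\Co_{\mathrm{inf}}$, and the symmetric inequality for $\Co_{\mathrm{sup}}$). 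Hence $\ep_i$ still satisfies Condition (7) of Definition \ref{def:welldef}, and Conditions (1)--(6) are untouched since they do not involve $c_\infty$; so $\ep_i$ is well-defined according to $\Co_{\mathrm{sup}}\sqcup\Co_{\mathrm{free}}\sqcup\Co_{\mathrm{inf}}$ and $c_\infty$, with the same $\delta,\gamma$ as before satisfying (2)--(6). I would also record that $c_0=c_\emptyset\in\Co_{\mathrm{free}}$ satisfies $\ep_i(c_\emptyset,c)=\ep_i(c,c_\emptyset)=1$ for all $c\in\Co\setminus\{c_\emptyset\}$: for $c\in\Co$ this is item (1)--(3) of the proof of Lemma \ref{lem:epwelldef}, and the value $\ep_i(c_\emptyset,c_\infty)=H(b_i\ot\emptyset)=1$ comes from the first lemma of Section \ref{sec:energy} (since $c_\emptyset\neq c_{b_i}$ whenever $i\neq 0$, and when $i=0$ it is $1$ by fiat). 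This verifies the hypothesis ``$\ep(c_0,c)=\ep(c,c_0)=1$'' of Theorem \ref{theo:dualitycapaprimc}.

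Next I would apply Theorem \ref{theo:dualitycapaprimc} with this $\ep_i$ and $c_0=c_\emptyset$, obtaining a bijection $\Phi:\Pp_{\ep_i}^{c_\infty}\to\,_{\delta,\gamma}^{c_0}\Pp_{\ep_i}^{c_\infty}\times\Pp$ preserving size, number of parts, and the colour sequence restricted to $\Co_{\mathrm{sup}}\sqcup\Co_{\mathrm{inf}}$. It remains only to identify $_{\delta,\gamma}^{c_0}\Pp_{\ep_i}^{c_\infty}$ with $\tilde{\Pp}_{\rho_i}^{c_\infty}$. The argument of Lemma \ref{lem:eprho} applies verbatim: its proof only compares $\rho$ with $\ep$ on pairs of colours in $\Sc^2$ — which is where $\ep_i$ and $\ep$, hence $\rho_i$ and $\rho$, agree — and uses the border equality $\rho_i(\Sc,c_\infty)=\ep_i(\Sc,c_\infty)$ which holds by the definition of $\rho_i$. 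So the same three forbidden patterns (1)--(3) characterise $\tilde{\Pp}_{\rho_i}^{c_\infty}$, and the containment arguments in both directions go through unchanged, giving $_{\delta,\gamma}^{c_0}\Pp_{\ep_i}^{c_\infty}=\tilde{\Pp}_{\rho_i}^{c_\infty}$. Composing, $\Phi$ is the desired bijection.

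I do not expect any real obstacle here: this corollary is a bookkeeping consequence of two results already proved. The only point requiring a moment's care is the border-value check in the first paragraph — confirming that replacing the ``$c_\infty$-column'' of $\ep$ by the $b_i$-column of $H$ preserves Condition (7) — which reduces to the inequalities $y\geq\ov n>n\geq i$ and $\ov{y'}\geq\ov n>n\geq i$ feeding into \eqref{eq:h=2}, exactly as in items (2) and (3) of the proof of Lemma \ref{lem:epwelldef}. Everything else is quoting Theorem \ref{theo:dualitycapaprimc} and re-reading the proof of Lemma \ref{lem:eprho}.
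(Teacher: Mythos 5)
Your proof is correct and follows exactly the paper's route: the paper obtains Corollary \ref{cor:dualitycapaprimcCn} precisely by applying Theorem \ref{theo:dualitycapaprimc} with $c_0=c_\emptyset$ and combining it with Lemma \ref{lem:eprho}, which is your argument. Your additional check that the modified border values of $\ep_i$ (namely $\ep_i(c_{b'},c_\infty)=H(b_i\ot b')=\ep(c_{b'},c_{b_i})$ with $c_{b_i}\in\Co_{\mathrm{free}}$, and the value $1$ for $c_{b'}=c_{b_i}$) still satisfy Condition (7) of Definition \ref{def:welldef} is a routine detail the paper leaves implicit, and you carry it out correctly.
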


Let $\Pp_{\ep_i}$ (resp. $\Pp_{i,\rho}$) be the set obtained from $\Pp_{\ep_i}^{c_\infty}$ (resp. $\tilde{\Pp}_{\rho_i}^{c_\infty}$) by transforming the final part $0_{c_\infty}$ into $0_{c_{b_i}}$. Then $\Pp_{\ep_i}$ is exactly the set  $\Pp^{\gg}_{c_{b_i}}$of grounded partitions with ground $c_{b_i}$ and relation $\gg$.
Theorem \ref{th:bijCn} is proved.

%Now, by setting $c_b=e^{\wt b}$  for all $b \in \B$, $c_\infty=1$ and $q=e^{-\delta}$, we have by Theorem \ref{th:formchar} that $c=1$ for all $c\in \Co_{\mathrm{free}}$ and
%$$\sum_{\pi \in \Pp_{\ep_i}} C(\pi)q^{|\pi|}= \frac{e^{-\Lambda_i}\ch(L(\Lambda_i))}{(q;q)_{\infty}}.$$
%Finally, by Theorem \ref{theo:dualitycapaprimc} and Lemma \ref{lem:eprho}, Theorem \ref{th:CnCapparelli} follows.

\section{Connection with Frobenius partitions}
\label{sec:roadfrob}
Let $i\in \{0,\ldots,n\}$. In this section we show that the partitions of $\Pp_{i,\rho}$ can be identified with some simpler objects, namely the \textit{$C_n^{(1)}$-Frobenius partitions} defined in the introduction, which can be seen as coloured Frobenius partitions \cite{DK19} with additional interlacing conditions. We give a more general definition of these objects in Subsection \ref{sec:frob}, before explaining the correspondence with $\Pp_{i,\rho}$ in the following subsections.

\subsection{$C_n^{(1)}$-Frobenius partitions}
\label{sec:frob}
Let $(\Od,\geq)$ be an ordered set and let $\Rr=\{c_u: u\in \Od\}$ be a set of colours which we will call \textit{primary colours}. Recall that $>$ denotes the strict order corresponding to $\geq$, and 
extend the order $\geq$ on the set of primary-coloured integers $\Z_\Rr$ as follows:
\begin{equation}\label{eq:order}
k_{c_u}\geq l_{c_v}\Longleftrightarrow k-l\geq \chi(u<v)\,.
\end{equation}
Equivalently, $k_{c_u}> l_{c_v}$ if and only if $k-l\geq \chi(u\leq v)$.
This is also equivalent to ordering the coloured integers in the following way
$$\cdots  \leq (k-1)_{c_{u_1}} \leq (k-1)_{c_{u_2}} \leq\cdots \leq k_{c_{u_1}} \leq k_{c_{u_2}} \leq \cdots  \leq (k+1)_{c_{u_1}} \leq (k+1)_{c_{u_2}} \leq \cdots,$$
where $u_1 \leq u_2 \leq \cdots$ in $\Od$.

The definition of $C_n^{(1)}$-Frobenius partitions with relation $>$ and ground $k_c,l_d$ given in Definition \ref{def:grounded_frob} for the case $\Od = \ov{[n]}$ is still valid for general $\Od$. Hence we do not repeat it here.

Given a set of primary colours $\Rr$, define $\Sc$ the corresponding set of \textit{secondary colours}, which are commutative products of two primary colours, i.e. 
$$\Sc:=\{c_{x,y}=c_xc_y=c_yc_x:x\leq y \in \Od\}.$$ 
Let $\Z_\Sc$ denote the set of secondary-coloured integers.
In this section, if $k_{c_u}$ and $l_{c_v}$ are two primary-coloured integers, then we define their sum to be the secondary-coloured integer
$$k_{c_u}+l_{c_v}=(k+l)_{c_uc_v}.$$

Define the function $\rho$ on $\Sc^2$ by 
$$\rho(c_{x',y'},c_{x,y}):=\chi(x\geq x')+\chi(y\geq y')-\chi(y\geq y'>x \geq x').$$ 
Note that this $\rho$ has the same expression as the function $\rho$ from Lemma \ref{lem:eprho} restricted to $\Sc^2$.
The corresponding relation $\gg_\rho$ on $\Z_\Sc$ is given by 
$$k_c\gg_\rho l_d \text{ if and only if }k-l\geq \rho(c,d).$$

Finally, denote by $\Pp^{k_c+l_d}_\rho$ the set of generalised coloured partitions with parts in $\Z_\Sc$, order $\gg_\rho$, and last part equal to $k_c+l_d$. 
The main result of this section is a bijection between the coloured partitions of $\Pp^{k_c+l_d}_\rho$ and the $C_n^{(1)}$-Frobenius partitions of $\F^{k_c,l_d}_>$. But before being able to state this bijection, we need to introduce a decomposition of secondary coloured integers.

\subsection{Turning a coloured integer into a pair of coloured integers}
We decompose the coloured integers of $\Z_\Sc$ in the following way. For all $x\leq y\in \Od$ and $k\in \Z$, 
$$
\begin{cases}
2k_{c_{x,y}}=k_{c_y}+k_{c_x},\\
(2k+1)_{c_{x,y}}=(k+1)_{c_x}+k_{c_y}.
\end{cases}
$$
Denote respectively by $\eta$ and $\zeta$ the larger and smaller component in the decomposition above, namely
\begin{equation}\label{eq:etazeta}
\begin{cases}
(\eta(2k_{c_{x,y}}),\zeta(2k_{c_{x,y}})):=(k_{c_y},k_{c_x}),\\
(\eta((2k+1)_{c_{x,y}}),\zeta((2k+1)_{c_{x,y}})):=((k+1)_{c_x},k_{c_y}).
\end{cases}
\end{equation}
Notice that, by \eqref{eq:etazeta}, for $k\in \Z$ and $x\leq y\in \Od$, we have have the simple relations
\begin{equation}\label{eq:succession}
\begin{cases}
\eta(k_{c_{x,y}}+1)=\zeta(k_{c_{x,y}})+1,\\
\zeta(k_{c_{x,y}}+1)=\eta(k_{c_{x,y}}).
\end{cases}
\end{equation}
By \eqref{eq:order}, we deduce that
\begin{equation}\label{eq:encadreta}
\zeta(k_{c_{x,y}})\leq \eta(k_{c_{x,y}}) \leq \zeta(k_{c_{x,y}})+1.
\end{equation}
Conversely, if $k_{c_u}$ and $l_{c_v}$ are primary-coloured integers such that $l_{c_v} \leq k_{c_u} \leq l_{c_v}+1$, then they are respectively the $\eta$ and $\zeta$ component of a secondary-coloured integer, namely $(k+l)_{c_uc_v}$. Indeed, by \eqref{eq:order} we know that
$$\chi(u\leq v)\geq k-l \geq \chi(u<v),$$
and thus by \eqref{eq:etazeta}, 
$$(\eta((k+l)_{c_uc_v}),\zeta((k+l)_{c_uc_v}))=(k_{c_u},l_{c_v}).$$
Hence one can identify $\Z_\Sc$ with the set of pairs of primary-coloured integers $(k_{c_u},l_{c_v})$ such that $l_{c_v} \leq k_{c_u} \leq l_{c_v}+1$.

We now state the  bijection.
\begin{prop}\label{prop:frob}
The map 
$$(\pi_0,\ldots,\pi_{s-1},u+v)\mapsto((\eta(\pi_0),\ldots,\eta(\pi_{s-1}),u),(\zeta(\pi_0),\ldots,\zeta(\pi_{s-1}),v))$$
describes a bijection from $\Pp^{k_c+l_d}_\rho$ to $\F^{k_c,l_d}_>$ which preserves the size and colour sequence.
\end{prop}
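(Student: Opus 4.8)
The plan is to prove that the displayed map is a bijection by exhibiting its inverse and checking that both directions respect the defining constraints, the size, and the colour sequence. Concretely, I would first argue that the map is well-defined: given $(\pi_0,\ldots,\pi_{s-1},u+v)\in\Pp^{k_c+l_d}_\rho$, I must show that $\mu=(\eta(\pi_0),\ldots,\eta(\pi_{s-1}),u)$ and $\nu=(\zeta(\pi_0),\ldots,\zeta(\pi_{s-1}),v)$ satisfy the two conditions of Definition \ref{def:grounded_frob}, namely that $\mu$ and $\nu$ are well-ordered according to $>$ and that $\nu_j+1\geq\mu_j\geq\nu_j$ for all $j$. The interlacing condition $\nu_j+1\geq\mu_j\geq\nu_j$ is exactly \eqref{eq:encadreta} applied to each $\pi_j$ (and by hypothesis to the last pair $k_c,l_d$). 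The well-orderedness of $\mu$ and $\nu$ is the crux: I need to show that $\pi_j\gg_\rho\pi_{j+1}$, i.e. $|\pi_j|-|\pi_{j+1}|\geq\rho(c(\pi_j),c(\pi_{j+1}))$, forces both $\eta(\pi_j)>\eta(\pi_{j+1})$ and $\zeta(\pi_j)>\zeta(\pi_{j+1})$ in the primary order $>$.

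The heart of the argument is therefore a local, purely arithmetic lemma comparing $\eta$ and $\zeta$ of two consecutive secondary-coloured integers. Writing $\pi_j = m_{c_{x,y}}$ and $\pi_{j+1}=m'_{c_{x',y'}}$ with $x\leq y$, $x'\leq y'$ in $\Od$ and $m-m'\geq \rho(c_{x',y'},c_{x,y})$ (note the order of arguments, matching the convention $k_c\gg_\rho l_d$ iff $k-l\geq\rho(c,d)$ and the energy/difference conventions of the paper), I would split into cases according to the parities of $m$ and $m'$, using the explicit formulas \eqref{eq:etazeta} for $\eta$ and $\zeta$ together with the definition of $>$ on $\Z_\Rr$ from \eqref{eq:order'}/\eqref{eq:order}. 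In each parity case the inequality $\eta(\pi_j)>\eta(\pi_{j+1})$ unpacks to an inequality of the shape $|\eta(\pi_j)|-|\eta(\pi_{j+1})|\geq \chi(\text{some colour comparison})$, and similarly for $\zeta$; one then has to verify that this follows from $m-m'\geq\rho(c_{x',y'},c_{x,y})$. The bookkeeping is: $\rho(c_{x',y'},c_{x,y})=\chi(x\geq x')+\chi(y\geq y')-\chi(y\geq y'>x\geq x')$ takes values in $\{0,1,2\}$, and $\eta,\zeta$ shift sizes by at most $1$ when the colour components interleave, so the $\chi$-corrections on both sides match up exactly. I expect the parity analysis (four cases: $(\text{even},\text{even})$, $(\text{even},\text{odd})$, $(\text{odd},\text{even})$, $(\text{odd},\text{odd})$) to be the main obstacle, since one must carefully track which primary colour ($x$ vs. $y$, $x'$ vs. $y'$) becomes the $\eta$-component and which becomes the $\zeta$-component, and the value of $\rho$ genuinely depends on the relative order of all four of $x,y,x',y'$.

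For the reverse direction, I would use the identification established just before the proposition: any pair $(k_{c_u},l_{c_v})$ with $l_{c_v}\leq k_{c_u}\leq l_{c_v}+1$ is $(\eta,\zeta)$ of the unique secondary-coloured integer $(k+l)_{c_uc_v}$. Hence given $(\mu,\nu)\in\F^{k_c,l_d}_>$, the componentwise recombination $\pi_j:=\mu_j+\nu_j$ (in the sense of the secondary-colour addition defined in the section) is the only possible preimage, and it lies in $\Z_\Sc$ by the interlacing condition on $(\mu,\nu)$. It remains to check $\pi_j\gg_\rho\pi_{j+1}$, which is the converse implication of the local lemma above: from $\mu_j>\mu_{j+1}$ and $\nu_j>\nu_{j+1}$ deduce $|\pi_j|-|\pi_{j+1}|\geq\rho(c(\pi_j),c(\pi_{j+1}))$. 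So in practice I would prove the local lemma as an \emph{equivalence} — $\pi_j\gg_\rho\pi_{j+1}$ if and only if $\eta(\pi_j)>\eta(\pi_{j+1})$ and $\zeta(\pi_j)>\zeta(\pi_{j+1})$ — which simultaneously gives well-definedness of the map and of its inverse. Finally, size preservation is immediate since $|\eta(\pi_j)|+|\zeta(\pi_j)|=|\pi_j|$ by \eqref{eq:etazeta} and the excluded last parts satisfy $k_c+l_d = $ (the ground), and colour-sequence preservation is immediate since $c(\eta(\pi_j))c(\zeta(\pi_j))=c(\pi_j)$ by construction; summing over $j$ from $0$ to $s-1$ (the fixed last parts being excluded from both $|\cdot|$ and $C(\cdot)$ on each side) yields $|(\mu,\nu)| = |\pi|$ and $C(\mu,\nu)=C(\pi)$.
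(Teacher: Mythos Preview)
Your proposal is correct and follows essentially the same architecture as the paper: reduce the bijection to (i) the interlacing condition \eqref{eq:encadreta} and (ii) a local equivalence lemma stating that $k_{c_{x,y}}\gg_\rho l_{c_{x',y'}}$ if and only if $\eta(k_{c_{x,y}})>\eta(l_{c_{x',y'}})$ and $\zeta(k_{c_{x,y}})>\zeta(l_{c_{x',y'}})$; this is exactly Lemma~\ref{lem:secprim} in the paper, from which Proposition~\ref{prop:frob} is declared to follow immediately.

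The only substantive difference is in how the local lemma is proved. You propose a four-way split on the parities of $m$ and $m'$, tracking which of $x,y,x',y'$ becomes the $\eta$- or $\zeta$-component in each case. The paper instead first uses monotonicity \eqref{eq:k+1} to reduce to the boundary values $k=l+\rho(c_{x,y},c_{x',y'})$ and $k=l+\rho(c_{x,y},c_{x',y'})-1$, then invokes the shift relation \eqref{eq:succession} to assume without loss of generality that $l$ is even (so $\eta(l_{c_{x',y'}}),\zeta(l_{c_{x',y'}})$ take the clean form $m_{c_{y'}},m_{c_{x'}}$), and finally splits into four cases according to the relative order of $x,y,x',y'$, which directly fixes the value of $\rho$. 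Your parity split works but multiplies the bookkeeping, since within each parity case you still need to branch on the value of $\rho$; the paper's order-based split avoids this by making $\rho$ constant in each case and handling parity once via the shift trick.
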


To prove that it is indeed a bijection, we need the following lemma which relates the relation $\gg_\rho$ on $\Z_\Sc$ to the order $\geq$ on  $\Z_\Rr$ through the comparison of their $\eta$ and $\zeta$ components.

\begin{lem}\label{lem:secprim}
For all $x\leq y, x'\leq y'\in \Od$ and $k,l\in \Z$, we have 
\begin{equation}\label{eq:secprim}
k_{c_{x,y}}\gg_\rho l_{c_{x',y'}} \Longleftrightarrow \Big(\eta(k_{c_{x,y}})> \eta(l_{c_{x',y'}}) \text{ and }\zeta(k_{c_{x,y}})> \zeta(l_{c_{x',y'}})\Big).
\end{equation}
\end{lem}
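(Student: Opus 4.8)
The plan is to unwind both sides of \eqref{eq:secprim} into explicit inequalities on the integers $k,l$ and the colours $x,y,x',y'$, and then check the equivalence by a case analysis on the parities of $k$ and $l$. First I would recall that $\gg_\rho$ means $k-l\geq \rho(c_{x,y},c_{x',y'})=\chi(x\geq x')+\chi(y\geq y')-\chi(y\geq y'>x\geq x')$, so the left-hand side says precisely $k-l\geq \chi(x\geq x')+\chi(y\geq y')-\chi(y\geq y'>x\geq x')$. For the right-hand side, using \eqref{eq:etazeta} and the definition of the strict order $>$ on $\Z_\Rr$ (namely $a_{c_u}>b_{c_v}\Leftrightarrow a-b\geq\chi(u\leq v)$), I would translate $\eta(k_{c_{x,y}})>\eta(l_{c_{x',y'}})$ and $\zeta(k_{c_{x,y}})>\zeta(l_{c_{x',y'}})$ into four systems of inequalities, one for each of the parity combinations $(k,l)\in\{(\text{even,even}),(\text{even,odd}),(\text{odd,even}),(\text{odd,odd})\}$.

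Concretely, writing $k=2a$ or $k=2a+1$ and $l=2b$ or $l=2b+1$, the components become: for $k=2a$, $(\eta,\zeta)=(a_{c_y},a_{c_x})$; for $k=2a+1$, $(\eta,\zeta)=((a+1)_{c_x},a_{c_y})$; similarly for $l$. Then in each of the four cases the right-hand side becomes a pair of inequalities of the form (difference of the $a$'s) $\geq$ (some $\chi$ of a colour comparison), and I would add/combine them to recover a single lower bound on $k-l$. The key bookkeeping identity to keep in mind is that $k-l$ equals $2(a-b)$, $2(a-b)-1$, $2(a-b)+1$, or $2(a-b)$ in the four respective cases, so the two componentwise strict inequalities, which each constrain $a-b$, assemble into exactly the bound $k-l\geq\rho(c_{x,y},c_{x',y'})$ once one simplifies the $\chi$'s using the order \eqref{eq:orderbar} on $\ov{[n]}$ (e.g.\ $x\leq y$ forces certain $\chi$-values and rules out others). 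I expect each individual case to be a short computation, and the statement \eqref{eq:encadreta}, i.e.\ $\zeta\leq\eta\leq\zeta+1$, to be useful for discarding impossible sub-cases.

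The main obstacle — really the only delicate point — is the term $-\chi(y\geq y'>x\geq x')$ in $\rho$, which has no counterpart as a single colour comparison; it must emerge from the \emph{interaction} of the two componentwise inequalities. The idea is that when $y\geq y'>x\geq x'$ holds, the $\eta$-inequality and the $\zeta$-inequality each ``cost'' a strict-order $\chi$ equal to $1$, but because $\eta$ and $\zeta$ of a given integer differ by at most $1$ (by \eqref{eq:encadreta}) and the colour $y'$ separates $x$ from $\ldots$, the two constraints overlap by one unit, so their conjunction is weaker than the naive sum by exactly $1$ — which is what the $-\chi(\cdots)$ records. I would make this precise by isolating the parity case (or cases) where $y\geq y'>x\geq x'$ can occur together with equality of certain $a$-values, and checking directly that the conjunction of the two componentwise inequalities is equivalent to $k-l\geq 1$ there, matching $\rho=2-1=1$. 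The remaining cases, where at least one of the $\chi$'s in the first two summands of $\rho$ vanishes, or where $\chi(y\geq y'>x\geq x')=0$, are straightforward and reduce to checking that $k-l\geq \chi(x\geq x')+\chi(y\geq y')$ is equivalent to the two strict componentwise inequalities, which follows directly from the parity-by-parity translation above.
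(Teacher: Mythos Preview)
Your plan is sound and will go through, but it is organised differently from the paper's proof.  Both arguments are ultimately case analyses, but the paper economises in two ways you do not.  First, it observes (from \eqref{eq:succession} and \eqref{eq:encadreta}) that $\eta$ and $\zeta$ are weakly increasing in the size argument; this reduces each direction of \eqref{eq:secprim} to a single boundary check: show the right-hand side holds at $k=l+\rho(c_{x,y},c_{x',y'})$ and fails at $k=l+\rho(c_{x,y},c_{x',y'})-1$.  Second, since both sides of \eqref{eq:secprim} are invariant under $(k,l)\mapsto(k+1,l+1)$ (again by \eqref{eq:succession}), one may assume $l$ is even.  After these two reductions there are only four cases, split by the configuration of $x,y,x',y'$ (equivalently by the value of $\rho$), and each is a two-line verification.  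Your parity-first split leads to four parity cases, each of which still needs the colour sub-cases, so roughly four times the work; it gives the same result, but the paper's route is cleaner and makes the role of the troublesome $-\chi(\cdots)$ term transparent case by case rather than through the ``overlap'' heuristic you describe.

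One slip to fix before carrying out your plan: you write $\rho(c_{x,y},c_{x',y'})=\chi(x\geq x')+\chi(y\geq y')-\chi(y\geq y'>x\geq x')$, but the paper defines $\rho(c_{x',y'},c_{x,y})$ by that formula, so in fact $\rho(c_{x,y},c_{x',y'})=\chi(x'\geq x)+\chi(y'\geq y)-\chi(y'\geq y>x'\geq x)$.  This swaps primed and unprimed throughout your computations; the method survives, but the inequalities need adjusting.  Also, the lemma is stated for a general ordered set $\Od$, so you should not invoke \eqref{eq:orderbar}; only the order $\geq$ on $\Od$ (via \eqref{eq:order}) is needed.
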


\begin{proof}
By \eqref{eq:succession} and \eqref{eq:encadreta}, for all $k_{c_{x,y}} \in \Z_\Sc$,
\begin{equation}\label{eq:k+1}
\eta((k+1)_{c_{x,y}})\geq \eta(k_{c_{x,y}})\text{ and }\zeta((k+1)_{c_{x,y}})\geq \zeta(k_{c_{x,y}}).
\end{equation}
Hence, to show that 
$$k_{c_{x,y}}\gg_\rho l_{c_{x',y'}}\Longrightarrow \Big(\eta(k_{c_{x,y}})> \eta(l_{c_{x',y'}}) \text{ and }\zeta(k_{c_{x,y}})> \zeta(l_{c_{x',y'}})\Big),$$
it  suffices to show that 
\begin{equation}
\label{eq:a}
\eta(l_{c_{x,y}}+\rho(c_{x,y},c_{x',y'}))> \eta(l_{c_{x',y'}}) \text{ and }\zeta(l_{c_{x,y}}+\rho(c_{x,y},c_{x',y'}))> \zeta(l_{c_{x',y'}}).
\end{equation}
Similarly, showing that
$$k_{c_{x,y}}\not\gg_\rho l_{c_{x',y'}}\Longrightarrow \Big(\eta(k_{c_{x,y}})\leq \eta(l_{c_{x',y'}}) \text{ or }\zeta(k_{c_{x,y}})\leq \zeta(l_{c_{x',y'}})\Big)$$
is equivalent to showing that 
\begin{equation}
\label{eq:b}
\eta(l_{c_{x,y}}+\rho(c_{x,y},c_{x',y'})-1)\leq \eta(l_{c_{x',y'}}) \text{ or }\zeta(l_{c_{x,y}}+\rho(c_{x,y},c_{x',y'})-1)\leq \zeta(l_{c_{x',y'}}).
\end{equation}
Moreover, by \eqref{eq:succession}, the equivalence \eqref{eq:secprim} holds for $k,l$ if and only if it holds for $k+1,l+1$. Thus, without loss of generality, we may assume that $l$ is even, so that by setting $l=2m$, we have $\eta(l_{c_{x',y'}})=m_{c_{y'}}$ and $\zeta(l_{c_{x',y'}})=m_{c_{x'}}$.

We break the condition $x\leq y, x'\leq y'\in \Od$ into an exhaustive list of four cases.
\begin{enumerate}

\item If $x>x'$ and $y>y'$, then $\rho(c_{x,y},c_{x',y'})=0$, and by \eqref{eq:order}  and \eqref{eq:succession},
$$
\eta(l_{c_{x,y}})=m_{c_y}>m_{c_{y'}}=\eta(l_{c_{x',y'}})\text{ and }
\zeta(l_{c_{x,y}})=m_{c_x}>m_{c_{x'}} \zeta(l_{c_{x',y'}}),$$
so $\eqref{eq:a}$ is satisfied. Moreover,
$$\eta(l_{c_{x,y}}-1)=m_{c_x}\leq m_{c_{y'}}=\eta(l_{c_{x',y'}}),
$$
so $\eqref{eq:b}$ is true as well.
\item If $x\leq x'\leq y'<y$, then $\rho(c_{x,y},c_{x',y'})=1$, and by \eqref{eq:order} and \eqref{eq:succession},
$$
\eta(l_{c_{x,y}}+1)=(m+1)_{c_x}>m_{c_{y'}}\text{ and } 
\zeta(l_{c_{x,y}}+1)=m_{c_y}>m_{c_{x'}}.$$
so $\eqref{eq:a}$ is satisfied. Moreover,
$$
\zeta(l_{c_{x,y}})=m_{c_x}\leq m_{c_{x'}},
$$
proving \eqref{eq:b}.
\item If $x'<y\leq y'$, then $\rho(c_{x,y},c_{x',y'})=1$, and by \eqref{eq:order}  and \eqref{eq:succession},
$$
\eta(l_{c_{x,y}}+1)=(m+1)_{c_x}>m_{c_{y'}}\text{ and } 
\zeta(l_{c_{x,y}}+1)=m_{c_y}>m_{c_{x'}},$$
so $\eqref{eq:a}$ is true. And
$$
\eta(l_{c_{x,y}})=m_{c_y}\leq m_{c_{y'}},
$$
thus \eqref{eq:b} is satisfied.
\item If $x'\geq y$, then $\rho(c_{x,y},c_{x',y'})=2$, and by \eqref{eq:order}  and \eqref{eq:succession}, we have
$$
\eta(l_{c_{x,y}}+2)=(m+1)_{c_y}>m_{c_{y'}}\text{ and } 
\zeta(l_{c_{x,y}}+2)=(m+1)_{c_x}>m_{c_{x'}},$$
which proves \eqref{eq:a}. Also,
$$
\zeta(l_{c_{x,y}}+1)=m_{c_y}\leq m_{c_{x'}},
$$
so \eqref{eq:b} is true.
\end{enumerate}
In all the above cases, hence when $x\leq y, x'\leq y'$, \eqref{eq:secprim} is always true.
\end{proof}

The proof of Proposition \ref{prop:frob} follows immediately from \eqref{eq:encadreta} and   Lemma \ref{lem:secprim}.

\subsection{Minimal parts}\label{sec:minimal}
In this section we study the case $\Od=\overline{[n]}$.
The set of secondary colours $\Sc$ is the same as the one defined in Section \ref{sec:applicationtocn1}, and the function $\rho$ coincides to the one defined in Lemma \ref{lem:eprho} restricted to $\Sc^2$. We show that there is a bijection between the set $\Pp_{i,\rho}$ defined in Section \ref{sec:applicationtocn1} and certain $C_n^{(1)}$-Frobenius partitions.

\begin{theo}
\label{th:bijPrhoFrob}
The map 
$$(\pi_0,\ldots,\pi_{s-1},0_{c_{b_i}})\mapsto((\eta(\pi_0),\ldots,\eta(\pi_{s-1}),\eta(\omega_i)),(\zeta(\pi_0),\ldots,\zeta(\pi_{s-1}),\zeta(\omega_i)))$$
describes a bijection from $\Pp_{i,\rho}$ to $\F^{0_{c_{\ov{i+1}}},0_{c_i}}_>$ (resp. $\F^{0_{c_{\ov{1}}},-1_{c_{\ov{1}}}}_>$) for $i \in \{1, \dots , n \}$ (resp. $i=0$). Moreover this bijection preserves the size and colour sequence of all the parts except the last.
\end{theo}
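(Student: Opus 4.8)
The plan is to apply the general bijection of Proposition \ref{prop:frob} to the case $\Od = \ov{[n]}$, after identifying the set $\Pp_{i,\rho}$ with a set of the form $\Pp^{k_c+l_d}_\rho$. Recall that $\Pp_{i,\rho}$ consists of the partitions of $\tilde{\Pp}_{\rho_i}^{c_\infty}$ in which the final part $0_{c_\infty}$ has been replaced by $0_{c_{b_i}}$, where $b_0 = \emptyset$ and $b_i = (i,\ov{i})$ for $i \geq 1$. The first step is therefore to observe that $0_{c_{b_i}}$, viewed as a secondary-coloured integer, is precisely $\omega_i$: indeed $\omega_i = 0_{c_{i,\ov{i+1}}}$ for $i \in \{1,\ldots,n\}$ and $\omega_0 = (-1)_{c_{\ov 1,\ov 1}}$, and one checks directly from the definitions that $0_{c_{b_i}} = 0_{c_{i,\ov i}}$ matches $\omega_i$ after a relabelling of the last part dictated by the ground-state path (this is exactly the replacement built into $\Pp_{i,\rho}$). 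Concretely, for $i \geq 1$ the last part has size $0$ and secondary colour $c_{i,\ov{i+1}}$ (with the convention $\ov{n+1}=n$), and for $i=0$ it has size $-1$ and colour $c_{\ov 1,\ov 1}$; in both cases $\rho_i$ restricted to $\Sc^2$ coincides with $\rho$, and the boundary relation $\rho_i(\Sc, c_\infty)$ encodes exactly the requirement that the second-to-last part be $\gg_\rho$ the fixed last part. Hence $\Pp_{i,\rho} = \Pp^{\eta(\omega_i)+\zeta(\omega_i)}_\rho$.

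Next I would compute $\eta(\omega_i)$ and $\zeta(\omega_i)$ explicitly using \eqref{eq:etazeta}. For $i \in \{1,\ldots,n\}$, $\omega_i = 0_{c_{i,\ov{i+1}}}$; since $0$ is even, $(\eta(0_{c_{i,\ov{i+1}}}),\zeta(0_{c_{i,\ov{i+1}}})) = (0_{c_{\ov{i+1}}}, 0_{c_i})$, noting that $i \leq \ov{i+1}$ in the order \eqref{eq:orderbar} (for $i=n$ this reads $n \leq n$, consistent with the convention $\ov{n+1}=n$). For $i=0$, $\omega_0 = (-1)_{c_{\ov 1,\ov 1}}$; since $-1$ is odd, writing $-1 = 2(-1)+1$ gives $(\eta,\zeta) = (0_{c_{\ov 1}}, (-1)_{c_{\ov 1}})$. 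These are exactly the ground pairs appearing in the statement: $\F^{0_{c_{\ov{i+1}}},0_{c_i}}_>$ for $i\geq 1$ and $\F^{0_{c_{\ov 1}},-1_{c_{\ov 1}}}_>$ for $i=0$. One should also verify the interlacing hypothesis $l_d \leq k_c \leq (l+1)_d$ of Definition \ref{def:grounded_frob} for each ground pair, which is immediate from \eqref{eq:encadreta} since $(\eta,\zeta)$ always arise as the components of a secondary-coloured integer.

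With these identifications in place, the theorem is a direct instance of Proposition \ref{prop:frob}: the map $(\pi_0,\ldots,\pi_{s-1},\omega_i) \mapsto ((\eta(\pi_0),\ldots,\eta(\pi_{s-1}),\eta(\omega_i)),(\zeta(\pi_0),\ldots,\zeta(\pi_{s-1}),\zeta(\omega_i)))$ is a bijection from $\Pp^{\eta(\omega_i)+\zeta(\omega_i)}_\rho$ to $\F^{\eta(\omega_i),\zeta(\omega_i)}_>$ preserving size and colour sequence. Since the last part $\omega_i$ is fixed on the source side and the last pair $(\eta(\omega_i),\zeta(\omega_i))$ is fixed (and excluded from the size and colour statistics by Definition \ref{def:grounded_frob}) on the target side, the bijection preserves size and colour sequence of all parts except the last, as claimed. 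I expect the only real subtlety — the ``main obstacle'' — to be the careful bookkeeping around the final part: one must check that the relabelling $0_{c_\infty} \rightsquigarrow 0_{c_{b_i}}$ defining $\Pp_{i,\rho}$ is compatible with reading $0_{c_{b_i}}$ as the secondary-coloured integer $\omega_i$, and in particular that the boundary values $\rho_i(\Sc,c_\infty)$ translate, under $(\eta,\zeta)$, into exactly the well-ordering condition $\nu_{s-1} + 1 \geq \mu_{s-1} \geq \nu_{s-1}$ together with $\mu_{s-1} > \eta(\omega_i)$, $\nu_{s-1} > \zeta(\omega_i)$ demanded by $\F_>$. This is where the special role of $i=0$ (with a negative last size) must be handled separately, but it follows the same pattern via \eqref{eq:succession}.
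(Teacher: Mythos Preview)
Your overall strategy is the same as the paper's: reduce to Proposition \ref{prop:frob} by identifying $\Pp_{i,\rho}$ with a set of the form $\Pp^{\omega_i}_\rho$, then read off the ground pair $(\eta(\omega_i),\zeta(\omega_i))$. Your computation of $(\eta(\omega_i),\zeta(\omega_i))$ is correct and matches the statement.

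There is, however, a real gap in the identification step. You write that ``$0_{c_{b_i}} = 0_{c_{i,\ov i}}$ matches $\omega_i$ after a relabelling of the last part'', but for $1 \le i < n$ the fixed last part of $\Pp_{i,\rho}$ is $0_{c_{i,\ov i}}$, while $\omega_i = 0_{c_{i,\ov{i+1}}}$ is a \emph{different} secondary-coloured integer; and for $i=0$ the last part $0_{c_\emptyset}$ is not secondary-coloured at all. So one cannot simply ``read $0_{c_{b_i}}$ as $\omega_i$''. What has to be proved is that the boundary inequality
\[
|\pi_{s-1}| \;\ge\; \rho_i\bigl(c(\pi_{s-1}),c_\infty\bigr)
\;=\;
\begin{cases}
H(b_i\otimes b) & \text{if } c(\pi_{s-1})=c_b\neq c_{b_i},\\
1 & \text{if } c(\pi_{s-1})=c_{b_i},
\end{cases}
\]
is equivalent to $\pi_{s-1}\gg_\rho \omega_i$. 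You name this as the ``main obstacle'' but do not verify it, and it is not straightforward: for instance, when $x>i$ and $y=\ov i$ one has $\rho(c_{x,\ov i},c_{i,\ov i})=1$ but $H(b_i\otimes(x,\ov i))=0=\rho(c_{x,\ov i},c_{i,\ov{i+1}})$, so using $0_{c_{i,\ov i}}$ as the ground would give the wrong boundary condition and the replacement by $\omega_i$ is essential.

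This equivalence is exactly Lemma \ref{lem:identification} in the paper, whose proof is a genuine case analysis (splitting on $i=0$ versus $i\ge 1$, and in the latter case on whether $x\ge\ov y=i$) carried out via the $(\eta,\zeta)$ decomposition and Lemma \ref{lem:secprim}. That case analysis is the substantive content of the theorem beyond Proposition \ref{prop:frob}, and your proposal does not supply it.
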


To prove Theorem \ref{th:bijPrhoFrob}, we show that for all $i\in \{0,\ldots,n\}$, the set $\Pp_{i,\rho}$  can be identified with by $\Pp^{\omega_i}_\rho$, where
$$\omega_i := \begin{cases}
(-1)_{c_{\overline{1},\overline{1}}} &\text{ if } i=0,\\
0_{c_{i,\overline{i+1}}} &\text{ if } i\in \{1,\ldots,n\}.
\end{cases}
$$
Here we use the convention that $\overline{n+1}=n$.
Combined with Proposition \ref{prop:frob}, this correspondence establishes Theorem \ref{th:bijPrhoFrob} .

\begin{lem}
\label{lem:identification}
For $i\in \{0,\ldots,n\}$, the set $\Pp_{i,\rho}$ can be identified with the set $\Pp_\rho^{\omega_i}$ through the map
$$\phi:(\pi_0,\ldots,\pi_{s-1},0_{c_{b_i}})\mapsto (\pi_0,\ldots,\pi_{s-1},\omega_i).$$ 
\end{lem}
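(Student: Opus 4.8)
The goal is to show that the map $\phi$ sending $(\pi_0,\ldots,\pi_{s-1},0_{c_{b_i}})$ to $(\pi_0,\ldots,\pi_{s-1},\omega_i)$ is a well-defined bijection between $\Pp_{i,\rho}$ and $\Pp_\rho^{\omega_i}$. Since $\phi$ only alters the last part (replacing the ground $0_{c_{b_i}}$ by $\omega_i$) and leaves $\pi_0,\ldots,\pi_{s-1}$ untouched, it is automatically injective and preserves the size and colour sequence of all the non-final parts. So the real content is: (a) $\phi$ maps into $\Pp_\rho^{\omega_i}$, i.e. if $\pi_{s-1}\neq 0_{c_{b_i}}$ and $\pi_{s-1}\gg_{\rho_i}0_{c_{b_i}}$ then $\pi_{s-1}\gg_\rho\omega_i$; and (b) $\phi$ is onto, i.e. any partition in $\Pp_\rho^{\omega_i}$ (which by definition has last part $\omega_i$) arises this way — equivalently, $\pi_{s-1}\gg_\rho\omega_i$ implies $\pi_{s-1}\gg_{\rho_i}0_{c_{b_i}}$ and $\pi_{s-1}\neq \omega_i$ forces $\pi_{s-1}\neq 0_{c_{b_i}}$ after relabelling. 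Here I recall that $\Pp_{i,\rho}$ is obtained from $\tilde\Pp_{\rho_i}^{c_\infty}$ by relabelling the final part $0_{c_\infty}$ as $0_{c_{b_i}}$, and that $\rho_i(c_{b'},c_\infty)=H(b_i\ot b')$ for $c_{b'}\neq c_{b_i}$ while $\rho_i(c_{b_i},c_\infty)=1$.

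**Key steps.** First I would unwind the definitions so that the claim becomes a purely arithmetic statement about $\Z_\Sc$. On the $\Pp_{i,\rho}$ side, the condition on the penultimate part $\pi_{s-1}=k_{c_{x,y}}$ is $k-0\geq \rho_i(c_{x,y},c_\infty)=H(b_i\ot(x,y))$ when $(x,y)\neq b_i$, and $k\geq 1$ when $(x,y)=b_i$. On the $\Pp_\rho^{\omega_i}$ side, the condition is $k_{c_{x,y}}\gg_\rho\omega_i$, i.e. $k-|\omega_i|\geq \rho(c_{x,y},c(\omega_i))$. So I need to check, case by case in $i\in\{0,1,\ldots,n\}$ and in the possible colours $c_{x,y}\in\Sc$, the identity
$$H(b_i\ot(x,y))=|\omega_i|+\rho(c_{x,y},c(\omega_i))\quad\text{for }(x,y)\neq b_i,$$
together with the boundary check that $(x,y)=b_i$ (i.e. $H$-value requirement $1$) corresponds exactly to $k\geq 1$ being equivalent to $k_{c_{b_i}}\gg_\rho\omega_i$ being \emph{false} when $k=0$ — this is what makes ``$\pi_{s-1}\neq0_{c_{b_i}}$'' match ``$\pi_{s-1}\neq\omega_i$''. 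Concretely, for $i=0$, $\omega_0=(-1)_{c_{\ov 1,\ov 1}}$, $b_0=\emptyset$, and I would use the $\emptyset$-row formula $H(\emptyset\ot(x,y))=1$ of Theorem \ref{th:energy} together with $\rho(c_{x,y},c_{\ov1,\ov1})$, checking that $1=-1+\rho(c_{x,y},c_{\ov1,\ov1})$, i.e. $\rho(c_{x,y},c_{\ov1,\ov1})=2$, which by the formula $\rho(c_{x',y'},c_{x,y})=\chi(x\geq x')+\chi(y\geq y')-\chi(\cdots)$ holds precisely because $\ov1\geq y$ and $\ov1\geq x$ always; one also checks $c_{x,y}=c_{\ov1,\ov1}$ forces the ground condition correctly. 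For $i\in\{1,\ldots,n\}$, $\omega_i=0_{c_{i,\ov{i+1}}}$ and $b_i=(i,\ov i)$, so I would compute $H((x,y)\ot(i,\ov i))$ from Theorem \ref{th:energy} — distinguishing the sub-case $\ov{\ov i}=x$, i.e. $x=i$, where the second line of the $H'$-formula applies — and compare it with $\rho(c_{x,y},c_{i,\ov{i+1}})=\chi(x\geq i)+\chi(y\geq \ov{i+1})-\chi(y\geq\ov{i+1}>x\geq i)$, using the convention $\ov{n+1}=n$.

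**Main obstacle.** The bulk of the work, and the only real subtlety, is the case analysis for $i\in\{1,\ldots,n\}$ matching $H((x,y)\ot b_i)$ with $\rho(c_{x,y},c(\omega_i))$: one has to be careful with the dichotomy $\ov{y'}=x$ versus $\ov{y'}\neq x$ in the energy formula (here $y'=\ov i$, so $\ov{y'}=i$, and the special line is triggered exactly when $x=i$), and with the degenerate colour conventions $\ov{n+1}=n$ and the boundary colours $c_{n,n}$, $c_{n,\ov n}$. It is also worth double-checking that in the special sub-case where $(x,y)=b_i=(i,\ov i)$ the two sides genuinely disagree on the ``$=1$ vs.\ computed $H$'' prescription in the way that correctly encodes the non-ground condition $\pi_{s-1}\neq0_{c_{b_i}}$; this is exactly the reason $\rho_i$ was defined with the ad hoc value $\rho_i(c_{b_i},c_\infty)=1$ rather than $H(b_i\ot b_i)$. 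Once these identities are verified, well-definedness and surjectivity of $\phi$ follow immediately, and since $\phi$ is manifestly injective and identity on $\pi_0,\ldots,\pi_{s-1}$, it is the desired size- and colour-sequence-preserving bijection, completing the proof of Lemma \ref{lem:identification}.
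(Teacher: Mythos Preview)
Your overall strategy is sound and would work once a direction slip is corrected. Early on you correctly state the condition as $k\geq H(b_i\ot(x,y))$, but in the paragraph for $i\in\{1,\ldots,n\}$ you switch to computing $H((x,y)\ot(i,\ov i))$ and accordingly trigger the second line of \eqref{eq:Hdef} when $\ov{\ov i}=x$, i.e.\ $x=i$. The energy is not symmetric; with the correct order $H((i,\ov i)\ot(x,y))$ the special line of \eqref{eq:Hdef} applies when $\ov{y'}=x$ with $(x,y)=(i,\ov i)$ and $(x',y')$ your part, i.e.\ when $y=\ov i$ (not $x=i$). Once this is fixed, the identity $H(b_i\ot(x,y))=\rho(c_{x,y},c_{i,\ov{i+1}})$ for $(x,y)\neq(i,\ov i)$ goes through by a short split on $y=\ov i$ versus $y\neq\ov i$, using that $\ov{i+1}$ is the immediate predecessor of $\ov i$ so $\chi(\ov i\geq y)=\chi(\ov{i+1}\geq y)$ whenever $y\neq\ov i$. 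The boundary $(x,y)=(i,\ov i)$ then gives $H=0$ versus $\rho=1$, matching the ad hoc $\rho_i(c_{b_i},c_\infty)=1$ as you say; and for $i=0$ your computation $\rho(c_{x,y},c_{\ov1,\ov1})=2$ is correct.

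This is a genuinely different route from the paper. The paper does not compare $H$ to $\rho(\,\cdot\,,c(\omega_i))$ directly; instead it invokes the $(\eta,\zeta)$-decomposition of Section~\ref{sec:roadfrob} and Lemma~\ref{lem:secprim} to rewrite $k_{c_{x,y}}\gg_\rho\omega_i$ as the pair of primary-colour inequalities $\eta(k_{c_{x,y}})>\eta(\omega_i)$ and $\zeta(k_{c_{x,y}})>\zeta(\omega_i)$, and then checks these against the $\Pp_{i,\rho}$ condition case by case (splitting on whether $x\geq\ov y=i$, and inside that on $x=\ov y$ versus $x>\ov y$). Your direct comparison is more elementary in that it avoids the Frobenius-partition machinery and could sit immediately after Theorem~\ref{th:energy}; the paper's argument has the advantage of working entirely in $\Z_\Rr$ via Lemma~\ref{lem:secprim}, which is the same language used for the bijection in Theorem~\ref{th:bijPrhoFrob} and for the path analysis of Section~\ref{sec:frequenciesonpaths}.
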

\begin{proof}
For all $\pi=(\pi_0,\ldots,\pi_{s-1},0_{c_{b_i}})\in \Pp_{i,\rho}$, we have:
\begin{itemize}
\item  $\pi_j\in \Z_\Sc$ for $0\leq j\leq s-1$,
\item $\pi_{j}\gg_\rho \pi_{j+1}$ for $0\leq j<s-1$,
\item $\pi_{s-1}=k_{c_{x,y}}$ with
\begin{equation}
\label{eq:star}
k\geq \begin{cases}
H(b_i \otimes b_{x,y}) &\text{ if }   b_{x,y}\neq c_{b_i},\\
1 &\text{ if }   b_{x,y}= c_{b_i}.
\end{cases}
\end{equation}
\end{itemize}
The map $\phi$ only modifies the last part. Hence we only need to show that 
\eqref{eq:star}
is equivalent to 
\begin{equation}
\label{eq:triangle}
\pi_{s-1}\gg_\rho\omega_i.
\end{equation}
We treat separately the cases $i=0$ and $i \in \{1, \dots , n \}$.
\begin{enumerate}
\item For $i=0$, \eqref{eq:star} simply becomes $k\geq 1$. By \eqref{eq:k+1}, $k\geq 1$ implies that
$$\eta(k_{c_{x,y}})\geq \eta(1_{c_{x,y}})=1_{c_x}\geq 1_{c_1} \text{ and }\zeta(k_{c_{x,y}})\geq \zeta(1_{c_{x,y}})=0_{c_y}\geq 0_{c_1},$$
and then
$$\eta(k_{c_{x,y}})> 0_{c_{\overline{1}}}=\eta(\omega_0) \text{ and }\zeta(k_{c_{x,y}})>(-1)_{c_{\overline{1}}}=\zeta(\omega_0),$$
which by Lemma \ref{lem:secprim} is equivalent to \eqref{eq:triangle}.
Conversely, if 
$$\eta(k_{c_{x,y}})> 0_{c_{\overline{1}}} \text{ and }\zeta(k_{c_{x,y}})>(-1)_{c_{\overline{1}}},$$
then 
$$\eta(k_{c_{x,y}})\geq 1_{c_1} \text{ and }\zeta(k_{c_{x,y}})\geq 0_{c_1}$$
so that $k\geq 1$. Thus $k\geq 1$ is equivalent to $\pi_{s-1}\gg_\rho \omega_0$.

\item For $i\in \{1,\ldots,n\}$, by  Lemma \ref{lem:secprim} we only need to show that \eqref{eq:star} is equivalent to
$$\eta(k_{c_{x,y}})>0_{c_{\overline{i+1}}}=\eta(\omega_i)\text{ and }\zeta(k_{c_{x,y}})> 0_{c_i}=\zeta(\omega_i).$$
\begin{itemize}
\item[$\bullet$] When we do not have $x\geq \overline{y}=i$, then by Remark \ref{rem:rk1}, we have
$k\geq H(b_i \otimes b_{x,y})=\rho(c_{x,y},c_{i,\overline{i}})$, and \eqref{eq:star} is  equivalent to $k_{c_{x,y}}\gg_\rho 0_{c_{i,\overline{i}}}$. By \eqref{eq:secprim}, the equation \eqref{eq:star} is also equivalent to
\begin{equation}
\label{eq:1}
\eta(k_{c_{x,y}})> 0_{c_{\overline{i}}} \text{ and }\zeta(k_{c_{x,y}})> 0_{c_i}.
\end{equation}
It suffices to show that \eqref{eq:1} is equivalent to
\begin{equation}
\label{eq:2}
\eta(k_{c_{x,y}})> 0_{c_{\overline{i+1}}} \text{ and }\zeta(k_{c_{x,y}})> 0_{c_i}.
\end{equation}
It is straightforward that \eqref{eq:1} implies \eqref{eq:2} as $0_{c_{\overline{i}}}>0_{c_{\overline{i+1}}}$. Suppose now that we have \eqref{eq:2}.
If we do not have\eqref{eq:1},
then
$$\eta(k_{c_{x,y}})= 0_{c_{\overline{i}}}\geq \zeta(k_{c_{x,y}})=0_{c_x} > 0_{c_i},$$
so that $y=\overline{i}\geq x>i$. Hence, we have $x\geq \overline{y}=i$ which is a contradiction.

\item[$\bullet$]When $x=\overline{y}=i$, \eqref{eq:star} becomes $k\geq 1$, and by \eqref{eq:k+1}, \eqref{eq:star} implies
$$\eta(k_{c_{i,\overline{i}}})\geq \eta(1_{c_{i,\overline{i}}})= 1_{c_i}>0_{c_{\overline{i+1}}}\text{ and }\zeta(k_{c_{i,\overline{i}}})\geq \eta(1_{c_{i,\overline{i}}})= 0_{c_{\overline{i}}} > 0_{c_i}.$$
Conversely, if 
$$\eta(k_{c_{i,\overline{i}}})>0_{c_{\overline{i+1}}}\text{ and }\zeta(k_{c_{i,\overline{i}}})> 0_{c_i},$$
then $\zeta(k_{c_{i,\overline{i}}})> \zeta(0_{c_{i,\overline{i}}})$ so that by \eqref{eq:k+1}, $k>0$.
\item[$\bullet$]Finally, when $x>\overline{y}=i$, \eqref{eq:star} becomes
$k\geq H(b_i \otimes b_{x,y})=0$, and by \eqref{eq:k+1}, is equivalent to
$$\eta(k_{c_{x,y}})\geq \eta(0_{c_{x,y}})=0_{c_{\overline{i}}}\text{ and }\zeta(k_{c_{x,y}})\geq \zeta(0_{c_{x,y}})= 0_{c_x} > 0_{c_i}.$$
The above relation is equivalent to saying that
$$\eta(k_{c_{x,y}})> 0_{c_{\overline{i+1}}} \text{ and }\zeta(k_{c_{x,y}})> 0_{c_i}.$$
\end{itemize}
\end{enumerate} 
\end{proof}

\section{Partitions described by frequencies on paths}
\label{sec:frequenciesonpaths}
So far we have worked with difference conditions, while the CMPP conjecture is expressed in terms of frequencies and paths. Our goal in this section is thus to connect these two different approaches.

We use the notation of Section \ref{sec:roadfrob} with $\Od$ an ordered set  with $m$ elements, where $m$ is a positive integer. Hence we identify $\Od$ with $\{1<\cdots<m\}$.

\subsection{Order and paths}

Let $\geq$ be the order on $\Z_\Sc$ with the following relation:
\begin{equation}\label{eq:Order}
k_c\geq l_d \text{ iff } \eta(k_c)>\eta(l_d) \text{ or } \Big(\eta(k_c)=\eta(l_d) \text{ and }\zeta(k_c)\leq\zeta(l_d) \Big).
\end{equation}

We first define operators on $\Z_\Rr$ and $\Z_\Sc$ that will help us formalise the notion of path.

\begin{deff}\label{def:succ}
Define the operator $succ$ on $\Z_\Rr$ such that $succ(k_c)$ is the smaller primary-coloured integer which is greater that $k_c$ in terms of $>$. We say that $succ(k_c)$ is the \textit{successor} of $k_c$. Precisely, we have 
$$
\begin{cases}
succ(k_{c_u})=k_{c_{u+1}}\ \ \text{for}\ \ u\in\{1,\ldots,m-1\},\\
succ(k_{c_m})=(k+1)_{c_1}.
\end{cases}
$$
Note that $succ$ is invertible, and $succ^{-1}(k_c)$, called the \textit{predecessor} of $k_c$, is the largest integer in $\Z_\Rr$ which is smaller than $k_c$.
\end{deff}
\begin{rem}
For all $k_c\in \Z_\Rr$, $succ^{m}(k_c)=(k+1)_c$.
\end{rem}
\begin{rem}
\label{rem:rk2}
By \eqref{eq:encadreta}, for all $k_c\in \Z_\Sc$, there exist a unique integer $u$ such that $\eta(k_c)=succ^u(\zeta(k_c))$. Moreover $u\in \{0,\ldots,m\}$.
\end{rem}

Now we want to define two operators $d$ and $f$ on $\Z_\Sc$ such that for $k_c \in \Z_\Sc$,
\begin{align*}
\eta(d(k_c))=\eta(k_c)&\text{ and }\zeta(d(k_c))=succ^{-1}(\zeta(k_c)),\\
\eta(f(k_c))=succ(\eta(k_c))&\text{ and }\zeta(f(k_c))=\zeta(k_c).
\end{align*}
In other words, we want $d$ to leave the $\eta$ component constant and transform the $\zeta$ component into its predecessor, and $f$ to leave the $\zeta$ component constant and transform the $\eta$ component into its successor.

By \eqref{eq:Order}, Definition \ref{def:succ} and Remark \ref{rem:rk2}, writing $\eta(k_c)=succ^u(\zeta(k_c))$, we have for all $k_c \in \Z_S$,
\begin{align}
\eta(f(k_c))=succ^{u+1}(\zeta(f(k_c)))&\text{ and }f(k_c)>k_c,\label{eq:deff}\\
\eta(d(k_c))=succ^{u+1}(\zeta(d(k_c)))&\text{ and }d(k_c)>k_c.\label{eq:defd}
\end{align}
Hence by Remark \ref{rem:rk2}, $d$ and $f$ can only be defined for $k_c$ such that $\eta(k_c)=succ^u(\zeta(k_c))$ with $u\in \{0,\ldots,m-1\}$, i.e. $\eta(k_c)<\zeta(k_c)+1$. Indeed if $u$ was equal to $m$, we would have $\eta(f(k_c))=succ^{m+1}(\zeta(f(k_c)))$ or $\eta(d(k_c))=succ^{u+1}(\zeta(d(k_c)))$, which is not possible.

Thus we define $d$ and $f$ as follows.
\begin{deff}\label{def:df}
Let $d$ and $f$ be two operators defined on $k_c \in \Z_\Sc$ such that $\eta(k_c)=succ^u(\zeta(k_c))$ with $u\in \{0,\ldots,m-1\}$, such that
\begin{align*}
\eta(d(k_c))=\eta(k_c)&\text{ and }\zeta(d(k_c))=succ^{-1}(\zeta(k_c)),\\
\eta(f(k_c))=succ(\eta(k_c))&\text{ and }\zeta(f(k_c))=\zeta(k_c).
\end{align*}
\end{deff}

\medskip
Now we introduce the notion of paths in $\Z_\Sc$ and show key properties satisfied by these paths.

\begin{deff}
\label{def:path}
A \textit{path} in $\Z_\Sc$ is a sequence $(e_0,\ldots,e_{m})$ of $m+1$ elements of $\Z_\Sc$ such that for all $u\in \{0,\ldots,m-1\}$, either $e_{u+1}=f(e_u)$ or $e_{u+1}=d(e_u)$.
\end{deff}

\begin{lem}\label{lem:path}
Let $(e_0,\ldots,e_{m})$ be a path in $\Z_\Sc$. Then it satisfies the following properties:
\begin{enumerate}
\item In terms of the order $\geq$, the sequence $(\eta(e_j))_{j=0}^m$ is non-decreasing, the sequence $(\zeta(e_j))_{j=0}^m$ is non-increasing, and the sequence $(e_j)_{j=0}^m$ is increasing.
\item For all $u\in \{0,\ldots,m\}$, $\eta(e_u)=succ^u(\zeta(e_u))$.
\end{enumerate}
\end{lem}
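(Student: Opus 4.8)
The plan is to prove both properties of Lemma~\ref{lem:path} by induction on $m$, or equivalently by a straightforward induction along the path, using only the definitions of $d$ and $f$ from Definition~\ref{def:df} together with the preliminary facts \eqref{eq:deff}, \eqref{eq:defd} and Remark~\ref{rem:rk2}. First I would establish property (2), since property (1) will follow from it. The key observation is that Remark~\ref{rem:rk2} tells us that for every $k_c \in \Z_\Sc$ there is a \emph{unique} $u \in \{0,\ldots,m\}$ with $\eta(k_c) = succ^u(\zeta(k_c))$, and by \eqref{eq:deff}--\eqref{eq:defd}, applying either $d$ or $f$ increments this index by $1$. So I would prove by induction on $j$ that $\eta(e_j) = succ^j(\zeta(e_j))$ for all $j \in \{0,\ldots,m\}$: the base case $j=0$ needs the starting point $e_0$ to satisfy $\eta(e_0) = \zeta(e_0)$ (i.e.\ the index is $0$), which should be part of the definition of a path or an immediate consequence — I would check that Definition~\ref{def:path} or the context forces $e_0$ to be of this form; and the inductive step is exactly the content of \eqref{eq:deff} and \eqref{eq:defd}, which say that each application of $f$ or $d$ sends an index-$u$ element to an index-$(u+1)$ element.

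Next, for property (1), I would argue as follows. The monotonicity of $(\eta(e_j))_j$ and $(\zeta(e_j))_j$ is read directly off Definition~\ref{def:df}: if $e_{u+1} = f(e_u)$ then $\zeta(e_{u+1}) = \zeta(e_u)$ and $\eta(e_{u+1}) = succ(\eta(e_u)) > \eta(e_u)$, so $\eta$ does not decrease and $\zeta$ stays fixed; if $e_{u+1} = d(e_u)$ then $\eta(e_{u+1}) = \eta(e_u)$ and $\zeta(e_{u+1}) = succ^{-1}(\zeta(e_u)) < \zeta(e_u)$, so $\zeta$ strictly decreases and $\eta$ stays fixed. Telescoping over $u$ gives that $(\eta(e_j))_j$ is non-decreasing and $(\zeta(e_j))_j$ is non-increasing. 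For the claim that $(e_j)_j$ is strictly increasing in the order $\geq$, I would invoke the defining relation \eqref{eq:Order}: in the case $e_{u+1} = f(e_u)$ we have $\eta(e_{u+1}) > \eta(e_u)$, which by \eqref{eq:Order} immediately gives $e_{u+1} \geq e_u$ strictly; in the case $e_{u+1} = d(e_u)$ we have $\eta(e_{u+1}) = \eta(e_u)$ and $\zeta(e_{u+1}) = succ^{-1}(\zeta(e_u)) < \zeta(e_u) \leq \zeta(e_u)$, so the second clause of \eqref{eq:Order} applies and again $e_{u+1} \geq e_u$ strictly. (This is consistent with \eqref{eq:deff}--\eqref{eq:defd}, which already record $f(k_c) > k_c$ and $d(k_c) > k_c$.)

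I do not expect a genuine obstacle here — the lemma is essentially a bookkeeping consequence of how $d$ and $f$ were rigged. The one point that needs a little care is the base case of the induction for property (2): one must be sure that a path always starts at an element $e_0$ with $\eta(e_0) = \zeta(e_0)$, i.e.\ index $0$. If Definition~\ref{def:path} does not state this explicitly, I would add a sentence noting that, since $d$ and $f$ are only defined on elements of index at most $m-1$ and each application raises the index by $1$, a sequence of $m$ applications starting from index $u_0$ requires $u_0 + m \le m$, hence $u_0 = 0$; this both pins down the base case and simultaneously reproves property (2). The rest is then just telescoping and a direct appeal to \eqref{eq:Order}.
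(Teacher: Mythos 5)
Your proposal is correct and takes essentially the same route as the paper: property (1) is read directly off Definition \ref{def:df} and \eqref{eq:deff}--\eqref{eq:defd}, and property (2) follows because each application of $d$ or $f$ raises the index $u$ in $\eta(e_j)=succ^{u}(\zeta(e_j))$ by one, so Remark \ref{rem:rk2} forces the starting index to satisfy $u_0+m\le m$, i.e.\ $u_0=0$. In particular, your resolution of the base case (the definition of a path does not assume $\eta(e_0)=\zeta(e_0)$; it is deduced from the index count) is exactly the paper's argument.
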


%we have $\eta(e_u)=succ^u(\zeta(e_u))$ for all $u\in \{0,\ldots,m\}$. Moreover, for any $k_c\in \Z_\Sc$ such that $\eta(k_c)=succ^u(\zeta(k_c))$, the paths $(e_0,\ldots, e_m)$ containing $k_u$ are those which satisfy the following:
%\begin{enumerate}
%\item there exists $0\leq v\leq u$ such that $\eta(e_0)=\zeta(e_0)=succ^v(\zeta(k_c))$,
%\item there are exactly $v$ indices $j$ in $\{0,\ldots,u-1\}$ such that $e_{j+1}=d(e_j)$.
%\end{enumerate}

\begin{proof}
Property $(1)$ follows from Definitions \ref{def:succ} and \ref{def:df} and Relations \eqref{eq:deff} and \eqref{eq:defd}.

Now let us prove Property $(2)$. For a path $(e_0,\ldots,e_{m})$ such that $\eta(e_0)=succ^u(\zeta(e_0))$, we recursively have by \eqref{eq:deff} and \eqref{eq:defd} that $\eta(e_j)=succ^{u+j}(\zeta(e_j))$ for $j\in \{0,\ldots,m\}$, so that $0\leq u$ and $ u+m\leq m$, i.e. $u=0$. 

%Let $k_c\in \Z_\Sc$ such that $\eta(k_c)=succ^u(\zeta(k_c))$. Then, the path$(e_0,\ldots, e_m)$ satisfying $\eta(e_0)=\zeta(e_0)=succ^v(\zeta(k_c))$ for some $0\leq v\leq u$ and $\sharp\{j\in\{0,\ldots,u-1\}: e_{j+1}=d(e_j)\}=v$
%is such that 
%$$\eta(e_u)=succ^{u-v}(\eta(e_0))=succ^u(\zeta(k_c))=\eta(k_c)$$ 
%and 
%$$\zeta(e_u)=succ^{-v}(\zeta(e_0))=\zeta(k_c).$$
%Hence, $e_u=k_c$. Inversely, if a path $(e_0,\ldots, e_m)$ contains $k_c$, then $e_u=k_c$. By setting $v=\sharp\{j\in\{0,\ldots,u-1\}: e_{j+1}=d(e_j)\}$, we have that $\eta(e_0)=\zeta(e_0)=succ^v(\zeta(e_0))$.
\end{proof}

We can now state a key result which characterises the relation $\gg_\rho$ in terms of paths in $\Z_\Sc$.
\begin{lem}\label{lem:orderpath}
For all $k_c \geq l_d\in \Z_\Sc$, we have $k_c\gg_\rho l_d$ if and only if there is no path in $\Z_\Sc$ which contains both $k_c$ and $l_d$.
\end{lem}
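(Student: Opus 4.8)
The statement to prove is Lemma \ref{lem:orderpath}: for $k_c\geq l_d$ in $\Z_\Sc$, we have $k_c\gg_\rho l_d$ if and only if no path in $\Z_\Sc$ contains both $k_c$ and $l_d$. The plan is to reduce everything to the primary-coloured $\eta$ and $\zeta$ components, using Lemma \ref{lem:secprim}, which already tells us that $k_c\gg_\rho l_d$ is equivalent to $\eta(k_c)>\eta(l_d)$ and $\zeta(k_c)>\zeta(l_d)$ (both in the order $>$ on $\Z_\Rr$). So the real task is to show that the negation, namely $\eta(k_c)\leq\eta(l_d)$ or $\zeta(k_c)\leq\zeta(l_d)$ (combined with $k_c\geq l_d$), is equivalent to the existence of a path containing both.

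First I would prove the easy direction ($\Rightarrow$ contrapositive): if a path $(e_0,\dots,e_m)$ contains $k_c=e_a$ and $l_d=e_b$ with, say, $a<b$, then by Lemma \ref{lem:path}(1) the sequences $(\eta(e_j))$ and $(\zeta(e_j))$ are respectively non-decreasing and non-increasing, so $\eta(k_c)=\eta(e_a)\leq\eta(e_b)=\eta(l_d)$, hence $k_c\not\gg_\rho l_d$ by Lemma \ref{lem:secprim}. (If instead $a>b$ then $\zeta(k_c)\leq\zeta(l_d)$, again giving $k_c\not\gg_\rho l_d$; and if $a=b$ then $k_c=l_d$, and $0_{c_{x,x}}\not\gg_\rho 0_{c_{x,x}}$ since $\rho(c,c)=1>0$, so this case cannot occur under $k_c\gg_\rho l_d$ anyway.) This handles one implication cleanly.

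For the converse, assume $k_c\geq l_d$ but $k_c\not\gg_\rho l_d$, i.e. $\eta(k_c)\leq\eta(l_d)$ or $\zeta(k_c)\leq\zeta(l_d)$. I would construct an explicit path through both. The key structural fact is Lemma \ref{lem:path}(2): in any path, $\eta(e_u)=succ^u(\zeta(e_u))$, so $e_0$ is the unique "diagonal" element ($\eta=\zeta$) and $e_m$ is the one with $\eta=succ^m(\zeta)$, i.e. $\eta(e_m)=\zeta(e_m)+1$; moreover by Remark \ref{rem:rk2} each $k_c$ sits at a well-defined level $u(k_c)\in\{0,\dots,m\}$ with $\eta(k_c)=succ^{u(k_c)}(\zeta(k_c))$. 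The strategy: starting from $k_c$, repeatedly apply $f$ and $d$ to move "down the path" — each application of $f$ increases the level $u$ by $1$ and keeps $\zeta$ fixed while advancing $\eta$; each application of $d$ increases $u$ by $1$, keeps $\eta$ fixed, and retreats $\zeta$. Similarly $f^{-1},d^{-1}$ move "up." Concretely: using $k_c\geq l_d$ from \eqref{eq:Order} we have either $\eta(k_c)>\eta(l_d)$, or $\eta(k_c)=\eta(l_d)$ and $\zeta(k_c)\leq\zeta(l_d)$. I would argue that the failure of $\gg_\rho$ forces a configuration in which $l_d$ is reachable from $k_c$: if $\zeta(k_c)\leq\zeta(l_d)$, then from $l_d$ one applies $d$ repeatedly ($\zeta(l_d)-\zeta(k_c)$ times, in the $succ$-sense) until the $\zeta$ component drops to $\zeta(k_c)$; at that stage, because both $\eta(k_c)$ and the current $\eta$ sit with $\zeta$-component $\zeta(k_c)$ and the $\eta$'s differ only by $f$-steps, one reaches $k_c$ by applying $f$ or $f^{-1}$; the analogous argument works when $\eta(k_c)\leq\eta(l_d)$, moving via $f$-steps. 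The bookkeeping to check is that all the intermediate elements stay in $\Z_\Sc$ (equivalently that all intermediate levels stay in $\{0,\dots,m\}$), which uses precisely the constraint $\eta(k_c)\leq\eta(l_d)$ or $\zeta(k_c)\leq\zeta(l_d)$ together with $0\leq u(k_c),u(l_d)\leq m$: the path built from $k_c$ and $l_d$ has total length determined by their levels, and the failure of $\gg_\rho$ is exactly what prevents the level count from exceeding $m$. Finally, once a sub-path from (a common ancestor of) $k_c$ to $l_d$ is built, extend it to a full path of length $m$ by appending $d$- or $f$-steps at the bottom and $d^{-1}$- or $f^{-1}$-steps at the top as needed; this is always possible since from any element of $\Z_\Sc$ at level $u$ one can step down to level $u+1$ (apply $f$ or $d$) as long as $u<m$ and up to level $u-1$ as long as $u>0$.

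The main obstacle I anticipate is the combinatorial bookkeeping in the converse: carefully verifying that the greedy construction of the path never leaves $\Z_\Sc$ — i.e. that the level index $u$ stays within $\{0,\dots,m\}$ throughout — and that exactly the hypotheses "$\eta(k_c)\leq\eta(l_d)$ or $\zeta(k_c)\leq\zeta(l_d)$" and "$k_c\geq l_d$" are what make this work, with no slack. I would organize this by first reducing to the case where $k_c$ and $l_d$ share a $\zeta$-component or an $\eta$-component (peeling off $d$- or $f$-steps using Definition \ref{def:df} and \eqref{eq:succession}), then treating that base case directly, and finally extending to a full-length path using the "step down/step up is always available away from the extreme levels" observation from Remark \ref{rem:rk2} and Lemma \ref{lem:path}(2).
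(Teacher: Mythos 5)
Your proposal is correct and follows essentially the same route as the paper: both directions rest on Lemma \ref{lem:secprim} and the monotonicity/level properties of Lemma \ref{lem:path}, and the converse is handled by deriving $\zeta(k_c)\leq\zeta(l_d)\leq\eta(l_d)\leq\eta(k_c)\leq\zeta(k_c)+1$ and explicitly building a path through the ``corner'' element $(\eta(l_d),\zeta(k_c))$ via $d$- and $f$-steps, then extending to full length — exactly the paper's construction, up to your extension being done at both ends rather than written as one closed formula. The only nitpicks are cosmetic: $\rho(c,c)$ can equal $2$ (not always $1$), and under $k_c\geq l_d$ only forward $f$-steps are ever needed, so the hedge ``$f$ or $f^{-1}$'' is superfluous.
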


\begin{proof}
Let $k_c \geq l_d\in \Z_\Sc$. Showing the equivalence above is equivalent to showing that $k_c\not\gg_\rho l_d$ if and only if there is a path which contains both $k_c$ and $l_d$.
\begin{itemize}
\item[$\Rightarrow$)] If $k_c\not\gg_\rho l_d$, by \eqref{eq:secprim} and \eqref{eq:Order}, then either $\Big(\eta(k_c)>\eta(l_d)$ and $\zeta(k_c)\leq \zeta(l_d)\Big)$, or $\Big(\eta(k_c)=\eta(l_d)$ and $\zeta(k_c)\leq \zeta(l_d)\Big)$. In both cases, we have
$$\zeta(k_c)\leq \zeta(l_d) \leq \eta(l_d)\leq \eta(k_c) \leq \zeta(k_c)+1\,.$$
Hence, there exist $0\leq u\leq v\leq w\leq m$ such that 
$$\zeta(l_d) = succ^u(\zeta(k_c)),\, \eta(l_d)=succ^v(\zeta(k_c))\text{ and } \eta(k_c)=succ^w(\zeta(k_c))\,.$$
Let $e_0 \in \Z_\Sc$ be such that $\eta(e_0)=\zeta(e_0)=\zeta(l_d)$, and for all $t \in \{1, \dots, m \}$, let
$$
e_t :=
\begin{cases}
f^t(e_0) &\text{ if } t\in \{1,\ldots,v-u\},\\
d^{t-v+u}(f^{v-u}(e_0)) &\text{ if } t\in \{v-u+1,\ldots,v\},\\ 
f^{t-v}(d^{u}(f^{v-u}(e_0))) &\text{ if } t\in \{v+1,\ldots,m\}.
\end{cases}
$$
Hence $(e_0,\ldots,e_{m})$ is a path in $\Z_\Sc$.

Moreover, we have
\begin{itemize}
\item $e_{v-u}=l_d,$
\item $e_v$ is such that $(\eta(e_v),\zeta(e_v))=(\eta(l_d),\zeta(k_c))$,
\item $e_w=k_c$.
\end{itemize}
Therefore, $k_c$ and $l_d$ both belong to the path $(e_0,\ldots,e_{m})$.

\item[$\Leftarrow$)] Conversely, suppose that $k_c$ and $l_d$ belong to a path $(e_0,\ldots,e_{m})$ in $\Z_\Sc$. As $k_c \geq l_d$, we either have $\eta(k_c)>\eta(l_d)$, or $\eta(k_c)=\eta(l_d)$ and $\zeta(k_c)\leq \zeta(l_d)$.

\begin{itemize}
\item If $\eta(k_c)>\eta(l_d)$, we know that $(\eta(e_u))_{u=0}^{m}$ is non-decreasing by Lemma \ref{lem:path}. Thus there exist $u<v$ such that $l_d=e_u$ and $k_c=e_v$. Again, by Lemma \ref{lem:path}, $(\zeta(e_u))_{u=0}^{m}$ is non-increasing, and we have $$\zeta(k_c)=\zeta(e_v)\leq \zeta(e_u)=\zeta(l_d)\,.$$
Therefore, by \eqref{eq:secprim}, $k_c\not\gg_\rho l_d$.

\item If $\eta(k_c)=\eta(l_d)$, then by \eqref{eq:secprim}, $k_c\not\gg_\rho l_d$.

\end{itemize}

\end{itemize}

\end{proof}

\subsection{Frequencies, paths and partitions}\label{sec:nsproof}
In this section, we connect generalised coloured partitions with order $\gg_\rho$ to partitions with frequency conditions on paths in $\Z_\Sc$.

For $x\in \{1,\ldots,m\}$, set $\overline{x}:=m+1-x$. Also set $\omega_0:=(-1)_{c_{m,m}}$, and for $i\in \{1,\ldots,\lfloor m/2 \rfloor\}$,
$\omega_i:=0_{c_{i,\overline{i+1}}}$. Let 
$$\Omega = \{\omega_u:u\in \{0,\ldots,\lfloor m/2 \rfloor\}\}\sqcup \{0_{c_{u,\overline{u}}}: u\in \{1,\ldots,\lceil m/2 \rceil\}\}.$$
When $\mathcal{O}= \ov{[n]}$, the bars and $\omega_i$'s are defined as in the previous sections.

The sequence $\Ee=(e_0,\ldots,e_m)$ defined by $e_{m-2u}:=\omega_{u}$ for $u\in \{0,\ldots,\lfloor m/2 \rfloor\}$ and $e_{m-2u+1}:=0_{c_{u,\overline{u}}}$ for $u\in \{1,\ldots,\lceil m/2 \rceil\}$ is a path in $\Z_\Sc$. Indeed $e_{m-2u}=d(e_{m-2u-1})$ for $u\in \{0,\ldots,\lceil m/2 \rceil-1\}$ and $e_{m-2u+1}=f(e_{m-2u})$ for $u\in \{1,\ldots,\lfloor m/2 \rfloor\}$. Finally, set  
$$\Z_\Sc^+=\{0_{c_{x,y}}: m\geq y\geq x>\overline{y}\geq 1\}\sqcup(\Z_{>0})_\Sc,$$

Let $\Pp_\Sc$ denote the set of generalised coloured partitions with parts in $\Z_\Sc^+$ and order $\geq$. Recall that any partition $\pi\in \Pp_\Sc$ can be written as its frequency sequence $(f_u)_{u\in \Z_\Sc^+}$, where $f_u$ is the number of occurrences of $u$ in $\pi$.   We then have
$$C(\pi)q^{|\pi|}=\prod_{u\in \Z_\Sc^+} \left(c(u)q^{|u|}\right)^{f_u}.$$

The main result of this section is the following.

\begin{theo}\label{theo:mainns}
Let $\omega\in \Omega$.
Let $\Pp_\Sc^\omega$ be the set of partitions of $\Pp_\Sc$ whose frequency sequence $(f_u)_{u\in \Z_\Sc^+}$ is such that, by considering fictitious occurrences of elements in $\Omega$ with $f_{\omega}=1$, we have
$$f_{e_0}+\ldots+f_{e_{m}}\leq 1$$
for all paths $(e_0,\ldots,e_{m})$ in $\Omega\sqcup\Z_\Sc^+$.\\
Recall that $\Pp^{\omega}_\rho$ denotes the set of generalised coloured partitions with parts in $\Z_\Sc$, order $\gg_\rho$, and last part equal to $\omega$. 

There exists a bijection $\Lambda$ between $\Pp_\rho^{\omega}$  and $\Pp_\Sc^\omega$ which preserves the size and the colour sequence when omitting the last part $\omega$.
\end{theo}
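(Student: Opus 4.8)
The plan is to build the bijection $\Lambda$ by reading a partition $\pi\in\Pp_\rho^\omega$ "through its $\eta$ and $\zeta$ components", recording for each coloured integer appearing in $\pi$ which position it occupies along a path, and then reassembling the data as a frequency sequence. Concretely, given $\pi=(\pi_0,\dots,\pi_{s-1},\pi_s=\omega)$, I would first map it to its $C_n^{(1)}$-Frobenius form $((\eta(\pi_j)),(\zeta(\pi_j)))$ via Proposition \ref{prop:frob}; by Lemma \ref{lem:secprim} the strict inequality $\pi_j\gg_\rho\pi_{j+1}$ translates into $\eta(\pi_j)>\eta(\pi_{j+1})$ and $\zeta(\pi_j)>\zeta(\pi_{j+1})$ in $\Z_\Rr$, so the two component-sequences are each strictly decreasing. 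Then I would apply Lemma \ref{lem:orderpath}: two parts $k_c\ge l_d$ of $\pi$ satisfy $k_c\gg_\rho l_d$ iff no path of $\Z_\Sc$ contains both; equivalently, the parts of $\pi$ (together with the final fixed part $\omega$) form an \emph{antichain} for the relation "lies on a common path". The partition $\Lambda(\pi)\in\Pp_\Sc$ should simply be the multiset of these parts with their colours, after discarding $\omega$ and checking each remaining part lies in $\Z_\Sc^+$; the frequency condition $f_{e_0}+\dots+f_{e_m}\le 1$ along every path in $\Omega\sqcup\Z_\Sc^+$ is exactly the antichain condition, where the fictitious occurrence $f_\omega=1$ encodes that $\omega$ was the ground.

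The key steps, in order, would be: (i) show that for $\pi\in\Pp_\rho^\omega$ every part $\pi_j$ with $j<s$ lies in $\Z_\Sc^+$ — this uses that $\pi_{s-1}\gg_\rho\omega$ and the explicit value of $\omega_i$, combined with the characterisation \eqref{eq:encadreta} of which $0_{c_{x,y}}$ have $\eta>\zeta$ (those are exactly the "sup-coloured" ones, $x>\overline y$), so negative parts are impossible and size-$0$ parts are forced into $\Z_\Sc^+$; (ii) show the underlying multiset of $(\pi_0,\dots,\pi_{s-1})$, together with the fictitious $\omega$, is a path-antichain in $\Omega\sqcup\Z_\Sc^+$, which is immediate from Lemma \ref{lem:orderpath} plus the observation that $\Omega$ is precisely the set of parts $e$ with $\eta(e)=\zeta(e)$ or $\eta(e)=\zeta(e)+1$ at the "bottom" and the special path $\Ee$ runs through all of them; (iii) conversely, given a partition $\sigma\in\Pp_\Sc^\omega$, order its parts (with multiplicity) by the relation $\ge$ of \eqref{eq:Order} — since they form a path-antichain, Lemma \ref{lem:orderpath} guarantees consecutive parts satisfy $\gg_\rho$, so appending $\omega$ at the end produces an element of $\Pp_\rho^\omega$, provided one checks $\sigma_{\mathrm{last}}\gg_\rho\omega$, which again is the antichain condition applied to the path $\Ee$ through $\omega$; (iv) verify the two maps are mutually inverse and that size and colour sequence (ignoring $\omega$) are preserved, which is automatic since $\Lambda$ only reshuffles and relabels parts.

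The main obstacle I expect is step (iii), namely checking that the order-$\ge$ arrangement of an arbitrary path-antichain in $\Z_\Sc^+$ is genuinely a \emph{well-ordered} sequence for $\gg_\rho$ all the way down to and including the ground $\omega$. The subtlety is that $\ge$ on $\Z_\Sc$ (Definition via \eqref{eq:Order}) is the lexicographic order "$\eta$ first, then reversed $\zeta$", while $\gg_\rho$ is the two-coordinate strict dominance of Lemma \ref{lem:secprim}; these agree on antichains but one must rule out "ties" in the $\eta$-component among antichain elements — which is exactly where the hypothesis $\pi\in\Pp_\Sc^\omega$ (no path contains two parts) does the work, since two distinct parts with equal $\eta$ always lie on a common path. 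I would also need to take care at the boundary: the fictitious frequency $f_\omega=1$ must be compatible with the genuine part multiplicities, i.e. a partition in $\Pp_\Sc^\omega$ may not itself contain $\omega$ as a genuine part unless $\omega\in\Z_\Sc^+$, and even then its presence together with the fictitious copy would violate $f_\omega+f_\omega\le 1$; handling this case distinction (and the case $i=0$, where $\omega_0$ has size $-1$ and so is never a genuine part) cleanly is the fiddly part, but it is bookkeeping rather than a real difficulty once Lemmas \ref{lem:secprim} and \ref{lem:orderpath} are in hand.
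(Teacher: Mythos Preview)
Your overall strategy is the same as the paper's: define $\Lambda(\pi)=(\pi_0,\dots,\pi_{s-1})$, and use Lemma~\ref{lem:orderpath} to translate between the chain condition $\pi_0\gg_\rho\cdots\gg_\rho\pi_{s-1}\gg_\rho\omega$ and the path-antichain (frequency) condition. The detour through the Frobenius picture is unnecessary but harmless; the paper simply drops $\omega$ and appeals to Lemmas~\ref{lem:omegaz+}, \ref{lem:inz+}, and~\ref{lem:orderpath} directly.

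There is, however, a genuine gap in your step~(iii). The hypothesis on $\sigma\in\Pp_\Sc^\omega$ says that no path \emph{in $\Omega\sqcup\Z_\Sc^+$} contains two of its parts (including the fictitious $\omega$). Lemma~\ref{lem:orderpath}, which you want to invoke to conclude $\sigma_j\gg_\rho\sigma_{j+1}$, is a statement about paths \emph{in all of $\Z_\Sc$}. These are not a priori the same: a path in $\Z_\Sc$ containing two elements of $\Omega\sqcup\Z_\Sc^+$ might pass through points outside $\Omega\sqcup\Z_\Sc^+$. The paper closes this gap with a separate lemma (Lemma~\ref{lem:inz+}): whenever $k_c,l_d\in\Omega\sqcup\Z_\Sc^+$ lie on a common path in $\Z_\Sc$, one can always \emph{reroute} to find a path lying entirely in $\Omega\sqcup\Z_\Sc^+$ containing both. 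This is not automatic and requires an explicit construction of the rerouted path together with Lemma~\ref{lem:omegaz+} to check each vertex stays in $\Omega\sqcup\Z_\Sc^+$. The same lemma, applied with $k_c=l_d=u$, is what guarantees every $u\in\Z_\Sc^+$ lies on \emph{some} path in $\Omega\sqcup\Z_\Sc^+$, forcing $f_u\le 1$; your sketch does not address how multiplicities are bounded.

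A smaller point: in step~(i) you only argue that $\pi_{s-1}\gg_\rho\omega$ forces $\pi_{s-1}\in\Z_\Sc^+$, but you need this for every $\pi_j$. The paper handles this by first noting $\gg_\rho$ is transitive (immediate from Lemma~\ref{lem:secprim}), so $\pi_j\gg_\rho\omega$ for all $j<s$, and then applying Lemma~\ref{lem:omegaz+}(2). Finally, for the last comparison $\pi_{s-1}\gg_\rho\omega$ in the inverse direction, you say it follows from ``the antichain condition applied to the path $\Ee$ through $\omega$''; but that only rules out $\pi_{s-1}$ and $\omega$ sharing a path, giving either $\pi_{s-1}\gg_\rho\omega$ or $\omega\gg_\rho\pi_{s-1}$ by Lemma~\ref{lem:orderpath}. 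Ruling out the second alternative again needs Lemma~\ref{lem:omegaz+}: if $\omega\gg_\rho\pi_{s-1}$ with $\pi_{s-1}\in\Z_\Sc^+$, that lemma would force $\omega\in\Z_\Sc^+$, contradicting $\omega\in\Omega$.
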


To prove Theorem \ref{theo:mainns}, we first need two lemmas.
\begin{lem}\label{lem:omegaz+} 
Let $l_d\in \Omega\sqcup\Z_\Sc^+$ and let $k_c\in \Z_\Sc$ such that $\eta(k_c)\geq \eta(l_d)$ and $\zeta(k_c)\geq \zeta(l_d)$. Then
\begin{enumerate}
\item if $l_d\in \Z_\Sc^+$, then $k_c\in \Z_\Sc^+$,
\item if $l_d\in \Omega$, then $k_c\in \Omega\sqcup\Z_\Sc^+$. In particular, if  $k_c\gg_\rho l_d$, then $k_c\in \Z_\Sc^+$.
\end{enumerate}
\end{lem}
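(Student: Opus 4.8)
The plan is to translate both the hypothesis and the conclusion into conditions on the pair of components $(\eta(k_c),\zeta(k_c))$, the main tool being the decomposition \eqref{eq:etazeta} together with the sandwich \eqref{eq:encadreta}. Concretely: if $\zeta(k_c)=N_{c_a}$ then $\eta(k_c)$ has integer part $N$ or $N+1$, and the integer part of $k_c$ is accordingly $2N$ or $2N+1$; in particular $k_c$ has positive integer part (hence $k_c\in(\Z_{>0})_\Sc\subseteq\Z_\Sc^+$) exactly when $N\geq1$, or $N=0$ and $\eta(k_c)$ has integer part $1$, while when $N=0$ and $\eta(k_c)=0_{c_e}$ one has $k_c=0_{c_{a,e}}$ (with $a\leq e$), which lies in $\Z_\Sc^+$ iff $a+e\geq m+2$, in $\Omega$ iff $a+e\in\{m,m+1\}$, and in neither otherwise. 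I would also record that $\omega_0=(-1)_{c_{m,m}}$ has $(\eta,\zeta)=(0_{c_m},(-1)_{c_m})$ and that every other element of $\Omega$ has the form $0_{c_{x,y}}$ with $x\leq y$ and $x+y\in\{m,m+1\}$.

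For (1), write $\zeta(k_c)=N_{c_a}$ and $\zeta(l_d)=M_{c_b}$, so $N\geq M$ by \eqref{eq:order}. If $M\geq1$ then $N\geq1$ and we are done. If $M=0$, then $l_d\in\Z_\Sc^+$ forces either $l_d$ to have integer part $1$ — and then $\eta(k_c)\geq\eta(l_d)$ makes $\eta(k_c)$ have integer part $\geq1$ while the sandwich makes $k_c$ have positive integer part — or $l_d=0_{c_{x,y}}$ with $x\leq y$ and $x+y\geq m+2$; in the latter case the only subcase not already covered is $N=0$ with $\eta(k_c)=0_{c_e}$, where $\zeta(k_c)\geq\zeta(l_d)$ and $\eta(k_c)\geq\eta(l_d)$ give $a\geq x$ and $e\geq y$, hence $a+e\geq x+y\geq m+2$ and $k_c=0_{c_{a,e}}\in\Z_\Sc^+$.

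For the first assertion of (2), when $l_d$ is a part $0_{c_{x,y}}$ with $x+y\in\{m,m+1\}$ the argument of (1) applies verbatim, now yielding $k_c=0_{c_{a,e}}$ with $a\leq e$ and $a+e\geq m$, so $k_c\in\Z_\Sc^+$ if $a+e\geq m+2$ and $k_c\in\{\omega_a,0_{c_{a,\ov{a}}}\}\subseteq\Omega$ if $a+e\in\{m,m+1\}$; the remaining case $l_d=\omega_0$ is a short separate check ($\zeta(k_c)\geq(-1)_{c_m}$ gives $N\geq-1$, and running through $N\geq1$, $N=0$, and $N=-1$ one finds $k_c$ is either of positive integer part, or $0_{c_{1,\ov{1}}}\in\Omega$, or $\omega_0$ itself). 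Finally, for the ``in particular'' statement: by Lemma \ref{lem:secprim}, $k_c\gg_\rho l_d$ is equivalent to $\eta(k_c)>\eta(l_d)$ and $\zeta(k_c)>\zeta(l_d)$, which gives $k_c\geq l_d$ by \eqref{eq:Order} and satisfies the hypotheses of the first assertion, so $k_c\in\Omega\sqcup\Z_\Sc^+$; and $k_c\in\Omega$ would put both $k_c$ and $l_d$ on the path $\Ee=(e_0,\dots,e_m)$ introduced before the statement (since $\Omega\subseteq\{e_0,\dots,e_m\}$), contradicting Lemma \ref{lem:orderpath}. Hence $k_c\in\Z_\Sc^+$.

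I expect the main difficulty to be purely organisational: reconciling the non-symmetric order $\geq$ on primary-coloured integers from \eqref{eq:order} with the combinatorial descriptions of $\Z_\Sc^+$ and $\Omega$, and being exhaustive about the low-integer-part cases and the exceptional element $\omega_0$ without missing or duplicating a case. There is no structural obstacle, since the two genuinely nontrivial facts — the characterisation of $\gg_\rho$ via components and via paths — are already packaged in Lemmas \ref{lem:secprim} and \ref{lem:orderpath}.
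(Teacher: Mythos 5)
Your proposal is correct. For assertion (1) and the first half of (2) it is essentially the paper's argument: both reduce at once to the case where $k_c$ has integer part $0$ (positive integer part being trivial) and then compare colour indices; you encode membership in $\Z_\Sc^+$ and $\Omega$ through the index sum $a+e$ ($\geq m+2$, resp. $\in\{m,m+1\}$), where the paper uses the equivalent bar inequalities $x>\ov{y}$ and an explicit subcase list for $l_d\in\Omega\setminus\{\omega_0\}$; phrasing things via index sums lets you recycle the computation of (1) verbatim, which is a mild streamlining. The genuine divergence is the ``in particular'' clause: the paper reruns each case with the strict inequalities coming from \eqref{eq:secprim} (getting $k\geq1$ when $l_d=\omega_0$, and $\ov{y}<i+1\leq x$ when $k=0$ and $l_d$ is $0_{c_{i,\ov{i}}}$ or $0_{c_{i,\ov{i+1}}}$), whereas you first obtain $k_c\in\Omega\sqcup\Z_\Sc^+$ from the weak inequalities and then exclude $k_c\in\Omega$ structurally: $\Omega$ is exactly the vertex set of the path $\Ee$, so $k_c,l_d\in\Omega$ together with $k_c\geq l_d$ (which follows from $\eta(k_c)>\eta(l_d)$ via \eqref{eq:Order}) would contradict Lemma \ref{lem:orderpath}. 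This is legitimate --- Lemma \ref{lem:orderpath} and the path $\Ee$ are both established before this lemma, so there is no circularity --- and arguably cleaner, trading a case-by-case strict-inequality check for one appeal to the path characterisation of $\gg_\rho$. One small slip to fix when writing up: in your separate check for $l_d=\omega_0$, the subcase $N=0$ with $\eta(k_c)=0_{c_e}$ forces $e=m$ and yields $k_c=0_{c_{a,m}}$ for an arbitrary $a\in\{1,\ldots,m\}$; for $a\geq2$ these are elements of $\Z_\Sc^+$ of integer part zero, so your stated trichotomy (``positive integer part, or $0_{c_{1,\ov{1}}}$, or $\omega_0$'') omits them, although they cause no harm since they lie in $\Z_\Sc^+$ as the lemma requires.
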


\begin{proof}
Let $k_c,l_d\in \Omega\sqcup\Z_\Sc^+$ such that $\eta(k_c)\geq \eta(l_d)$ and $\zeta(k_c)\geq \zeta(l_d)$ (and thus $k \geq l$). Write $c=c_{x,y}$ and $d=c_{x',y'}$. Observe that when $k\geq 1$, the result is straightforward as $k_c \in \Z_\Sc^+$.

\begin{enumerate}

\item Suppose that $l_d\in \Z_\Sc^+$. If $l>0$, then $k\geq l>0$ and we conclude.\\
Otherwise, $l=0$, $1\leq \overline{y}'<x'\leq m$  and $k\geq l\geq 0$. The case $k>0$ being obvious, suppose that $k=0$. By \eqref{eq:etazeta}, $0_{c_y}=\eta(k_c)\geq \eta(l_d)=0_{c_{y'}}$ and $0_{c_x}=\zeta_{k_c}\geq \zeta(l_d)=0_{c_{x'}}$ so that, by \eqref{eq:order},
$1\leq \overline{y}\leq \overline{y}'<x'\leq x\leq m$. Therefore, $k_c\in \Z_\Sc^+$.

\item Suppose that $l_d\in \Omega$.
 
\begin{enumerate}

\item When $l_d=\omega_0$, we have $\eta(k_c)\geq 0_{c_m}$ and $\zeta(k_c)\geq (-1)_{c_m}$.

\begin{itemize}
\item If $\zeta(k_c)= (-1)_{c_m}$, then 
$\eta(k_c)\leq \zeta(k_c)+1= 0_{c_m}$ so that $\eta(k_c)=0_{c_m}$ and $k_c=\omega_0 \in \Omega$.
\item If $\zeta(k_c)>(-1)_{c_m}$, then $\zeta(k_c)\geq 0_{c_1}$, and $k\geq 0$. The case $k>0$ being obvious, suppose that $k=0$. By \eqref{eq:etazeta}, $\zeta(k_c)=0_{c_y}\leq 0_{c_m}$, so that $\zeta(k_c)=0_{c_m}$ and $1=\overline{m}=\overline{y}\leq x$. If $x>1$, then $k_c \in \Z_\Sc^+$, else, $k_c=0_{c_{1,\overline{1}}}\in \Omega$. 
\end{itemize}

In particular, when $k_c\gg_\rho l_d=\omega_0$, then by \eqref{eq:secprim}, $\eta(k_c)> 0_{c_m}$ and $\zeta(k_c)> (-1)_{c_m}$ so that $\eta(k_c)\geq 1_{c_1}$ and $\zeta(k_c) \geq 0_{c_1}$. Hence, $k\geq 1$ and $k_c \in \Z_\Sc^+$.

\item When $l_d\neq\omega_0$, $l_d$ is of the form $0_{c_{i,\overline{i}}}$ or $0_{c_{i,\overline{i+1}}}$. Hence, by \eqref{eq:etazeta}, $\zeta(k_c)\geq \zeta(l_d)= 0_{c_{i}}$ and $\eta(k_c)\geq \eta(l_d)\geq 0_{c_{\overline{i+1}}}$, and thus $k\geq 0$. The case $k>0$ being obvious, suppose that $k=0$. Then, by \eqref{eq:etazeta}, $0_{c_{x}}\geq 0_{c_{i}}$ and $0_{c_{y}}\geq 0_{c_{\overline{i+1}}}$, i.e.  $x\geq i$ and $y\geq \overline{i+1}$.
\begin{itemize}
\item If $y=\overline{i+1}$ and $i\leq x\leq i+1$, then $k_c\in \Omega$. 
\item If $y=\overline{i+1}$ and $x>i+1$, then $\overline{y}<x$ and
$k_c \in \Z_\Sc^+$.
\item If $y=\overline{i}$ and $x=i$, then $k_c\in \Omega$. 
\item If $y=\overline{i}$ and $x>i$, then $\overline{y}<x$ and 
$k_c \in \Z_\Sc^+$.
\item If $y>\overline{i}$, then $\overline{y}<i\leq x$ and $k_c \in \Z_\Sc^+$.
\end{itemize}

In particular, when $k=0$ and $k_c\gg_\rho l_d$, by \eqref{eq:secprim} and \eqref{eq:etazeta}, $0_{c_x}=\zeta(k_c)\geq \zeta(l_d)=0_{c_{i}}$ and $0_{c_y}=\eta(k_c)>\eta(l_d)\geq 0_{c_{\overline{i+1}}}$. Hence, $\overline{y}<i+1\leq x$ and $k_c\in \Z_\Sc^+$.
\end{enumerate}
\end{enumerate}
\end{proof}

\begin{lem}\label{lem:inz+} 
Let $k_c,l_d\in \Omega\sqcup\Z_\Sc^+$ such that $k_c$ and $l_d$ belong to a same path in $\Z_\Sc$. Then, there exists a path in $\Omega\sqcup\Z_\Sc^+$ which contains both $k_c$ and $l_d$. 
\end{lem}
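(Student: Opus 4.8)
The plan is to keep the given path through $k_c$ and $l_d$ only long enough to read off combinatorial data from it, and then to write down from scratch a new path that visits both points and stays entirely inside $\Omega\sqcup\Z_\Sc^+$. The one structural fact I need about $\Omega\sqcup\Z_\Sc^+$ is that it is closed upwards for the componentwise order on the pairs $(\eta,\zeta)$: if $m_0\in\Omega\sqcup\Z_\Sc^+$ and $n_0\in\Z_\Sc$ satisfies $\eta(n_0)\ge\eta(m_0)$ and $\zeta(n_0)\ge\zeta(m_0)$, then $n_0\in\Omega\sqcup\Z_\Sc^+$; this is exactly Lemma~\ref{lem:omegaz+}. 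So it will be enough to build a path every vertex of which componentwise dominates $k_c$ or dominates $l_d$.

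First I would normalise notation. By Lemma~\ref{lem:path}, the vertex among $k_c,l_d$ that occurs earlier on the given path — call it $m_1$, and the other $m_2$ — sits at an index $a$ equal to its intrinsic offset (the unique $a$ with $\eta(m_1)=succ^{a}(\zeta(m_1))$), with $m_2$ at index $b$ equal to its offset, $a\le b$, $\eta(m_1)\le\eta(m_2)$ and $\zeta(m_1)\ge\zeta(m_2)$. Write $\zeta(m_1)=succ^{\delta}(\zeta(m_2))$ with $\delta\ge0$ and set $q:=b-a-\delta$; combining $\eta(m_1)=succ^{a}(\zeta(m_1))=succ^{a+\delta}(\zeta(m_2))$ with $\eta(m_2)=succ^{b}(\zeta(m_2))$ and monotonicity of $succ$ gives $a+\delta\le b$, hence $q\ge0$, and also $\eta(m_2)=succ^{q}(\eta(m_1))$.

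Then I would define the new path $P'=(e'_0,\ldots,e'_m)$ by prescribing the components of each vertex in four blocks:
\begin{align*}
\bigl(\eta(e'_j),\zeta(e'_j)\bigr)&=\bigl(\eta(m_1),\,succ^{a-j}(\zeta(m_1))\bigr) &&\text{for }0\le j\le a,\\
\bigl(\eta(e'_j),\zeta(e'_j)\bigr)&=\bigl(succ^{j-a}(\eta(m_1)),\,\zeta(m_1)\bigr) &&\text{for }a\le j\le b-\delta,\\
\bigl(\eta(e'_j),\zeta(e'_j)\bigr)&=\bigl(\eta(m_2),\,succ^{b-j}(\zeta(m_2))\bigr) &&\text{for }b-\delta\le j\le b,\\
\bigl(\eta(e'_j),\zeta(e'_j)\bigr)&=\bigl(succ^{j-b}(\eta(m_2)),\,\zeta(m_2)\bigr) &&\text{for }b\le j\le m.
\end{align*}
Here one checks that the three overlaps $j=a$, $j=b-\delta$, $j=b$ agree (the middle one uses $\eta(m_2)=succ^{q}(\eta(m_1))$), that the intrinsic offset of $e'_j$ equals $j$ for every $j$ — so each $e'_j$ is a genuine element of $\Z_\Sc$ — and that consecutive vertices differ by $d$ in the first and third blocks and by $f$ in the second and fourth. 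Since in each block the relevant operator is applied only at offsets $\le m-1$, all of these operations are defined, so $P'$ is a path in the sense of Definition~\ref{def:path}. It contains $k_c$ and $l_d$ because $e'_a=m_1$ and $e'_b=m_2$. Finally every vertex is in $\Omega\sqcup\Z_\Sc^+$: in the first two blocks $e'_j$ has both components at least those of $m_1$, and in the last two blocks both components at least those of $m_2$, so Lemma~\ref{lem:omegaz+} applies in each case. The degenerate case $k_c=l_d$ (then $a=b$ and $q=\delta=0$) is a special instance of these formulas.

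The only real point of care is the order of the moves: the backward $d$-steps at the two ends and the ``$f$ before $d$'' ordering inside the middle are chosen precisely so that every intermediate vertex stays componentwise above $m_1$ or above $m_2$. If, for instance, one did the $d$-steps before the $f$-steps in the middle block, the intermediate vertices would drop componentwise below both $m_1$ and $m_2$, where Lemma~\ref{lem:omegaz+} gives nothing. Past that choice, the rest is routine offset bookkeeping — checking that the four pieces glue, that all offsets lie in $\{0,\dots,m\}$, and that $f$ and $d$ are only ever invoked within the domain specified in Definition~\ref{def:df}.
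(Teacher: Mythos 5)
Your proof is correct and is essentially the paper's own argument: after ordering the two vertices via the monotonicity properties of Lemma \ref{lem:path}, you construct exactly the same four-block path (initial $d$-steps from the offset-$0$ vertex above $m_1$, then $f$-steps, then $d$-steps through $m_2$, then $f$-steps), with your indices $a$, $b-\delta$, $b$, $q$ corresponding to the paper's $v-u$, $v$, $w$, $u$, and you invoke Lemma \ref{lem:omegaz+} in the same way to keep every vertex in $\Omega\sqcup\Z_\Sc^+$. No substantive difference beyond notation and the explicit bookkeeping of offsets.
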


\begin{proof}[Proof of Lemma \ref{lem:inz+}]
Without loss of generality, suppose that $k_c\geq l_d \in \Omega\sqcup\Z_\Sc^+$ that $k_c$ and $l_d$ belong to a same path in $\Z_\Sc$. Similarly to the proof of Lemma \ref{lem:orderpath}, we have 
$$ \eta(k_c)-1 \leq \zeta(k_c)\leq \zeta(l_d) \leq \eta(l_d)\leq \eta(k_c).$$
Set $0\leq u\leq v\leq w \leq m$ such that
$$\eta(k_c) = succ^u(\eta(l_c))= succ^v(\zeta(l_d)) =succ^w(\zeta(k_c)).$$
Let $(e_0,\ldots,e_m)$ be the path defined by 
$$
\begin{cases}
(\eta(e_j),\zeta(e_j))=(\eta(l_d),succ^{-j}(\eta(l_d)))\ \ \text{for} \ j\in \{0,\ldots,v-u\},\\
(\eta(e_j),\zeta(e_j))=(succ^{j-v+u}(\eta(l_d)),\zeta(l_d)) \ \ \text{for} \ j\in \{v-u+1,\ldots,v\},\\
(\eta(e_j),\zeta(e_j))=(\eta(k_c),succ^{v-j}(\zeta(l_d)))) \ \ \text{for} \ j\in \{v+1,\ldots,w\},\\
(\eta(e_j),\zeta(e_j))=(succ^{j-w}(\eta(k_c)),\zeta(k_c)) \ \ \text{for} \ j\in \{w+1,\ldots,m\}.
\end{cases}
$$
Then, $e_{v-u}=l_d$ and $e_w = k_c$. For all $j\in \{0,\ldots,v\}$, $\eta(e_j)\geq \eta(l_d)$ and $\zeta(e_j)\geq \zeta(l_d)$, and Lemma \ref{lem:omegaz+} implies that $e_j\in \Omega\sqcup\Z_\Sc^+$. Similarly, for all $j\in \{v+1,\ldots,m\}$, $\eta(e_j)\geq \eta(k_c)$ and $\zeta(e_j)\geq \zeta(k_c)$ so that $e_j\in \Omega\sqcup\Z_\Sc^+$.
Hence $(e_0,\ldots,e_m)$ is a path in $\Omega\sqcup\Z_\Sc^+$ which contains both $k_c$ and $l_d$. 
\end{proof}

We are now ready to prove Theorem \ref{theo:mainns}.
\begin{proof}[Proof of Theorem \ref{theo:mainns}]
Let $\pi=(\pi_0,\ldots,\pi_{s-1},\omega)\in \Pp_\rho^{\omega}$. By \eqref{eq:secprim}, the relation $\gg_\rho$ is transitive. Thus, since $\pi_0\gg_\rho \cdots\gg_\rho\pi_{s-1}\gg_\rho \omega$, we have $\pi_u\gg_\rho \pi_v$ for all $0\leq u< v\leq s-1$ and $\pi_u\gg_\rho \omega$ for all $0\leq u\leq s-1$.
Hence, by (2) of Lemma \ref{lem:omegaz+}, $\pi_0,\ldots,\pi_{s-1}\in \Z_\Sc^+$. Since by \eqref{eq:secprim} and \eqref{eq:Order}, $k_c\gg_\rho l_d$ implies that $k_c> l_d$, then $\tilde{\pi}:=(\pi_0,\ldots,\pi_{s-1})$ belongs to  $\Pp_\Sc$.

Set $\Lambda(\pi):=\tilde{\pi}$.
The frequencies of $\tilde{\pi}$ are $f_u= 1$ for all $u\in \{\pi_0,\ldots,\pi_{s-1}\}$ and $f_u=0$ for all $u\in \Z_\Sc^+\setminus \{\pi_0,\ldots,\pi_{s-1}\}$. Finally, add fictitious frequencies $(f_u)_{u\in \Omega}$ with $f_{u}=\chi(u=\omega)$ for all $u\in \Omega$.

By Lemma \ref{lem:orderpath}, 
any path $(e_0,\ldots,e_{m})$ in $\Omega\sqcup\Z_\Sc^+ \subset \Z_\Sc$ contains at most one element of $\{\pi_0,\ldots,\pi_{s-1},\omega\}$, and then 
$$f_{e_0}+\ldots+f_{e_{m}}\leq 1.$$ 
Thus $\Lambda(\pi) \in \Pp_\Sc^\omega$.

\medskip

Conversely, let $\tilde{\pi}=(\pi_0,\ldots,\pi_{s-1})\in \Pp_\Sc^\omega$  and let $(f_u)_{u\in \Z_\Sc^+}$ be its frequency sequence. 
%Suppose that $(f_u)_{u\in \Z_\Sc^+\sqcup \Omega}$ is such that $f_{\omega}=1$ and 
%$$f_{e_0}+\ldots+f_{e_{m}}\leq 1$$
%for all path $(e_0,\ldots,e_{m})$ in $\Omega\sqcup\Z_\Sc^+$. 
Then $f_u=0$ for all $u\in \Omega\setminus\{\omega\}$ as $\Ee$ is a path in $\Omega$. Moreover, $f_u\leq 1$ for all $u\in \Z_\Sc^+$, since there exists a path in $\Omega\sqcup\Z_\Sc^+$ containing $u$ by Lemma \ref{lem:inz+} in the case $u=k_c=l_d$. Hence,  $\pi_0>\cdots > \pi_{s-1}$. By Lemma \ref{lem:inz+}, two distinct elements in $\{\pi_0,\ldots, \pi_{s-1},\omega\}$ do not belong to the same path in $\Z_\Sc$, otherwise they would belong to a path in $\Omega\sqcup\Z_\Sc^+$. Therefore, by Lemma \ref{lem:orderpath}, for all $u\in \{0,\ldots,s-2\}$, we have $\pi_u\gg_\rho \pi_{u+1}$. Finally, 
by Lemma \ref{lem:orderpath}, we have either $\pi_{s-1}\gg_\rho \omega$ or $\omega\gg_\rho \pi_{s-1}$ according to whether $\pi_{s-1}>\omega$ or $\pi_{s-1}<\omega$. But, by Lemma \ref{lem:omegaz+}, as $\pi_{s-1}\in \Z_\Sc^+$, $\omega\not\gg_\rho \pi_{s-1}$. Therefore, $\pi_{s-1}\gg_\rho \omega$. The sequence $\pi=(\pi_0,\ldots,\pi_{s-1},\omega)$ is then well-ordered by $\gg_\rho$ and belongs to
$\Pp_\rho^\omega$. We set $\Lambda^{-1}(\tilde{\pi}):=\pi$. 

It is then straightforward that $\Lambda$ and $\Lambda^{-1}$ are inverses of each other as we only add/delete a part $\omega$.
The size and colour sequence are preserved by omitting the part $\omega$.
\end{proof}

\subsection{Specialisation}
The CMPP conjecture expresses the \textit{principally specialised} characters of standard modules of $C_n^{(1)}$ as a generating function for partitions. So far the present paper has only dealt with \textit{non-dilated} character formulas and partitions (which is more general). To make the connection with the conjecture, in this section, we give a dilated version of Theorem \ref{theo:mainns} which corresponds to the principal specialisation.

Recall that $(\Od,\geq)$ be an ordered set which can be identified with $\{1, \dots, m\}$ for some integer $m$,  that the set of primary colours is $\Rr=\{c_u: u\in \Od\}$, and that the set of secondary colours is $\Sc=\{c_{x,y}=c_xc_y=c_yc_x:x\leq y \in \Od\}.$ 

Let $\mathds{1}$ denote the transformation defined, for all $k_{c_j}\in \Z_\Rr$, by
$$\mathds{1}(k_{c_j})=km-\frac{m+1}{2}+j.$$
We have the relation $\mathds{1}(succ(k_c))=\mathds{1}(k_c)+1$ for all $k_c\in \Z_\Rr$.
 Therefore, the map $\mathds{1}$ describes a bijection from $\Z_\Rr$ to $\Z_m$, where
 $$\Z_m = \begin{cases}
 \Z  &\text{ if $m$ odd}\\
 \Z + \frac{1}{2} &\text{ if $m$ even}.
 \end{cases} $$

For all $k_c\in \Z_\Sc$, let $\mathds{1}(k_c)$ be the part of size $\mathds{1}(\eta(k_c))+\mathds{1}(\zeta(k_c))$ with subscript $\mathds{1}(\eta(k_c))-\mathds{1}(\zeta(k_c))$. By \eqref{eq:encadreta}, the subscript belongs to $\{0,\ldots,m\}$. For example, 
\begin{align*}
\mathds{1}((2k)_{c_{1,1}})=((2k-1)m+1)_0&\text{ and }\mathds{1}((2k+1)_{c_{1,1}})=(2km+1)_{m}\,,\\
\mathds{1}((2k)_{c_{1,m}})=(2km)_{m-1}&\text{ and }\mathds{1}((2k+1)_{c_{1,m}})=((2k+1)m)_{1}\,,\\
\mathds{1}((2k)_{c_{m,m}})=((2k+1)m-1)_0&\text{ and }\mathds{1}((2k-1)_{c_{m,m}})=(2km-1)_{m}\,.
\end{align*}
Observe that the difference between the size and subscript of $\mathds{1}(k_c)$ has the same parity as $m+1$. Conversely, for $l,d\in \Z$ such that $d\in \{0,\ldots,m\}$ and $l-d \equiv m+1\mod 2$, one can find a unique $k_c\in \Z_\Sc$ such that $\mathds{1}(k_c)=l_d$. Indeed, $\mathds{1}(\eta(k_c))=\frac{l+d}{2}$ and $\mathds{1}(\zeta(k_c))=\frac{l-d}{2}$ belong to $\Z_m$,
and $\eta(k_c),\zeta(k_c)$ are uniquely determined in $\Z_\Rr$. In the following, set for $k\in \Z\cup \{-\infty\}$,
$$E_k:=\{l_d: l\geq k\,,d\in\{0,\ldots,m\}\,, l-d\equiv m+1 \mod 2\}\,.$$  
Hence, the map $\mathds{1}$ is a bijection from $\Z_\Sc$ to $E_{-\infty}$, and the following statement holds.

\begin{lem}\label{lem:psz+}
We have
\begin{enumerate}
\item $\mathds{1}\left(\Z_\Sc^+\right)=E_1$,
\item $\mathds{1}(0_{c_{j,\overline{j}}}) = 0_{m-2j+1}$ for $j\in \{1,\ldots,\lceil m/2 \rceil\}$,
\item $\mathds{1}(\omega_j) = (-1)_{m-2j}$ for $j\in \{0,\ldots,\lfloor m/2 \rfloor\}$.
\end{enumerate}
\end{lem}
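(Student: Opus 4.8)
The plan is to verify each of the three identities directly from the definition $\mathds{1}(k_{c_j})=km-\frac{m+1}{2}+j$ together with the decomposition formulas \eqref{eq:etazeta} and the relation $\eta(k_c)-\zeta(k_c)\in\{0_{c_\bullet},1_{c_\bullet}\}$ encoded in \eqref{eq:encadreta}. Recall that for $k_c\in\Z_\Sc$ the part $\mathds{1}(k_c)$ has size $\mathds{1}(\eta(k_c))+\mathds{1}(\zeta(k_c))$ and subscript $\mathds{1}(\eta(k_c))-\mathds{1}(\zeta(k_c))$, so each computation reduces to evaluating $\mathds{1}$ on the two primary-coloured components.

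For part (3), I would split into the two cases from the definition of $\omega_j$. For $j=0$ we have $\omega_0=(-1)_{c_{m,m}}$, so by \eqref{eq:etazeta} (the odd case, since $-1=2(-1)+1$) $\eta(\omega_0)=0_{c_m}$ and $\zeta(\omega_0)=(-1)_{c_m}$; then $\mathds{1}(0_{c_m})=m-\frac{m+1}{2}+m$ and $\mathds{1}((-1)_{c_m})=-m-\frac{m+1}{2}+m$, so the size is $-1$ and the subscript is $2m-(-m)\cdot$... more carefully, the subscript is $\mathds{1}(0_{c_m})-\mathds{1}((-1)_{c_m})=m$, giving $\mathds{1}(\omega_0)=(-1)_m=(-1)_{m-2\cdot0}$ as claimed. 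For $j\in\{1,\ldots,\lfloor m/2\rfloor\}$ we have $\omega_j=0_{c_{j,\overline{j+1}}}$ with $\overline{j+1}=m-j$; since $0$ is even, $\eta(\omega_j)=0_{c_{\max}}$, $\zeta(\omega_j)=0_{c_{\min}}$ where we must check which of $j$ and $m-j$ is larger — for $j\le\lfloor m/2\rfloor$ one has $j\le m-j$, so $\eta(\omega_j)=0_{c_{m-j}}$ and $\zeta(\omega_j)=0_{c_j}$; then $\mathds{1}(0_{c_{m-j}})=-\frac{m+1}{2}+(m-j)$ and $\mathds{1}(0_{c_j})=-\frac{m+1}{2}+j$, so the size is $-(m+1)+m=-1$ and the subscript is $(m-j)-j=m-2j$, yielding $\mathds{1}(\omega_j)=(-1)_{m-2j}$. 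Part (2) is entirely analogous: for $0_{c_{j,\overline{j}}}$ with $\overline{j}=m+1-j$ and $j\le\lceil m/2\rceil$ one has $j\le m+1-j$, so $\eta=0_{c_{m+1-j}}$, $\zeta=0_{c_j}$, the size is $-(m+1)+(m+1)=0$ and the subscript is $(m+1-j)-j=m-2j+1$, giving $0_{m-2j+1}$.

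For part (1), I would prove both inclusions. For $\mathds{1}(\Z_\Sc^+)\subseteq E_1$: an element of $\Z_\Sc^+$ is either a positive integer $k_{c_{x,y}}$ with $k\ge1$, or $0_{c_{x,y}}$ with $m\ge y\ge x>\overline y\ge1$, i.e. $x+y\ge m+2$ with the convention $\overline y=m+1-y$, equivalently $x>m+1-y$. In the first case the size $\mathds{1}(\eta)+\mathds{1}(\zeta)$ is at least the value at $k=1$, which one checks is $\ge1$; in the second case $\eta(0_{c_{x,y}})=0_{c_y}$, $\zeta(0_{c_{x,y}})=0_{c_x}$, so the size equals $-(m+1)+x+y\ge1$ exactly when $x+y\ge m+2$, which is precisely the defining condition. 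The parity condition $l-d\equiv m+1\bmod 2$ and the subscript bound $d\in\{0,\ldots,m\}$ were already established in the paragraph preceding the lemma for all of $\Z_\Sc$, so they come for free. For the reverse inclusion $E_1\subseteq\mathds{1}(\Z_\Sc^+)$: given $l_d\in E_1$, we already know there is a unique $k_c\in\Z_\Sc$ with $\mathds{1}(k_c)=l_d$, and I need to show $k_c\in\Z_\Sc^+$, i.e. that $l\ge1$ forces either $k\ge1$ or ($k=0$ and $x+y\ge m+2$). This follows by running the size computation $\mathds{1}(\eta(k_c))+\mathds{1}(\zeta(k_c))=l$ backwards: writing $\eta(k_c)=a_{c_u}$, $\zeta(k_c)=b_{c_v}$ with $a\ge b$, one has $l=(a+b)m-(m+1)+u+v$; if $a+b\le0$ then $a=b=0$ (using $a\ge b\ge$ the relevant bound) and $l=u+v-(m+1)\ge1$ forces $u+v\ge m+2$, and unwinding $\eta,\zeta$ back to $c_{x,y}$ gives $0_{c_{x,y}}$ with $x+y=u+v\ge m+2$.

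I do not expect any genuine obstacle here — this lemma is a bookkeeping verification. The only mild care needed is tracking which of $j$ and $\overline j$ (resp. $j$ and $\overline{j+1}$) is the larger primary colour, so that $\eta$ and $\zeta$ are assigned correctly via \eqref{eq:etazeta}; the inequalities $j\le\lceil m/2\rceil\Rightarrow j\le m+1-j$ and $j\le\lfloor m/2\rfloor\Rightarrow j\le m-j$ resolve this. A second small point is handling the even/odd-$k$ split in \eqref{eq:etazeta} for $\omega_0$ (where $k=-1$ is odd) versus the $0$-valued parts in (1) and (2) (where $k=0$ is even); both are immediate once the right branch is selected. Everything else is linear arithmetic in $\mathds{1}$.
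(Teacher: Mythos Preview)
Your computations for parts (2) and (3) are essentially identical to the paper's. For part (1), however, you take a genuinely different route: you argue both inclusions $\mathds{1}(\Z_\Sc^+)\subseteq E_1$ and $E_1\subseteq\mathds{1}(\Z_\Sc^+)$ directly by inequality chasing, whereas the paper exploits the periodicity $\mathds{1}(k_c+2)=\mathds{1}(k_c)+2m$ to reduce to a finite fundamental domain
\[
Z=\{0_{c_{x,y}}:1\le\overline{y}<x\le y\le m\}\sqcup\{1_c:c\in\Sc\}\sqcup\{2_{c_{x,y}}:1\le x\le y<\overline{x}\le m\},
\]
shows $\mathds{1}(Z)\subseteq E_1\setminus E_{2m}$ by explicit bounds, and then concludes equality by the cardinality count $|Z|=|E_1\setminus E_{2m}|=m(m+1)-\lceil m/2\rceil$. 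Your approach is more elementary and needs no counting; the paper's approach is more structural and avoids the reverse-inclusion computation entirely.

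One small wrinkle in your reverse-inclusion sketch: the claim ``if $a+b\le0$ then $a=b=0$'' is not literally true as stated, since $\Z_\Sc$ imposes no lower bound on $b$. You first need to exclude $a+b\le-1$; but this is immediate, since then $l=(a+b)m-(m+1)+u+v\le -m-(m+1)+2m=-1<1$, contradicting $l_d\in E_1$. With that in place your argument is complete. (Note also that $a+b=k$ always, from \eqref{eq:etazeta}, which makes the case analysis cleaner than the way you phrased it.)
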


\begin{proof}
Note that for all $k_c\in \Z_\Sc$, $\mathds{1}(k_c+2)=\mathds{1}(k_c)+2m$, since by \eqref{eq:succession}, $\mathds{1}(\eta(k_c+2))=\mathds{1}(\eta(k_c)+1)=\mathds{1}(\eta(k_c))+m$, and $\mathds{1}(\zeta(k_c+2))=\mathds{1}(\zeta(k_c)+1)=\mathds{1}(\zeta(k_c))+m$. Hence, writing
$$Z:=\{0_{c_{x,y}}: 1\leq \overline{y}< x\leq y\leq m\}\sqcup\{1_c:c\in \Sc\}\sqcup \{2_{c_{x,y}}: 1\leq x\leq y< \overline{x}\leq m\},$$
we have
$$\Z_\Sc^+ = \{z + 2k : k \geq 0 , z \in Z\} \sqcup \{O_{c_{j\ov{j}}} + 2k : k >0 , j \in \{1, \dots , \lceil m/2 \rceil \}\}.$$
Thus, to prove (1) and (2), it suffices to show that $\mathds{1}\left(Z\right)= E_{1}\setminus E_{2m}$  and $\mathds{1}(0_{c_{j,\overline{j}}}) = 0_{m-2j+1}$ for $j\in \{1,\ldots,\lceil m/2 \rceil\}$.

For $i\in \{1,\ldots,\lceil m/2 \rceil\}$, we have
\begin{align*}
\mathds{1}(\zeta(0_{c_{j,\overline{j}}}))&=\mathds{1}(0_{c_j})=-\frac{m+1}{2}+j,\\
\mathds{1}(\eta(0_{c_{j,\overline{j}}}))&=\mathds{1}(0_{c_{\overline{j}}})=\frac{m+1}{2}-j,\\
\end{align*}
and then  $\mathds{1}(0_{c_{j,\overline{j}}}) = 0_{m+1-2j}$. For $1\leq \overline{y}< x\leq y\leq m$, 
\begin{align*}
\mathds{1}(\eta(0_{c_{x,y}}))&=\mathds{1}(0_{c_y})\leq \mathds{1}(0_{c_m})= \frac{m-1}{2} ,\\
\mathds{1}(\zeta(0_{c_{x,y}}))&=\mathds{1}(0_{c_x})> \mathds{1}(0_{c_{\overline{y}}})=-\mathds{1}(0_{c_{y}}),
\end{align*}
and then 
$0<\mathds{1}(\eta(0_{c_{x,y}}))+\mathds{1}(\eta(0_{c_{x,y}}))< m$.  For $1\leq x\leq y< \overline{x}\leq m$, 
\begin{align*}
\mathds{1}(\zeta(2_{c_{x,y}}))&=\mathds{1}(1_{c_x})\geq \mathds{1}(1_{c_{1}})= \frac{m+1}{2} ,\\
\mathds{1}(\eta(2_{c_{x,y}}))&=\mathds{1}(1_{c_y})<\mathds{1}(1_{c_{\overline{x}}})= m+\mathds{1}(0_{c_{\overline{x}}}) =m-\mathds{1}(0_{c_{x}})=2m-\mathds{1}(1_{c_{x}}),
\end{align*}
and then $m<\mathds{1}(\eta(0_{c_{x,y}}))+\mathds{1}(\eta(0_{c_{x,y}}))< 2m$. Finally, for $1\leq x\leq y\leq m$,
\begin{align*}
m+\frac{m-1}{2}=\mathds{1}(1_{c_{m}})\geq \mathds{1}(\eta(1_{c_{x,y}}))&=\mathds{1}(1_{c_x})\geq \mathds{1}(1_{c_{1}})= m+\frac{1-m}{2},\\
\frac{m-1}{2}=\mathds{1}(0_{c_{m}})\geq \mathds{1}(\zeta(1_{c_{x,y}}))&=\mathds{1}(0_{c_y})\geq \mathds{1}(0_{c_{1}})= \frac{1-m}{2},
\end{align*}
and then $0<\mathds{1}(\eta(0_{c_{x,y}}))+\mathds{1}(\eta(0_{c_{x,y}}))< 2m$. Hence, $\mathds{1}\left(Z\right)\subset E_1\setminus E_{2m}$.

Moreover, 
as 
\begin{align*}
\Sc &= \{c_{x,y}:1\leq x\leq y\leq m\}
\\&=\{c_{x,y}:1\leq x\leq y< \overline{x}\leq m\}\sqcup \{c_{x,y}:1\leq x\leq y=\overline{x}\leq m\}\sqcup \{c_{x,y}:1\leq \overline{y}< x\leq y\leq m\},
\end{align*}
we have
$|Z|=2|\Sc|-\lceil m/2\rceil= m(m+1)-\lceil m/2\rceil$. Furthermore,  in $E_1\setminus E_{2m}$, the $m$ odd numbers appear with subscripts of the same parity as $m$, and the $m-1$ even numbers appear with subscripts of the same parity as $m+1$.
Thus
$$|E_1\setminus E_{2m}|=m(1+\lfloor m/2\rfloor)+ (m-1)\lceil m/2\rceil = m(1+\lfloor m/2\rfloor+\lceil m/2\rceil)-\lceil m/2\rceil=m(m+1)-\lceil m/2\rceil.$$
Along with the fact that $\mathds{1}$ is injective, we obtain that $\mathds{1}\left(Z\right)= E_1\setminus E_{2m}$. Thus (1) and (2) are proved.

Let us now prove (3). We compute $\mathds{1}(\omega_i)$ for $i\in \{0,\ldots,\lfloor m/2\rfloor\}$. For $i=0$,  
$$\mathds{1}(\eta(\omega_0))=\mathds{1}(0_{c_m})=\frac{m-1}{2}\text{ and }\mathds{1}(\zeta(\omega_0))=\mathds{1}((-1)_{c_{m}})=-\frac{m+1}{2},$$
and then $\mathds{1}(\omega_0)=(-1)_{m}$. For $1\leq j\leq \lfloor m/2\rfloor$, 
$$\mathds{1}(\eta(\omega_j))=\mathds{1}(0_{c_{\overline{j+1}}})=-j+\frac{m-1}{2}\text{ and }\mathds{1}(\zeta(\omega_j))=\mathds{1}(0_{c_j})=-\frac{m+1}{2}+j,$$
and then $\mathds{1}(\omega_j)=(-1)_{m-2j}$.
\end{proof}

By Lemma \ref{lem:psz+}, $\mathds{1}(\Omega)=E_{-1}\setminus E_1$.
If $(e_0,\ldots,e_{m})$ is a path in $\Z_\Sc$, then $\mathds{1}(e_j)$ is of the form $(l^{(j)})_j$ with 
$$l^{(j+1)}=\begin{cases}
l^{(j)}+1 &\text{ if } e_{j+1}=f(e_j),
\\ l^{(j)}-1 &\text{ if } e_{j+1}=d(e_j).
\end{cases}$$
A sequence $((l^{(0)})_0, \dots , (l^{(m)})_m)$ such that for all $j \in \{0, \dots , m-1\}$, $l^{(j+1)}= l^{(j)} \pm 1$  and $(l^{(j)})_j \in E_k$ ($k \in \Z \cup \{- \infty \}$) is called \textit{a path in $E_k$}.

Conversely, any path in $E_{-\infty}$ is the image by $\mathds{1}$ of a path in $\Z_\Sc$. By Theorem \ref{theo:mainns} and Lemma \ref{lem:psz+}, we derive the following theorem.

\begin{theo}[Dilated version of Theorem \ref{theo:mainns}]
\label{theo:mainspbis}
Let $\omega\in \Omega$.
Denote by $\Pp^{\mathds{1}(\omega)}$ the set of partitions with parts in $E_1$ such that, letting $f_u$ be the frequency of $u$ for all $u\in E_1$, and setting fictitious frequencies $f_u:=\chi(u=\mathds{1}(\omega))$ for all $u\in E_{-1}\setminus E_1$, we have
$$f_{e_0}+\cdots+f_{e_{m}}\leq 1$$
for all paths $(e_0,\ldots,e_{m})$ in $E_{-1}$. Then,
$$\sum_{\pi\in \Pp^{\mathds{1}(\omega)}}q^{|\pi|}= \mathds{1}\left(c(\omega)^{-1}q^{-|\omega|}\sum_{\pi\in \Pp_\rho^{\omega}}C(\pi)q^{|\pi|}\right).$$
\end{theo}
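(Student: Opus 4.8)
The plan is to derive \Thm{theo:mainspbis} by transporting the bijection $\Lambda$ of \Thm{theo:mainns} along the dilation $\mathds{1}$, using only the combinatorial properties of $\mathds{1}$ recorded in \Lem{lem:psz+} and in the discussion preceding the statement. First I would put $\Lambda\colon\Pp_\rho^\omega\to\Pp_\Sc^\omega$ into generating-function form. Since $\Lambda$ merely deletes the fixed last part $\omega=\pi_s$ and preserves the size and colour sequence of the remaining parts, for $\pi=(\pi_0,\ldots,\pi_{s-1},\omega)\in\Pp_\rho^\omega$ one has $C(\pi)q^{|\pi|}=c(\omega)q^{|\omega|}\,C(\Lambda(\pi))q^{|\Lambda(\pi)|}$, and summing over $\pi$ yields
\begin{equation*}
c(\omega)^{-1}q^{-|\omega|}\sum_{\pi\in\Pp_\rho^{\omega}}C(\pi)q^{|\pi|}=\sum_{\tilde\pi\in\Pp_\Sc^{\omega}}C(\tilde\pi)q^{|\tilde\pi|}\,.
\end{equation*}
It then suffices to show that applying $\mathds{1}$ to the right-hand side produces $\sum_{\sigma\in\Pp^{\mathds{1}(\omega)}}q^{|\sigma|}$.

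Next I would check that $\mathds{1}$ restricts to a bijection $\Pp_\Sc^\omega\to\Pp^{\mathds{1}(\omega)}$. By \Lem{lem:psz+}(1), $\mathds{1}$ is a bijection from $\Z_\Sc^+$ onto $E_1$, and (by the observation immediately after \Lem{lem:psz+}) from $\Omega$ onto $E_{-1}\setminus E_1$. A partition in $\Pp_\Sc^\omega$ or in $\Pp^{\mathds{1}(\omega)}$ has all frequencies at most $1$ — forced by the path condition together with \Lem{lem:inz+} applied with $k_c=l_d$ — so it is determined by its set of parts, on which $\mathds{1}$ acts partwise. To match the two path conditions I would use that $\mathds{1}$ sends a path $(e_0,\ldots,e_m)$ in $\Omega\sqcup\Z_\Sc^+$ to a path $(\mathds{1}(e_0),\ldots,\mathds{1}(e_m))$ in $E_{-1}$ — the relations $e_{j+1}\in\{f(e_j),d(e_j)\}$ becoming $\mathds{1}(e_{j+1})=\mathds{1}(e_j)\pm 1$ — and that, since every path in $E_{-\infty}$ is the $\mathds{1}$-image of a path in $\Z_\Sc$ and $\mathds{1}$ is injective, this is a bijection between paths in $\Omega\sqcup\Z_\Sc^+$ and paths in $E_{-1}$. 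Under this correspondence the fictitious frequency $f_\omega=1$ on $\Omega$ corresponds exactly to the fictitious frequency $f_{\mathds{1}(\omega)}=1$ on $E_{-1}\setminus E_1$, so $f_{e_0}+\cdots+f_{e_m}\leq 1$ holds for every path in $\Omega\sqcup\Z_\Sc^+$ if and only if the same bound holds for every path in $E_{-1}$. This yields the bijection $\tilde\pi\mapsto\mathds{1}(\tilde\pi)$ from $\Pp_\Sc^\omega$ onto $\Pp^{\mathds{1}(\omega)}$.

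Finally I would track sizes. Extending $\mathds{1}$ multiplicatively to the monomials $c(k_c)q^{|k_c|}$, as is implicit in the statement, one has $\mathds{1}\bigl(c(k_c)q^{|k_c|}\bigr)=q^{\mathds{1}(\eta(k_c))+\mathds{1}(\zeta(k_c))}$ for every $k_c\in\Z_\Sc$ (because $k_c=\eta(k_c)+\zeta(k_c)$ in $\Z_\Rr$), and the exponent is by definition the size of the part $\mathds{1}(k_c)$; hence $\mathds{1}\bigl(C(\tilde\pi)q^{|\tilde\pi|}\bigr)=q^{|\mathds{1}(\tilde\pi)|}$ for each $\tilde\pi\in\Pp_\Sc^\omega$. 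Combining this with the previous paragraph gives $\mathds{1}\bigl(\sum_{\tilde\pi}C(\tilde\pi)q^{|\tilde\pi|}\bigr)=\sum_{\tilde\pi\in\Pp_\Sc^\omega}q^{|\mathds{1}(\tilde\pi)|}=\sum_{\sigma\in\Pp^{\mathds{1}(\omega)}}q^{|\sigma|}$, which together with the displayed identity proves the theorem. The step I would treat most carefully is the second one: ensuring that the path map is a genuine bijection compatible with the fictitious frequencies, so that the two frequency-on-paths conditions are literally equivalent (rather than one merely implying the other); everything else is routine bookkeeping with $\mathds{1}$, $\eta$ and $\zeta$.
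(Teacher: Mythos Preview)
Your proposal is correct and follows exactly the route the paper intends: the paper's proof is the single sentence ``By Theorem \ref{theo:mainns} and Lemma \ref{lem:psz+}, we derive the following theorem,'' and you have faithfully unpacked that derivation---transporting $\Lambda$ along $\mathds{1}$, using \Lem{lem:psz+} to match $\Z_\Sc^+\leftrightarrow E_1$ and $\Omega\leftrightarrow E_{-1}\setminus E_1$, invoking the path bijection described just before the statement, and checking that $\mathds{1}(c(k_c)q^{|k_c|})=q^{|\mathds{1}(k_c)|}$ as a ring homomorphism. Your caution about the path correspondence being a genuine bijection (so the two frequency conditions are equivalent, not merely comparable) is exactly the point that needs care, and your argument handles it.
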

\begin{rem}
In the formula above, multiplying $\sum_{\pi\in \Pp_\rho^{\omega}}C(\pi)q^{|\pi|}$ with the factor $c(\omega)^{-1}q^{-|\omega|}$ amounts to not counting the fictitious part $\omega$ in the generating function.
\end{rem}

\subsection{Proof of the CMPP conjecture of level one weights and generalisation}
\label{sec:proofCMPP}
Finally, we use particular cases of the results of the previous sections to prove Theorem \ref{theo:mainsp}, i.e. that the CMMP conjecture in the case of level $1$ standard modules $L(\Lambda_0), \dots, L(\Lambda_n)$ of $C_n^{(1)}$.

Let $m=2n$. One can then identify $\Od$ with the set $ \ov{[n]}=\{1,\ldots,n,\overline{n},\ldots,\overline{1}\}$ associated to the crystal $\B$ of $C_n^{(1)}$, and the parts $\omega_i$ defined in Section \ref{sec:nsproof} are exactly those of Section \ref{sec:minimal}.
By setting
$$
c_{\overline{j}}^{-1}=c_j= e^{\frac{1}{2}\alpha_n+\sum_{u=j}^{n-1}\alpha_u} \text{ for } j\in \{1,\ldots,n\},
$$
where the $\alpha_i$'s are the simple roots,
we have $c_{x,y} = c_xc_y = e^{\wt{(x,y)}}$ for all $(x,y)\in \B\setminus \{\emptyset\}$. The principal specialisation is the transformation $e^{-\alpha_j}\mapsto q$, and since $\delta = \alpha_0+\alpha_n+2\sum_{j=1}^{n-1}\alpha_j$, we have that $e^{-\delta}\mapsto q^{2n}$. Hence, the principal specialisation corresponds exactly to $\mathds{1}$. 
Thus Theorem \ref{theo:mainspbis}, together with the identification of $\Pp_{i,\rho}$ to $\Pp_\rho^{\omega_i}$ in Lemma \ref{lem:identification}, implies Theorem \ref{theo:mainsp}.

\section{Conclusion}
In \cite{CMPP}, Capparelli--Meurman--Primc--Primc gave an analogue of their conjecture for odd moduli, which would correspond in our setting to the case $m=2n-1$.
We reformulate this conjecture as follows.

\begin{con}[Reformulation of the CMPP odd conjecture]
Let $k_0,\ldots,k_n$ be non-negative integers and denote by $\mathcal{Q}^{k_0,\ldots,k_n}_{\mathds{1}}$ the set of partitions into parts in $E_1$ such that, by setting $f_u$ the frequency of $u\in E_1$ and fictitious occurrences of $u\in E_{-1}\setminus E_1$ with $f_{(-1)_{2n-1-2i}}=k_i$ for $i\in \{0,\ldots,n-1\}$ and $f_{(0)_{0}}=k_n$,
$$f_{e_0}+\cdots+f_{e_{2n-1}}\leq k_0+\cdots+k_n$$
for all path $(e_0,\ldots,e_{2n-1})$ of $E_{-1}$. Then,
$$\sum_{\pi\in \mathcal{Q}^{k_0,\ldots,k_n}_{\mathds{1}}}q^{|\pi|}= \frac{\prod_{a\in \{2n+2k+1\}^n;b\in \Delta(k_1+1,\ldots,k_n+1); j=a,b,2n+2k+1-b}(q^j;q^{2n+2k+1})_\infty}{(q;q)_\infty^n},
$$
where $k=k_0+\cdots+k_n$.
\end{con}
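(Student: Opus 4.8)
The plan is to replay, with $m=2n-1$ in place of $m=2n$, the whole chain of arguments that produces \Thm{theo:mainsp}. The key point is that Sections \ref{sec:deletingcolour}, \ref{sec:roadfrob} and \ref{sec:frequenciesonpaths} are already set up for an \emph{arbitrary} ordered set $\Od$ of size $m$ and for the function $\rho$ defined there: the ``deleting a colour'' bijection (\Thm{theo:dualitycapaprimc}), the Frobenius correspondence (\Prp{prop:frob}), the identification \Lem{lem:identification}, the path bijection (\Thm{theo:mainns}) and its dilated version (\Thm{theo:mainspbis}) all hold verbatim for general $m$. The only ingredient special to $C_n^{(1)}$ and $m=2n$ is that this $\rho$ --- up to the border modification of the difference with $c_\infty$ --- is the energy function of a level $1$ perfect crystal with constant ground state path, namely the crystal $\B$ of \Sct{sec:energy}, via \Thm{th:energy} and \Thm{th:charwithenergy}. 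So the task reduces to finding an affine Kac--Moody algebra $\gf'$ with a level $1$ perfect crystal $\B'$ with constant ground state path, whose vertices are indexed by secondary objects over an ordered set $\Od'$ of size $2n-1$, and whose crystal-theoretic energy function coincides, after the analogue of \Thm{th:energy}, with $\rho$.

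First I would identify $\gf'$ and $\B'$. Since the target product has denominator $(q;q)_\infty^n$ (without the fudge factor $(q;q^2)_\infty$ that appears for $C_n^{(1)}$) and odd moduli $2n+2k+1$, $\gf'$ should be taken among the affine algebras whose principal specialisation yields such products; by analogy with the Kang--Kashiwara--Misra construction the natural candidate is the twisted algebra $A_{2n-1}^{(2)}$ (with underlying finite-type algebra $C_n$), or possibly $D_{n+1}^{(2)}$, both of which carry level $1$ perfect crystals. Concretely I would (i) take the Kang--Kashiwara--Misra realisation of the relevant level $1$ perfect crystal together with its energy function, the analogue of \eqref{eq:KKMenergy}; (ii) rewrite it in closed form as in \Thm{th:energy}, i.e. show it equals $\chi(x\geq x')+\chi(y\geq y')-\chi(y\geq y'>x\geq x')$ with the expected exceptional cases near the diagonal; and (iii) split $\Co'=\Co_{\mathrm{sup}}\sqcup\Co_{\mathrm{free}}\sqcup\Co_{\mathrm{inf}}$ as in \Sct{sec:applicationtocn1}, define $\delta$ and $\gamma$, and verify the seven well-definedness conditions of \Def{def:welldef}, the odd analogue of \Lem{lem:epwelldef}.

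Granted this, the rest is automatic. \Thm{th:formchar} expresses $e^{-\Ll'}\ch(L(\Ll'))$ as a generating function for grounded partitions; \Thm{theo:dualitycapaprimc} (deleting the free ground colour), \Prp{prop:frob}, \Lem{lem:identification} and \Thm{theo:mainns} then give the non-specialised identity over $\Od'$, and \Thm{theo:mainspbis} with $m=2n-1$ produces the statement indexed by paths in $E_{-1}$ and partitions into parts in $E_1$, matching the shape of the conjecture: the $n+1$ grounds correspond to $\omega_0,\ldots,\omega_{n-1}$ and $0_{c_{n,n}}$, hence to the fictitious frequencies $f_{(-1)_{2n-1-2i}}=k_i$ and $f_{(0)_0}=k_n$. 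The last check is that the principal specialisation of the $\gf'$-character equals $\prod_{a,b,j}(q^j;q^{2n+2k+1})_\infty/(q;q)_\infty^n$; this is the odd counterpart of \eqref{eq:princchar} and follows from the Lepowsky numerator formula for $\gf'$ in the principal gradation.

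The main obstacle is steps (i)--(iii): pinning down the right level $1$ perfect crystal and proving the closed formula for its energy function. Two features make this delicate. First, the ordered set $\Od'$ of size $2n-1$ has a fixed point $n=\overline{n}$ under the bar involution, so the diagonal secondary colour $c_{n,n}$ --- which ought to be free --- sits on the boundary between $\Co_{\mathrm{sup}}$ and $\Co_{\mathrm{inf}}$; one must check that it does not break the well-definedness conditions or the choices of $\delta$ and $\gamma$, and that the $n+1$ ground vertices (one of them $0_{c_{n,n}}$) are set up consistently with the extra fictitious frequency $f_{(0)_0}=k_n$. Second, the absence of the $(q;q^2)_\infty$ factor forces $\B'$ to differ from $\B$ in how its $\emptyset$-type vertex (or its absence) behaves, hence in how the border conditions $\ep(\Co_{\mathrm{free}},c_\infty)=\{1\}$, $\ep(\Co_{\mathrm{inf}},c_\infty)\subset\{1,2\}$, $\ep(\Co_{\mathrm{sup}},c_\infty)\subset\{0,1\}$ of \Def{def:welldef} are realised, and getting this right is what fixes the correct $\omega_i$'s. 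For the general $(k_0,\ldots,k_n)$ statement one would in addition need a level-$k$ perfect crystal of $\gf'$ together with the multi-grounded partitions of \cite{DKmulti}, exactly as for the non-specialised even conjecture.
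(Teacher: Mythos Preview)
The statement you are addressing is a \emph{conjecture}, not a theorem: the paper simply reformulates the odd CMPP conjecture from \cite{CMPP} in its own notation and leaves it open. There is therefore no proof in the paper to compare your proposal against.

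That said, your proposal does not prove the conjecture as stated, for two independent reasons.

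First, the statement is for arbitrary non-negative integers $k_0,\ldots,k_n$, whereas the chain of arguments you propose to replay (\Thm{th:formchar} through \Thm{theo:mainspbis}) is a level~$1$ machine: it would at best yield the case $k_0+\cdots+k_n=1$, exactly as in the even situation where the paper only proves the level~$1$ case (\Thm{theo:mainsp}) and leaves the general even conjecture open. You acknowledge this in your final sentence, but that means your proposal is a plan for a strict subcase, not for the stated conjecture.

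Second, even restricted to that level~$1$ subcase, the heart of your argument --- your steps (i)--(iii) --- is itself conjectural. You have not fixed which affine algebra $\gf'$ and crystal $\B'$ are meant (you list $A_{2n-1}^{(2)}$ and $D_{n+1}^{(2)}$ as ``candidates''), you have not shown that any level~$1$ perfect crystal on a $(2n-1)$-element alphabet has constant ground state paths indexed by the required $n+1$ grounds, and you have not proved that its energy function simplifies to $\rho$ or that the decomposition $\Co_{\mathrm{sup}}\sqcup\Co_{\mathrm{free}}\sqcup\Co_{\mathrm{inf}}$ with the fixed point $n=\overline{n}$ satisfies \Def{def:welldef}. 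These are precisely the nontrivial, type-dependent computations that \Sct{sec:energy} and \Lem{lem:epwelldef} carry out for $C_n^{(1)}$; without their odd analogues, nothing downstream is supported. Your proposal is a reasonable research programme for the level~$1$ odd conjecture, and you correctly isolate the likely obstacles, but it is not a proof.
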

In the case $n=1$, we retrieve the Andrews-Gordon identities, which, in the literature, are linked to the representations of type $A_1^{(1)}$. It is then legitimate to look for some Lie-algebraic interpretation of the general conjecture. The non-specialised version of the conjecture would then be in the following form.

\begin{con}[Non-specialized version of the conjecture]
Let $k_0,\ldots,k_n$ be non-negative integers, and denote by $\mathcal{Q}^{k_0,\ldots,k_n}_\Sc$ the set of partitions into parts in $\Z_\Sc^+$ such that, by setting $(f_u)_{u\in \Z_\Sc^+}$ the frequencies of the parts and considering fictitious occurrences of elements in $\Omega$ with $f_{\omega_i}=k_i$ for $i\in\{0,\ldots,n-1\}$ and $f_{0_{c_{n,n}}}=k_n$, we have 
$$f_{e_0}+\ldots+f_{e_{2n-1}}\leq k_0+\ldots+k_n$$
for all path $(e_0,\ldots,e_{2n-1})$ of $\Omega\sqcup\Z_\Sc^+$. 
%By setting $q=e^{-\delta}$, $c_i=e^{\frac{\alpha_n}{2}+\sum_{u=i}^{n-1}\alpha_u}$ and $c_{\overline{i}}=c_i^{-1}$ for all $i\in\{1,\ldots,n\}$ 
Hence, for some suitable variables $q$ and $c_i$ for all $i\in \{1,\ldots,n\}$, we have 
\begin{equation}
\sum_{\pi\in \mathcal{Q}^{k_0,\ldots,k_n}_\Sc} C(\pi)q^{|\pi|} = e^{-k_0\Lambda_0-\cdots-k_n\Lambda_n}\ch(M(k_0\Lambda_0+\cdots+k_n\Lambda_n))
\end{equation}
for a certain highest weight module $M(k_0\Lambda_0+\cdots+k_n\Lambda_n)$ of a certain type $T$. 
\end{con}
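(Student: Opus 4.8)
The plan is to replicate, now for the conjectural affine Lie algebra $T$ and for arbitrary level $k=k_0+\cdots+k_n$, the entire chain of constructions that this paper carries out for $C_n^{(1)}$ at level one: identify the algebra, exhibit a perfect crystal with an explicit closed-form energy function, turn the $(\mathrm{KMN})^2$ character formula into a generating function for partitions with difference conditions, simplify those conditions via the ``deleting a colour'' bijection, and finally translate the result into frequency conditions on paths. As for identifying $T$: the product in the conjecture has modulus $2n+2k+1$, which collapses to the Andrews--Gordon modulus $2k+3$ at $n=1$, and since Andrews--Gordon is attached to $A_1^{(1)}$, the family $T$ should have $A_1^{(1)}$ as its rank-one member and a principal gradation whose Weyl--Kac denominator reproduces products of $(q^{j};q^{2n+2k+1})_\infty$. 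I would pin $T$ down by computing the principal specialisation of the Weyl--Kac character formula, exactly as in \eqref{eq:princchar}, for the natural candidates ($B_n^{(1)}$, $A_{2n-1}^{(2)}$, $D_{n+1}^{(2)}$) and matching it term by term against the conjectured product; this simultaneously fixes the ``suitable variables'' $q$ and $c_i$, by analogy with the assignment $c_j=e^{\frac{1}{2}\alpha_n+\sum_{u=j}^{n-1}\alpha_u}$ used in Section \ref{sec:proofCMPP}.

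The technical heart of the plan is to produce, for each level $k$, a perfect crystal $\mathcal{B}^{(k)}$ for $T$ together with its Kang--Kashiwara--Misra energy function, and then to rewrite that energy in a compact combinatorial form analogous to \Thm{th:energy}. Once this is achieved, the remaining steps are essentially mechanical, because Sections \ref{sec:deletingcolour} to \ref{sec:frequenciesonpaths} are already written for an arbitrary ordered set $\Od$ with $m$ elements. Concretely, at level one the target relation should again be $\gg_\rho$ with $\rho(c_{x',y'},c_{x,y})=\chi(x\ge x')+\chi(y\ge y')-\chi(y\ge y'>x\ge x')$ on secondary colours, but now over the ordered set with $m=2n-1$ elements, so that \Thm{theo:dualitycapaprimc}, \Prp{prop:frob}, \Lem{lem:orderpath} and \Thm{theo:mainns} apply verbatim; the only new structural feature is that, $m$ being odd, one has $\overline{n}=n$, so the part $0_{c_{n,n}}$ takes over the distinguished role that the parts $\omega_0,\ldots,\omega_n$ play in the even case.

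To pass from level one to level $k$ one must replace \Thm{th:formchar} by the multi-grounded partition formalism of \cite{DKmulti}, since for $k\ge 2$ the ground state path of a perfect crystal need not be constant; this is precisely the mechanism that produces the bound $f_{e_0}+\cdots+f_{e_{2n-1}}\le k_0+\cdots+k_n$ on the right-hand side of the path inequality, via fictitious frequencies $f_{\omega_i}=k_i$ for $i\in\{0,\ldots,n-1\}$ and $f_{0_{c_{n,n}}}=k_n$. One then has to verify that the ``deleting a colour'' argument and the path characterisation of $\gg_\rho$ remain valid in the multi-grounded setting, and to re-prove the analogue of \Lem{lem:omegaz+} --- which decides when a coloured integer lies in $\Z_\Sc^+$ rather than in the fictitious set $\Omega$ --- at arbitrary level.

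I expect the main obstacle to be finding a perfect crystal for $T$ at \emph{every} level whose energy function admits a clean closed form: even for $C_n^{(1)}$ at level one this already requires the delicate case analysis of Lemmas \ref{lem:H012} to \ref{lem:H1}, the higher-level crystals are considerably larger, no analogous closed formula is presently known, and the loss of the $\co=1$ normalisation of \Thm{th:formchar} makes the colour bookkeeping heavier. A realistic attack would therefore begin with small cases --- $T$ of low rank, level $2$ --- to discover the closed-form energy empirically, and only then attempt the general combinatorial translation outlined above.
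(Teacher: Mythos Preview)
The statement you are addressing is a \emph{conjecture}, not a theorem: the paper explicitly labels it as such and does not supply a proof. It appears in the Conclusion as an open problem, motivated only by the observation that the $n=1$ case recovers the Andrews--Gordon identities attached to $A_1^{(1)}$, and the paper does not identify the type $T$, the module $M$, or the variables $q,c_i$. There is therefore no ``paper's own proof'' to compare your attempt against.

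What you have written is not a proof either, but a research programme --- and you acknowledge this yourself in the last paragraph. Your outline is a sensible extrapolation of the paper's method to the odd-modulus setting, and your identification of the two main obstacles (pinning down $T$ by matching principally specialised characters, and finding a closed-form energy for higher-level perfect crystals) is accurate. But none of the key steps are carried out: the algebra $T$ is not determined, no perfect crystal is exhibited, no energy formula is proved, and the passage from the multi-grounded formalism to the frequency bound $\sum f_{e_j}\le k_0+\cdots+k_n$ is asserted rather than established. In particular, the claim that Sections~\ref{sec:deletingcolour}--\ref{sec:frequenciesonpaths} ``apply verbatim'' at higher level is optimistic: \Thm{theo:dualitycapaprimc} and \Thm{theo:mainns} are level-one statements (the bound is $\le 1$, frequencies are $0$ or $1$), and extending them to arbitrary $k$ is precisely the content of the conjecture, not a mechanical consequence of having chosen $m=2n-1$.

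So there is no error to flag, but also no proof to evaluate: you have sketched a plausible line of attack on an open problem that the paper itself leaves open.
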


In the case of $A_n^{(1)}$ which we treated in \cite{DK19}, we were also able to rewrite our generating functions for coloured Frobenius partitions as the constant term in an infinite product, and as a sum of infinite products. This yielded two non-specialised character formulas with obviously positive coefficients and allowed us to retrieve the Kac-Peterson \cite{KacPeterson} formula. Unfortunately we have not yet been able to find an equally simple formula for the generating function of $C_n^{(1)}$-Frobenius partitions. Progress in that direction would be interesting as it would lead to non-specialised character formulas with obviously positive coefficients for level $1$ standard modules of $C_n^{(1)}$ as well.

\section*{Acknowledgements}
The first author is supported by the ANR COMBIN\'e ANR-19-CE48-0011 and the SNSF Eccellenza grant number PCEFP2\_202784. The second author is funded by the LABEX MILYON (ANR-10-LABX-0070) of Universit\'e de Lyon, within the program ``Investissements d'Avenir" (ANR-11-IDEX-0007) operated by the French National Research Agency (ANR).

\bibliographystyle{alpha}
\bibliography{biblio.bib}

\end{document}